\newtheorem{theorem}{Theorem}
\newtheorem{proposition}{Proposition}[section]
\newtheorem{claim}{Claim}
\newtheorem{lemma}[proposition]{Lemma}
\theoremstyle{definition}
\newtheorem{remark}{Remark}[section]
\numberwithin{equation}{section}
\newtheorem{definition}{Definition}
\newcommand{\abs}[1]{\lvert #1 \rvert}
\newcommand{\bigabs}[1]{\bigl\lvert #1 \bigr\rvert}
\newcommand{\Bigabs}[1]{\Bigl\lvert #1 \Bigr\rvert}
\newcommand{\norm}[1]{\lVert #1 \rVert}
\newenvironment{customthm}[1]
  {\innercustomthm}
  {\endinnercustomthm}
\newenvironment{proofclaim}[1][Proof of the claim]{\begin{proof}[#1]}{\end{proof}}
\newcommand{\N}{{\mathbb N}}
\newcommand{\R}{{\mathbb R}}
\newcommand{\Sset}{{\mathbb S}}
\newcommand{\loc}{{\mathrm{loc}}}
\newcommand{\rad}{{\mathrm{rad}}}
\newcommand{\mmu}{{m}}
\DeclareMathOperator{\Div}{div}
\DeclareMathOperator{\supp}{supp}
\DeclareMathOperator{\esssup}{ess\, sup}
\newcommand{\dif}{\,\mathrm{d}}
\newcommand{\dualprod}[2]{\langle #1, #2 \rangle}
\title[Schr\"odinger--Poisson--Slater equations at the critical frequency]{Groundstates and radial solutions\\ to nonlinear
Schr\"odinger--Poisson--Slater equations\\ at the critical frequency}
\author{Carlo Mercuri}
\address{Swansea University\\ Department of Mathematics\\ Singleton Park\\
Swansea\\ SA2~8PP\\ Wales, United Kingdom}	
\email{C.Mercuri@swansea.ac.uk}
\author{Vitaly Moroz}
\address{Swansea University\\ Department of Mathematics\\ Singleton Park\\
Swansea\\ SA2~8PP\\ Wales, United Kingdom}	
\email{V.Moroz@swansea.ac.uk}
\author{Jean Van Schaftingen}
\address{Universit\'e Catholique de Louvain\\ Institut de Recherche en Math\'ematique et Phy\-sique\\ Chemin du Cyclotron 2 bte L7.01.01\\ 1348 Louvain-la-Neuve \\ Belgium}
\email{Jean.VanSchaftingen@uclouvain.be}
\keywords{Schr\"odinger--Poisson--Slater equation; defocusing nonlocal interaction;
nonlinear Schr\"odinger equation; nonlocal problem; Riesz potential;
interpolation inequalities; Brezis--Lieb lemma; Strauss type radial estimates}
\subjclass[2010]{35Q55 (35J91, 35J47, 35J50, 31B35)}
\begin{document}

\begin{abstract}
We study the nonlocal Schr\"odinger--Poisson--Slater type equation
$$
 - \Delta u + (I_\alpha \ast \vert u\vert^p)\vert u\vert^{p - 2} u= \vert u\vert^{q-2}u\quad\text{in \(\mathbb{R}^N\),}
$$
where $N\in\mathbb{N}$, $p>1$, $q>1$ and $I_\alpha$ is the Riesz potential of order $\alpha\in(0,N).$
We introduce and study the Coulomb--Sobolev function space which is natural for the energy functional of the problem
and we establish a family of
associated optimal interpolation inequalities. We prove existence of optimizers for the inequalities,
which implies the existence of solutions to the equation for a certain range of the parameters.
We also study regularity and some qualitative properties of solutions.
Finally, we derive radial Strauss type estimates and use them to prove
the existence of radial solutions to the equation in a range of parameters which is in
general wider than the range of existence parameters obtained via interpolation inequalities.
\end{abstract}

\maketitle

%\vspace{-1em}
\tableofcontents
\newpage

\section{Introduction}
\settocdepth{section}

\subsection{Setting of the problem.}

We study the Schr\"odinger--Poisson--Slater type equation
\begin{equation}\tag{$\mathcal{SPS}$}\label{sps}
 - \Delta u + (I_\alpha \ast \abs{u}^p)\abs{u}^{p - 2} u= \abs{u}^{q-2}u\quad\text{in \(\R^N\),}
\end{equation}
where $N\in\N$, $p>1$, $q>1$ and $I_\alpha : \R^N \to \R$ is the Riesz potential of order $\alpha\in(0,N)$,
defined for \(x \in \R^N \setminus \{0\}\) as
$$
  I_\alpha(x)=\frac{A_\alpha}{\abs{x}^{N-\alpha}},\qquad A_\alpha=\frac{\Gamma(\tfrac{N-\alpha}{2})}{\Gamma(\tfrac{\alpha}{2})\pi^{N/2}2^{\alpha}}.
$$
The choice of normalisation constant $A_\alpha$ ensures that the kernel $I_\alpha$ enjoys the semigroup property
$$I_{\alpha+\beta}=I_\alpha*I_\beta\qquad \text{for each \(\alpha,\beta\in(0,N)\) such that
\(\alpha+\beta<N\)},
$$
see for example \cite{DuPlessis}*{pp.\thinspace{}73--74}. Since $I_\alpha*\varphi\to\varphi$ as $\alpha\to 0$ for all $\varphi\in C^\infty_c(\R^N)$,
the local equation
$$ - \Delta u + \abs{u}^{2p - 2} u= \abs{u}^{q-2}u\quad\text{in \(\R^N\)}$$
can be seen as a formal limit of \eqref{sps} when $\alpha\to 0$.
Local equations of this type were studied, e.g. in \citelist{\cite{MerlePeletier1992}\cite{DelPinoDolbeault2002}}.
Because for $\alpha\in(0,N)$ the Riesz potential $I_\alpha$ is the Green function of the fractional Laplacian $(-\Delta)^{\alpha/2}$  (see for example \cite{Stein1970}*{Section 5.1.1}),
the system of equations
\begin{equation*}
\left\{
\begin{array}{rcl}
 - \Delta u + v \abs{u}^{p-2}u &=& \abs{u}^{q-2}u,\\
 (-\Delta)^{\alpha/2}v &=&u^p,\\
\end{array}
\right.
\end{equation*}
is formally equivalent to  equation \eqref{sps}.

The nonlocal nonlinear Schr\"odinger equation, in natural units,
\begin{equation}\label{NLS-3}
i\partial_t\psi=-\Delta\psi + V_{\mathrm{ext}}(x)\psi+\Big(\frac{1}{4\pi\abs{x}}\ast \abs{\psi}^2\Big)\psi- \abs{\psi}^{q-2}\psi,
\quad(x,t)\in\R^3\times\R,
\end{equation}
and its stationary counterpart
\begin{equation}\label{sps-3}
 - \Delta u + V_{\mathrm{ext}}(x)u+\Big(\frac{1}{4\pi\abs{x}} \ast \abs{u}^2\Big)u= \abs{u}^{q-2}u,\quad x\in\R^3,
\end{equation}
appear in the physical literature as an approximation of the Hartree--Fock model of a quantum many--body system of electrons under the
presence of the external potential $V_{\mathrm{ext}}$, see \cite{LeBris-Lions-2005}
for a mathematical introduction into Hartree--Fock  method and further references
therein. Within this context equations \eqref{NLS-3} and \eqref{sps-3} are known
under the names of Schr\"odinger--Poisson--Slater \cite{Bokanowski-Lopez-Soler},
Schr\"odin\-ger--Poisson--$X_\alpha$
\citelist{\cite{Mauser-2001}\cite{Bao-Mauser-Stimming-2003}}, or
Maxwell--Schr\"o\-din\-ger--Poisson
\citelist{\cite{Benci-Fortunato}\cite{Catto-2013}} equations.
The function $\abs{u}^2:\R^3\to\R$ in equation \eqref{sps-3} is the density of electrons in the original many--body system.
The nonlocal convolution term represents the Coulombic repulsion between the electrons.
The local term $\abs{u}^{q-2}u$ was introduced by Slater \cite{Slater} with $q=8/3$ as a local approximation of the exchange potential in the Hartree--Fock model \citelist{\cite{Mauser-2001}\cite{Bokanowski-Lopez-Soler}}.
The multidimensional version of Schr\"odin\-ger--Poisson--Slater equation in $\R^N$ was proposed in \cite{Bao-Mauser-Stimming-2003},
where the approximated exchange term corresponds to $q=2+2/N$.
The equation \eqref{sps-3} is also related to Thomas--Fermi--Dirac--von\;Weizs\"acker (TFDW) models of density functional theory \cite{LeBris-Lions-2005}*{pp. 311--313}.

From a physical point of view, the most relevant objects in the study of equation \eqref{sps-3} are minimisers and critical points
of the energy \(\mathcal J\) corresponding to \eqref{sps-3}, defined by
$$
  \mathcal J(u)
  =\frac{1}{2}\int_{\R^3}\abs{D u}^2
   +\frac{1}{2}\int_{\R^3} V_{\mathrm{ext}}\abs{u}^2
   +\frac{1}{16\pi}\iint_{\R^3}\frac{\abs{u (x)}^2|u(y)|^2}{\abs{x - y}}\dif y\dif x
   -\frac{1}{q}\int_{\R^3}\abs{u}^q,
$$
subject to a prescribed mass constraint $\|u\|_{L^2(\R^3)}^2=m>0$.
This leads to solutions of equation \eqref{sps-3} with a free Lagrange multiplier $\lambda=\lambda(m)$,
\begin{equation}\label{sps-3m}
 - \Delta u + \big(V_{\mathrm{ext}}(x)+\lambda\big) u+\Big(\frac{1}{4\pi\abs{x}} \ast \abs{u}^2\Big)u= \abs{u}^{q-2}u,\quad x\in\R^3,
\end{equation}
while the ansatz $\psi(x,t)=e^{i\lambda t}u(x)$ produces standing--wave solutions of the time--dependent equation \eqref{NLS-3}.
Equation \eqref{sps-3m} where $V_{\mathrm{ext}} + \lambda>0$ and $\lambda$ is either a free Lagrange multiplier or a fixed parameter had been extensively studied by many authors, see for example survey papers \citelist{\cite{Ambrosetti-survey}\cite{Catto-2013}} for an extensive list of references.

In the critical frequency case, when $V_{\mathrm{ext}} + \lambda=0$, equation \eqref{sps-3m} becomes
\begin{equation}\label{sps-30}
 - \Delta u + \Big(\frac{1}{4\pi\abs{x}} \ast \abs{u}^2\Big)u= \abs{u}^{q-2}u,\quad x\in\R^3,
\end{equation}
and had been studied by Ruiz \cite{Ruiz-ARMA} and Ianni and Ruiz \cite{Ianni-Ruiz-2012}.
Their results reveal a complex mathematical structure behind \eqref{sps-30}.
In particular, \eqref{sps-30} admits a positive solution for $3<q<6$ \cite{Ianni-Ruiz-2012}*{Theorem 1.2}
and radial positive solution for $18/7<q<3$ \cite{Ruiz-ARMA}*{Theorem 1.3}.
In addition, \cite{Ruiz-ARMA}*{Theorem 1.4} shows that for $18/7<q<3$
solutions of \eqref{sps-3m} with $V_{\mathrm{ext}}\equiv 0$ and $\lambda\to 0$
converge to nontrivial solutions of \eqref{sps-30}.
In other words, under some circumstances \eqref{sps-30} plays a role of the
limit equation for \eqref{sps-3m} in the critical frequency r\'egime $\lambda\to 0$.

In order to capture and to understand various mathematical features of
Schr\"odinger--Poisson--Slater equations with zero potential, we embed it in a wider class
of equations \eqref{sps} in arbitrary space dimensions and with general parameters.
As we point out later in the present introduction, our analysis of the functional setting provides new insights on the significance of the exponents $q=3$ and $q=18/7$ and  highlights new phenomena occurring at different ranges of the parameters. In particular we identify several values of the parameters which play the role of critical thresholds for continuous, and locally and globally compact embeddings of a natural functional space associated with \eqref{sps}, as well as for the existence and the nonexistence of solutions.

\subsection{Function spaces and interpolation inequalities.}
Equation \eqref{sps} has a variational structure.
Using the semigroup property of the Riesz potential, the energy functional that corresponds to \eqref{sps} can be written as
$$
\mathcal J_*(u)=\frac{1}{2}\int_{\R^N} \abs{D u}^2+\frac{1}{2p}\int_{\R^N} \big|I_{\alpha/2}*\abs{u}^p\big|^2-\frac{1}{q}\int_{\R^N} \abs{u}^q.
$$
Our first step in the study of Schr\"o\-dinger--Poisson--Slater type equation \eqref{sps}
is to define a natural energy space associated with the energy functional $\mathcal J_*$.
In Section~\ref{Sect-CS} we define the \emph{Coulomb--Sobolev space} $E^{\alpha,p}(\R^N)$
as the space of weakly differentiable function for which the norm
\begin{equation*}
  \norm{u}_{E^{\alpha, p}}:=\biggl(\int_{\R^N} \abs{D u}^2+\Bigl(\int_{\R^N}\big|I_{\alpha/2}*\abs{u}^p\big|^2\Bigr)^{1/p}\biggr)^{1/2}
\end{equation*}
is finite.
In the case $N=3$, $\alpha=2$ and $p=2$, this space has been studied by P.-L.\thinspace{}Lions \citelist{\cite{Lions-1981}*{Lemma 4}\cite{Lions1987}*{(55)}} and D.\thinspace{}Ruiz \cite{Ruiz-ARMA}*{section 2}.
From their works, it is known that $E^{2,2}(\R^3)$ is a uniformly convex separable Banach space, that $E^{2,2}(\R^3)\subset L^q(\R^N)$ for every $q\in[3,6)$
and the embedding fails for $q<3$. The proofs rely on the quadratic algebraic structure
of the nonlocal term in the case $p=2$ and cannot be directly extended to a general $p\neq 2$.
Note that both the norm $\norm{\cdot}_{E^{\alpha, p}}$ and the energy $\mathcal J_*$,
can be considered for all $p\ge 1$ and $q\ge 1$, although for $p=1$ or $q=1$
the interpretation of equation \eqref{sps} as the Euler--Lagrange equation for $\mathcal J_*$ becomes delicate.

In Section~\ref{Sect-CS} we show that the Coulomb--Sobolev space $E^{\alpha,p}(\R^N)$
is indeed a well defined Banach space for a general set of parameters $N\in\N$, $\alpha\in(0,N)$ and $p\ge 1$.
The space is uniformly convex and reflexive for $p>1$.
We also show that $L^1_{\loc}$--convergence can efficiently replace
the usual notion of the weak convergence for $p>1$ and that the subspace of smooth test functions $C^\infty_c(\R^N)$ is dense in $E^{\alpha,p}(\R^N)$.

In Section~\ref{sect-Embeddings} we study embedding properties of the space $E^{\alpha,p}(\R^N)$.
Our main result in that section is a Gagliardo--Nirenberg type interpolation inequality
for Coulomb--Sobolev spaces and associated optimal embeddings into Lebesgue spaces.

\begin{theorem}[Coulomb--Sobolev interpolation inequality]
\label{theoremEmbedding}
Let \(N \in \N\), \(\alpha \in (0, N)\), $p,q\in[1,+\infty)$.
The space \(E^{\alpha, p} (\R^N)\) is continuously embedded in \(L^q (\R^N)\) if and only if
the following assumption holds:
\begin{equation}\tag{$\mathcal Q$}\label{Q}
\begin{split}
\text{either}\quad & \frac{1}{p} \ge \frac{(N - 2)_+}{N + \alpha} \quad\text{and}\quad
   \frac{1}{2} - \frac{1}{N} \le \frac{1}{q} \le \frac{1}{2} - \frac{p - 1}{\alpha + 2 p}\,,\bigskip\\
\text{or}\quad & \frac{1}{p} < \frac{(N - 2)_+}{N + \alpha} \quad\text{and}\quad
   \frac{1}{2} - \frac{1}{N} \ge \frac{1}{q} \ge \frac{1}{2} - \frac{p - 1}{\alpha + 2 p}\,.
\end{split}
\end{equation}
Moreover, if \eqref{Q} holds and \(\alpha + N \ne p (N - 2)\) then there exists \(S=S(N,\alpha,p,q) > 0\) such that for every \(u \in E^{\alpha, p}(\R^N)\)
the interpolation estimate
\begin{equation}\label{Coulomb-Sobolev-estimate}
  S\Bigl(\int_{\R^N} \abs{u}^q\Bigr)^{\frac{1}{q}}
  \le \Bigl(\int_{\R^N} \abs{D u}^2\Bigr)^\frac{\theta}{2}  \Bigl( \int_{\R^N} \bigl(I_{\alpha/2} \ast \abs{u}^p\bigr)^2\Bigr)^{\frac{1 - \theta}{2 p}}
\end{equation}
is valid with
\begin{equation}\label{e-theta}
 \frac{1}{q} = \theta \Bigl(\frac{1}{2} - \frac{1}{N}\Bigr) + (1 - \theta) \frac{N + \alpha}{2 N p}
\end{equation}
where
$$\frac{\alpha}{2 p + \alpha} \le \theta \le 1\quad\text{if $N\ge 2$},\qquad
\frac{\alpha}{2p+\alpha}\le\theta<\frac{1+\alpha}{1+\alpha+p}\quad\text{if $N=1$}.$$
If $N\ge 3$ and \(\alpha + N = p (N - 2)\) then there exists \(C=C(N,\alpha) > 0\) such that for every \(u \in E^{\alpha, \frac{N+\alpha}{N-2}}(\R^N)\),
\begin{equation}\label{Coulomb-Sobolev-estimate-crit}
C\int_{\R^N} \abs{u}^{\frac{2 N}{N - 2}}
\le  \Bigl(\int_{\R^N} \abs{Du}^2 \Bigr)^\frac{\alpha}{2 + \alpha}
\Bigl( \int_{\R^N} \bigabs{I_{\alpha/2} \ast \abs{u}^\frac{N+\alpha}{N-2}}^2\Bigr)^\frac{2}{2 + \alpha}.
\end{equation}
\end{theorem}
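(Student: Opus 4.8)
The plan is to prove the two directions of the equivalence and the two inequalities separately, everything resting on one local estimate that turns the nonlocal term into quantitative information. Using the semigroup property one has $\int_{\R^N}(I_{\alpha/2}\ast\abs{u}^p)^2=\iint_{\R^N\times\R^N}I_\alpha(x-y)\abs{u(x)}^p\abs{u(y)}^p$, and restricting this double integral to $B_R(y)\times B_R(y)$, where $I_\alpha\ge A_\alpha(2R)^{-(N-\alpha)}$, gives
\begin{equation*}
  \int_{B_R(y)}\abs{u}^p\le C\,R^{\frac{N-\alpha}{2}}\Bigl(\int_{\R^N}\bigl(I_{\alpha/2}\ast\abs{u}^p\bigr)^2\Bigr)^{1/2}\qquad\text{for all }y\in\R^N,\ R>0 .
\end{equation*}
Combined with the Sobolev inequality $\norm{u}_{L^{2N/(N-2)}}\le C\norm{Du}_{L^2}$ when $N\ge3$ (and, for $N\in\{1,2\}$, with the Gagliardo--Nirenberg inequalities for $W^{1,2}(\R^N)$), this immediately yields the two extreme cases of \eqref{Coulomb-Sobolev-estimate}: the value $\theta=1$ is precisely Sobolev, while the other endpoint $\theta=\frac{\alpha}{2p+\alpha}$, which by \eqref{e-theta} corresponds to $\frac1q=\frac12-\frac{p-1}{\alpha+2p}$, is the genuinely nonlocal estimate that must be established.

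\textbf{The nonlocal endpoint and interpolation.} To prove the endpoint inequality I would cover $\R^N$ by balls of some radius $\rho>0$ with bounded overlap, bound $\norm{u}_{L^q(B_\rho)}$ on each ball by a Gagliardo--Nirenberg inequality interpolating $L^q$ between $L^p$ and $L^{2N/(N-2)}$ (the latter controlled through $\norm{Du}_{L^2(B_\rho)}$), insert the displayed key estimate to replace every local $L^p$ norm by $C\rho^{(N-\alpha)/2}(\int_{\R^N}(I_{\alpha/2}\ast\abs{u}^p)^2)^{1/2}$, sum the resulting bounds by H\"older's inequality in the sequence spaces, arranged so that the gradient contributions enter through the convergent sum $\sum\norm{Du}_{L^2(B_\rho)}^2\le C\norm{Du}_{L^2(\R^N)}^2$, and finally optimise over $\rho$; since \eqref{e-theta} makes the leading term scale invariant, the optimisation produces exactly the endpoint form of \eqref{Coulomb-Sobolev-estimate}. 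Once both endpoints are available, every intermediate $\theta\in(\frac{\alpha}{2p+\alpha},1)$ follows from the log-convexity of $L^q$ norms, $\norm{u}_{q}\le\norm{u}_{q_0}^{s}\norm{u}_{q_1}^{1-s}$, which turns the two endpoint inequalities into \eqref{Coulomb-Sobolev-estimate} for every admissible $q$; the precise range, i.e.\ the two alternatives in \eqref{Q}, comes from the relative position of $\frac{2Np}{N+\alpha}$ and $\frac{2N}{N-2}$, equivalently the sign of $p(N-2)-(N+\alpha)$, which decides whether one interpolates between or beyond these exponents. The delicate point --- and the main obstacle --- is precisely this local-to-global passage: the lower-order terms in the local Gagliardo--Nirenberg inequalities involve $\norm{u}_{L^p(B_\rho)}$, which is not globally summable because $E^{\alpha,p}(\R^N)$ is in general not contained in $L^p(\R^N)$, so the $\ell^r$-bookkeeping has to be organised so that these terms are absorbed using only $\int\abs{Du}^2$, $\int(I_{\alpha/2}\ast\abs{u}^p)^2$ and the scale-uniform consequence of the key estimate; this bookkeeping is also what singles out the extremal exponent $\frac{\alpha}{2p+\alpha}$ and the shape of \eqref{Q}, and it, together with the low-dimensional cases, is where most of the work lies.

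\textbf{Necessity of \eqref{Q}.} Using that $C^\infty_c(\R^N)$ is dense in $E^{\alpha,p}(\R^N)$, I would test the would-be embedding on explicit families. Dilations $u(\lambda\,\cdot)$, for which the three functionals scale by fixed powers of $\lambda$, already force $\frac{2Np}{N+\alpha}\le q\le\frac{2N}{N-2}$ in the first alternative (and the reverse inequalities in the second). To reach the sharper endpoint $\frac1q=\frac12-\frac{p-1}{\alpha+2p}$ I would use ``chains of spreading bumps'': $u_n=\sum_{j=1}^{k_n}\psi_n(\cdot-\rho_n j\,e)$ with $e$ a unit vector, $\psi_n$ a smoothing of the indicator of a ball of radius $R_n$, and $\rho_n\gg R_n$, so that the supports are disjoint and the off-diagonal part of $\int(I_{\alpha/2}\ast\abs{u_n}^p)^2$ is negligible (when $N-\alpha\le1$ that part is larger, but it is still made subordinate by enlarging $\rho_n$). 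Then $\norm{u_n}_{L^q}$, $\norm{Du_n}_{L^2}$ and $\int(I_{\alpha/2}\ast\abs{u_n}^p)^2$ are explicit monomials in $k_n$ and $R_n$; choosing $k_n=R_n^{\sigma}\to\infty$ with $\sigma$ at its admissible threshold makes $\norm{u_n}_{L^q}/\norm{u_n}_{E^{\alpha,p}}$ blow up exactly when \eqref{Q} fails, and the threshold value of $\sigma$ is what pins down the exponent $\frac12-\frac{p-1}{\alpha+2p}$; in the second alternative the opposite endpoint is reached with dilated single bumps.

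\textbf{The critical case.} When $N\ge3$ and $\alpha+N=p(N-2)$ one has $\frac{2Np}{N+\alpha}=\frac{2N}{N-2}$, so \eqref{Q} collapses to the single exponent $q=\frac{2N}{N-2}$ and the only admissible value is $\theta=\frac{\alpha}{2p+\alpha}$; then \eqref{Coulomb-Sobolev-estimate-crit} is exactly the $\frac{2N}{N-2}$-th power of \eqref{Coulomb-Sobolev-estimate} at that endpoint. I would obtain it either by running the covering argument directly at the critical exponent --- where the optimisation in $\rho$ degenerates into an equality at every scale, reflecting the joint scale invariance of the two factors on the right-hand side --- or by passing to the limit in \eqref{Coulomb-Sobolev-estimate} along subcritical exponents while tracking the constant. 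It must be stated separately because the constant $S(N,\alpha,p,q)$ produced by the $\rho$-optimisation degenerates as $p(N-2)\to N+\alpha$.
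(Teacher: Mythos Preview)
Your proposal correctly identifies the architecture of the proof --- endpoint Coulomb--Sobolev estimate, interpolation to intermediate exponents, necessity via test functions --- and your treatment of necessity (dilations plus sums of translated bumps) matches the paper's approach almost exactly. The gap is in the proof of the endpoint inequality \(q=2\frac{2p+\alpha}{2+\alpha}\).

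The covering argument you sketch does not close as written. You propose to replace each local \(\norm{u}_{L^p(B_\rho)}\) by the \emph{global} bound \(C\rho^{(N-\alpha)/2}\bigl(\int_{\R^N}(I_{\alpha/2}\ast\abs{u}^p)^2\bigr)^{1/2}\) coming from your key estimate, then sum over balls using H\"older so that only the gradient pieces appear in a convergent \(\ell^1\)--sum. But at the endpoint one computes \(q\theta=\frac{2\alpha}{2+\alpha}<2\), so the gradient factor \(\norm{Du}_{L^2(B_j)}^{q\theta}\) cannot by itself carry the whole \(\ell^1\) summation; some nontrivial power of the \(L^p\) piece must also be summed, and once you have replaced it by a ball--independent constant, that sum diverges. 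You flag precisely this difficulty (``not globally summable \dots where most of the work lies'') but do not explain how to resolve it. The remedy is to keep the estimate local: from \(\int_{B_\rho(y)}\abs{u}^p\le C\rho^{N-\alpha/2}\inf_{B_\rho(y)}I_{\alpha/2}\ast\abs{u}^p\) one obtains \(\norm{u}_{L^p(B_j)}^{2p}\le C\rho^{-\alpha}\int_{B_j}\bigl(I_{\alpha/2}\ast\abs{u}^p\bigr)^2\), which \emph{does} sum over a bounded--overlap covering. With this refinement your scheme can be pushed through, but it is not what you wrote.

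The paper avoids the summation entirely by a pointwise argument. From the representation \(\abs{u(x)}\le C\bigl(\int_{B_\rho(x)}\frac{\abs{Du(y)}}{\abs{x-y}^{N-1}}\dif y+\fint_{B_\rho(x)}\abs{u}\bigr)\) one bounds the first term by \(C\rho\,\mathcal M\abs{Du}(x)\) and the second by \(C\rho^{-\alpha/(2p)}\bigl(I_{\alpha/2}\ast\abs{u}^p(x)\bigr)^{1/p}\); optimising in \(\rho\) at each point gives
\[
\abs{u(x)}\le C\bigl(\mathcal M\abs{Du}(x)\bigr)^{\frac{\alpha}{2p+\alpha}}\bigl(I_{\alpha/2}\ast\abs{u}^p(x)\bigr)^{\frac{2}{2p+\alpha}},
\]
and raising to the power \(2\frac{2p+\alpha}{2+\alpha}\), integrating, applying H\"older and the Hardy--Littlewood maximal theorem yields the endpoint inequality in one stroke. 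The intermediate \(\theta\) then follow, as you say, by Gagliardo--Nirenberg interpolation with the Sobolev endpoint (and in dimension one by a separate \(L^\infty\) estimate). This maximal--function route is both shorter and cleaner than any covering argument, and it dispenses with the delicate \(\ell^r\) bookkeeping you anticipate.
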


In the cases $N=1$ and $N=2$ only the first option of condition \eqref{Q} occurs, while $\frac{1}{2} - \frac{1}{N} \le \frac{1}{q}$ reduces to $q<+\infty$,
so the assumption \eqref{Q} reads as
$$p\ge 1 \quad\text{and}\quad  q\ge 2 \frac{2 p + \alpha}{2 + \alpha}.$$

The parameter \(\theta\) in the interpolation estimate \eqref{Coulomb-Sobolev-estimate} can be computed explicitly in terms of the exponent \(q\) as
\begin{align}\label{eq-theta}
 \theta & = \frac{N + \alpha - \dfrac{2 pN}{q}}{(N + \alpha) - p (N - 2)},
 & & 1 - \theta = \frac{\dfrac{2 p N}{q} - p (N - 2)}{(N + \alpha) - p (N - 2)}.
\end{align}
In the assumption \eqref{Q} the reader will recognize \(q=\frac{2 N}{N - 2}\) as the classical Sobolev critical exponent.
The exponent \(q=2 \frac{2 p + \alpha}{2 + \alpha}\) is a new \emph{Coulomb--Sobolev critical exponent}.
This case corresponds to estimate \eqref{Coulomb-Sobolev-estimate} with $\theta=\frac{\alpha}{2 p + \alpha}$, which reads as
\begin{equation}
\label{ineqCriticalCoulombSobolev}
S^{2 \frac{2 p + \alpha}{2 + \alpha}}\int_{\R^N} \abs{u}^{2 \frac{2 p + \alpha}{2 + \alpha}}
\le  \Bigl(\int_{\R^N} \abs{Du}^2 \Bigr)^\frac{\alpha}{2 + \alpha}
\Bigl( \int_{\R^N} \abs{I_{\alpha/2} \ast \abs{u}^p}^2\Bigr)^\frac{2}{2 + \alpha}.
\end{equation}
When \(N = 3\), \(\alpha = 2\) and \(p = 2\), the value \(q=3\) was already known to be Coulomb--Sobolev critical \citelist{\cite{Lions1987}*{(55)}\cite{Ruiz-ARMA}*{theorem 1.5}}. Inequality \eqref{Coulomb-Sobolev-estimate}
for $N\in\N$, $\alpha\in(0,N)$ and $p=2$ was obtained in \cite{BellazziniFrankVisciglia}*{proposition 2.1}.
We emphasise that unlike the classical Hardy--Littlewood--Sobolev inequality
\begin{equation*}
\int_{\R^N} \bigl(I_{\alpha/2} \ast \abs{u}^p\bigr)^2\le C\Big(\int_{\R^N}\abs{u}^\frac{2Np}{N+\alpha}\Big)^\frac{N+\alpha}{N},
\end{equation*}
inequality \eqref{Coulomb-Sobolev-estimate} is a \emph{lower} bound on the Coulomb term and,
via Young's inequality, on the $\|\cdot\|_{E^{\alpha,p}}$--norm. The latter ensures the continuous embedding
$$E^{\alpha,p}(\R^N)\subset L^{2\frac{2 p + \alpha}{2 + \alpha}}(\R^N).$$

In section 3 we also study \emph{weighted} lower estimates on the Coulomb term.
By homogeneity considerations, a natural candidate would be
\begin{equation}
\label{eqImpossibleRuiz-intro}
 \Bigl(\int_{\R^N} \frac{\abs{u (x)}^p}{\abs{x}^{\frac{N - \alpha}{2}}} \dif x\Bigr)^2
 \le C\int_{\R^N} \abs{I_{\alpha/2} \ast \abs{u}^p}^2.
\end{equation}
However, as already observed by Ruiz \cite{Ruiz-ARMA}*{section 3} this estimate cannot hold.
In proposition~\ref{propositionRuizCounterexample} we show that a necessary condition on the weight $W:\R^N\to\R$ for inequality
\begin{equation}
\label{eqImpossibleRuiz-intro-W}
 \Bigl(\int_{\R^N} W(x)\abs{u (x)}^p\dif x\Bigr)^2
 \le C\int_{\R^N} \abs{I_{\alpha/2} \ast \abs{u}^p}^2
\end{equation}
to hold for all $u\in E^{\alpha,p}(\R^N)$ is
\[
  \int_{\R^N \setminus B_2} \frac{W (x)}{\abs{x}^\frac{N + \alpha}{2} (1 + \abs{\log \abs{x}})^\frac{1}{2} (1 + \log (1 + \abs{\log \abs{x}}))^\delta}\dif x < \infty
\]
for every $\delta>\frac{1}{2}$.
This completes the study of Ruiz who showed when \(\alpha = 2\)  that inequality \eqref{eqImpossibleRuiz-intro-W}
does not hold when \(W (x) \ge \abs{x}^{-\frac{N - 2}{2}}\abs{\log \abs{x}}^{\beta}\) when \(\beta < \frac{1}{2} - \frac{1}{N}\) \cite{Ruiz-ARMA}*{remark 3.3}.
We also obtain a family of weighted estimates (proposition~\ref{propositionRuizAverage}),
which as a particular case asserts that for $\gamma<\frac{1}{2}$,
\begin{equation*}
 \Bigl(\int_{\R^N} \frac{\abs{u (x)}^p}{\abs{x}^{\frac{N - \alpha}{2}}(1 + \abs{\log \abs{x}})^\gamma}\dif x\Bigr)^2
 \le C\int_{\R^N} \abs{I_{\alpha/2} \ast \abs{u}^p}^2
\end{equation*}
for all $u\in E^{\alpha,p}(\R^N)$. This particular inequality was established by Ruiz \cite{Ruiz-ARMA}*{theorem 3.1},
but the new proof we give is more direct.

\subsection{Existence of groundstates.}
We prove the existence of optimizers for the Coulomb--Sobolev inequalities except at the Coulomb--Sobolev critical exponent \(q=2 \frac{2 p + \alpha}{2 + \alpha}\) and Sobolev critical exponent $q=\frac{2N}{N-2}$.

\begin{theorem}[Existence of optimizers]\label{t-multiplicative-intro}
Let \(N \in \N\), \(\alpha \in (0, N)\) and \(p,q\in[1,+\infty)\)
be such that the following assumption holds:
\begin{equation}\tag{$\mathcal Q'$}\label{Q_0}
\begin{split}
\text{either}\quad & \frac{1}{p} > \frac{(N - 2)_+}{N + \alpha} \quad\text{and}\quad
   \frac{1}{2} - \frac{1}{N} < \frac{1}{q} < \frac{1}{2} - \frac{p - 1}{\alpha + 2 p}\,,\bigskip\\
\text{or}\quad & \frac{1}{p} < \frac{(N - 2)_+}{N + \alpha} \quad\text{and}\quad
   \frac{1}{2} - \frac{1}{N} > \frac{1}{q} > \frac{1}{2} - \frac{p - 1}{\alpha + 2 p}\,.
\end{split}
\end{equation}
Then the best constant
\begin{equation}\label{Coulomb-Sobolev-estimate-best}
S:=\inf_{u\in E^{\alpha,p}(\R^N)\setminus\{0\}}\frac{\Bigl(\int_{\R^N} \abs{D u}^2\Bigr)^\frac{\theta}{2}  \Bigl( \int_{\R^N} \bigl(I_{\alpha/2} \ast \abs{u}^p\bigr)^2\Bigr)^{\frac{1 - \theta}{2 p}}}{\Bigl(\int_{\R^N} \abs{u}^q\Bigr)^{\frac{1}{q}}}
\end{equation}
where $\theta$ is given by \eqref{eq-theta}, is achieved.
\end{theorem}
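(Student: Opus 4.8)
The plan is to combine the direct method with P.-L.\ Lions' concentration--compactness principle. The quotient defining $S$ is invariant under the dilations $u\mapsto\mu u$ and $u\mapsto u(\lambda\,\cdot\,)$, $\mu,\lambda>0$, and under the translations $u\mapsto u(\,\cdot\,-y)$, $y\in\R^N$, and these invariances are the only obstruction to compactness of minimising sequences. Since \eqref{Q_0} forces $(N+\alpha)\ne p(N-2)$, given a minimising sequence $(u_n)$ for $S$ I would first use the two dilation parameters to impose the normalisation
\[
  \int_{\R^N}\abs{Du_n}^2=\int_{\R^N}\bigl(I_{\alpha/2}\ast\abs{u_n}^p\bigr)^2=1 ,
\]
which leaves only the translation invariance; then $(u_n)$ is bounded in $E^{\alpha,p}(\R^N)$ and $\int_{\R^N}\abs{u_n}^q\to S^{-q}>0$, the positivity of $S$ being Theorem~\ref{theoremEmbedding}. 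Note that symmetric decreasing rearrangement is of no help here: by the Riesz rearrangement inequality it does not decrease the Coulomb term $\int_{\R^N}(I_{\alpha/2}\ast\abs{u}^p)^2$, which sits in the numerator, so, unlike for the local Gagliardo--Nirenberg inequality, one cannot reduce to radial functions.

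I would then run the concentration--compactness dichotomy on the bounded sequence of measures $\abs{u_n}^q\dif x$. \emph{Vanishing} is excluded, since a vanishing lemma for $E^{\alpha,p}(\R^N)$ would then force $\int_{\R^N}\abs{u_n}^q\to0$, contradicting $\int_{\R^N}\abs{u_n}^q\to S^{-q}$. \emph{Dichotomy} is excluded using the strict inequalities in \eqref{Q_0}: writing $\sigma=\tfrac{q\theta}{2}$ and $\tau=\tfrac{q(1-\theta)}{2p}$, a computation from \eqref{eq-theta} gives
\[
  \sigma+\tau-1=\frac{q(\alpha+2)-2(\alpha+2p)}{2\bigl((N+\alpha)-p(N-2)\bigr)}>0 ,
\]
which says precisely that \eqref{Q_0} places $q$ strictly on the appropriate side of the Coulomb--Sobolev critical exponent $2\tfrac{2p+\alpha}{2+\alpha}$. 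Indeed, if a minimising sequence split as $u_n=u_n^1+u_n^2+o(1)$ in $L^q(\R^N)$ with $\operatorname{dist}(\supp u_n^1,\supp u_n^2)\to\infty$ and $\int_{\R^N}\abs{u_n^j}^q\to\beta_j S^{-q}$, $\beta_1,\beta_2\in(0,1)$, $\beta_1+\beta_2=1$, then the asymptotic additivity of $\int_{\R^N}\abs{Du_n}^2$ and of $\int_{\R^N}(I_{\alpha/2}\ast\abs{u_n}^p)^2$ (the cross term of the latter being nonnegative and vanishing by separation of supports) gives, along a subsequence, limits $a_j=\lim\int_{\R^N}\abs{Du_n^j}^2$ and $b_j=\lim\int_{\R^N}(I_{\alpha/2}\ast\abs{u_n^j}^p)^2$ with $a_1+a_2\le1$, $b_1+b_2\le1$; applying \eqref{Coulomb-Sobolev-estimate} to $u_n^j$ and letting $n\to\infty$ then yields $\beta_j\le a_j^{\sigma}b_j^{\tau}$. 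Summing gives $1\le a_1^{\sigma}b_1^{\tau}+a_2^{\sigma}b_2^{\tau}$, which contradicts the elementary inequality $a_1^{\sigma}b_1^{\tau}+a_2^{\sigma}b_2^{\tau}<1$ --- valid for a nontrivial splitting whenever $\sigma+\tau>1$, as one sees after normalising $a_1+a_2=b_1+b_2=1$ by homogeneity and applying Young's inequality to each factor. This is exactly the point at which the excluded exponents $q=2\tfrac{2p+\alpha}{2+\alpha}$ (where $\sigma+\tau=1$) and $q=\tfrac{2N}{N-2}$ must be left out.

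Thus only \emph{compactness} survives, and after translating each $u_n$ we may assume, along a subsequence, $u_n\rightharpoonup u$ in $E^{\alpha,p}(\R^N)$ --- using reflexivity when $p>1$ and the $L^1_{\loc}$-convergence of Section~\ref{Sect-CS} in general --- with $\abs{u_n}^q\dif x$ tight and $u_n\to u$ in $L^q_{\loc}(\R^N)$, hence $u_n\to u$ in $L^q(\R^N)$ with $\int_{\R^N}\abs{u}^q=S^{-q}$ and in particular $u\ne0$. By convexity $\int_{\R^N}\abs{Du}^2\le\liminf_n\int_{\R^N}\abs{Du_n}^2=1$, and since $u_n\to u$ in $L^1_{\loc}(\R^N)$ and $I_{\alpha/2}\ge0$, Fatou's lemma (applied first to the convolution and then to the integral) gives $\int_{\R^N}(I_{\alpha/2}\ast\abs{u}^p)^2\le1$. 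Hence
\[
  S\le\frac{\bigl(\int_{\R^N}\abs{Du}^2\bigr)^{\theta/2}\bigl(\int_{\R^N}(I_{\alpha/2}\ast\abs{u}^p)^2\bigr)^{(1-\theta)/(2p)}}{\bigl(\int_{\R^N}\abs{u}^q\bigr)^{1/q}}\le\frac{1}{(S^{-q})^{1/q}}=S ,
\]
so all these inequalities are equalities and $u$ achieves $S$. The step I expect to be the main obstacle is the vanishing lemma for $E^{\alpha,p}(\R^N)$ in the regime $q\ge\tfrac{2N}{N-2}$, which occurs in the second alternative of \eqref{Q_0}: there the classical $H^1$ vanishing lemma does not apply, and one has to propagate smallness of the local $L^q$-masses to $\int_{\R^N}\abs{u_n}^q\to0$ through a locally finite covering argument based on a \emph{localised} Coulomb--Sobolev inequality in which the Coulomb term is genuinely used; the same local embedding, rather than ordinary Rellich compactness, is also what supplies the $L^q_{\loc}$- and $L^1_{\loc}$-convergence invoked above.
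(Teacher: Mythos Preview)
Your approach via Lions' concentration--compactness is sound and rests on the same decisive inequality $\sigma+\tau>1$ (equivalently $\tfrac{1}{q}<\tfrac{\theta}{2}+\tfrac{1-\theta}{2p}$) that drives the paper's argument, but the paper takes a genuinely different route that sidesteps precisely the obstacle you identify. Rather than running the full vanishing/dichotomy/compactness trichotomy on $\abs{u_n}^q\dif x$, the paper normalises $\norm{Du_n}_{L^2}=\norm{u_n}_{L^q}=1$ and uses a direct Lieb-type lemma (lemma~\ref{liebtypelemma0}): from boundedness of $\int_{\R^N}\abs{\nabla u_n}^2$ alone, if the superlevel sets $\{\abs{u_n}>\varepsilon\}$ have measure bounded below --- which the $p,q,r$ theorem of Fr\"ohlich--Lieb--Loss supplies from $\norm{u_n}_{L^q}=1$ together with boundedness in some $L^r$ on the other side of $q$ --- then after translation a subsequence has a nontrivial $L^1_{\mathrm{loc}}$ limit $u$. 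This replaces your vanishing lemma entirely and requires no localised Coulomb--Sobolev inequality. The splitting step is then handled not by a cutoff dichotomy but by the decomposition $u_n=u+(u_n-u)$: the gradient term splits by weak lower semicontinuity, the Coulomb term by the \emph{nonlocal Brezis--Lieb inequality} of proposition~\ref{propositionBrezisLieb} (one of the paper's new tools; it gives only an inequality, but that suffices), and a discrete H\"older plus strict-concavity argument forces $\norm{u_n-u}_{L^q}\to0$.

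What each approach buys: the paper's route avoids the technical cutoff analysis of dichotomy --- where, incidentally, your parenthetical about the Coulomb cross term ``vanishing by separation of supports'' is unnecessary, since its nonnegativity alone already gives $b_1+b_2\le1$, while controlling the gradient of the cutoffs in $E^{\alpha,p}(\R^N)$ would need some care --- and, more importantly, avoids having to prove the vanishing lemma you flag as the main obstacle. Your approach follows the classical template and would in principle extend to settings where a Brezis--Lieb-type splitting is unavailable, but the supercritical vanishing step when $q\ge\tfrac{2N}{N-2}$ genuinely needs new input; the paper's Lieb-type lemma is exactly that input, packaged so that it bypasses the trichotomy altogether.
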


Existence of optimizers for \eqref{Coulomb-Sobolev-estimate-best} was previously established
in \cite{BellazziniFrankVisciglia}*{theorem 2.2} for $N\in\N$, $\alpha\in(0,N)$ and $p=2$.

Existence of optimizers for the \emph{multiplicative} minimization problem
\eqref{Coulomb-Sobolev-estimate-best} is equivalent up to a rescaling to the existence of optimizers
for the constrained \emph{additive} minimization problem
\begin{equation}\label{Coulomb-Sobolev-additive-best-intro}
M_c:=\inf\Big\{\mathcal{E}_*(u)\;\big|\;u\in E^{\alpha,p}(\R^N), \int_{\R^N}\abs{u}^q=c\Big\},
\end{equation}
where $c>0$ and we denote
\begin{equation}\label{E-defn-intro}
\mathcal{E}_*(u)=\frac{1}{2}\int_{\R^N}\abs{D u}^2+\frac{1}{2p}\int_{\R^N} \abs{I_{\alpha/2} \ast \abs{u}^p}^2.
\end{equation}
The direct proof of the existence of optimizers for $M_c$ which we give in theorem~\ref{thm-additive}
provides some additional understanding about the behaviour of minimizing sequences. In particular we show that all the minimising sequences are relatively compact modulo translations, this result is in the spirit of P.-L. Lions \cite{Lions1984CC1}.

The multiplicative and additive minimization problems share up to a rescaling the same Euler--Lagrange equation,
in the sense that minimizers of \eqref{Coulomb-Sobolev-additive-best-intro} and,
after a rescaling, minimizers of \eqref{Coulomb-Sobolev-estimate-best}
are weak solutions of the equation
\begin{equation}\label{sps-mu}
 - \Delta u + (I_\alpha \ast \abs{u}^p)\abs{u}^{p - 2} u= \mu\abs{u}^{q-2}u\quad\text{in \(\R^N\),}
\end{equation}
with an unknown Lagrange multiplier $\mu>0$, see section \ref{sect-additive}.
If $q\neq 2\frac{\alpha+2p}{\alpha+2}$ then a rescaling of solutions of \eqref{sps-mu} allows to get rid
of the Lagrange multiplier and to obtain a solution of the original equation \eqref{sps}.
Solutions of \eqref{sps} obtained as rescaled minimizers of
\eqref{Coulomb-Sobolev-estimate-best} or \eqref{Coulomb-Sobolev-additive-best-intro} are called in what follows
\emph{groundstate} solutions.

The proof of theorem~\ref{t-multiplicative-intro} relies on a ``compactness up to translations'' type lemma (lemma~\ref{liebtypelemma0}, see also \cite{LiebLoss2001}*{p.\thinspace{}215}) and on a novel nonlocal Brezis--Lieb type lemma (proposition~\ref{propositionBrezisLieb}).

In contrast with its local counterpart \cite{BrezisLieb1983}, our nonlocal Brezis--Lieb lemma is an \emph{inequality}: we prove that if \((u_n)_{n \in \N}\) converges to \(u : \R^N \to \R\) almost everywhere and if \((I_{\alpha/2} \ast \abs{u_n}^p)_{n \in \N}\) is bounded in \(L^2 (\R^N)\), then
\begin{equation}
\label{ineqIntroBL}
  \liminf_{n \to \infty}
  \int_{\R^N} \bigabs{I_{\alpha/2} \ast \abs{u_n}^p}^2 - \bigabs{I_{\alpha/2} \ast \abs{u_n - u}^p}^2 \ge \int_{\R^N} \bigabs{I_{\alpha/2} \ast \abs{u}^p}^2.
\end{equation}
This inequality is sufficient for the purpose of proving theorem~\ref{t-multiplicative-intro}.

It is natural to ask whether equality holds in the inequality \eqref{ineqIntroBL}, similarly
to the classical local Brezis--Lieb lemma.
The answer is that equality holds if and only if \((u_n)_{n \in \N}\) converges
strongly to \(u\) in \(L^p_{\mathrm{loc}} (\R^N)\) (proposition~\ref{propositionBrezisLiebEquiv}).
This will be the case if \((u_n)_{n \in \N}\) is bounded in some \(L^q_{\mathrm{loc}} (\R^N)\) for some \(q > p\), allowing to recover a result of Moroz and Van Schaftingen  when \(q = \frac{2 N p}{N + \alpha}\) \cite{MorozVanSchaftingen}*{lemma 2.4}  and  of  Bellazzini, Frank and Visciglia when \(p = 2\) and \(q > 2\) \cite{BellazziniFrankVisciglia}*{lemma 6.2}.

One can also wonder whether a stronger assumption of boundedness in the
Coulomb--Sobolev \(E^{\alpha, p} (\R^N)\) could imply equality in \eqref{ineqIntroBL},
or equivalently strong compactness in \(L^p_{\mathrm{loc}} (\R^N)\).
An elementary local estimate on Riesz potentials (proposition~\ref{propositionRuizBall}) always ensures the local
embedding $E^{\alpha,p}(\R^N)\subset L^p_{\textrm{loc}}(\R^N)$.
However, this embedding is compact
if and only if $p<\frac{2\alpha}{\alpha-2}$.
In fact, for each $\alpha>2$ and $p\ge \frac{2\alpha}{\alpha-2}$
we construct a sequence of smooth functions which is bounded in $E^{\alpha,p}(\R^N)$,
converges almost everywhere to zero and has $L^p$-norm which is bounded away from zero.
Functions in the sequence have uniformly bounded supports located around a $d$--dimensional hypercube
in $\R^N$ with $d>N-\alpha$ (see lemma~\ref{lemmaCoulombSobolev-noncompact}).

Going back to the question about equality in the Brezis--Lieb inequality \eqref{ineqIntroBL}, we obtain that if \(\alpha \le 2\) or \(p < \frac{2 \alpha}{\alpha - 2}\), and if the sequence \((u_n)_{n \in \N}\) is bounded in the Coulomb--Sobolev space \(E^{\alpha, p} (\R^N)\), then equality holds in \eqref{ineqIntroBL}.
We show that this restriction is optimal by constructing for every $p\ge\frac{2\alpha}{(\alpha-2)_+}$ a sequence of smooth functions whose support concentrate
to a bounded Cantor--like set of positive \(H^{\alpha/2}\)--capacity and vanishing Lebesgue measure (lemma~\ref{lemmaCantor}) .

In section~\ref{sect-equation} we study some additional qualitative properties of solutions
of \eqref{sps} such as regularity and positivity. The following theorem summarizes our findings.

\begin{theorem}[Existence of groundstates]\label{t-groundstate}
Let \(N \in \N\), \(\alpha \in (0, N)\) and \(p,q\in(1,+\infty)\).

Assume that assumption \eqref{Q_0} holds.
Then there exists a nontrivial nonnegative groundstate solution $u\in E^{\alpha,p}(\R^N)\cap C^2(\R^N)$ to equation \eqref{sps} and $u\in C^\infty(\R^N\setminus u^{-1}(0))$.
In addition, if $p\ge 2$ then $u(x)>0$ for all $x\in \R^N$.
\end{theorem}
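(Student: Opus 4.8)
The plan is to obtain the groundstate as a rescaled minimizer, then to bootstrap its regularity, and finally to get positivity from the strong maximum principle.

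\emph{Existence.} Since assumption \eqref{Q_0} holds, I first apply Theorem~\ref{t-multiplicative-intro} to produce a minimizer $u\in E^{\alpha,p}(\R^N)\setminus\{0\}$ for the best constant $S$ in \eqref{Coulomb-Sobolev-estimate-best}. Replacing $u$ by $\abs{u}$ leaves $\int_{\R^N}\abs{u}^q$ and $\int_{\R^N}\bigabs{I_{\alpha/2}\ast\abs{u}^p}^2$ unchanged and does not increase $\int_{\R^N}\abs{Du}^2$, since $\abs{D\abs{u}}\le\abs{Du}$ almost everywhere, so I may assume $u\ge 0$. As recalled in section~\ref{sect-additive}, after multiplication by a positive constant $u$ is a weak solution of \eqref{sps-mu} for some $\mu$, and testing \eqref{sps-mu} against $u$ gives $\mu\int_{\R^N}\abs{u}^q=\int_{\R^N}\abs{Du}^2+\int_{\R^N}\bigabs{I_{\alpha/2}\ast\abs{u}^p}^2>0$, so $\mu>0$. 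Assumption \eqref{Q_0} excludes the Coulomb--Sobolev critical value $q=2\frac{\alpha+2p}{\alpha+2}$, so a two-parameter dilation $v(x)=\lambda u(\sigma x)$ with $\lambda,\sigma>0$ chosen to make the Dirichlet and Coulomb terms scale identically and to absorb $\mu$ turns $u$ into a nontrivial nonnegative weak solution $v\in E^{\alpha,p}(\R^N)$ of \eqref{sps}; from now on I rename $v$ as $u$.

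\emph{Regularity.} I would write \eqref{sps} as $-\Delta u=g$ with $g:=\abs{u}^{q-2}u-(I_\alpha\ast\abs{u}^p)\abs{u}^{p-2}u$ and run a bootstrap. Initially $u\in E^{\alpha,p}(\R^N)$ gives $u\in L^q(\R^N)\cap H^1_\loc(\R^N)$ — assumption \eqref{Q_0} forces $q>2$, hence $u\in L^2_\loc(\R^N)$ — and $u\in L^p_\loc(\R^N)$ by the elementary local Riesz potential estimate of proposition~\ref{propositionRuizBall}, while $I_\alpha\ast\abs{u}^p=I_{\alpha/2}\ast(I_{\alpha/2}\ast\abs{u}^p)\in L^{2N/(N-\alpha)}(\R^N)$ by Hardy--Littlewood--Sobolev. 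Alternating the Calder\'on--Zygmund $L^s$-estimates for $-\Delta$ with Sobolev embeddings, and re-estimating the nonlocal factor by Hardy--Littlewood--Sobolev at each step, raises the integrability of $u$ until $g\in L^s_\loc(\R^N)$ for some $s>N/2$, whence $u\in W^{2,s}_\loc(\R^N)\subset C^{0,\gamma}_\loc(\R^N)$. Once $u$ is locally bounded one checks, splitting the kernel over $B_1$ and $\R^N\setminus B_1$, that $I_\alpha\ast\abs{u}^p$ is continuous and locally H\"older, so $g\in C^{0,\gamma}_\loc(\R^N)$ and Schauder estimates give $u\in C^{2,\gamma}_\loc(\R^N)\subset C^2(\R^N)$. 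On the open set $\Omega:=\{u\ne 0\}$ I would then iterate Schauder: the maps $t\mapsto\abs{t}^{q-2}t$, $t\mapsto\abs{t}^{p-2}t$, $t\mapsto\abs{t}^p$ are $C^\infty$ along the nonzero values of $u$, and for $x_0\in\Omega$ and $\chi\in C^\infty_c(\R^N)$ equal to $1$ near $x_0$ the splitting $I_\alpha\ast\abs{u}^p=I_\alpha\ast(\chi\abs{u}^p)+I_\alpha\ast((1-\chi)\abs{u}^p)$ shows $I_\alpha\ast\abs{u}^p$ is as smooth near $x_0$ as $\abs{u}^p$ is, the far part being $C^\infty$ near $x_0$ because $I_\alpha$ is smooth off the origin; hence $u\in C^k(\Omega)$ implies $g\in C^k(\Omega)$ and then $u\in C^{k+1}(\Omega)$, so $u\in C^\infty(\R^N\setminus u^{-1}(0))$.

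\emph{Positivity when $p\ge 2$.} Here $u\ge 0$, $u\in C^2(\R^N)$, and $V:=(I_\alpha\ast u^p)\,u^{p-2}$ is nonnegative and continuous on $\R^N$ (for $p=2$ it is simply $I_\alpha\ast u^2$; for $p>2$ the factor $u^{p-2}$ is continuous because $u$ is), hence locally bounded. Then \eqref{sps} reads $-\Delta u+Vu=\mu u^{q-1}\ge 0$, so the strong maximum principle forces either $u\equiv 0$ or $u>0$ everywhere; since $u$ is nontrivial, $u(x)>0$ for all $x\in\R^N$. It is precisely here that $p\ge 2$ is used: for $1<p<2$ the potential $V$ blows up at the zeros of $u$ and the argument breaks down.

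I expect the regularity bootstrap to be the main obstacle: the nonlocal factor $I_\alpha\ast\abs{u}^p$ must be controlled both locally, where it inherits only the regularity of $\abs{u}^p$, and globally through Hardy--Littlewood--Sobolev, and one has to verify that under \eqref{Q_0} the resulting chain of exponents really closes up and delivers local boundedness of $u$; the existence and positivity parts are then comparatively routine.
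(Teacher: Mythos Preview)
Your existence and positivity steps are essentially those of the paper (the paper works with the additive problem \eqref{Coulomb-Sobolev-additive-best-intro} rather than \eqref{Coulomb-Sobolev-estimate-best}, but these are equivalent by the discussion in section~\ref{sect-additive}), and your positivity argument is a harmless variant of proposition~\ref{proposition:Positivity}.

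The genuine gap is in the regularity step, and it is exactly the one you flagged. Your Calder\'on--Zygmund/Sobolev/Hardy--Littlewood--Sobolev bootstrap closes only in the subcritical branch \(p(N-2)<N+\alpha\) of \eqref{Q_0}; indeed, the paper records precisely this in the remark after proposition~\ref{proposition:Regularity}. In the supercritical branch \(p(N-2)>N+\alpha\), the local nonlinearity \(\abs{u}^{q-1}\) is Sobolev--supercritical (\(q>\tfrac{2N}{N-2}\)), and the standard iteration \(\tfrac{1}{r_{k+1}}=\tfrac{q-1}{r_k}-\tfrac{2}{N}\) decreases \(r_k\) whenever \(r_k<\tfrac{N(q-2)}{2}\); since \(r_0=2\tfrac{2p+\alpha}{2+\alpha}<\tfrac{N(r_0-2)}{2}\) there, for \(q\) near the Coulomb--Sobolev endpoint the chain moves the wrong way from the outset. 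Treating \((I_\alpha\ast\abs{u}^p)\abs{u}^{p-2}u\) as a right-hand side and estimating it by Hardy--Littlewood--Sobolev does not rescue the iteration, because moving the Coulomb term to the right throws away exactly the coercivity that the Coulomb--Sobolev scale provides.

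The paper's fix (proposition~\ref{proposition:Regularity}, Claim~1) is a Moser-type step rather than a linear bootstrap: one tests \eqref{sps} against a truncation of \(\abs{u}^{2\beta-2}u\) with \(\beta=1+\tfrac{r-q}{2}\), keeps the Coulomb term on the left, and obtains simultaneous control of \(\int\abs{\nabla\abs{u}^\beta}^2\) and \(\int\bigl(I_{\alpha/2}\ast\abs{u}^{p+\beta-1}\bigr)^2\). This places \(\abs{u}^\beta\) in the Coulomb--Sobolev space \(E^{\alpha,\tilde p}(\R^N)\) with \(\tilde p=1+\tfrac{p-1}{\beta}=\tfrac{2p+r-q}{2+r-q}\), and the Coulomb--Sobolev embedding for that exponent yields \(u\in L^{r+(2\frac{2p+\alpha}{2+\alpha}-q)}\). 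Since \(q<2\tfrac{2p+\alpha}{2+\alpha}\) in the supercritical branch, this gives a fixed positive gain at every step and the iteration reaches arbitrarily high integrability; then your Schauder argument goes through as written. The missing idea, in short, is that the Coulomb term has to be used as a coercive term in a nonlinear test, not merely estimated as a forcing via Hardy--Littlewood--Sobolev.
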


In the case $N=3$, $\alpha=2$, $p=2$ the existence of a positive solution to \eqref{sps}
was proved by Ianni and Ruiz \cite{Ianni-Ruiz-2012}*{theorem 1.2} using a mountain--pass type argument.

We do not know whether or not the restriction $p\ge 2$ is essential for the positivity of groundstates.
We also do not know whether or not groundstate solutions obtained in Theorem~\ref{t-groundstate} are radial.
We study the existence of radial groundstates separately.

\subsection{Radial estimates and existence of radial groundstates.}
In section~\ref{sect-radial} we study the embeddings of the subspace $E^{\alpha, p}_\rad (\R^N)$ of radially symmetric functions in $E^{\alpha, p}(\R^N)$ into
Lebesgue spaces, in the spirit of the seminal result of Strauss \cite{Strauss1977} (see also \citelist{\cite{SuWangWillem2007b}\cite{SuWangWillem2007}})
and its counterpart for some Coulomb--Sobolev spaces \cite{Ruiz-ARMA} (see also \citelist{\cite{Mercuri2008}\cite{BonheureMercuri2011}\cite{BellaziniGhimentiOzawa}}).
For $\alpha>1$ the radial embedding intervals are wider then the intervals given by Theorem~\ref{theoremEmbedding}: the critical Coulomb--Sobolev exponent \(2 \frac{2 p + \alpha}{2 + \alpha}\) is replaced by a stronger critical exponent.

\begin{theorem}[Radial embeddings]\label{theoremRadialSobolev-intro}
Let \(N \in \N\), \(\alpha \in (0, N)\) and \(p,q\in[1,+\infty)\).

If \(\alpha \le 1\), then \(E^{\alpha, p}_\rad (\R^N)\) is embedded in \(L^q (\R^N)\) if and only if \(E^{\alpha, p} (\R^N)\) is embedded in \(L^q (\R^N)\).

If \(\alpha > 1\), then
\(E^{\alpha, p}_\rad (\R^N)\) is embedded in \(L^q (\R^N)\) if and only if
the following assumption holds:
\begin{equation}\tag{$\mathcal Q_{\mathrm{rad}}$}\label{Qrad}
\begin{split}
\text{either}\quad & \frac{1}{p} \ge \frac{(N - 2)_+}{N + \alpha} \quad\text{and}\quad
   \frac{1}{2} - \frac{1}{N} \le \frac{1}{q} < \frac{3 N + \alpha - 4}{2(2 p (N - 1) + N - \alpha)}\,,\bigskip\\
\text{or}\quad & \frac{1}{p} \le \frac{(N - 2)_+}{N + \alpha} \quad\text{and}\quad
   \frac{1}{2} - \frac{1}{N} \ge \frac{1}{q} > \frac{3 N + \alpha - 4}{2(2 p (N - 1) + N - \alpha)}\,.
\end{split}
\end{equation}
In this case, the
interpolation estimate \eqref{Coulomb-Sobolev-estimate} is valid with $\theta$ given by \eqref{eq-theta}.

\end{theorem}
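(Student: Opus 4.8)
The plan is to treat the two implications separately, and within the "if" direction to exploit a pointwise radial decay estimate for functions in $E^{\alpha,p}_\rad(\R^N)$, much as the classical Strauss lemma does for $H^1_\rad$.

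For the easy implication, that the embedding into $L^q$ for the full space $E^{\alpha,p}(\R^N)$ implies the radial embedding, one simply notes $E^{\alpha,p}_\rad(\R^N)\subset E^{\alpha,p}(\R^N)$, so Theorem~\ref{theoremEmbedding} applies verbatim; this covers the case $\alpha\le 1$ in one direction, and in the case $\alpha>1$ it shows that the radial embedding holds at least on the range $(\mathcal Q)$. The content is therefore to extend the range from $(\mathcal Q)$ to $(\mathcal Q_{\mathrm{rad}})$ when $\alpha>1$, i.e. to push the upper endpoint on $1/q$ from $\tfrac12-\tfrac{p-1}{\alpha+2p}$ up to $\tfrac{3N+\alpha-4}{2(2p(N-1)+N-\alpha)}$, and to prove the sharpness of that new threshold.

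For the main (radial gain) part, the first step is to establish a Strauss-type decay estimate: for $u\in E^{\alpha,p}_\rad(\R^N)$ radial and, say, in $C^\infty_c$, one controls $\abs{u(x)}$ at a radius $r=\abs{x}$ by the two quantities $\bigl(\int\abs{Du}^2\bigr)^{1/2}$ and $\bigl(\int(I_{\alpha/2}\ast\abs{u}^p)^2\bigr)^{1/(2p)}$. The gradient term gives the usual bound involving $r^{-(N-1)/2}$. The new ingredient is a lower bound on the Coulomb energy in terms of a weighted integral of $\abs{u}^p$: since $u$ is radial, $I_{\alpha/2}\ast\abs{u}^p$ evaluated on the sphere of radius $r$ is bounded below, via the explicit Riesz kernel and the fact that $\abs{x-y}\le 2\max(\abs{x},\abs{y})$, by a constant times $\int_{B_r}\abs{u(y)}^p\,\dif y\,/\,r^{(N-\alpha)/2}$ (this is exactly the elementary local estimate of Ruiz/proposition~\ref{propositionRuizBall} read radially); integrating $|I_{\alpha/2}\ast\abs{u}^p|^2$ over an annulus then yields $\int_{\R^N}(I_{\alpha/2}\ast\abs{u}^p)^2 \gtrsim \int_0^\infty r^{\alpha-1}\bigl(\int_{B_r}\abs{u}^p\bigr)^2\,\dif r$ or a comparable weighted quantity. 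Combining the fundamental-theorem-of-calculus bound $\abs{u(r)}^p \le p\int_r^\infty \abs{u}^{p-1}\abs{u'}$ with Cauchy--Schwarz against the gradient term and with this Coulomb lower bound, and optimising the split between the two, should produce a pointwise bound of the shape $\abs{u(r)}\le C\,r^{-\sigma}\norm{Du}_2^{a}\,\bigl(\text{Coulomb}\bigr)^{b}$ for an explicit exponent $\sigma=\sigma(N,\alpha,p)$; the value of $\sigma$ is what dictates the improved critical exponent. Once this pointwise estimate is in hand, one writes $\int_{\R^N}\abs{u}^q = \int\abs{u}^{q-q_0}\abs{u}^{q_0}$, bounds the factor $\abs{u}^{q-q_0}$ by the decay estimate on $\abs{x}\ge R$ and absorbs the rest into an exponent $q_0$ already covered by $(\mathcal Q)$ (interpolating near the origin using the local embedding $E^{\alpha,p}\subset L^p_\loc$ from proposition~\ref{propositionRuizBall}), and lets $R$ be chosen by scaling. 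A homogeneity check then pins down $\theta$ in \eqref{eq-theta} and confirms that \eqref{Coulomb-Sobolev-estimate} persists on the enlarged range, with the endpoint $\tfrac{3N+\alpha-4}{2(2p(N-1)+N-\alpha)}$ excluded (strict inequality) since the pointwise estimate degenerates there.

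For the "only if" direction when $\alpha>1$ — showing the embedding fails once $1/q$ reaches or exceeds the new threshold — the plan is to build an explicit family of radial test functions concentrating at a moving sphere: roughly, $u_n$ supported in an annulus $\{\,\abs{\,\abs{x}-R_n} \le \delta_n\}$ with height $h_n$ and width $\delta_n$ chosen so that $\norm{Du_n}_2$ and the Coulomb term stay bounded (the Coulomb term being controlled by the radial lower-bound computation above, which for a thin shell at radius $R_n$ scales like $h_n^{2p}R_n^{N-1}\delta_n^{\,2}\cdot R_n^{\alpha-N}$ up to constants) while $\norm{u_n}_q\to\infty$; letting $R_n\to\infty$ and tuning $\delta_n$, the borderline case of the scaling inequality is precisely $1/q = \tfrac{3N+\alpha-4}{2(2p(N-1)+N-\alpha)}$. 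One checks that $E^{\alpha,p}_\rad$ is a closed subspace so that failure of the a priori inequality genuinely obstructs the embedding. Finally, the case $\alpha\le 1$ needs a brief argument that the radial class gives no gain: there the weight $r^{\alpha-1}$ produced above is non-improving (the would-be radial threshold coincides with, or is weaker than, the one in $(\mathcal Q)$), and the same concentrating-shell construction shows no extra exponent is reachable, so the radial and non-radial ranges agree.

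The main obstacle I expect is the bookkeeping in the first step: getting the sharp pointwise radial decay with the correct exponent $\sigma$, since one must optimally balance the contribution of $\int\abs{Du}^2$ against the weighted lower bound on the Coulomb term (which itself only controls a cumulative average $\int_{B_r}\abs{u}^p$, not a pointwise value), and it is the precise interplay of the powers $N-1$ (from the sphere area), $\alpha-1$ (from the Coulomb weight) and $p$ that produces the exponent $3N+\alpha-4$ and $2p(N-1)+N-\alpha$ appearing in $(\mathcal Q_{\mathrm{rad}})$. Getting the matching test functions to saturate exactly this exponent — rather than a slightly weaker one — is the second delicate point.
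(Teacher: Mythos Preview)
Your overall strategy matches the paper's: a Strauss-type pointwise decay estimate combined with interpolation for the sufficiency, and shell-concentrating radial test functions for the necessity. The paper's pointwise estimate (its theorem~\ref{theoremRadialEstimates}) is obtained along the lines you sketch --- start from $\abs{u(r)}^\gamma \le \gamma^{-1}\int_r^\infty \abs{u'}\,\abs{u}^{\gamma-1}$ and apply a three-term H\"older --- but the Coulomb input is not the averaged inequality $\int (I_{\alpha/2}\ast\abs{u}^p)^2 \gtrsim \int_0^\infty r^{\alpha-1}\bigl(\int_{B_r}\abs{u}^p\bigr)^2\,\dif r$ you propose (whose exponent is, incidentally, off; compare proposition~\ref{propositionRuizAverage}). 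Instead the paper uses the simpler exterior weighted bound of proposition~\ref{propositionRuizPowerExterior}, namely $\int_{\R^N\setminus B_R}\abs{u}^p\abs{x}^{-\beta}\le C R^{-(\beta-\frac{N-\alpha}{2})}\norm{I_{\alpha/2}\ast\abs{u}^p}_{L^2}$ for $\beta>\tfrac{N-\alpha}{2}$, which slots directly into the H\"older split and delivers the sharp decay exponent without further optimisation. The embedding itself is then read off from a weighted version (proposition~\ref{propositionRadialSobolevLargeAlpha}) by splitting the integral over $B_R$ and $\R^N\setminus B_R$ with two \emph{different} admissible values of $\theta$ in the pointwise estimate, rather than your $\abs{u}^{q-q_0}\abs{u}^{q_0}$ interpolation.

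There is, however, a genuine gap in your necessity argument. A single family of thin shells $u_R$ supported near $\abs{x}=R$ will, at the endpoint $1/q=\tfrac{3N+\alpha-4}{2(2p(N-1)+N-\alpha)}$, produce a \emph{bounded} ratio $\norm{u_R}_{L^q}/\norm{u_R}_{E^{\alpha,p}}$, not a divergent one; this shows only that the embedding (if it holds) is not compact at the endpoint, not that it fails. To actually rule out the endpoint when $\alpha>1$ --- and this was a conjecture of Ruiz even in the model case $N=3$, $\alpha=2$, $p=2$ --- the paper (lemma~\ref{lemmaNoRadialCritical}) superposes $k$ disjoint shells at geometrically separated radii $R,R^2,\dotsc,R^k$, exploits the radial kernel bound $K^R_{\alpha,N}(r,s)\lesssim (rs)^{-(N-\alpha)/2}$ of lemma~\ref{lemmaRadialKernel} to show the cross Coulomb interactions between distinct shells vanish as $R\to\infty$, and then rescales so that the $E^{\alpha,p}$ norm stays bounded while the $L^q$ norm grows like $k^{\frac{2(\alpha-1)}{3N+\alpha-4}}$. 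The factor $\alpha-1$ here is precisely why this works only for $\alpha>1$, and it is not visible from a single-scale analysis. You should add this step.
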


The embedding \(E^{2, 2}_\rad (\R^3)\subset L^q (\R^3)\) for $q\in(18/7,6]$
was established by Ruiz \cite{Ruiz-ARMA}*{theorem 1.2}. Ruiz also conjectured that \(E^{2, 2}_\rad (\R^3)\not\subset L^{18/7}(\R^3)\)
\cite{Ruiz-ARMA}*{remark 4.1}.

The intervals of assumption \eqref{Qrad} are wider then intervals in \eqref{Q}.
As a consequence, for $N\ge 2$, $\alpha>1$ and $c>0$ we establish the existence of optimizers for the radial
minimization problem
\begin{equation}\label{Coulomb-Sobolev-additive-best+r}
M_{c,\rad}:=\inf\Big\{\mathcal{E}_*(u)\;\big|\;u\in E^{\alpha,p}_\rad(\R^N), \int_{\R^N}\abs{u}^q=c\Big\}
\end{equation}
where the functional $\mathcal{E}_*$ is defined by \eqref{E-defn-intro}, for a range of $q$ which is wider then the range given in theorem~\ref{t-multiplicative}.
In particular, the existence range will include the Coulomb-Sobolev critical exponent $q=2\frac{2p+\alpha}{2+\alpha}$
where the Euler--Lagrange equation \eqref{sps-mu} should be interpreted as a nonlinear eigenvalue problem rather then as an equation,
since the Lagrange multiplier in \eqref{sps-mu} cannot be scaled out.
The following summarizes the results.

\begin{theorem}\label{t-radial-intro}
Let $N\geq 2$, $\alpha\in (0,N)$ and $p,q\in[1,+\infty)$.
Assume that either $\alpha\le 1$ and \eqref{Q_0} holds,
or $\alpha>1$ and the following condition holds:
\begin{equation}\tag{$\mathcal Q'_\rad$}\label{Qrad0}
\begin{split}
\text{either}\quad & \frac{1}{p} > \frac{(N - 2)_+}{N + \alpha} \quad\text{and}\quad
   \frac{1}{2} - \frac{1}{N} < \frac{1}{q} < \frac{3 N + \alpha - 4}{2(2 p (N - 1) + N - \alpha)}\,,\bigskip\\
\text{or}\quad & \frac{1}{p} < \frac{(N - 2)_+}{N + \alpha} \quad\text{and}\quad
   \frac{1}{2} - \frac{1}{N} > \frac{1}{q} > \frac{3 N + \alpha - 4}{2(2 p (N - 1) + N - \alpha)}.\,
\end{split}
\end{equation}
Then the embedding \(E^{\alpha, p}_\rad (\R^N)\subset L^q (\R^N)\) is compact and
the constrained minimization problem \eqref{Coulomb-Sobolev-additive-best+r}
admits a nonnegative optimizer $w\in E^{\alpha,p}_\rad(\R^N)$.
When $\alpha\neq 1$ the conditions are also necessary for the compactness of the embedding.

In addition, assume that $p>1$.
Then for $q\neq 2\frac{2p+\alpha}{2+\alpha}$ a rescaling of $w$ solves equation \eqref{sps},
while if $q=2\frac{2p+\alpha}{2+\alpha}$ then $w$ solves the eigenvalue problem
\begin{equation}\label{sps-mu*}
 - \Delta w + (I_\alpha \ast \abs{w}^p)\abs{w}^{p - 2} w=qM_{1,\rad}\abs{w}^{q-2}w\quad\text{in \(\R^N\).}
\end{equation}
Moreover, $w\in C^2(\R^N)$, $w\in C^\infty(\R^N\setminus w^{-1}(0))$ and $w(|x|)\to 0$ as $\abs{x}\to\infty$.
If $p\ge 2$ then $w(|x|)>0$ for all $x\in \R^N$.
\end{theorem}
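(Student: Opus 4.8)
The plan is to establish the theorem in four stages: compactness of the radial embedding, existence of the constrained minimizer, derivation and normalization of the Euler--Lagrange equation, and the qualitative properties of the minimizer.

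\emph{Compactness of the radial embedding.} Since \eqref{Qrad0} is the open version of \eqref{Qrad} (and \eqref{Q_0} of \eqref{Q} when \(\alpha \le 1\)), I would first fix exponents \(q_1 < q < q_2\) still satisfying \eqref{Qrad} (resp.\ \eqref{Q}). Given \((u_n)_{n\in\N}\) bounded in \(E^{\alpha, p}_\rad (\R^N)\), Theorem~\ref{theoremRadialSobolev-intro} makes it bounded in \(L^{q_1}(\R^N) \cap L^{q_2}(\R^N)\); it is also bounded in \(\dot H^1(\R^N)\) and, by proposition~\ref{propositionRuizBall}, in \(L^p_\loc(\R^N)\), hence in \(H^1_\loc(\R^N)\), and since \eqref{Qrad0} forces \(q < \tfrac{2N}{N-2}\) it is relatively compact in \(L^q_\loc(\R^N)\); passing to a subsequence, \(u_n \to u\) in \(L^q_\loc(\R^N)\) and a.e. Near the origin \(\int_{B_\rho}\abs{u_n - u}^q \le \abs{B_\rho}^{1 - q/q_2}\norm{u_n - u}_{L^{q_2}}^q\) is small uniformly in \(n\) as \(\rho \to 0\); in the exterior, the radial Strauss-type estimates of section~\ref{sect-radial} give \(\sup_{\abs{x}\ge R}\abs{u_n(x)} \to 0\) as \(R \to \infty\) uniformly in \(n\), so \(\int_{\R^N \setminus B_R}\abs{u_n - u}^q \le \bigl(\sup_{\abs{x}\ge R}(\abs{u_n} + \abs{u})\bigr)^{q - q_1}\norm{u_n - u}_{L^{q_1}}^{q_1}\) is small uniformly; on each annulus \(B_R \setminus B_\rho\) the integral vanishes by the \(L^q_\loc\)-convergence. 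Combining the three regions gives \(u_n \to u\) in \(L^q(\R^N)\). For the necessity when \(\alpha \ne 1\), I would exhibit at each endpoint of the interval in \eqref{Qrad} a bounded sequence in \(E^{\alpha, p}_\rad(\R^N)\) with no \(L^q\)-convergent subsequence: a spreading (or concentrating) dilation at the Sobolev endpoint \(\tfrac1q = \tfrac12 - \tfrac1N\), and the extremal-type sequences used to prove sharpness of Theorem~\ref{theoremRadialSobolev-intro} at the radial endpoint.

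\emph{Existence of a minimizer.} Here \(M_{c,\rad}\) is finite (take any radial competitor) and positive, since \eqref{Coulomb-Sobolev-estimate} together with Young's inequality bounds \(c^{1/q} = \norm{u}_{L^q}\) by a positive power of \(\mathcal E_*(u)\). As \(\mathcal E_*\) controls \(\norm{\cdot}_{E^{\alpha,p}}\), a minimizing sequence \((u_n)_{n\in\N}\) is bounded in \(E^{\alpha, p}_\rad(\R^N)\), and replacing \(u_n\) by \(\abs{u_n}\) leaves \(\int\abs{u_n}^q\) unchanged and does not increase \(\mathcal E_*\), so I may take \(u_n \ge 0\). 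By the compactness above, along a subsequence \(u_n \to w\) in \(L^q(\R^N)\) and a.e., hence \(\int_{\R^N}\abs{w}^q = c\) and \(w \ge 0\), \(w \ne 0\). The Dirichlet integral is weakly lower semicontinuous, and the nonlocal Brezis--Lieb inequality \eqref{ineqIntroBL} (proposition~\ref{propositionBrezisLieb}) together with \(\int\abs{I_{\alpha/2}\ast\abs{u_n - w}^p}^2 \ge 0\) yields \(\int_{\R^N}\abs{I_{\alpha/2}\ast\abs{w}^p}^2 \le \liminf_n \int_{\R^N}\abs{I_{\alpha/2}\ast\abs{u_n}^p}^2\); hence \(\mathcal E_*(w) \le \liminf_n \mathcal E_*(u_n) = M_{c,\rad}\), and since \(w\) is admissible it is a minimizer.

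\emph{Euler--Lagrange equation and rescaling.} Assume \(p > 1\). Because \(\mathcal E_*\) and \(u \mapsto \int\abs{u}^q\) are \(C^1\) and \(O(N)\)-invariant and \(w\) minimizes \(\mathcal E_*\) on \(\{\int\abs{u}^q = c\}\) inside the fixed-point subspace \(E^{\alpha, p}_\rad(\R^N)\), the principle of symmetric criticality produces a multiplier \(\mu\) with \(-\Delta w + (I_\alpha \ast \abs{w}^p)\abs{w}^{p-2}w = \mu \abs{w}^{q-2}w\) weakly, using \(I_{\alpha/2}\ast I_{\alpha/2} = I_\alpha\); testing with \(w\) gives \(\mu c = \int\abs{Dw}^2 + \int\abs{I_{\alpha/2}\ast\abs{w}^p}^2 > 0\). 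If \(q \ne 2\tfrac{2p+\alpha}{2+\alpha}\), the dilation \(w_t(x) = t^a w(t^b x)\) with \(a/b = (2+\alpha)/\bigl(2(p-1)\bigr)\) makes the \(-\Delta\) and Coulomb terms scale with the same power of \(t\) while the nonlinear term carries the exponent \(a(q-2) - 2b\), which is nonzero precisely because \(q \ne 2\tfrac{2p+\alpha}{2+\alpha}\); choosing \(t\) with \(\mu\, t^{a(q-2) - 2b} = 1\) turns \(w_t\) into a radial nonnegative solution of \eqref{sps}. If \(q = 2\tfrac{2p+\alpha}{2+\alpha}\), the same dilation multiplies \(\int\abs{Dw}^2\), \(\int\abs{I_{\alpha/2}\ast\abs{w}^p}^2\) and \(\int\abs{w}^q\) by the common factor \(t^\kappa\) with \(\kappa = b\,\tfrac{\alpha + N - p(N-2)}{p-1} \ne 0\) (the case \(\alpha + N = p(N-2)\) being excluded by \eqref{Qrad0}); thus \(\mathcal E_*(w_t) = t^\kappa \mathcal E_*(w)\) and \(\int\abs{w_t}^q = t^\kappa c\), forcing \(M_{c,\rad} = c\,M_{1,\rad}\), and differentiating \(t \mapsto \mathcal E_*(w_t) - \tfrac\mu q\int\abs{w_t}^q\) at \(t=1\) (a Pohozaev-type identity, using that \(w\) is critical for \(\mathcal E_* - \tfrac\mu q\int\abs{\cdot}^q\)) gives \(\mathcal E_*(w) = \tfrac{\mu c}{q}\), i.e.\ \(\mu = q M_{1,\rad}\), which is \eqref{sps-mu*}.

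\emph{Regularity, decay, positivity, and the main difficulty.} Since \(w\) solves an equation of the form \eqref{sps-mu}, the bootstrap of section~\ref{sect-equation} (as in Theorem~\ref{t-groundstate}) gives \(w \in C^2(\R^N) \cap C^\infty(\R^N \setminus w^{-1}(0))\), and radial decay with continuity gives \(w(\abs{x}) \to 0\) as \(\abs{x} \to \infty\). If \(p \ge 2\), then \(c_0 := (I_\alpha \ast \abs{w}^p)\abs{w}^{p-2} \ge 0\) is locally bounded and, since \(w \ge 0\), the equation reads \(-\Delta w + c_0 w = \mu\abs{w}^{q-2}w \ge 0\); as \(w \not\equiv 0\), the strong maximum principle forces \(w > 0\) everywhere. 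I expect the compactness step to be the main obstacle: it rests on the quantitative radial Strauss-type estimates of section~\ref{sect-radial}, which are what upgrade the continuous embedding of Theorem~\ref{theoremRadialSobolev-intro} on the open range \eqref{Qrad0} to a compact one, by simultaneously controlling the \(L^q\)-mass near the origin (through the interior exponent \(q_2\)) and at infinity (through the pointwise decay); a secondary delicate point is the exact scaling bookkeeping pinning down the multiplier at the Coulomb--Sobolev critical exponent.
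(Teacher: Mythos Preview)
Your overall architecture is sound and close to the paper's, but there is one concrete error in the compactness step. You assert that \eqref{Qrad0} forces \(q<\tfrac{2N}{N-2}\). This is only true in the first branch of \eqref{Qrad0}. In the second branch, where \(\tfrac{1}{p}<\tfrac{N-2}{N+\alpha}\) (so necessarily \(N\ge 3\)), one has \(\tfrac{1}{q}<\tfrac12-\tfrac1N\), i.e.\ \(q>\tfrac{2N}{N-2}\), and then your \(H^1_\loc\)--Rellich route to \(L^q_\loc\)-compactness fails outright. The fix is easy and you already have the ingredients: from proposition~\ref{propositionElementaryLocalWeakCompactness} you get a.e.\ convergence along a subsequence, and since the sequence is bounded in \(L^{q_2}(\R^N)\) with \(q_2>q\), Vitali (uniform integrability of \(\abs{u_n}^q\) on bounded sets) gives \(u_n\to u\) in \(L^q_\loc(\R^N)\). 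Once this is repaired, your three-region argument (H\"older near the origin via \(q_2\), pointwise radial decay of theorem~\ref{theoremRadialEstimates} at infinity combined with the \(L^{q_1}\) bound, and \(L^q_\loc\)-convergence on annuli) goes through.

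For comparison, the paper handles compactness differently: instead of interpolating between two embedding exponents \(q_1<q<q_2\), it proves a scale of \emph{weighted} radial embeddings \(E^{\alpha,p}_\rad(\R^N)\hookrightarrow L^q(\R^N,\abs{x}^{-\gamma}\dif x)\) valid for \(\gamma\) in a small two-sided neighbourhood of \(0\), and reads off uniform tightness near \(0\) and near \(\infty\) directly from positive and negative \(\gamma\). Your approach is more elementary in that it avoids the weighted proposition, at the cost of needing the pointwise decay estimate explicitly.

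Two smaller remarks. First, the regularity bootstrap of section~\ref{sect-equation} is stated under \eqref{Q_0}; in the radial range \eqref{Qrad0}\setminus\eqref{Q_0} you must run the same iteration but invoke the \emph{radial} Coulomb--Sobolev embedding (theorem~\ref{theoremRadialSobolev-intro}) at the step where the paper estimates \(\int\abs{u_\mmu}^{s}\). You implicitly assume this but do not say it. Second, your determination of the multiplier at \(q=2\tfrac{2p+\alpha}{2+\alpha}\) via the scaling identity \(\tfrac{d}{dt}\big|_{t=1}\big(\mathcal E_*(w_t)-\tfrac{\mu}{q}\int\abs{w_t}^q\big)=0\) is correct but is precisely the Poho\v zaev identity of proposition~\ref{theoremPohozhaev}, whose rigorous justification requires the regularity you only establish afterwards; the paper proceeds in the same order (regularity first, then Poho\v zaev plus Nehari to compute \(\mu=M_{1,\rad}\)), so you should at least flag the dependency.
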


For $N=3$, $\alpha=2$, $p=2$ the above theorem was proved by Ruiz in \cite{Ruiz-ARMA}*{theorem 1.3} (case $q\in(18/7,3)$)
and by Ianni and Ruiz \cite{Ianni-Ruiz-2012}*{theorem 1.2 and theorem 1.3} (cases $q\in(3,6)$ and $q=3$).

In the Coulomb--Sobolev critical case $q=2\frac{2p+\alpha}{2+\alpha}$ the problem is invariant by scaling,
in the sense that if $w$ is a solution of
\begin{equation}\label{sps-mu**}
 - \Delta u + (I_\alpha \ast \abs{u}^p)\abs{u}^{p - 2} u= q\mu \abs{u}^{q-2}u\quad\text{in \(\R^N\)}
\end{equation}
for some $\mu>0$, then for every $\lambda>0$ the function $w_\lambda(x)=\lambda^{-\frac{\alpha+2}{2(p-1)}}w(x/\lambda)$ is also a solution of \eqref{sps-mu**} and $\mathcal{E}_*(w_\lambda)\equiv \mathcal{E}_*(w)$. In particular, the scale invariance implies that $M_c\equiv M_1$
and $M_{c,\rad}\equiv M_{1,\rad}$ for every $c>0$ (see also \eqref{scaledinfimum}).
Note that unless $p=\frac{N+\alpha}{N-2}$, the $L^q$--norm of $w_\lambda$ is not preserved and the scale invariance does not lead to the
nonuniqueness of the minimizer for $M_c$ or $M_{c,\rad}$.
We show in remark~\ref{universalmubound} that if  $q=2\frac{2p+\alpha}{2+\alpha}$, $p\neq\frac{N+\alpha}{N-2}$ and
the pair $(\mu,u)\in \R_+\times E^{\alpha,p}(\R^N)\setminus\{0\}$ is a solution of \eqref{sps-mu**} then
$$
\mu\ge M_1,
$$
so that $M_1$ could be seen as the least eigenvalue of \eqref{sps-mu**}.
This further justifies that in the Coulomb--Sobolev critical case \eqref{sps-mu**} should be interpreted as
an eigenvalue problem rather then an equation.

\subsection{Open questions.}
We close this introduction by listing several problems that are left open in the present work.

\subsubsection{Radial versus non radial minimizers.}
Radial symmetry of the minimizer constructed in theorem~\ref{t-multiplicative-intro} is open.
It is also unclear whether the optimal constants $M_1$ and $M_{1,\rad}$ share the same value.
A result in \cite{Ruiz-ARMA}*{theorem 1.7} shows that for $N=3$, $\alpha=2$, $p=2$ and $q<3$
a breakup of symmetry may occur when equation \eqref{sps} is considered with Dirichlet boundary data
on a ball of sufficiently large radius. This construction however excludes the range of $q$ where
both $M_1$ and $M_{1,\rad}$ are attained.

\subsubsection{Uniqueness of the minimizers.}
Uniqueness up to translations of the minimizers constructed in theorems~\ref{t-multiplicative-intro} and~\ref{t-radial-intro} is open. If the minimizer for \eqref{Coulomb-Sobolev-estimate-best} was a nonradial function, then due to rotations, we would have a strong non-uniqueness situation.

\subsubsection{Positivity versus dead--cores for $p<2$.}
Positivity of the groundstate solutions constructed in theorems~\ref{t-multiplicative-intro} and~\ref{t-radial-intro} is open for $p<2$ and a-priori, the possibility of dead cores (regions in which the solution vanishes) cannot be ruled out.

\subsubsection{Radial compactness in the case $\alpha=1$ and $q=\frac{2}{3}(2p+1)$.}
 Compactness of the embedding $E^{1,p}_\rad (\R^N)\subset L^{\frac{2}{3}(2p+1)}(\R^N)$ in the case $(N-2)p\neq N+1$
 is open. If answered positively, this would lead to the existence of a radial solution to the eigenvalue problem
\eqref{sps-mu*} in that case.

\subsubsection{Double--critical case.}
The existence of optimizers in the double--critical case $p=\frac{N+\alpha}{N-2}$
and $q=2\frac{2p+\alpha}{2+\alpha}=\frac{2N}{N-2}$ is open. Observe that
in the particular case $N\ge 3$ and $\alpha=2$ solutions of \eqref{sps-mu} for
$p=\frac{N+2}{N-2}$ and $q=p+1=\frac{2N}{N-2}$ can be formally constructed
from the explicit radial groundstate solutions $U\in E^{2,p}_\rad(\R^N)$ of the critical Emden--Fowler equation
\begin{equation*}
-\Delta U=U^\frac{N+2}{N-2},\quad U>0\quad\text{in $\R^N$.}
\end{equation*}
Indeed, then $I_2\ast |U|^\frac{N+2}{N-2}=U$ and therefore,
$$-\Delta U+(I_2\ast U^{\frac{N+2}{N-2}})U^{\frac{4}{N-2}}=2 U^{\frac{N+2}{N-2}}\quad\text{in $\R^N$.} $$
It is unclear however whether $U$ is a groundstate of \eqref{sps}, i.e. if it is
a minimizer of \eqref{Coulomb-Sobolev-additive-best+r}.

\section{Coulomb--Sobolev spaces}\label{Sect-CS}
\settocdepth{subsection}

\subsection{Definition of Coulomb--Sobolev spaces}

\begin{definition}
Let $N\in\N$, $\alpha \in (0,N)$ and $p\geq 1$.
We define the \emph{Coulomb space} \(Q^{\alpha, p} (\R^N)\) as the vector space of measurable functions \(u:\R^N\to\R\) such that
$$
  \norm{u}_{Q^{\alpha, p}}:= \Bigl(\int_{\R^N}\bigabs{I_{\alpha/2} \ast \abs{u}^p} ^2\Bigr)^{\frac{1}{2 p}} < \infty.
$$
\end{definition}

It can be observed that for every measurable function, \(u \in Q^{\alpha, p} (\R^N)\) if and only if \(\abs{u}^p \in Q^{\alpha, 1} (\R^N)\).
By the Hardy--Littlewood--Sobolev inequality \cite{LiebLoss2001}*{theorem 4.3} we have
\[
 \int_{\R^N} \bigabs{I_{\alpha/2} \ast \abs{u}^p}^2 \le C \Bigl(\int_{\R^N} \abs{u}^{\frac{2 N p}{N + \alpha}} \Bigr)^\frac{N + \alpha}{N}
\]
and thus \(L^\frac{2 N p}{N + \alpha} (\R^N) \subset Q^{\alpha, p} (\R^N)\).

\begin{proposition}
\label{norma}
Let $N\in\N$, $\alpha \in (0,N)$ and $p\geq 1$. Then \(\norm{\cdot}_{Q^{\alpha, p}}\) defines a norm.
\end{proposition}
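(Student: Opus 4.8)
The plan is to verify the three norm axioms for $\norm{\cdot}_{Q^{\alpha,p}}$, the only nontrivial one being the triangle inequality. Positive homogeneity is immediate: for $\lambda \in \R$ one has $\abs{\lambda u}^p = \abs{\lambda}^p \abs{u}^p$, hence $I_{\alpha/2}\ast\abs{\lambda u}^p = \abs{\lambda}^p\, I_{\alpha/2}\ast\abs{u}^p$, and raising to the power $2$ and then to the power $1/(2p)$ pulls out exactly $\abs{\lambda}$. For definiteness, if $\norm{u}_{Q^{\alpha,p}}=0$ then $I_{\alpha/2}\ast\abs{u}^p=0$ a.e.; since $I_{\alpha/2}>0$ everywhere on $\R^N\setminus\{0\}$, this forces $\abs{u}^p=0$ a.e., i.e. $u=0$ a.e. (Alternatively, invoke the Hardy--Littlewood--Sobolev lower control or the fact that the Riesz potential of a nonnegative nonzero function is strictly positive.)

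For the triangle inequality I would reduce to the case $p=1$ using the observation already recorded in the text that $u\in Q^{\alpha,p}(\R^N)$ iff $\abs{u}^p\in Q^{\alpha,1}(\R^N)$, together with the pointwise inequality $\abs{u+v}^p \le (\abs{u}+\abs{v})^p$ and the fact that $f\mapsto I_{\alpha/2}\ast f$ is positivity preserving and monotone on nonnegative functions: thus $0 \le I_{\alpha/2}\ast\abs{u+v}^p \le I_{\alpha/2}\ast(\abs{u}+\abs{v})^p$ pointwise, so
$$
\norm{u+v}_{Q^{\alpha,p}} \le \bigl\||u|+|v|\bigr\|_{Q^{\alpha,p}}
= \Bigl(\int_{\R^N}\bigabs{I_{\alpha/2}\ast(\abs{u}+\abs{v})^p}^2\Bigr)^{1/(2p)}.
$$
Writing $f=\abs{u}^p$, $g=\abs{v}^p$ and using $(\abs{u}+\abs{v})^p = (f^{1/p}+g^{1/p})^p$, it remains to show
$$
\Bigl(\int_{\R^N}\bigabs{I_{\alpha/2}\ast(f^{1/p}+g^{1/p})^p}^2\Bigr)^{1/(2p)}
\le \Bigl(\int_{\R^N}\bigabs{I_{\alpha/2}\ast f}^2\Bigr)^{1/(2p)} + \Bigl(\int_{\R^N}\bigabs{I_{\alpha/2}\ast g}^2\Bigr)^{1/(2p)},
$$
which, after substituting $F=I_{\alpha/2}\ast f$, $G=I_{\alpha/2}\ast g$ (both nonnegative) and noting $I_{\alpha/2}\ast(f^{1/p}+g^{1/p})^p \le \bigl(F^{1/p}+G^{1/p}\bigr)^p$ — this last step being the crux — reduces to $\norm{(F^{1/p}+G^{1/p})^p}_{L^2}^{1/p}\le \norm{F}_{L^2}^{1/p}+\norm{G}_{L^2}^{1/p}$, i.e. to the triangle inequality in $L^{2}$ applied after the substitution $F = \Phi^p$, equivalently the statement that $h\mapsto \norm{h^p}_{L^2}^{1/p} = \norm{h}_{L^{2p}}$ is a norm on $L^{2p}$ of nonnegative functions. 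So the whole thing collapses to Minkowski's inequality in $L^{2p}(\R^N)$.

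The one genuine obstacle is justifying the pointwise bound $I_{\alpha/2}\ast(f^{1/p}+g^{1/p})^p \le \bigl(I_{\alpha/2}\ast f + I_{\alpha/2}\ast g\bigr)^p$ for $p\ge 1$ and nonnegative $f,g$. I would prove this by a Minkowski-type argument: for fixed $x$, the convolution $(I_{\alpha/2}\ast\cdot)(x)$ is, up to the normalization, an integral (an expectation) against the measure $I_{\alpha/2}(x-\cdot)\dif y$, and the map $\phi\mapsto \bigl(\int \phi^p \dif\nu\bigr)^{1/p}$ is a norm (subadditive) for $p\ge 1$ by Minkowski's integral inequality; applying this with $\phi = f^{1/p}$ and $\phi=g^{1/p}$ and raising to the $p$-th power gives exactly the claimed inequality. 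Care is needed with infinite values — but since we only apply the estimate to $u,v\in Q^{\alpha,p}(\R^N)$, the right-hand side is finite a.e., so all manipulations are legitimate, and for general measurable $u,v$ with infinite norm the inequality is trivially true. I expect the homogeneity and definiteness to be a couple of lines, and the triangle inequality to take a short paragraph once the Minkowski integral inequality reduction is in place.
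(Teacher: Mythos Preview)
Your proof is correct and follows essentially the same route as the paper: the pointwise weighted Minkowski inequality (your ``genuine obstacle'' $I_{\alpha/2}\ast(f^{1/p}+g^{1/p})^p \le (F^{1/p}+G^{1/p})^p$, i.e.\ the triangle inequality in $L^p(I_{\alpha/2}(x-\cdot)\,dy)$) followed by Minkowski in $L^{2p}(\R^N)$. Your intermediate reduction via $\abs{u+v}^p\le(\abs{u}+\abs{v})^p$ is harmless but unnecessary, since the weighted Minkowski already absorbs the absolute value; the paper applies it directly to $u+v$.
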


\begin{proof}
By the classical integral weighted Minkowski inequality \cite{LiebLoss2001}*{theorem 2.4} we have for every \(x \in \R^N\),
\[
 \bigabs{(I_{\alpha/2} \ast \abs{u + v}^p) (x)}^\frac{1}{p}
 \le \bigabs{(I_{\alpha/2} \ast \abs{u}^p) (x)}^\frac{1}{p}
 + \bigabs{(I_{\alpha/2} \ast \abs{v}^p) (x)}^\frac{1}{p}.
\]
By integrating and applying the Minkowski inequality in \(L^{2 p} (\R^N)\) we conclude that
\[
\begin{split}
 \Bigl(\int_{\R^N} \bigabs{(I_{\alpha/2} \ast \abs{u + v}^p)}^2\Bigr)^\frac{1}{2 p}
 &\le \Bigl(\int_{\R^N} \bigabs{(I_{\alpha/2} \ast \abs{u}^p)}^2\Bigr)^\frac{1}{2 p} + \Bigl(\int_{\R^N} \bigabs{(I_{\alpha/2} \ast \abs{v}^p) }^2\Bigr)^\frac{1}{2 p}.\qedhere
\end{split}
\]
\end{proof}

When $p=1$ the norm \(\norm{\cdot}_{Q^{\alpha, 1}}\) is not generated by an inner product and does not coincide with the $\alpha$--energy norm
of the classical potential theory. The latter is defined by $\norm{u}_{\mathcal E^{\alpha}}:= \|I_{\alpha/2} \ast u\|_{L^2(\R^N)}$, see for example \cite{DuPlessis}*{pp.\thinspace{}80--81}.

\begin{definition}
Let $N\in\N$, $\alpha \in (0,N)$ and $p\geq 1$.
We define the \emph{Coulomb--Sobolev space} \(E^{\alpha,p} (\R^N)\) as the vector space of functions \(u \in Q^{\alpha, p} (\R^N)\) such that \(u\) is weakly differentiable in \(\R^N\), \(Du \in L^2 (\R^N)\) and
\begin{equation*}
  \norm{u}_{E^{\alpha, p}}:=\biggl(\int_{\R^N} \abs{D u}^2+\Bigl(\int_{\R^N}\big|I_{\alpha/2}*\abs{u}^p\big|^2\Bigr)^{1/p}\biggr)^{1/2}<\infty.
\end{equation*}
The function \(\norm{\cdot}_{E^{\alpha, p}}\) defines a norm in view of proposition~\ref{norma}.
\end{definition}

The space $E^{2,2}(\R^3)$ was introduced and studied by Ruiz \cite{Ruiz-ARMA}*{section 2}.

\subsection{Completeness of Coulomb--Sobolev spaces}
It is not difficult to see that the Coulomb space \(Q^{\alpha, p} (\R^N)\) is complete.
We are going to prove that the Coulomb--Sobolev space \(E^{\alpha, p} (\R^N)\) is a Banach space.

\begin{proposition} \label{propositionCompleteness}
For all $N\in\N$, $\alpha \in (0,N)$ and $p\geq 1$ the normed space \(E^{\alpha, p} (\R^N)\) is complete.
\end{proposition}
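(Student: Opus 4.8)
The plan is to reduce completeness of $E^{\alpha,p}(\R^N)$ to completeness of the two ``factor'' spaces $L^2(\R^N)$ and $Q^{\alpha,p}(\R^N)$, the only nontrivial point being that the $Q^{\alpha,p}$-limit carries the expected weak gradient. Let $(u_n)_{n\in\N}$ be a Cauchy sequence in $E^{\alpha,p}(\R^N)$. From $\int_{\R^N}\abs{Du_n-Du_m}^2\le\norm{u_n-u_m}_{E^{\alpha,p}}^2$ and $\bigl(\int_{\R^N}\abs{I_{\alpha/2}\ast\abs{u_n-u_m}^p}^2\bigr)^{1/p}\le\norm{u_n-u_m}_{E^{\alpha,p}}^2$, the sequence $(Du_n)_{n\in\N}$ is Cauchy in $L^2(\R^N;\R^N)$ and $(u_n)_{n\in\N}$ is Cauchy in $Q^{\alpha,p}(\R^N)$. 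Since $L^2(\R^N)$ is complete and $Q^{\alpha,p}(\R^N)$ is complete, there are $g\in L^2(\R^N;\R^N)$ and $u\in Q^{\alpha,p}(\R^N)$ with $Du_n\to g$ in $L^2(\R^N)$ and $u_n\to u$ in $Q^{\alpha,p}(\R^N)$.

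Next I would promote convergence in $Q^{\alpha,p}$ to convergence in $L^p_{\loc}$ via an elementary lower bound on Riesz potentials (in the spirit of Proposition~\ref{propositionRuizBall}). Fix $R>0$; since $I_{\alpha/2}$ is bounded below by a positive constant $c_R$ on the ball $B_{2R}$, for every $v\in Q^{\alpha,p}(\R^N)$ and a.e.\ $x\in B_R$ one has $(I_{\alpha/2}\ast\abs{v}^p)(x)\ge c_R\int_{B_R}\abs{v}^p$, and integrating $\abs{(I_{\alpha/2}\ast\abs{v}^p)(x)}^2$ over $x\in B_R$ gives $\norm{v}_{L^p(B_R)}\le C_R\norm{v}_{Q^{\alpha,p}}$ for a constant $C_R>0$. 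Applied to $v=u_n-u$ this yields $u_n\to u$ in $L^p_{\loc}(\R^N)$, hence in $L^1_{\loc}(\R^N)$ because $p\ge1$.

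Then $g$ is identified with the weak gradient of $u$: for every $\varphi\in C^\infty_c(\R^N)$ and every $i\in\{1,\dots,N\}$ the identity $\int_{\R^N}u_n\,\partial_i\varphi=-\int_{\R^N}\partial_i u_n\,\varphi$ passes to the limit, the left-hand side converging to $\int_{\R^N}u\,\partial_i\varphi$ by $L^1_{\loc}$ convergence of $(u_n)_{n\in\N}$ and the right-hand side to $-\int_{\R^N}g_i\,\varphi$ by $L^2$ convergence of $(Du_n)_{n\in\N}$. Hence $u$ is weakly differentiable with $Du=g\in L^2(\R^N)$, so that $u\in E^{\alpha,p}(\R^N)$. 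To conclude I would observe that $\norm{u_n-u}_{E^{\alpha,p}}^2=\int_{\R^N}\abs{Du_n-g}^2+\bigl(\int_{\R^N}\abs{I_{\alpha/2}\ast\abs{u_n-u}^p}^2\bigr)^{1/p}=\norm{Du_n-g}_{L^2}^2+\norm{u_n-u}_{Q^{\alpha,p}}^2\to0$, so $u_n\to u$ in $E^{\alpha,p}(\R^N)$.

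The main obstacle is precisely the step of extracting genuine local convergence from the abstract Cauchy condition: a priori $u_n\to u$ only in the nonlinear $Q^{\alpha,p}$ sense, and without the pointwise positivity and the lower bound of $I_{\alpha/2}$ over a fixed ball one cannot pass to the limit in the distributional definition of the gradient. The same lower bound, combined with Fatou's lemma applied to the convolutions, is also what proves the completeness of $Q^{\alpha,p}(\R^N)$ used above, should one prefer not to take it for granted.
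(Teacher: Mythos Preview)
Your proof is correct and shares the same key ingredient as the paper's, namely the local lower bound on Riesz potentials (Proposition~\ref{propositionRuizBall}) to pass from the Cauchy condition in $E^{\alpha,p}$ to $L^p_{\loc}$ convergence. The packaging differs slightly: you invoke completeness of $Q^{\alpha,p}(\R^N)$ and completeness of $L^2$ to get limits $u\in Q^{\alpha,p}$ and $g\in L^2$, then identify $g=Du$ by passing to the limit in the distributional identity; the paper instead isolates a Fatou-type lower semicontinuity statement (Proposition~\ref{propositionSemiContinuityLocalConvergence}) which, applied first to $(u_n)$ and then to each tail sequence $(u_n-u_m)_m$, yields both $u\in E^{\alpha,p}$ and convergence. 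Your route is a touch more direct for this single result, but it relies on completeness of $Q^{\alpha,p}$, whose proof (as you note) needs exactly the same local bound plus Fatou; the paper's route has the advantage that the Fatou property is a standalone tool reused repeatedly later (e.g.\ in Propositions~\ref{propositionElementaryLocalWeakCompactness} and~\ref{weakL1loc} and in the existence proofs).
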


The first ingredient of the completeness is the following local estimate, which in particular implies
that $E^{\alpha, p} (\R^N)\subset L^p_\loc(\R^N)$.

\begin{proposition}
\label{propositionRuizBall}
For all $N\in\N$, $\alpha \in (0,N)$ and $p\geq 1$, there exists \(C>0\) such that for every \(a \in \R^N\) and \(\rho > 0\),
\[
  \int_{B_\rho (a)} \abs{u}^p \le C\rho^\frac{N - \alpha}{2} \Bigl( \int_{B_\rho(a)} \bigabs{I_{\alpha/2} \ast \abs{u}^p}^2\Bigr)^\frac{1}{2}.
\]
\end{proposition}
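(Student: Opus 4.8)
The plan is to exploit the sign structure: since $v:=\abs{u}^p\ge 0$ and the Riesz kernel $I_{\alpha/2}$ is nonnegative, the convolution $I_{\alpha/2}\ast v$ is pointwise bounded \emph{below} on $B_\rho(a)$ simply by restricting the integration defining the convolution to $B_\rho(a)$, where the kernel is itself bounded below by a constant multiple of $\rho^{-(N-\alpha/2)}$.

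\textbf{Step 1: a pointwise lower bound on $B_\rho(a)$.} For $x,y\in B_\rho(a)$ one has $\abs{x-y}<2\rho$, and since $N-\tfrac{\alpha}{2}>0$ (because $\alpha<N<2N$), the kernel satisfies $I_{\alpha/2}(x-y)=A_{\alpha/2}\abs{x-y}^{-(N-\alpha/2)}\ge A_{\alpha/2}(2\rho)^{-(N-\alpha/2)}$. Hence, discarding the contribution of $\R^N\setminus B_\rho(a)$ in the convolution, which is legitimate as $I_{\alpha/2}\ge 0$ and $v\ge 0$, for every $x\in B_\rho(a)$,
\[
  (I_{\alpha/2}\ast v)(x)\ \ge\ \int_{B_\rho(a)} I_{\alpha/2}(x-y)\,v(y)\dif y\ \ge\ \frac{A_{\alpha/2}}{(2\rho)^{N-\alpha/2}}\int_{B_\rho(a)} v.
\]

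\textbf{Step 2: integrate the square over $B_\rho(a)$.} Squaring the previous inequality, integrating over $x\in B_\rho(a)$, and writing $\abs{B_1}$ for the Lebesgue measure of the unit ball so that $\abs{B_\rho(a)}=\abs{B_1}\rho^N$, we obtain (using $2(N-\tfrac\alpha2)=2N-\alpha$)
\[
  \int_{B_\rho(a)}\bigabs{I_{\alpha/2}\ast v}^2\ \ge\ \abs{B_1}\,\rho^N\cdot\frac{A_{\alpha/2}^2}{(2\rho)^{2N-\alpha}}\Bigl(\int_{B_\rho(a)}v\Bigr)^2\ =\ \frac{\abs{B_1}\,A_{\alpha/2}^2}{2^{2N-\alpha}}\,\rho^{\alpha-N}\Bigl(\int_{B_\rho(a)}v\Bigr)^2.
\]
Taking square roots and rearranging yields the assertion with $C=2^{N-\alpha/2}\,\abs{B_1}^{-1/2}A_{\alpha/2}^{-1}$, which depends only on $N$, $\alpha$ and $p$, and not on $a$ or $\rho$.

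There is essentially no analytic obstacle here; the only idea is to truncate the convolution to the ball before estimating, and to exploit the positivity of $\abs{u}^p$ and of $I_{\alpha/2}$. For completeness one notes the degenerate case: if $\int_{B_\rho(a)}\abs{u}^p=+\infty$, then Step 1 forces $I_{\alpha/2}\ast\abs{u}^p\equiv+\infty$ on $B_\rho(a)$, so the right-hand side is also infinite and the inequality holds trivially. In the application to completeness one uses the estimate in the contrapositive direction, namely to deduce $u\in L^p_\loc(\R^N)$ and a local $L^p$ bound from finiteness (and smallness) of the Coulomb term $\int_{\R^N}\bigabs{I_{\alpha/2}\ast\abs{u}^p}^2$.
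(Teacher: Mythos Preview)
Your proof is correct and follows essentially the same approach as the paper: both establish the pointwise lower bound $(I_{\alpha/2}\ast\abs{u}^p)(x)\ge (2\rho)^{-(N-\alpha/2)}\int_{B_\rho(a)}\abs{u}^p$ for $x\in B_\rho(a)$ by restricting the convolution to the ball and using $\abs{x-y}<2\rho$, and then integrate the square over $B_\rho(a)$. Your version is slightly more detailed in tracking the constant and noting the degenerate case, but the argument is identical.
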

\begin{proof}
We observe that for every \(x \in B_\rho (a)\),
\[
  (2\rho)^{-(N-\alpha/2)}\int_{B_\rho (a)} \abs{u}^pdy \le \int_{B_\rho (a)} \frac{|u(y)|^p}{\abs{x - y}^{N-\alpha/2}}dy \leq C_1\bigl(I_{\alpha/2} \ast \abs{u}^p\bigr) (x).
\]
The conclusion follows by integration.
\end{proof}

The second ingredient is the following Fatou property for locally converging sequences.

\begin{proposition}[Fatou property]
\label{propositionSemiContinuityLocalConvergence}
Let $N\in\N$, $\alpha \in (0,N)$ and $p\geq 1$.
If \((u_n)_{n \in \N}\) is a bounded sequence in \(E^{\alpha, p} (\R^N)\) that converges to a
function \(u:\R^N\to\R\) in \(L^1_{\mathrm{loc}} (\R^N)\), then
\(u \in E^{\alpha, p} (\R^N)\),
\[
 \int_{\R^N} \abs{D u}^2
 \le \liminf_{n \to \infty} \int_{\R^N} \abs{D u_n}^2
\]
and
\[
  \int_{\R^N} \bigabs{I_{\alpha/2} \ast \abs{u}^p}^2 \le \liminf_{n \to \infty} \int_{\R^N} \bigabs{I_{\alpha/2} \ast \abs{u_n}^p}^2.
\]
Moreover $Du_n\rightharpoonup Du$ weakly in $L^2(\R^N)$.
\end{proposition}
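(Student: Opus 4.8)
The plan is to combine the elementary local estimate of Proposition~\ref{propositionRuizBall} with the lower semicontinuity of the gradient and of the Coulomb terms, using only the $L^1_\loc$-convergence of the sequence. First I would handle the gradient. Since $(u_n)$ is bounded in $E^{\alpha,p}(\R^N)$, the gradients $(Du_n)$ are bounded in $L^2(\R^N)$, so after passing to a subsequence $Du_n\rightharpoonup v$ weakly in $L^2(\R^N)$ for some $v\in L^2(\R^N;\R^N)$. The $L^1_\loc$-convergence $u_n\to u$ identifies $v$ with $Du$ in the sense of distributions: for every $\varphi\in C^\infty_c(\R^N;\R^N)$ one has $\int_{\R^N} u_n\Div\varphi\to\int_{\R^N} u\Div\varphi$ (by $L^1_\loc$-convergence on the compact $\supp\varphi$) and $\int_{\R^N} Du_n\cdot\varphi\to\int_{\R^N} v\cdot\varphi$ (by weak convergence), whence $\int_{\R^N} u\Div\varphi=-\int_{\R^N} v\cdot\varphi$. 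Thus $u$ is weakly differentiable with $Du=v\in L^2(\R^N)$, and weak lower semicontinuity of the $L^2$-norm gives $\int_{\R^N}\abs{Du}^2\le\liminf_{n\to\infty}\int_{\R^N}\abs{Du_n}^2$. Since the weak limit $v=Du$ does not depend on the subsequence, the full sequence satisfies $Du_n\rightharpoonup Du$ weakly in $L^2(\R^N)$.

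Next I would treat the Coulomb term. By Proposition~\ref{propositionRuizBall}, the bound on $\bigl(\int_{\R^N}\abs{I_{\alpha/2}\ast\abs{u_n}^p}^2\bigr)^{1/2}$ gives, for every fixed ball $B_\rho(a)$, a uniform bound on $\int_{B_\rho(a)}\abs{u_n}^p$; combined with $u_n\to u$ in $L^1_\loc(\R^N)$ (hence, along a further subsequence, almost everywhere), an application of Fatou's lemma yields $u\in L^p_\loc(\R^N)$, and moreover $\abs{u_n}^p\to\abs{u}^p$ in $L^1_\loc(\R^N)$: indeed, almost everywhere convergence of $\abs{u_n}^p$ together with the uniform local $L^1$-bound and the Brezis--Lieb/Vitali argument (or simply: local $L^1$-boundedness plus a.e.\ convergence plus the elementary inequality used to bound the tails) gives local equi-integrability, hence $L^1_\loc$-convergence of $\abs{u_n}^p$. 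Since $I_{\alpha/2}$ is a positive kernel, this implies $(I_{\alpha/2}\ast\abs{u_n}^p)(x)\to(I_{\alpha/2}\ast\abs{u}^p)(x)$ for a.e.\ $x\in\R^N$ (by monotone/dominated convergence applied to the convolution, using that the truncations $\abs{u_n}^p\mathbf 1_{B_R}$ converge in $L^1$ and that the tails contribute a small positive amount controlled uniformly in $n$ via Proposition~\ref{propositionRuizBall}). Then Fatou's lemma gives
\[
 \int_{\R^N}\bigabs{I_{\alpha/2}\ast\abs{u}^p}^2\le\liminf_{n\to\infty}\int_{\R^N}\bigabs{I_{\alpha/2}\ast\abs{u_n}^p}^2<\infty,
\]
so $u\in Q^{\alpha,p}(\R^N)$ and hence $u\in E^{\alpha,p}(\R^N)$.

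The main obstacle is the pointwise (a.e.) convergence of the potentials $I_{\alpha/2}\ast\abs{u_n}^p$ from merely $L^1_\loc$-convergence of $\abs{u_n}^p$: one must control the contribution to $(I_{\alpha/2}\ast\abs{u_n}^p)(x)$ coming from large $\abs{y}$, uniformly in $n$, since $L^1_\loc$-convergence says nothing about the tails. The remedy is that the kernel decays, $I_{\alpha/2}(x-y)\lesssim\abs{y}^{-(N-\alpha/2)}$ for $\abs{y}$ large, and Proposition~\ref{propositionRuizBall} (applied to dyadic annuli $B_{2^{k+1}}\setminus B_{2^k}$) bounds $\int_{B_{2^{k+1}}}\abs{u_n}^p\lesssim 2^{k(N-\alpha)/2}\norm{u_n}_{Q^{\alpha,p}}^p$, so that $\int_{\abs{y}>R}I_{\alpha/2}(x-y)\abs{u_n(y)}^p\dif y\lesssim\sum_{2^k>R}2^{-k(N-\alpha/2)}2^{k(N-\alpha)/2}\norm{u_n}_{Q^{\alpha,p}}^p\lesssim R^{-\alpha/2}\sup_n\norm{u_n}_{E^{\alpha,p}}^p\to0$ as $R\to\infty$, uniformly in $n$. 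Splitting $\abs{u_n}^p=\abs{u_n}^p\mathbf 1_{B_R}+\abs{u_n}^p\mathbf 1_{\R^N\setminus B_R}$ and letting first $n\to\infty$ (using $L^1(B_R)$-convergence and local integrability of the shifted kernel $y\mapsto I_{\alpha/2}(x-y)$) and then $R\to\infty$ yields the desired a.e.\ convergence, and the rest is Fatou.
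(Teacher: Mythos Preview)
Your treatment of the gradient term is correct and essentially equivalent to the paper's (the paper uses Hahn--Banach/Riesz representation instead of extracting a weakly convergent subsequence, but the content is the same).

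There is, however, a genuine gap in your handling of the Coulomb term. You claim that a.e.\ convergence of \(|u_n|^p\) together with the uniform local \(L^1\)-bound yields local equi-integrability and hence \(|u_n|^p\to|u|^p\) in \(L^1_{\loc}(\R^N)\). This is false in the stated generality: a uniform \(L^1\)-bound and a.e.\ convergence do \emph{not} imply equi-integrability, and the paper itself exhibits the obstruction. In lemma~\ref{lemmaCoulombSobolev-noncompact} it constructs, for \(\alpha>2\) and \(p>\tfrac{2\alpha}{\alpha-2}\), a sequence \((u_n)\) bounded in \(E^{\alpha,p}(\R^N)\) with uniformly bounded supports, \(u_n\to 0\) a.e., but \(\liminf_n\int|u_n|^p>0\). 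For such sequences \(|u_n|^p\not\to 0\) in \(L^1_{\loc}\), and correspondingly \(I_{\alpha/2}\ast|u_n|^p\) need not converge pointwise to \(I_{\alpha/2}\ast|u|^p\) (cf.\ the discussion around proposition~\ref{propositionBrezisLiebEquiv} and lemma~\ref{lemmaCantor}). Your ``remedy'' paragraph controls the tails correctly but still relies on \(L^1(B_R)\)-convergence of \(|u_n|^p\) for the local piece, so it does not close the gap; and even granting that convergence, the kernel \(I_{\alpha/2}(x-\cdot)\) is singular at \(x\), so mere \(L^1(B_R)\)-convergence does not immediately pass through the convolution.

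The paper avoids all of this by aiming for less: one does not need pointwise convergence of the potentials, only the \emph{inequality} \((I_{\alpha/2}\ast|u|^p)(x)\le\liminf_n(I_{\alpha/2}\ast|u_n|^p)(x)\). Since \(L^1_{\loc}\)-convergence gives a.e.\ convergence along a subsequence, Fatou's lemma applied to the inner integral (in \(y\), with the nonnegative integrand \(I_{\alpha/2}(x-y)|u_n(y)|^p\)) yields this inequality directly, and a second application of Fatou (in \(x\)) gives the conclusion. Replace your second and third paragraphs with this two-line argument.
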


This property is known in the context of classical Sobolev spaces \cite{Willem2013}*{theorem 6.1.7}.

\begin{proof}[Proof]
If \(\psi \in C^1_c (\R^N; \R^N)\), then
\[
 -\int_{\R^N} u \Div \psi
 =-\lim_{n \to \infty}\int_{\R^N} u_n \Div \psi
 =\lim_{n \to \infty} \int_{\R^N} Du_n \cdot \psi,
\]
and therefore by the Cauchy--Schwarz inequality
\[
 \Bigabs{\int_{\R^N} u \Div \psi} \le \liminf_{n \to \infty} \Bigl( \int_{\R^N} \abs{D u_n}^2\Bigr)^\frac{1}{2} \Bigl(\int_{\R^N} \abs{\psi}^2 \Bigr)^\frac{1}{2}.
\]
By the Hahn--Banach theorem, the distribution \(D u\) induces then a linear functional on \(L^2 (\R^N)\). Therefore, by the Riesz representation theorem, there exists \(F \in L^2 (\R^N; \R^N)\) such that for every \(\psi \in C^1_c (\R^N; \R^N)\),
\[
 -\int_{\R^N} u \Div \psi =\int_{\R^N} F \cdot \psi,
\]
and
\[
 \int_{\R^N} \abs{F}^2 \le \liminf_{n \to \infty} \int_{\R^N} \abs{D u_n}^2,
\]
that is \(F = D u\) in the weak sense and \(Du\) satisfied the required estimate. From the fact that \(F = D u\) it follows that $Du_n\rightharpoonup Du$ weakly in $L^2(\R^N)$.

Next we observe that by Fatou's lemma, for almost every \(x \in \R^N\) holds
\[
 \bigl(I_{\alpha/2} \ast \abs{u}^p\bigr) (x)
 \le \liminf_{n \to \infty} \bigl(I_{\alpha/2} \ast \abs{u_n}^p\bigr) (x).
\]
Hence, by Fatou's lemma again,
\[
 \int_{\R^N} \bigabs{I_{\alpha/2} \ast \abs{u}^p}^2
 \le \liminf_{n \to \infty} \int_{\R^N} \bigabs{I_{\alpha/2} \ast \abs{u_n}^p}^2.\qedhere
\]
\end{proof}

\begin{proof}[Proof of proposition~\ref{propositionCompleteness}]
Let \((u_n)_{n \in \N}\) be a Cauchy sequence in $E^{\alpha,p}(\R^N)$.
By Proposition~\ref{propositionRuizBall}, \((u_n)_{n \in \N}\) is also a Cauchy sequence in \(L^p_{\mathrm{loc}} (\R^N)\). Hence there exists \(u \in L^p_{\mathrm{loc}} (\R^N)\) such that \((u_n)_{n \in \N}\) converges to \(u\) in \(L^p_{\mathrm{loc}} (\R^N)\).
In view of proposition~\ref{propositionSemiContinuityLocalConvergence} we conclude that \(u \in E^{\alpha, p} (\R^N)\).
Moreover, for every \(n \in \N\) the sequence \((u_n - u_m)_{m \in \N}\) converges to \(u_n - u\) in \(L^p_{\mathrm{loc}} (\R^N)\). Hence, by proposition~\ref{propositionSemiContinuityLocalConvergence} again,
\[
 \int_{\R^N} \abs{D u_n - D u}^2 + \int_{\R^N} \bigabs{I_\alpha \ast \abs{u_n - u}^p }^2
 \le \limsup_{m \to \infty} \int_{\R^N} \abs{D u_n - D u_m}^2 + \int_{\R^N} \bigabs{I_\alpha \ast \abs{u_n - u_m}^p }^2.
\]
Since \((u_n)_{n \in \N}\) is a Cauchy sequence in $E^{\alpha,p}(\R^N)$,
\begin{multline*}
 \limsup_{n \to \infty} \int_{\R^N} \abs{D u_n - D u}^2 + \int_{\R^N} \bigabs{I_\alpha \ast \abs{u_n - u}^p }^2\\
 \le \limsup_{n \to \infty} \limsup_{m \to \infty} \int_{\R^N} \abs{D u_n - D u_m}^2 + \int_{\R^N} \bigabs{I_\alpha \ast \abs{u_n - u_m}^p }^2\\
 \le \limsup_{m, n \to \infty} \int_{\R^N} \abs{D u_n - D u_m}^2 + \int_{\R^N} \bigabs{I_\alpha \ast \abs{u_n - u_m}^p }^2 = 0.
\end{multline*}
\end{proof}

We complete this description of the completeness properties of \(E^{\alpha, p} (\R^N)\) with an embryonic local compactness result.

\begin{proposition}[Elementary local compactness]
\label{propositionElementaryLocalWeakCompactness}
Let $N\in\N$, $\alpha \in (0,N)$ and $p\ge 1$.
If \((u_n)_{n \in \N}\) is a bounded sequence in \(E^{\alpha, p} (\R^N)\),
then there exists a subsequence \((u_{n_k})_{k \in \N}\) that converges in \(L^1_\mathrm{loc} (\R^N)\).
\end{proposition}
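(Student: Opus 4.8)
The plan is to combine the local estimate from Proposition~\ref{propositionRuizBall}, the uniform bound on the gradients, and the classical Rellich--Kondrachov compactness theorem applied on balls, together with a diagonal extraction over an exhausting sequence of balls.

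First I would fix $R>0$ and consider the restrictions $u_n|_{B_R}$. From Proposition~\ref{propositionRuizBall} applied with $a=0$ and $\rho=R$, and the boundedness of $(u_n)_{n\in\N}$ in $E^{\alpha,p}(\R^N)$, we get a uniform bound $\sup_n \int_{B_R}\abs{u_n}^p \le C_R$; since $p\ge 1$, this gives a uniform bound on $\norm{u_n}_{L^1(B_R)}$. Combined with $\sup_n\int_{B_R}\abs{Du_n}^2 \le \sup_n\norm{u_n}_{E^{\alpha,p}}^2 <\infty$, the sequence $(u_n|_{B_R})_{n\in\N}$ is bounded in $W^{1,1}(B_R)$ (or, slightly more comfortably, one first improves $L^1$ to $L^{p}$ integrability and uses $W^{1,\min(2,p)}$; either way the point is the same). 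By the Rellich--Kondrachov theorem on the bounded Lipschitz domain $B_R$, the embedding $W^{1,1}(B_R)\hookrightarrow L^1(B_R)$ is compact, so there is a subsequence converging strongly in $L^1(B_R)$.

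Then I would run the standard diagonal argument: take $R=1$, extract a subsequence converging in $L^1(B_1)$; from it extract a further subsequence converging in $L^1(B_2)$; and so on. The diagonal subsequence $(u_{n_k})_{k\in\N}$ then converges in $L^1(B_R)$ for every $R\in\N$, hence in $L^1(K)$ for every compact $K\subset\R^N$, i.e. in $L^1_\loc(\R^N)$, which is the conclusion. (Taking the limit to be the same function on overlapping balls is automatic since $L^1(B_{R+1})$-convergence forces $L^1(B_R)$-convergence to the restriction of the same limit.)

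The only mildly delicate point, and the one I would be careful about, is justifying the application of the compact Sobolev embedding: $W^{1,1}$ on a ball does embed compactly into $L^1$, but one should either invoke this directly or, to stay squarely within textbook statements, note that Proposition~\ref{propositionRuizBall} in fact yields a uniform bound in $L^p(B_R)$ with $p\ge 1$, so the sequence is bounded in $W^{1,\min\{2,p\}}(B_R)$ and Rellich--Kondrachov gives compactness of $W^{1,s}(B_R)\hookrightarrow L^1(B_R)$ for any $s\ge 1$. Everything else is routine; there is no genuine obstacle here, which is consistent with the proposition being labelled ``embryonic''.
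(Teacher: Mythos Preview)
Your proposal is correct and follows essentially the same route as the paper: use Proposition~\ref{propositionRuizBall} (together with H\"older) to get a uniform $L^1_{\mathrm{loc}}$ bound, combine with the $L^2$ bound on $Du_n$ to obtain boundedness in $W^{1,1}_{\mathrm{loc}}(\R^N)$, and then apply the classical Rellich theorem with a diagonal argument. Your added remark about the compact embedding $W^{1,1}(B_R)\hookrightarrow L^1(B_R)$ is a fair point of caution, but this embedding is indeed compact, so no detour through higher integrability is needed.
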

\begin{proof}
By proposition~\ref{propositionRuizBall} and H\"older inequality, the sequence of functions \((u_n)_{n \in \N}\) is bounded in \(L^1_\mathrm{loc} (\R^N)\) and thus in \(W^{1, 1}_{\mathrm{loc}}(\R^N)\).
The conclusion then follows from the classical Rellich theorem and a diagonal argument.
\end{proof}

\subsection{Density of test functions in Coulomb--Sobolev spaces}

We are going to show that Coulomb--Sobolev space \(E^{\alpha, p}(\R^N)\) can be naturally identified with the completion of the set of test functions \(C^\infty_c (\R^N)\) under the norm \(\norm{\cdot}_{E^{\alpha, p}}\).

\begin{proposition}[Density of test functions]\label{smoothapprox}
Let $N\in\N$, $\alpha \in (0,N)$ and $p\ge 1$.
The space of test functions \(C^\infty_{c}(\R^N)\) is dense in the Coulomb space \(E^{\alpha, p} (\R^N)\).
\end{proposition}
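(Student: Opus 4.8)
The plan is to prove density by a two-step truncation-and-regularisation argument, reducing to the already-established tools: the local estimate of Proposition~\ref{propositionRuizBall} (giving $E^{\alpha,p}\subset L^p_\loc$), the Fatou property of Proposition~\ref{propositionSemiContinuityLocalConvergence}, and the known density of $C^\infty_c$ in the classical Sobolev space. First I would fix $u\in E^{\alpha,p}(\R^N)$ and approximate it by functions with compact support. A clean way is to set $u_R := \eta_R u$ where $\eta_R(x)=\eta(x/R)$ for a fixed cut-off $\eta\in C^\infty_c(\R^N)$ with $\eta\equiv 1$ on $B_1$, $0\le\eta\le 1$, $\supp\eta\subset B_2$. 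One needs $\norm{u - u_R}_{E^{\alpha,p}}\to 0$. For the gradient term this is routine: $D u_R = \eta_R Du + (D\eta_R) u$, and $D\eta_R\to 0$ uniformly while $u\in L^2_\loc$ on the annulus (actually one should be slightly careful and may prefer truncating $u$ itself, see below). For the Coulomb term the key pointwise inequality is $\abs{u_R}^p \le \abs{u}^p$, hence $I_{\alpha/2}\ast\abs{u_R}^p \le I_{\alpha/2}\ast\abs{u}^p$ pointwise, so $u_R\in Q^{\alpha,p}$ and, since $\abs{u_R}^p\to\abs{u}^p$ in $L^1$ (dominated convergence), one gets $I_{\alpha/2}\ast\abs{u_R}^p\to I_{\alpha/2}\ast\abs{u}^p$ in measure together with the uniform $L^2$ bound; a Brezis--Lieb / Vitali-type argument, or simply the Fatou property applied to $u-u_R$ after passing to subsequences, upgrades this to convergence of the $Q^{\alpha,p}$-norm. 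So it suffices to treat compactly supported $u\in E^{\alpha,p}(\R^N)$.

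Second, for $u\in E^{\alpha,p}(\R^N)$ with $\supp u \subset B_\rho$, I would mollify: set $u_\varepsilon := \varphi_\varepsilon \ast u$ with a standard mollifier $\varphi_\varepsilon$. Then $u_\varepsilon\in C^\infty_c(\R^N)$ with supports contained in $B_{\rho+1}$ for $\varepsilon<1$. Since $u\in L^p_\loc$ (Proposition~\ref{propositionRuizBall}) and has compact support, $u\in L^p(\R^N)\cap L^2(\R^N)$ — the $L^2$ membership coming from $Du\in L^2$, compact support, and Poincaré; hence $u_\varepsilon\to u$ in $L^p(\R^N)$ and $Du_\varepsilon = \varphi_\varepsilon\ast Du \to Du$ in $L^2(\R^N)$. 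It remains to control the Coulomb term. Here I would use the integral Minkowski inequality exactly as in the proof of Proposition~\ref{norma}: writing $\abs{u_\varepsilon}^p = \abs{\varphi_\varepsilon\ast u}^p$ and using $\bigabs{(\varphi_\varepsilon\ast u)(x)}\le (\varphi_\varepsilon\ast\abs{u})(x)$, one obtains the pointwise bound
\[
 \bigl(I_{\alpha/2}\ast\abs{u_\varepsilon}^p\bigr)^{1/p}(x) \le \bigl(\varphi_\varepsilon \ast (I_{\alpha/2}\ast\abs{u}^p)^{1/p}\bigr)(x),
\]
by Minkowski's integral inequality applied to the average over the mollification variable (this is the same manipulation that shows $\norm{\varphi_\varepsilon\ast f}_{Q^{\alpha,p}}\le\norm{f}_{Q^{\alpha,p}}$). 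Consequently $\norm{u_\varepsilon}_{Q^{\alpha,p}}\le\norm{u}_{Q^{\alpha,p}}$ uniformly in $\varepsilon$, and since $\abs{u_\varepsilon}^p\to\abs{u}^p$ in $L^1(\R^N)$ we again get $I_{\alpha/2}\ast\abs{u_\varepsilon}^p\to I_{\alpha/2}\ast\abs{u}^p$ in measure; combining the uniform $L^2$ bound with measure convergence and the Fatou property applied to $u-u_\varepsilon$ (along subsequences), $\norm{u-u_\varepsilon}_{Q^{\alpha,p}}\to 0$. A diagonal argument over $R$ and $\varepsilon$ then finishes the proof.

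The main obstacle is the Coulomb term, because $\norm{\cdot}_{Q^{\alpha,p}}$ is not a Hilbertian or even a translation-friendly norm for $p\ne 2$ and the naive estimate $\norm{\varphi_\varepsilon\ast u}_{Q^{\alpha,p}}\le\norm{u}_{Q^{\alpha,p}}$ has to be extracted through the two layers of Minkowski inequality (once in the mollification, once in $L^{2p}$), exactly mirroring Proposition~\ref{norma}. Once that commutation estimate is in hand, the passage from ``convergence in measure plus uniform $L^2$ bound'' to ``norm convergence'' is where I would invoke the Fatou property from Proposition~\ref{propositionSemiContinuityLocalConvergence}: applying it to the sequence $(u - u_\varepsilon)$ (which converges to $0$ in $L^1_\loc$ and is bounded in $E^{\alpha,p}$) gives $\limsup \norm{u-u_\varepsilon}_{Q^{\alpha,p}}=0$ directly, so one never needs an exact Brezis--Lieb identity — upper semicontinuity along the approximating sequence suffices. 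The gradient term and the truncation step are standard and I would not belabour them.
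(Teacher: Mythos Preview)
Your proposal has two genuine gaps.

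First, your use of the Fatou property (Proposition~\ref{propositionSemiContinuityLocalConvergence}) is backwards. That proposition gives \emph{lower} semicontinuity: if $v_n\to v$ in $L^1_\loc$ then $\norm{v}\le\liminf\norm{v_n}$. Applied to $v_n=u-u_\varepsilon\to 0$ it yields only the vacuous $0\le\liminf\norm{u-u_\varepsilon}$; it cannot deliver $\limsup\norm{u-u_\varepsilon}_{Q^{\alpha,p}}=0$. The correct route is dominated convergence: from your own Jensen bound one has $I_{\alpha/2}\ast\abs{u-u_\varepsilon}^p\le 2^{p-1}\bigl(I_{\alpha/2}\ast\abs{u}^p+\varphi_\varepsilon\ast(I_{\alpha/2}\ast\abs{u}^p)\bigr)$, the right-hand side converges in $L^2$, and the left-hand side tends to $0$ pointwise a.e.; generalised dominated convergence then gives the $L^2$ convergence. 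The same misuse occurs in your cutoff step, where the fix is even simpler since $\abs{u-u_R}^p\le\abs{u}^p$ pointwise. Note also that your claim ``$\abs{u_R}^p\to\abs{u}^p$ in $L^1$'' is unjustified, since you only know $u\in L^p_\loc$.

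Second, the spatial cutoff step is not routine. You need $R^{-2}\int_{B_{2R}\setminus B_R}\abs{u}^2\to 0$, but at this stage only $u\in L^p_\loc$ is available. Truncating values does not help: for $\abs{u_M}\le M$ one gets $\int\abs{(D\eta_R)u_M}^2\le CM^2R^{N-2}\to\infty$ when $N\ge 3$. You could rescue this by first proving $u\in L^{2N/(N-2)}$ via the embedding of Theorem~\ref{theoremEmbedding}, but that result appears later in the paper. The paper avoids the difficulty entirely by reversing the order of your two steps and replacing the spatial cutoff with a \emph{value} cutoff: it sets $u_n=\theta_n\circ(\eta_n\ast u)$, mollifying first. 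The mollified function is continuous and, by Proposition~\ref{propositionRuizBall}, satisfies $\abs{(\eta_n\ast u)(x)}\le C_n\bigl(\int_{B_{1/n}(x)}\bigabs{I_{\alpha/2}\ast\abs{u}^p}^2\bigr)^{1/(2p)}\to 0$ as $\abs{x}\to\infty$; hence the value truncation $\theta_n$ automatically yields compact support, and no $(D\eta_R)u$ term ever appears.
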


For \(N \ge 3\) this implies that the Coulomb--Sobolev space \(E^{\alpha, p} (\R^N)\) can be represented as
$$
 E^{\alpha,p} (\R^N) = Q^{\alpha, p} (\R^N) \cap \mathcal{D}^{1, 2}(\R^N),
$$
where \(\mathcal{D}^{1, 2}(\R^N)\) is the homogeneous Sobolev space of weakly differentiable functions \citelist{\cite{Willem2013}*{definition 7.2.1}}.
Taking this as a definition would not allow to cover the low dimensions \(N \in \{1, 2\}\),
when \(\mathcal{D}^{1, 2}(\R^N)\) is not defined.

\begin{proof}[Proof of proposition~\ref{smoothapprox}]
Let \(\theta \in C^\infty (\R)\) be a cut-off function such that \(\abs{\theta'} \le 1\) on \(\R\), \(\theta = 0\) on \([-1, 1]\) and \(\theta (t) = t\) if \(\abs{t} \ge 2\). For \(n\in\N\), let \(\theta_n (t) = \theta (n t)/n\). Choose a nonnegative radial test function \(\eta \in C^\infty_c (\R^N)\) such that \(\int_{\R^N} \eta = 1\) and \(\supp \eta \subset B_1\) and set \(\eta_n (x) = n^N \eta (n x)\).
For a given \(u \in E^{\alpha, p} (\R^N)\), define
\[
 u_n: = \theta_n \circ (\eta_n \ast u).
\]
By proposition~\ref{propositionRuizBall}, \(u \in L^1_\mathrm{loc} (\R^N)\) and therefore $u_n$ is well-defined.
By smoothing properties of the convolution and by the chain rule, \(u_n \in C^\infty (\R^N)\).

We are going to show that the support of \(u_n \) is compact.
Observe that for each \(x \in \R^N\), by H\"older inequality and by proposition~\ref{propositionRuizBall},
\[
  \abs{\eta_n \ast u (x)} \le C n^N \int_{B_{1/n} (x)} \abs{u}
  \le C n^\frac{N}{p} \Bigl(\int_{B_{1/n} (x)} \abs{u}^p \Bigr)^\frac{1}{p}
  \le C' n^\frac{N + \alpha}{2 p} \Bigl(\int_{B_{1/n} (x)} \bigabs{I_{\alpha/2} \ast \abs{u}^p}^2 \Bigr)^\frac{1}{2 p}.
\]
Since \(\int_{\R^N} \bigabs{I_{\alpha/2} \ast \abs{u}^p}^2 < \infty\), we deduce therefrom that
\(\lim_{\abs{x} \to \infty} (\eta_n \ast u) (x) = 0\), and thus the set \(\supp (\theta_n \circ (\eta_n \ast u))\) is compact.

Moreover, since the function \(u\) is locally integrable and weakly differentiable, the sequence \((\eta_n \ast u)_{n \in \N}\)
converges to \(u\) in \(W^{1, 1}_{\mathrm{loc}} (\R^N)\).
By the properties of \(\theta_n\) and by Lebesgue's dominated convergence theorem, it follows immediately that the sequence \((u_n)_{n \in \N}\) converges to \(u\) in \(W^{1,1}_{\mathrm{loc}} (\R^N)\).

Since \(\eta\) is nonnegative, we observe that for every \(n \in \N\),
\[
 \int_{\R^N} \abs{D u_n}^2
 \le \int_{\R^N} \abs{D (\rho_n \ast u)}^2
 \le \int_{\R^N} \abs{D u}^2.
\]
Since the sequence \((D u_n)_{n \in \N}\) converges to \(D u\) in measure, it follows that
\[
 \lim_{n \to \infty} \int_{\R^N} \abs{D u_n - D u}^2 = 0
\]
(cf.\thinspace{}\cite{Willem2013}*{proposition 4.2.6}).

Next, by Fatou's lemma, we have for every \(x \in \R^N\),
\begin{equation}
\label{eqDensityRieszLiminf}
 \liminf_{n \to \infty} (I_{\alpha/2} \ast \abs{u_n}^p) (x) \ge (I_{\alpha/2} \ast \abs{u}^p)(x).
\end{equation}
On the other hand, by Jensen's inequality
\[
 I_{\alpha/2} \ast \abs{u_n}^p \le I_{\alpha/2} \ast \abs{\rho_n \ast u}^p
 \le \rho_n \ast (I_{\alpha/2} \ast \abs{u}^p),
\]
and hence
\begin{equation}\label{eqDensityRieszLimsup}
 \limsup_{n \to \infty} \int_{\R^N} \bigabs{I_{\alpha/2} \ast \abs{u_n}^p}^2
 \le \int_{\R^N} \bigabs{I_{\alpha/2} \ast \abs{u}^p}^2.
\end{equation}
Therefore, we deduce from \eqref{eqDensityRieszLiminf} and \eqref{eqDensityRieszLimsup} that for almost every \(x \in \R^N\),
\begin{equation}
\label{eqDensityRieszLim}
 \lim_{n \to \infty} (I_{\alpha/2} \ast \abs{u_n}^p) (x) = (I_{\alpha/2} \ast \abs{u}^p)(x),
\end{equation}
and thus (see for example \cite{Willem2013}*{proposition 4.2.6}), for every such \(x \in \R^N\),
\[
  \lim_{n \to \infty} (I_{\alpha/2} \ast \abs{u_n - u}^p) (x) = 0.
\]
We deduce also from \eqref{eqDensityRieszLimsup} and \eqref{eqDensityRieszLim} that
\[
 \lim_{n \to \infty} \int_{\R^N} \bigabs{I_{\alpha/2} \ast \abs{u_n}^p - I_{\alpha/2} \ast \abs{u}^p}^2 = 0.
\]
Finally, since
\[
 I_{\alpha/2} \ast \abs{u_n - u}^p \le 2^{p - 1} I_{\alpha/2} \ast (\abs{u}^p + \abs{u_n}^p)
 \le 2^p I_{\alpha/2} \ast \abs{u}^p + 2^{p - 1} \bigabs{I_{\alpha/2} \ast \abs{u_n}^p - I_{\alpha/2} \ast \abs{u}^p},
\]
we conclude by Lebesgue's dominated convergence theorem.
\end{proof}

\subsection{Further properties of Coulomb--Sobolev spaces}

\subsubsection{Uniform convexity and reflexivity}
In order to study the uniform convexity of the Coulomb--Sobolev space \(E^{\alpha, p} (\R^N)\),
we first establish uniform convexity of the Coulomb space \(Q^{\alpha, p} (\R^N)\).
The property will follow from the following inequalities.

\begin{proposition}[Clarkson--Boas--Koskela inequalities]
Let $N\in\N$, $\alpha \in (0,N)$.
If \(p, r, s \in (1, \infty)\) satisfy
\[
 r \ge \max\Big\{2p, \frac{p}{p - 1}, \frac{s}{s - 1}\Big\},
\]
then for every \(u \in Q^{\alpha, p} (\R^N)\),
\begin{multline*}
  \Bigl(\int_{\R^N} \abs{I_{\alpha/2} \ast \abs{u + v}^p}^2\Bigr)^\frac{r}{2p} +
  \Bigl(\int_{\R^N} \abs{I_{\alpha/2} \ast \abs{u - v}^p}^2\Bigr)^\frac{r}{2p}\\
  \le 2^{r(1 - \frac{1}{s})} \biggl(\Bigl(\int_{\R^N} \abs{I_{\alpha/2} \ast \abs{u}^p}^2\Bigr)^\frac{s}{2 p} +
  \Bigl(\int_{\R^N} \abs{I_{\alpha/2} \ast \abs{v}^p}^2\Bigr)^\frac{s}{2 p}\biggr)^\frac{r}{s}.
\end{multline*}
\end{proposition}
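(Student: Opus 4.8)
The statement to prove is the Clarkson–Boas–Koskela inequality for the Coulomb space $Q^{\alpha,p}(\R^N)$: given $p,r,s\in(1,\infty)$ with $r\ge\max\{2p,\frac{p}{p-1},\frac{s}{s-1}\}$, one has
$$
\Bigl(\int_{\R^N} \abs{I_{\alpha/2} \ast \abs{u + v}^p}^2\Bigr)^\frac{r}{2p} +
\Bigl(\int_{\R^N} \abs{I_{\alpha/2} \ast \abs{u - v}^p}^2\Bigr)^\frac{r}{2p}
\le 2^{r(1 - \frac{1}{s})} \biggl(\norm{u}_{Q^{\alpha,p}}^s + \norm{v}_{Q^{\alpha,p}}^s\biggr)^\frac{r}{s}.
$$
The natural strategy is to view $Q^{\alpha,p}$ as built in two layers: pointwise, the map $u\mapsto \abs{I_{\alpha/2}\ast\abs{u}^p}^{1/p}$ behaves like a "norm" because of the integral Minkowski inequality already exploited in the proof of Proposition~\ref{norma}; then $Q^{\alpha,p}$ sits inside $L^{2p}(\R^N)$ via $u\mapsto (I_{\alpha/2}\ast\abs{u}^p)^{1/p}$. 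So the plan is: first prove a pointwise Clarkson-type inequality for the quantity $f_u(x):=(I_{\alpha/2}\ast\abs{u}^p)^{1/p}(x)$, then feed it into the classical Clarkson–Boas–Koskela inequalities in the Lebesgue space $L^{2p}(\R^N)$.

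**Step 1: pointwise estimate.** Fix $x\in\R^N$. Since $\abs{a+b}^p+\abs{a-b}^p$ and $\abs{a}^p+\abs{b}^p$ are comparable (Clarkson's elementary inequalities: for $p\ge 1$, $\abs{a+b}^p+\abs{a-b}^p\le 2^{p-1}(\abs{a}^p+\abs{b}^p)\cdot$ or rather, more usefully here, $\abs{a+b}^p+\abs{a-b}^p\ge 2(\abs{a}^p\wedge\ldots)$), the cleaner route is to use that by the integral Minkowski inequality, as in Proposition~\ref{norma},
$$
\bigl((I_{\alpha/2}\ast\abs{u+v}^p)(x)\bigr)^{1/p}\le \bigl((I_{\alpha/2}\ast\abs{u}^p)(x)\bigr)^{1/p}+\bigl((I_{\alpha/2}\ast\abs{v}^p)(x)\bigr)^{1/p}=f_u(x)+f_v(x),
$$
and likewise $f_{u-v}(x)\le f_u(x)+f_v(x)$, but also $f_u(x)\le f_{u+v}(x)+f_{u-v}(x)$ — wait, that last one fails since $2u=(u+v)+(u-v)$ gives only $2^{1/p}f_u(x)\le\ldots$; in any case the key point is simply that the four-variable map satisfies the reverse triangle-type bounds. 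I would instead prove directly the pointwise Boas–Koskela inequality
$$
f_{u+v}(x)^r+f_{u-v}(x)^r\le 2^{r(1-1/s)}\bigl(f_u(x)^s+f_v(x)^s\bigr)^{r/s}
$$
for each $x$. This is exactly the classical Boas–Koskela inequality applied to the two nonnegative numbers $A:=f_{u+v}(x)$ and $B:=f_{u-v}(x)$, given that $A,B\le f_u(x)+f_v(x)$ and — by the same Minkowski argument applied to the pair $u+v,\ u-v$ — that $\max\{f_u(x),f_v(x)\}\le 2^{-1/p}(f_{u+v}(x)+f_{u-v}(x))$, i.e. $\max\{f_u(x),f_v(x)\}$ is controlled below by a multiple of $A+B$. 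These two sandwich bounds, $A+B$ comparable to $f_u(x)+f_v(x)$ up to the constant $2^{1-1/p}$, together with $r\ge 2p$ (so that $r/p\ge 2$, the range where Clarkson's inequalities point the right way) are precisely the hypotheses under which the elementary numerical Clarkson–Boas–Koskela inequality holds.

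**Step 2: integrate via the Lebesgue-space version.** Having the pointwise inequality, raise it to a suitable power and integrate. More precisely, write $g:=f_{u+v}$, $h:=f_{u-v}$, $\phi:=f_u$, $\psi:=f_v$ as functions on $\R^N$; these lie in $L^{2p}(\R^N)$ with $\norm{\phi}_{L^{2p}}^p=\norm{u}_{Q^{\alpha,p}}^p$ etc. The classical Clarkson–Boas–Koskela inequalities in $L^{2p}$ — which apply because $2p>1$ and $r\ge\max\{2p,\frac{p}{p-1},\frac{s}{s-1}\}$ translates into the standard exponent conditions with ambient exponent $2p$ — give
$$
\norm{g}_{L^{2p}}^r+\norm{h}_{L^{2p}}^r\le 2^{r(1-1/s)}\bigl(\norm{\phi}_{L^{2p}}^s+\norm{\psi}_{L^{2p}}^s\bigr)^{r/s}
$$
\emph{provided} $g,h$ are dominated pointwise by $\phi+\psi$ and $\phi,\psi$ are each dominated by a fixed multiple of $g+h$; these domination conditions are exactly what Step~1 supplied. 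Rewriting $\norm{g}_{L^{2p}}^r=\bigl(\int\abs{I_{\alpha/2}\ast\abs{u+v}^p}^2\bigr)^{r/(2p)}$ and similarly for the other three terms yields the claimed inequality.

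**Main obstacle.** The delicate point is matching the exponent hypothesis $r\ge\max\{2p,\frac{p}{p-1},\frac{s}{s-1}\}$ to the precise form of the scalar Boas–Koskela inequality and ensuring the constant $2^{r(1-1/s)}$ comes out \emph{exactly}, not just up to an absorbable factor. In the purely Lebesgue setting one can invoke Boas's refinement of Clarkson and Koskela's sharpening directly; here the subtlety is that our "norm" $f_u$ is not itself a function-space norm but a composition, so one must be careful that the pointwise sandwich bounds are genuinely strong enough (both an upper bound $f_{u\pm v}\le f_u+f_v$ \emph{and} a lower comparison $f_u,f_v\lesssim f_{u+v}+f_{u-v}$) to run the standard argument; fortunately both come from the single integral Minkowski inequality used in two different groupings of $u,v,u+v,u-v$, so this is the step to state carefully but it is not deep. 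I expect the bulk of the write-up to consist in quoting the scalar inequality (e.g. from Boas–Koskela / Hanner-type lemmas) and verifying the index bookkeeping.
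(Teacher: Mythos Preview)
Your approach has a genuine gap in Step~2. You claim that the Clarkson--Boas--Koskela inequalities in \(L^{2p}\) yield
\[
\norm{g}_{L^{2p}}^r+\norm{h}_{L^{2p}}^r\le 2^{r(1-1/s)}\bigl(\norm{\phi}_{L^{2p}}^s+\norm{\psi}_{L^{2p}}^s\bigr)^{r/s}
\]
``provided \(g,h\) are dominated pointwise by \(\phi+\psi\) and \(\phi,\psi\) are each dominated by a fixed multiple of \(g+h\)''. No such monotone version exists: Clarkson-type inequalities rely on the exact algebraic relation between \(\phi\pm\psi\) and \(\phi,\psi\), not on pointwise sandwich bounds. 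A trivial counterexample: on \([0,1]\) take \(\phi=\psi=1\) and \(g=h=2\); your domination hypotheses hold, yet the left side is \(2^{r+1}\) while the right side is \(2^r\). Step~1 is also muddled: you cannot deduce the pointwise Clarkson--Boas--Koskela inequality with the sharp constant \(2^{r(1-1/s)}\) merely from Minkowski-type triangle bounds; you would need to recognise that for fixed \(x\) the map \(u\mapsto f_u(x)\) is a weighted \(L^p\) norm and invoke the \(L^p\) version directly. Even then, combining the pointwise inequality with the outer \(L^{2p}\) integration requires exactly the mixed-norm machinery you are trying to avoid.

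The paper's proof bypasses all of this with one observation: the map \(u\mapsto (\mathcal{L}u)(x,y)=I_{\alpha/2}(x-y)^{1/p}u(y)\) is a linear isometry from \(Q^{\alpha,p}(\R^N)\) into the mixed Lebesgue space \(L^{2p}(\R^N;L^p(\R^N))\), since
\[
\int_{\R^N}\Bigl(\int_{\R^N}\abs{(\mathcal{L}u)(x,y)}^p\dif y\Bigr)^2\dif x=\int_{\R^N}\bigabs{I_{\alpha/2}\ast\abs{u}^p}^2.
\]
The Clarkson--Boas--Koskela inequalities are known for \(L^{2p}(\R^N;L^p(\R^N))\) under precisely the hypothesis \(r\ge\max\{2p,\frac{p}{p-1},\frac{s}{s-1}\}\) (Boas, Koskela, Benedek--Panzone, Maligranda--Sabourova), and pulling back along the isometry gives the statement immediately. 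This is the missing idea: rather than trying to layer two scalar Clarkson steps by hand, embed linearly into a space where the full inequality is already established.
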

\begin{proof}
The Clarkson--Boas--Koskela inequalities in uniformly convex spaces \cite{MaligrandaSabourova2007}*{theorem 3.2} (see also \citelist{\cite{Koskela1979}*{theorem 3}\cite{Boas1940}*{theorem 2}\cite{BenedekPanzone1961}*{lemma 8.1}}) states that for every \(U, V \in L^{2p} (\R^N; L^p (\R^N))\),
\begin{multline*}
  \Bigl(\int_{\R^N} \Bigl(\int_{R^N} \abs{U (x, y)+ V (x, y)}^p\dif y\Bigr)^2\dif x\Bigr)^\frac{r}{2p} +
  \Bigl(\int_{\R^N} \Bigl(\int_{\R^N} \abs{U (x,y) - V (x, y)}^p \dif y\Bigr)^2\dif x\Bigr)^\frac{r}{2p}\\
  \le 2^{r(1 - \frac{1}{s})} \biggl(\Bigl(\int_{\R^N} \Bigl(\int_{\R^N} \abs{U (x, y)}^p\dif y\Bigr)^2\dif x\Bigr)^\frac{s}{2 p} +
  \Bigl(\int_{\R^N} \Bigl(\int_{\R^N} \abs{V (x, y)}^p\dif y\Bigr)^2\dif x\Bigr)^\frac{s}{2 p}\biggr)^\frac{r}{s}.
\end{multline*}
In particular, if we take \(U (x, y) = I_\alpha (x - y)^\frac{1}{p} u (y)\) and \(V (x, y) = I_\alpha (x - y)^\frac{1}{p} v (y)\), we reach the conclusion.
\end{proof}

In particular, if \(p \le \frac{3}{2}\) one can take \(r = \frac{p}{p - 1}\) and \(s = p\) and we obtain
\begin{multline}
\label{ineqClarksonLow}
  \Bigl(\int_{\R^N} \abs{I_{\alpha/2} \ast \abs{u + v}^p}^2\Bigr)^\frac{1}{2(p - 1)} +
  \Bigl(\int_{\R^N} \abs{I_{\alpha/2} \ast \abs{u - v}^p}^2\Bigr)^\frac{1}{2(p - 1)}\\
  \le 2 \biggl(\Bigl(\int_{\R^N} \abs{I_{\alpha/2} \ast \abs{u}^p}^2\Bigr)^\frac{1}{2} +
  \Bigl(\int_{\R^N} \abs{I_{\alpha/2} \ast \abs{v}^p}^2\Bigr)^\frac{1}{2}\biggr)^\frac{1}{p - 1}.
\end{multline}
whereas when \(p \ge \frac{3}{2}\) one can take \(r = 2 p\) and \(s = \frac{2 p}{2 p - 1}\) and we obtain
\begin{multline}
\label{ineqClarksonUpper}
  \int_{\R^N} \abs{I_{\alpha/2} \ast \abs{u + v}^p}^2 +
  \int_{\R^N} \abs{I_{\alpha/2} \ast \abs{u - v}^p}^2\\
  \le 2 \biggl(\Bigl(\int_{\R^N} \abs{I_{\alpha/2} \ast \abs{u}^p}^2\Bigr)^\frac{1}{2 p - 1} +
  \Bigl(\int_{\R^N} \abs{I_{\alpha/2} \ast \abs{v}^p}^2\Bigr)^\frac{1}{2 p - 1}\biggr)^{2 p - 1}.
\end{multline}

An important consequence of these considerations is the uniform convexity of the Coulomb spaces with $p>1$.

\begin{proposition}
Let $N\in\N$, $\alpha \in (0,N)$ and \(p \in (1, \infty)\).
Then \(Q^{\alpha, p} (\R^N)\) is uniformly convex.
\end{proposition}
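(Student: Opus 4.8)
The plan is to mimic Clarkson's classical proof of the uniform convexity of $L^p$, using the Clarkson--Boas--Koskela inequalities \eqref{ineqClarksonLow} and \eqref{ineqClarksonUpper} in place of the parallelogram-type identities. Write $N(u):=\int_{\R^N}\abs{I_{\alpha/2}\ast\abs{u}^p}^2$, so that $\norm{u}_{Q^{\alpha,p}}=N(u)^{1/(2p)}$ and $N(\lambda u)=\abs{\lambda}^{2p}N(u)$ for every $\lambda\in\R$. Fix $\varepsilon\in(0,2)$ and take $u,v\in Q^{\alpha,p}(\R^N)$ with $N(u)\le 1$, $N(v)\le 1$ and $N(u-v)\ge\varepsilon^{2p}$; by proposition~\ref{norma} both $u+v$ and $u-v$ lie in $Q^{\alpha,p}(\R^N)$. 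The goal is to exhibit $\delta=\delta(\varepsilon)>0$, independent of $u$ and $v$, with $N\bigl(\tfrac{u+v}{2}\bigr)\le(1-\delta)^{2p}$, i.e.\ $\norm{\tfrac{u+v}{2}}_{Q^{\alpha,p}}\le 1-\delta$.

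When $p\ge\tfrac32$, inequality \eqref{ineqClarksonUpper} gives
$$N(u+v)+N(u-v)\le 2\bigl(N(u)^{\frac{1}{2p-1}}+N(v)^{\frac{1}{2p-1}}\bigr)^{2p-1}\le 2\cdot 2^{2p-1}=2^{2p},$$
so that $N(u+v)\le 2^{2p}-\varepsilon^{2p}=2^{2p}\bigl(1-(\varepsilon/2)^{2p}\bigr)$ and hence $N\bigl(\tfrac{u+v}{2}\bigr)=2^{-2p}N(u+v)\le 1-(\varepsilon/2)^{2p}$. When $1<p\le\tfrac32$, inequality \eqref{ineqClarksonLow} gives
$$N(u+v)^{\frac{1}{2(p-1)}}+N(u-v)^{\frac{1}{2(p-1)}}\le 2\bigl(N(u)^{\frac12}+N(v)^{\frac12}\bigr)^{\frac{1}{p-1}}\le 2\cdot 2^{\frac{1}{p-1}}=2^{\frac{p}{p-1}},$$
whence, using $N(u-v)^{\frac{1}{2(p-1)}}\ge\varepsilon^{\frac{p}{p-1}}$, we get $N(u+v)^{\frac{1}{2(p-1)}}\le 2^{\frac{p}{p-1}}\bigl(1-(\varepsilon/2)^{\frac{p}{p-1}}\bigr)$ and therefore
$$N\Bigl(\tfrac{u+v}{2}\Bigr)=2^{-2p}N(u+v)\le\Bigl(1-(\varepsilon/2)^{\frac{p}{p-1}}\Bigr)^{2(p-1)}.$$
In either case the right-hand side is a quantity strictly less than $1$ depending only on $\varepsilon$ and $p$, so setting $\delta:=1-\bigl(1-(\varepsilon/2)^{2p}\bigr)^{1/(2p)}$ when $p\ge\tfrac32$ and $\delta:=1-\bigl(1-(\varepsilon/2)^{\frac{p}{p-1}}\bigr)^{\frac{p-1}{p}}$ when $1<p\le\tfrac32$ yields $\norm{\tfrac{u+v}{2}}_{Q^{\alpha,p}}\le 1-\delta$, which is precisely the uniform convexity of $Q^{\alpha,p}(\R^N)$.

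There is no genuine obstacle here: this is the standard Clarkson scheme, and the only points requiring care are to split into the two regimes $1<p\le\tfrac32$ and $p\ge\tfrac32$ so that the applicable form of the Clarkson--Boas--Koskela inequality can be invoked (the two ranges overlap at $p=\tfrac32$, so every $p\in(1,\infty)$ is covered), and to track the exponents $2p$, $2(p-1)$ and $\tfrac{p}{p-1}$ correctly when translating between $N(\cdot)$ and $\norm{\cdot}_{Q^{\alpha,p}}$.
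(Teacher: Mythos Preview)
Your proof is correct and follows exactly the approach the paper indicates: the paper's own proof simply states that the result ``follows immediately from the inequalities \eqref{ineqClarksonLow} and \eqref{ineqClarksonUpper}'', and you have carefully written out the standard Clarkson-type argument that makes this precise. The case split at $p=\tfrac32$, the exponent bookkeeping, and the explicit moduli of convexity are all handled correctly.
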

\begin{proof}
Follows immediately from the inequalities \eqref{ineqClarksonLow} and \eqref{ineqClarksonUpper}.
\end{proof}

\begin{remark}
Alternatively, one can observe that the map \(\mathcal{L} : Q^{\alpha, p} (\R^N) \to L^{2p} (\R^N; L^p (\R^N))\) defined  by
\begin{equation}\label{e-isometry}
  (\mathcal{L} u) (x, y) = \bigl(I_{\alpha/2} (x - y)\bigr)^\frac{1}{p} u (y),
\end{equation}
is a linear isometry from \(Q^{\alpha, p}(\R^N)\) into \(L^{2p} (\R^N; L^p (\R^N))\).
Since the latter is uniformly convex \citelist{\cite{Boas1940}*{theorem 2}\cite{BenedekPanzone1961}*{\S 8}}, its linear subspace \(\mathcal{L} \big(Q^{\alpha, p}(\R^N)\big) \subset L^{2p} (\R^N; L^p (\R^N))\) is also uniformly convex and thus the space \(Q^{\alpha, p} (\R^N)\) is uniformly convex.
\end{remark}

\begin{remark}
When \(p \ge 2\), it is possible to deduce the uniform convexity directly from the classical \(L^p\) Clarkson inequality.
Since \(p \ge 2\), for every \(a, b \in \R\),
\begin{equation*}
\abs{a+b}^p+\abs{a-b}^p\leq 2^{p - 1}\big(\abs{a}^p+\abs{b}^p\big).
\end{equation*}
Hence, by linearity of the convolution and by positivity of the Riesz-kernel,
\[
  \int_{\R^N} \bigabs{I_{\frac{\alpha}{2}}*\abs{u+v}^p+I_{\frac{\alpha}{2}}*\abs{u-v}^p}^2
 \leq 2^{2 p - 2} \int_{\R^N} \bigabs{I_{\frac{\alpha}{2}}*\abs{u}^p+I_{\frac{\alpha}{2}}*\abs{v}^p}^2.
\]
Expanding the square and applying the Cauchy--Schwarz inequality then implies
\[
  \int_{\R^N} \bigabs{I_{\frac{\alpha}{2}}*\abs{u+v}^p}^2 + \int_{\R^N} \bigabs{I_{\frac{\alpha}{2}}*\abs{u-v}^p}^2
  \le 2^{2 p - 1} \Bigl(\int_{\R^N} \bigabs{I_{\frac{\alpha}{2}}*\abs{u}^p}^2+\int_{\R^N} \bigabs{I_{\frac{\alpha}{2}}*\abs{v}^p}^2\Bigr).
\]
This inequality was proved for \(p = 2\) by D.\thinspace{}Ruiz \cite{Ruiz-ARMA}*{p.\thinspace{} 355} and
follows directly from the stronger inequality \eqref{ineqClarksonUpper} by the discrete H\"older inequality.
\end{remark}

The uniform convexity of the Coulomb space $Q^{\alpha,p} (\R^N)$ implies uniform convexity of the Coulomb--Sobolev space $E^{\alpha,p}(\R^N)$.

\begin{proposition}
\label{propositionUniformlyConvex}
Let $N\in\N$, $\alpha \in (0,N)$ and \(p \in (1, \infty)\).
Then $E^{\alpha,p} (\R^N)$ is uniformly convex.
\end{proposition}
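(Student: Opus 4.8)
The plan is to realise $E^{\alpha,p}(\R^N)$ as an isometric linear subspace of an $\ell^2$-direct sum of two uniformly convex spaces, and then to invoke (and sketch) the classical fact that such a sum is uniformly convex.

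By the very definition of the norm,
$$\norm{u}_{E^{\alpha,p}}^2 = \int_{\R^N}\abs{Du}^2 + \Bigl(\int_{\R^N}\bigabs{I_{\alpha/2}\ast\abs{u}^p}^2\Bigr)^{1/p} = \norm{Du}_{L^2(\R^N;\R^N)}^2 + \norm{u}_{Q^{\alpha,p}}^2,$$
so the linear map $T\colon E^{\alpha,p}(\R^N)\to L^2(\R^N;\R^N)\oplus Q^{\alpha,p}(\R^N)$ given by $Tu=(Du,u)$ is an isometry onto its image, where the target carries the norm $\norm{(f,v)}:=(\norm{f}_{L^2}^2+\norm{v}_{Q^{\alpha,p}}^2)^{1/2}$. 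Since the modulus of convexity of a linear subspace equipped with the inherited norm is no smaller than that of the ambient space, uniform convexity is inherited by linear subspaces; hence it suffices to show that $L^2(\R^N;\R^N)\oplus Q^{\alpha,p}(\R^N)$ with this $\ell^2$-norm is uniformly convex. Here $L^2(\R^N;\R^N)$ is a Hilbert space, hence uniformly convex, and $Q^{\alpha,p}(\R^N)$ is uniformly convex by the proposition proved above (which is where the real work, via the Clarkson--Boas--Koskela inequalities, was done); so the present statement is essentially a soft corollary.

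It then remains to prove that the $\ell^2$-direct sum $X\oplus Y$ of two uniformly convex Banach spaces is uniformly convex, which I would do through the sequential characterisation: if $\norm{z_n}\to1$, $\norm{w_n}\to1$ and $\norm{(z_n+w_n)/2}\to1$, then $\norm{z_n-w_n}\to0$. Writing $z_n=(x_n,y_n)$ and $w_n=(x_n',y_n')$, one combines the triangle inequality in each factor with the strict convexity of $t\mapsto t^2$ to deduce, along a subsequence, that $\norm{x_n}_X,\norm{x_n'}_X\to a$ and $\norm{y_n}_Y,\norm{y_n'}_Y\to b$ with $a^2+b^2=1$, and likewise $\norm{(x_n+x_n')/2}_X\to a$, $\norm{(y_n+y_n')/2}_Y\to b$. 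Applying uniform convexity of $X$ to the normalised vectors $x_n/\norm{x_n}_X$ and $x_n'/\norm{x_n'}_X$ forces $x_n-x_n'\to0$ in $X$ when $a>0$, while the case $a=0$ is immediate from the triangle inequality; symmetrically $y_n-y_n'\to0$ in $Y$, and hence $z_n-w_n\to0$. The only mildly delicate points are the quantitative use of the strict convexity of squaring to decouple the two factors and the separate treatment of the degenerate case in which one component norm tends to $0$; neither presents a genuine obstacle, so that the proposition reduces to the already-established uniform convexity of $Q^{\alpha,p}(\R^N)$.
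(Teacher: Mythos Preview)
Your proof is correct and follows essentially the same route as the paper: both identify $E^{\alpha,p}(\R^N)$ isometrically with a linear subspace of the $\ell^2$-product $Q^{\alpha,p}(\R^N)\times L^2(\R^N;\R^N)$ and then use that such a product of uniformly convex spaces is uniformly convex. The only difference is that the paper invokes Day's theorem \cite{Day1941} for the latter fact, whereas you supply a self-contained sequential argument.
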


It follows from proposition~\ref{propositionUniformlyConvex} that the space \(E^{\alpha, p} (\R^N)\) is reflexive \cite{Milman}*{} (see also for example \citelist{\cite{Brezis2011}*{theorem 3.31}\cite{Yosida1980}*{theorem V.2.2}}).

\begin{proof}[Proof of proposition~\ref{propositionUniformlyConvex}]
We consider the map \(\mathcal{P} : E^{\alpha, p} (\R^N) \to Q^{\alpha, p} (\R^N) \times L^2 (\R^N; \R^N)\) defined by \(\mathcal{P} u = (u, D u)\) and we set
\[
  \norm{(u, U)}_{Q^{\alpha, p} (\R^N) \times L^2 (\R^N; \R^N)} = \sqrt{\norm{u}_{Q^{\alpha, p} (\R^N)}^2 + \norm{U}_{L^2 (\R^N;\R^N)}^2},
\]
so that \(\mathcal{P}\) in an isometry.
By a general result on the product of uniformly convex spaces \cite{Day1941}*{theorem 3} (see also \cite{Ruiz-ARMA}*{lemma 2.3}), the space \(Q^{\alpha, p} (\R^N) \times L^2 (\R^N; \R^N)\) is uniformly convex and thus \(E^{\alpha, p} (\R^N)\) is also uniformly convex.
\end{proof}

\subsubsection{Weak convergence in Coulomb--Sobolev spaces}

The above results allow to obtain a useful characterization the weak convergence in \(E^{\alpha, p} (\R^N)\), which we are going to use extensively in the sequel.

\begin{proposition}\label{weakL1loc}
Let $N\in\N$, $\alpha \in (0,N)$ and \(p \in (1, \infty)\).
A sequence \((u_n)_{n \in \N}\) in \(E^{\alpha, p}(\R^N)\) converges weakly to \(u \in E^{\alpha, p} (\R^N)\) if and only if it is bounded in \(E^{\alpha, p}(\R^N)\)
and converges to $u$ strongly in \(L^1_{\mathrm{loc}} (\R^N)\).
\end{proposition}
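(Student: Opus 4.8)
The plan is to establish the two implications separately, exploiting the reflexivity of $E^{\alpha,p}(\R^N)$ (proposition~\ref{propositionUniformlyConvex}) together with the Fatou-type lower semicontinuity of proposition~\ref{propositionSemiContinuityLocalConvergence} and the elementary local compactness of proposition~\ref{propositionElementaryLocalWeakCompactness}.

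For the forward direction, suppose $u_n \rightharpoonup u$ weakly in $E^{\alpha,p}(\R^N)$. Then $(u_n)_{n\in\N}$ is bounded by the uniform boundedness principle. By proposition~\ref{propositionElementaryLocalWeakCompactness}, every subsequence of $(u_n)_{n\in\N}$ has a further subsequence converging in $L^1_\loc(\R^N)$ to some limit $v$; by proposition~\ref{propositionSemiContinuityLocalConvergence} this limit lies in $E^{\alpha,p}(\R^N)$, and moreover $D u_{n_k}\rightharpoonup Dv$ weakly in $L^2(\R^N)$ along that subsequence. On the other hand, the weak convergence $u_n\rightharpoonup u$ in $E^{\alpha,p}(\R^N)$ forces $Du_n\rightharpoonup Du$ weakly in $L^2(\R^N)$ (testing against the bounded functional $w\mapsto\int_{\R^N} D w\cdot\psi$ for $\psi\in C^\infty_c(\R^N;\R^N)$, which extends to $E^{\alpha,p}(\R^N)$ since $\norm{Dw}_{L^2}\le\norm{w}_{E^{\alpha,p}}$). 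Hence $Dv=Du$ in the distributional sense; combined with $u,v\in L^1_\loc(\R^N)$ this gives $v=u$ up to an additive constant, and since both belong to $E^{\alpha,p}(\R^N)\subset L^{2\frac{2p+\alpha}{2+\alpha}}(\R^N)$ (or, more elementarily, since the only constant in $L^p_\loc$ whose gradient vanishes and that can be a weak limit of the bounded sequence is the right one) the constant is zero. Since every subsequence has a further subsequence converging in $L^1_\loc(\R^N)$ to the same limit $u$, the whole sequence converges to $u$ in $L^1_\loc(\R^N)$.

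For the converse, suppose $(u_n)_{n\in\N}$ is bounded in $E^{\alpha,p}(\R^N)$ and $u_n\to u$ in $L^1_\loc(\R^N)$ with $u\in E^{\alpha,p}(\R^N)$. By reflexivity, every subsequence admits a further subsequence converging weakly in $E^{\alpha,p}(\R^N)$ to some $w$; by the forward implication just proved, this $w$ must equal the $L^1_\loc$-limit, namely $u$. Thus every subsequence has a further subsequence converging weakly to $u$, whence $u_n\rightharpoonup u$ weakly in $E^{\alpha,p}(\R^N)$.

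The main obstacle is the identification of the limit in the forward direction: one must be careful that a priori the weak limit in $E^{\alpha,p}$ and the $L^1_\loc$-limit of an extracted subsequence are produced by different mechanisms, and matching them requires knowing that the gradient functionals separate the weak topology enough to pin down the limit up to a constant, and then ruling out the constant. Apart from this, the argument is a routine subsequence-extraction combined with the tools already developed; no delicate estimates are needed beyond those in proposition~\ref{propositionSemiContinuityLocalConvergence} and proposition~\ref{propositionElementaryLocalWeakCompactness}.
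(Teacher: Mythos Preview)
Your argument is correct, but the identification step in the forward direction is more roundabout than necessary. The paper observes directly that for each \(\varphi\in C_c(\R^N)\) the linear functional \(v\mapsto\int_{\R^N}\varphi\,v\) is continuous on \(E^{\alpha,p}(\R^N)\), thanks to proposition~\ref{propositionRuizBall} (which gives \(E^{\alpha,p}(\R^N)\subset L^p_\loc(\R^N)\) with a norm bound). Testing against this functional immediately identifies the weak limit in \(E^{\alpha,p}(\R^N)\) with the \(L^1_\loc\)-limit, bypassing the gradient entirely and avoiding the ``constant discrepancy'' issue you encountered. Your route through \(Du_{n_k}\rightharpoonup Dv\) versus \(Du_n\rightharpoonup Du\) works, but it forces you to argue separately that the only constant in \(E^{\alpha,p}(\R^N)\) is zero; this is true (a nonzero constant has infinite Coulomb norm), so your parenthetical ``more elementary'' remark can be made precise without invoking the embedding \(E^{\alpha,p}(\R^N)\subset L^{2\frac{2p+\alpha}{2+\alpha}}(\R^N)\), which is only proved later in Section~\ref{sect-Embeddings}. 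In short: both proofs reduce the statement, via reflexivity and proposition~\ref{propositionElementaryLocalWeakCompactness}, to a uniqueness-of-limits assertion; the paper settles that assertion in one line using proposition~\ref{propositionRuizBall}, while you take a detour through the gradient.
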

\begin{proof}
Since the space \(E^{\alpha, p} (\R^N)\) with $p>1$ is reflexive, its unit ball is weakly compact.
In view of the compactness property of proposition~\ref{propositionElementaryLocalWeakCompactness}, it suffices to show that if \((u_n)_{n \in \N}\) converges weakly to \(u\) in \(E^{\alpha, p} (\R^N)\) and converges strongly to \(\Tilde{u}\) in \(L^1_{\mathrm{loc}} (\R^N)\), then \(u=\Tilde{u}\) almost everywhere.

To check this, we take \(\varphi \in C_c (\R^N)\) and we define the linear functional \(\ell\),
\[
 \dualprod{\ell}{v} = \int_{\R^N} \varphi v.
\]
By proposition~\ref{propositionRuizBall}, \(\ell\) is well-defined and continuous on \(E^{\alpha, p} (\R^N)\). Therefore,
\[
 \lim_{n \to \infty} \int_{\R^N} \varphi u_n = \int_{\R^N} \varphi u.
\]
On the other hand, we conclude immediately that
\[
 \lim_{n \to \infty} \int_{\R^N} \varphi u_n = \int_{\R^N} \varphi \Tilde{u}.
\]
Since \(\varphi \in C_c (\R^N)\) is arbitrary, we conclude that \(u = \Tilde{u}\) almost everywhere.
\end{proof}

\subsubsection{A characterization of the dual space}
Since $Q^{\alpha, p} (\R^N)$ can be identified with a linear subspace of $L^{2p} (\R^N; L^p (\R^N))$
via isometry \eqref{e-isometry}, any linear functional $Q^{\alpha, p} (\R^N)$ can be extended to a linear functional on \(L^{2p}(\R^N; L^p (\R^N))\), whose elements can be represented by functions in \(L^\frac{2 p}{2 p - 1} (\R^N; L^\frac{p}{p - 1} (\R^N))\). This gives a representation
of linear functionals on $Q^{\alpha, p} (\R^N)$.

\begin{proposition}
\label{propositionDualCoulomb}
Let \(T\) be a distribution, then \(T \in \big(Q^{\alpha, p} (\R^N)\big)'\) if and only if there exists \(F : \R^N \times \R^N \to \R\) such that
\[
 \int_{\R^N} \Bigl(\int_{\R^N} \abs{F (x, y)}^\frac{p}{p - 1}\dif x \Bigr)^\frac{2 (p - 1)}{2p - 1}\dif y < \infty
\]
and for every \(\varphi \in C^\infty_c (\R^N)\),
\[
  \dualprod{T}{\varphi} = \int_{\R^N} \Bigl(\int_{\R^N} F (x, y) I_{\alpha/2} (x-y)^\frac{1}{p} \dif y\Bigr) \varphi (x)\dif x.
\]
\end{proposition}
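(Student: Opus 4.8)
The plan is to transport the question to the ambient Bochner space $L^{2p}(\R^N;L^p(\R^N))$ through the linear isometry $\mathcal{L}$ of~\eqref{e-isometry}, to apply the Hahn--Banach theorem together with the classical identification of the dual of a mixed-norm Lebesgue space, and then to rewrite the resulting pairing by Fubini's theorem and the symmetry $I_{\alpha/2}(x-y)=I_{\alpha/2}(y-x)$ of the Riesz kernel. (Here one uses that $C^\infty_c(\R^N)$ is dense in $Q^{\alpha,p}(\R^N)$, which follows from the argument of Proposition~\ref{smoothapprox} applied to the $Q^{\alpha,p}$--part of the norm, so that $\bigl(Q^{\alpha,p}(\R^N)\bigr)'$ may be identified with the space of distributions $T$ for which $\varphi\mapsto\dualprod{T}{\varphi}$ is $\norm{\cdot}_{Q^{\alpha,p}}$--continuous on $C^\infty_c(\R^N)$.)

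\emph{Necessity.} Assume $T\in\bigl(Q^{\alpha,p}(\R^N)\bigr)'$. Since $\mathcal{L}$ is a linear isometry, $T\circ\mathcal{L}^{-1}$ is a bounded linear functional on the subspace $\mathcal{L}\bigl(Q^{\alpha,p}(\R^N)\bigr)$ of $L^{2p}(\R^N;L^p(\R^N))$, which extends by the Hahn--Banach theorem~\cite{Brezis2011} to a bounded linear functional $\Lambda$ on all of $L^{2p}(\R^N;L^p(\R^N))$. Since $1<p<\infty$, the space $L^p(\R^N)$ is reflexive, so the dual of $L^{2p}(\R^N;L^p(\R^N))$ is $L^{\frac{2p}{2p-1}}\bigl(\R^N;L^{\frac{p}{p-1}}(\R^N)\bigr)$, and $\Lambda$ is represented by a function $G$ in this space. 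For $\varphi\in C^\infty_c(\R^N)$ one then has $\dualprod{T}{\varphi}=\Lambda(\mathcal{L}\varphi)$, that is,
\[
 \dualprod{T}{\varphi}=\int_{\R^N}\int_{\R^N} G(x,y)\,I_{\alpha/2}(x-y)^{\frac{1}{p}}\,\varphi(y)\dif y\dif x.
\]
Two successive applications of H\"older's inequality show that the integrand is absolutely integrable on $\R^N\times\R^N$, so that Fubini's theorem applies; interchanging the order of integration, relabelling the two variables, and using the radial symmetry of $I_{\alpha/2}$ yields the asserted formula with $F(x,y):=G(y,x)$. The transposition $G(x,y)=F(y,x)$, combined with a change of variables, turns the membership of $G$ in $L^{\frac{2p}{2p-1}}(L^{\frac{p}{p-1}})$ into precisely the stated integrability of $F$.

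\emph{Sufficiency.} Conversely, let $F$ satisfy $\int_{\R^N}\bigl(\int_{\R^N}\abs{F(x,y)}^{\frac{p}{p-1}}\dif x\bigr)^{\frac{2(p-1)}{2p-1}}\dif y<\infty$ and let $T$ be the distribution defined through the displayed formula. Putting $G(x,y):=F(y,x)$, the same change of variables gives $G\in L^{\frac{2p}{2p-1}}\bigl(\R^N;L^{\frac{p}{p-1}}(\R^N)\bigr)$; hence $G$ induces a bounded linear functional $\Lambda$ on $L^{2p}(\R^N;L^p(\R^N))$, and $\Lambda\circ\mathcal{L}$ is a bounded linear functional on $Q^{\alpha,p}(\R^N)$ by the isometry property. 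Repeating the Fubini computation of the previous step shows that $\Lambda\circ\mathcal{L}$ agrees with $T$ on $C^\infty_c(\R^N)$, whence $T\in\bigl(Q^{\alpha,p}(\R^N)\bigr)'$.

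The only genuinely delicate point, I expect, is the identification of the dual of the mixed-norm Bochner space $L^{2p}(\R^N;L^p(\R^N))$ with $L^{\frac{2p}{2p-1}}(\R^N;L^{\frac{p}{p-1}}(\R^N))$, which rests on the reflexivity---in particular the Radon--Nikodym property---of $L^p(\R^N)$ for $1<p<\infty$; this is classical and may be quoted, for instance, from~\cite{BenedekPanzone1961}. The remaining ingredients---the isometry property of $\mathcal{L}$, the Hahn--Banach extension, the Fubini rearrangement, and the bookkeeping of the conjugate exponents $\tfrac{p}{p-1}$ and $\tfrac{2p}{2p-1}$ under the transposition---are routine. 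Note that, $\mathcal{L}$ not being surjective, the Hahn--Banach extension, and hence the representing function $F$, need not be unique; this is consistent with the statement, which claims no uniqueness.
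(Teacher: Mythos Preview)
Your proof is correct and follows essentially the same approach as the paper, which only sketches the argument in the paragraph preceding the proposition: embed $Q^{\alpha,p}(\R^N)$ isometrically into $L^{2p}(\R^N;L^p(\R^N))$ via $\mathcal{L}$, extend by Hahn--Banach, and represent via the dual of the mixed-norm space. You have correctly supplied the details the paper omits---the Fubini rearrangement, the variable transposition $F(x,y)=G(y,x)$ matching the integrability condition to the stated form, and the density of $C^\infty_c(\R^N)$ in $Q^{\alpha,p}(\R^N)$.
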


This representation is still valid when \(p = 1\), it should then be understood that
\[
 \int_{\R^N} \Bigl(\esssup_{x \in \R^N} \abs{F (x, y)} \Bigr)^2 < \infty.
\]

The representation of proposition~\ref{propositionDualCoulomb} allows to represent the dual of \(E^{\alpha, p} (\R^N)\).

\begin{proposition}
Let \(T\) be a distribution, then \(T \in Q^{\alpha, p} (\R^N)'\) if and only if there exists \(F : \R^N \times \R^N \to \R\) and \(G : \R^N \to \R^N\) such that
\[
  \int_{\R^N} \abs{G}^2 + \int_{\R^N} \Bigl(\int_{\R^N} \abs{F (x, y)}^\frac{p}{p - 1}\dif x \Bigr)^\frac{2 (p - 1)}{2p - 1}\dif y < \infty.
\]
and for every \(\varphi \in C^\infty_c (\R^N)\),
\[
  \dualprod{T}{\varphi} = \int_{\R^N} G (x) \cdot \nabla \varphi (x) + \Bigl(\int_{\R^N} F (x, y) I_{\alpha/2} (x-y)^\frac{1}{p} \dif y\Bigr) \varphi (x)\dif x.
\]
\end{proposition}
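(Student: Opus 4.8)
The plan is to deduce the statement from proposition~\ref{propositionDualCoulomb} by exploiting the isometric splitting of $E^{\alpha,p}(\R^N)$ already used in the proof of proposition~\ref{propositionUniformlyConvex}. Recall that the map $\mathcal{P}:E^{\alpha,p}(\R^N)\to Q^{\alpha,p}(\R^N)\times L^2(\R^N;\R^N)$ given by $\mathcal{P}u=(u,Du)$ is a linear isometry onto its image, where the product carries the norm $\|(u,U)\|=\bigl(\|u\|_{Q^{\alpha,p}}^2+\|U\|_{L^2}^2\bigr)^{1/2}$; indeed $\|u\|_{E^{\alpha,p}}^2=\|u\|_{Q^{\alpha,p}}^2+\|Du\|_{L^2}^2$, and the image is closed since $E^{\alpha,p}(\R^N)$ is complete by proposition~\ref{propositionCompleteness}.

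First I would check that any pair $(F,G)$ satisfying the stated integrability defines a continuous functional on $E^{\alpha,p}(\R^N)$. For $\varphi\in C^\infty_c(\R^N)$ the gradient term is controlled by $\bigl|\int_{\R^N}G\cdot\nabla\varphi\bigr|\le\|G\|_{L^2}\|\varphi\|_{E^{\alpha,p}}$ via Cauchy--Schwarz, while the sufficiency direction of proposition~\ref{propositionDualCoulomb} bounds the remaining term by $C\|\varphi\|_{Q^{\alpha,p}}\le C\|\varphi\|_{E^{\alpha,p}}$. Since $C^\infty_c(\R^N)$ is dense in $E^{\alpha,p}(\R^N)$ by proposition~\ref{smoothapprox}, this formula extends uniquely to an element $T\in E^{\alpha,p}(\R^N)'$.

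For the converse, take $T\in E^{\alpha,p}(\R^N)'$. Transported through $\mathcal{P}$ it becomes a continuous linear functional on the subspace $\mathcal{P}\bigl(E^{\alpha,p}(\R^N)\bigr)$ of $Q^{\alpha,p}(\R^N)\times L^2(\R^N;\R^N)$, which by the Hahn--Banach theorem extends to a continuous functional $\widetilde T$ on the whole product. Since the dual of a product of two Banach spaces is the product of the two duals, $\widetilde T(v,V)=\langle T_1,v\rangle+\langle T_2,V\rangle$ with $T_1\in Q^{\alpha,p}(\R^N)'$ and $T_2\in L^2(\R^N;\R^N)'$. Proposition~\ref{propositionDualCoulomb} applied to $T_1$ produces the kernel $F$ with the asserted integrability and pairing formula, and the Riesz representation theorem applied to $T_2$ produces $G\in L^2(\R^N;\R^N)$ with $\langle T_2,V\rangle=\int_{\R^N}G\cdot V$. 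Evaluating $\widetilde T\circ\mathcal{P}$ on $\varphi\in C^\infty_c(\R^N)$, for which $\mathcal{P}\varphi=(\varphi,\nabla\varphi)$, reproduces exactly the claimed expression for $\langle T,\varphi\rangle$.

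The genuine content has already been carried out inside proposition~\ref{propositionDualCoulomb} (the duality of mixed-norm Lebesgue spaces together with the Hahn--Banach extension from the image of the isometry $\mathcal{L}$), so the only point here that needs care is that $\widetilde T$ coincides with $T$ a~priori only on $\mathcal{P}(E^{\alpha,p}(\R^N))$, hence the representation is first obtained on $C^\infty_c(\R^N)$; continuity of both $T$ and the functional built from $(F,G)$, combined with the density of test functions from proposition~\ref{smoothapprox}, then upgrades the identity to all of $E^{\alpha,p}(\R^N)$. The same argument runs verbatim for $p=1$ using the $L^\infty$ interpretation of the integrability condition on $F$ recorded after proposition~\ref{propositionDualCoulomb}.
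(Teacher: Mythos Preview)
Your proposal is correct and follows precisely the approach the paper intends: the paper gives no explicit proof but merely remarks that ``the representation of proposition~\ref{propositionDualCoulomb} allows to represent the dual of \(E^{\alpha, p} (\R^N)\)'', and your argument via the isometry \(\mathcal{P}\), Hahn--Banach extension to the product, and the identification of the dual of each factor is exactly the standard way to make this precise. (Note the statement as printed contains a typo: it should read \(T\in E^{\alpha,p}(\R^N)'\) rather than \(Q^{\alpha,p}(\R^N)'\), as is clear from the presence of the gradient term; you have correctly argued for the intended space.)
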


\section{Coulomb--Sobolev embeddings}\label{sect-Embeddings}

In this section we prove theorem~\ref{theoremEmbedding} and study local compactness properties
of the embedding of Coulomb--Sobolev spaces into Lebesgue spaces.

\subsection{Continuous embedding into Lebesgue spaces and proof of theorem~\ref{theoremEmbedding}}

In order to prove theorem~\ref{theoremEmbedding} we first establish the critical Coulomb--Sobolev inequality \eqref{ineqCriticalCoulombSobolev}.

\begin{proposition}[Coulomb--Sobolev inequality]
\label{propositionRieszSobolevinterpolation}
Let $N\in\N$, $\alpha\in(0,N)$ and $p\ge 1$.
There exists \(C > 0\) such that for every \(u \in E^{\alpha, p}(\R^N)\),
\begin{equation}\label{e-critical-inequality}
\int_{\R^N} \abs{u}^{2\frac{2 p + \alpha}{2 + \alpha}}
\le C \Bigl(\int_{\R^N} \abs{Du}^2 \Bigr)^\frac{\alpha}{2 + \alpha}
\Bigl( \int_{\R^N} \abs{I_{\alpha/2} \ast \abs{u}^p}^2\Bigr)^\frac{2}{2 + \alpha}.
\end{equation}
\end{proposition}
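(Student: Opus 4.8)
The plan is to deduce \eqref{e-critical-inequality} from the local estimate of Proposition~\ref{propositionRuizBall} by a ball–covering argument. Write \(q_0:=2\frac{2p+\alpha}{2+\alpha}\) for the exponent appearing in \eqref{e-critical-inequality}; by Proposition~\ref{smoothapprox} it is enough to treat \(u\in C^\infty_c(\R^N)\). The structural fact driving the proof is that both sides of \eqref{e-critical-inequality} scale identically under the dilations \(u\mapsto u(\cdot/\lambda)\) and under \(u\mapsto\mu u\), so the radius \(\rho\) of the covering below is a free parameter and matching homogeneities will force the exponents \(\frac{\alpha}{2+\alpha}\) and \(\frac{2}{2+\alpha}\).

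Fix \(\rho>0\) and cover \(\R^N\) by balls \((B_\rho(a_i))_{i\in\N}\) with the finite–overlap property. On each ball a Gagliardo–Nirenberg–Sobolev inequality, rescaled to \(B_\rho(a_i)\), bounds \(\int_{B_\rho(a_i)}\abs{u}^{q_0}\) in terms of \(\int_{B_\rho(a_i)}\abs{Du}^2\) and \(\int_{B_\rho(a_i)}\abs{u}^p\), with interpolation exponent \(\theta\) determined by \(\frac1{q_0}=\theta(\frac12-\frac1N)+(1-\theta)\frac1p\) and with an explicit power of \(\rho\) coming from the rescaling. Into the resulting factors \(\int_{B_\rho(a_i)}\abs{u}^p\) I substitute Proposition~\ref{propositionRuizBall}, turning each into \(C\rho^{(N-\alpha)/2}\bigl(\int_{B_\rho(a_i)}\abs{I_{\alpha/2}\ast\abs{u}^p}^2\bigr)^{1/2}\). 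After raising everything to the power \(q_0\), I sum over \(i\), using the elementary superadditivity bound \(\sum_i s_i^{\sigma}t_i^{\tau}\le\bigl(\sum_i s_i\bigr)^{\sigma}\bigl(\sum_i t_i\bigr)^{\tau}\) — valid for \(s_i,t_i\ge0\), \(\sigma,\tau\ge0\) with \(\sigma+\tau\ge1\), by Hölder and \(\ell^1\subset\ell^{\sigma+\tau}\) — together with the finite overlap to pass from \(\sum_i\int_{B_\rho(a_i)}\) to \(\int_{\R^N}\). This yields
\[
 \int_{\R^N}\abs{u}^{q_0}\le C\rho^{\kappa}\Bigl(\int_{\R^N}\abs{Du}^2\Bigr)^{\sigma}\Bigl(\int_{\R^N}\abs{I_{\alpha/2}\ast\abs{u}^p}^2\Bigr)^{\tau}+C\rho^{\kappa'}\Bigl(\int_{\R^N}\abs{I_{\alpha/2}\ast\abs{u}^p}^2\Bigr)^{\tau'},
\]
and, since scale invariance forces \(\kappa\) and \(\kappa'\) to have opposite signs, minimising over \(\rho>0\) balances the two terms, removes the \(\rho\)–dependence, and produces exactly \eqref{e-critical-inequality}.

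The point needing care is the bookkeeping: one must verify that the local Gagliardo–Nirenberg–Sobolev inequality is available for \(q_0\) and that the summation exponents satisfy \(\sigma+\tau\ge1\) over the whole range \(N\in\N\), \(\alpha\in(0,N)\), \(p\ge1\). This is the situation described above when \(q_0<\frac{2N}{N-2}\) (always the case for \(N\le2\)). When \(\alpha+N=p(N-2)\), so that \(q_0=\frac{2N}{N-2}\), the two powers of \(\rho\) coincide and the inequality follows directly from the Sobolev inequality together with Proposition~\ref{propositionRuizBall}. In the remaining range \(q_0>\frac{2N}{N-2}\) (the second alternative in \eqref{Q}) the local Sobolev step fails — \(\int_{B_\rho(a_i)}\abs{u}^{q_0}\) is no longer controlled by the \(H^1\)–energy on the ball — and one must instead first use the global Sobolev embedding to gain a little integrability of \(\abs{u}^p\) and then exploit that Proposition~\ref{propositionRuizBall} holds simultaneously at all scales \(\le\rho\) in order to bootstrap the covering estimate up to the exponent \(q_0\). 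I expect establishing this bootstrap to be the main obstacle.
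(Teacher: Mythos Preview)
Your approach is genuinely different from the paper's, and the gap you flag in the range \(q_0>\frac{2N}{N-2}\) is real. The paper sidesteps the case distinction entirely by working \emph{pointwise} rather than with a fixed covering radius. It starts from the Sobolev representation
\[
 |u(x)|\le C\Bigl(\int_{B_\rho(x)}\frac{|Du(y)|}{|x-y|^{N-1}}\,dy+\fint_{B_\rho(x)}|u|\Bigr),
\]
bounds the first term by \(C\rho\,\mathcal M|Du|(x)\) and the second, via H\"older and the definition of \(I_{\alpha/2}\), by \(C\rho^{-\alpha/(2p)}\bigl(I_{\alpha/2}\ast|u|^p(x)\bigr)^{1/p}\). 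Optimising over \(\rho>0\) \emph{at each point \(x\)} yields the Hedberg-type pointwise bound
\[
 |u(x)|\le C\bigl(\mathcal M|Du|(x)\bigr)^{\frac{\alpha}{2p+\alpha}}\bigl(I_{\alpha/2}\ast|u|^p(x)\bigr)^{\frac{2}{2p+\alpha}},
\]
after which one raises to the power \(q_0\), integrates, applies H\"older, and invokes the \(L^2\) boundedness of the maximal operator. No Gagliardo--Nirenberg on balls is needed, and nothing about the relative position of \(q_0\) and \(\frac{2N}{N-2}\) enters.

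The difference that matters is that your covering uses a single global \(\rho\), so the local Sobolev step must produce \(L^{q_0}\) from \(H^1\) and \(L^p\) on each ball; when \(q_0>\max\bigl(p,\tfrac{2N}{N-2}\bigr)\) this is simply false, and your proposed bootstrap (``use global Sobolev to gain integrability, then iterate Proposition~\ref{propositionRuizBall} across scales'') is not a proof: it is not clear what the iteration step is, and there is no reason the losses from each Sobolev application should close up. Letting \(\rho\) depend on \(x\) is exactly what allows the argument to avoid ever needing more than the averaged \(L^p\) information at a single scale, and the maximal function then absorbs the price of that choice. In the subcritical regime your outline is plausible (modulo checking the summation exponents carefully), but even there it is more laborious than the pointwise route.
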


\begin{proof}
For every \(x \in \R^N\) and \(\rho > 0\), we have \cite{Mazya2010}*{theorem 1.1.10/1} the estimate
\[
  \abs{u (x)} \le C \Bigl(\int_{B_\rho (x)} \frac{\abs{D u (y)}}{\abs{x - y}^{N - 1}}\dif y + \fint_{B_\rho (x)} \abs{u}\Bigr).
\]
Since for $N>1$
$$\frac{1}{\abs{x - y}^{N-1}}=(N-1)\int^\infty_0\frac{\chi_{\{y\,:\,\abs{x - y}<t\}}(y)}{t^N}\dif t,$$
by Fubini's theorem and by the definition of the maximal function
$$\mathcal M f(x):=\sup_{\rho>0}\fint_{B_\rho(x)}|f(x)| \dif x,$$
we have
\[
\begin{split}
  \int_{B_\rho (x)} \frac{\abs{D u (y)}}{\abs{x - y}^{N - 1}}\dif y
  &= (N-1)\int_{B_\rho (x)} \int_{0}^\infty \frac{\abs{D u (y)}\chi_{\{y\,:\,\abs{x - y}<t\}}(y)}{t^{N}}\dif t\dif y \\
  &= (N-1)\int_0^\rho \frac{1}{t^N}\Bigl(\int_{B_t (x)} \abs{D u}dy\Bigr)\dif t
  \le C' \rho \mathcal{M} \abs{D u} (x).
\end{split}
\]
The same estimate is obviously true when $N=1$.
On the other hand, by H\"older's inequality and by definition of the Riesz potential \(I_{\alpha/2}\),
\[
  \fint_{B_\rho (x)} \abs{u} \le \Bigl( \fint_{B_\rho (x)} \abs{u}^p \Bigr)^\frac{1}{p}
  \le \frac{C''}{\rho^\frac{\alpha}{2 p}} \bigl(I_{\alpha/2} \ast \abs{u}^p (x) \bigr)^\frac{1}{p}.
\]
Therefore for every \(x \in \R^N\),
\[
  \abs{u (x)} \le C'''\Bigl(\rho \mathcal{M} \abs{D u} (x) + \rho^{-\frac{\alpha}{2 p}} \bigl(I_{\alpha/2} \ast \abs{u}^p (x) \bigr)^\frac{1}{p}\Bigr).
\]
If we take
\[
  \rho = \Bigl(\frac{I_{\alpha/2} \ast \abs{u}^p (x)}{\mathcal{M} \abs{D u} (x)}\Bigr)^\frac{2 p}{2p + \alpha},
\]
then
\[
  \abs{u (x)} \le C'''' \bigl(\mathcal{M} \abs{D u} (x)\bigr)^\frac{\alpha}{2 p + \alpha}
  \bigl(I_{\alpha/2} \ast \abs{u}^p (x) \bigr)^\frac{2}{2 p + \alpha},
\]
and
\[
  \abs{u (x)}^{2\frac{2 p + \alpha}{2 + \alpha}}
  \le C'''''\bigl(\mathcal{M} \abs{D u} (x)\bigr)^\frac{2 \alpha}{2 + \alpha}
  \bigl(I_{\alpha/2} \ast \abs{u}^p (x) \bigr)^\frac{4}{2 + \alpha}.
\]
By integration and by H\"older's inequality,
\[
  \int_{\R^N} \abs{u}^{2 \frac{2 p + \alpha}{2 + \alpha}}
  \le C''''' \Bigl(\int_{\R^N} (\mathcal{M} \abs{D u})^2\Bigr)^\frac{2}{2 + \alpha}
  \Bigl(\int_{\R^N} \bigl(I_{\alpha/2} \ast \abs{u}^p\bigr)^2\Bigr)^\frac{\alpha}{2 + \alpha}.
\]
By the classical maximal function theorem \cite{Stein1970}*{theorem I.1}, we conclude that
\[
  \int_{\R^N} \abs{u}^{2 \frac{2 p + \alpha}{2 + \alpha}} \le
  C'''''' \Bigl(\int_{\R^N} \abs{D u}^2\Bigr)^\frac{\alpha}{2 + \alpha}
  \Bigl(\int_{\R^N} \bigl(I_{\alpha/2} \ast \abs{u}^p\bigr)^2\Bigr)^\frac{2}{2 + \alpha}.\qedhere
\]
\end{proof}

As a direct consequence of the critical Coulomb--Sobolev inequality \eqref{e-critical-inequality} we obtain a pointwise one--dimensional estimate.
We give a proof for completeness, which could also be obtained by invoking directly the classical homogeneous Gagliardo--Nirenberg interpolation inequality.

\begin{theorem}\label{DimensionOne}
Let $\alpha\in(0,1)$ and $p\ge 1$.
There exists \(C > 0\) such that for every \(u \in E^{\alpha, p}(\R)\),
\[
\sup_{x\in\R}\abs{u (x)}\le C \Bigl(\int_{\R} \abs{u'}^2\Bigr)^\frac{1 + \alpha}{2(1 + \alpha + p)} \Bigl(\int_{\R} \abs{I_{\alpha/2} \ast \abs{u}^p}^2 \Bigr)^\frac{1}{2(1 + \alpha + p)}.
\]
\end{theorem}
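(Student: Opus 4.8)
The plan is to combine the critical Coulomb--Sobolev inequality \eqref{e-critical-inequality} (specialized to $N=1$) with an elementary one--dimensional $L^\infty$--$L^{q_0}$ interpolation obtained from the fundamental theorem of calculus, where I write $q_0 := 2\frac{2p+\alpha}{2+\alpha}$. By proposition~\ref{propositionRieszSobolevinterpolation} applied with $N=1$ (so $\alpha\in(0,1)$ is exactly the admissible range), every $u \in E^{\alpha,p}(\R)$ lies in $L^{q_0}(\R)$ and satisfies
\[
 \Bigl(\int_\R \abs{u}^{q_0}\Bigr)^{1/2} \le C^{1/2}\Bigl(\int_\R \abs{u'}^2\Bigr)^{\frac{\alpha}{2(2+\alpha)}}\Bigl(\int_\R \bigabs{I_{\alpha/2}\ast\abs{u}^p}^2\Bigr)^{\frac{1}{2+\alpha}},
\]
since the right-hand side of \eqref{e-critical-inequality} is finite for $u\in E^{\alpha,p}(\R)$. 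Moreover, as $u' \in L^2(\R) \subset L^1_\loc(\R)$, the function $u$ admits a locally absolutely continuous representative, which I use from now on.

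For the interpolation step, set $s := 1 + \tfrac{q_0}{2} = \frac{2(1+\alpha+p)}{2+\alpha} > 1$, so that $2(s-1) = q_0$ and $s(2+\alpha) = 2(1+\alpha+p)$. Since $t \mapsto \abs{t}^s$ is $C^1$, the function $\abs{u}^s$ is locally absolutely continuous with $(\abs{u}^s)' = s\abs{u}^{s-2}u\,u'$, and by the Cauchy--Schwarz inequality together with $2(s-1)=q_0$ the product $\abs{u}^{s-1}\abs{u'}$ belongs to $L^1(\R)$. Because $u \in L^{q_0}(\R)$ there is a sequence $x_k \to -\infty$ with $u(x_k) \to 0$; passing to the limit $k\to\infty$ in the identity $\abs{u(y)}^s = \abs{u(x_k)}^s + s\int_{x_k}^y \abs{u}^{s-2}u\,u'$ and using dominated convergence gives, for every $y \in \R$,
\[
 \abs{u(y)}^s \le s\int_\R \abs{u}^{s-1}\abs{u'} \le s\Bigl(\int_\R\abs{u}^{q_0}\Bigr)^{1/2}\Bigl(\int_\R\abs{u'}^2\Bigr)^{1/2}.
\]

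Finally, I substitute the bound on $\bigl(\int_\R\abs{u}^{q_0}\bigr)^{1/2}$ from the first display into the second, collect the exponents of $\int_\R\abs{u'}^2$ (they add up to $\frac{\alpha}{2(2+\alpha)} + \frac12 = \frac{1+\alpha}{2+\alpha}$), and take the $s$-th root; using $s(2+\alpha) = 2(1+\alpha+p)$ turns the resulting exponents $\frac{1+\alpha}{s(2+\alpha)}$ and $\frac{1}{s(2+\alpha)}$ into precisely $\frac{1+\alpha}{2(1+\alpha+p)}$ and $\frac{1}{2(1+\alpha+p)}$, which is the asserted inequality.

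I do not expect a genuine obstacle: the only delicate points are the justification that $u$ has the locally absolutely continuous representative and that it vanishes along some sequence at $-\infty$, which legitimizes the fundamental theorem of calculus identity and the passage to the limit, together with the routine bookkeeping of exponents. An alternative route would be to first prove the estimate for $u \in C^\infty_c(\R)$ using density (proposition~\ref{smoothapprox}) and then pass to the limit, but transferring a supremum bound through the limit is slightly awkward, so the direct argument above is the cleaner one.
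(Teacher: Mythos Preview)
Your proof is correct and follows essentially the same approach as the paper: both combine the one-dimensional $L^\infty$--$L^{q_0}$ estimate $\abs{u}^s \le s\bigl(\int_\R\abs{u}^{q_0}\bigr)^{1/2}\bigl(\int_\R\abs{u'}^2\bigr)^{1/2}$ (with $s=1+q_0/2$) obtained from the fundamental theorem of calculus and Cauchy--Schwarz, with the critical Coulomb--Sobolev inequality of proposition~\ref{propositionRieszSobolevinterpolation}. The only difference is that the paper argues first for $u\in C^\infty_c(\R)$ and then invokes density, while you work directly with the absolutely continuous representative and justify the vanishing at $-\infty$; both are standard and your exponent bookkeeping is accurate.
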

\begin{proof}
We prove the inequality for $u \in C^\infty_c(\R),$ the conclusion will follow by density. Let $t=2\frac{2p+\alpha}{2+\alpha},$ from the fundamental theorem of calculus and by the Cauchy-Schwarz inequality we have:
$$\abs{u (x)}^\frac{t+2}{2}\leq \frac{t+2}{2}\int^x _{-\infty}\abs{u}^\frac{t}{2}\abs{u'}\leq C\Bigl(\int_{\R} \abs{ u}^t\Bigr)^\frac{1}{2}\Bigl(\int_{\R} \abs{ u'}^2\Bigr)^\frac{1}{2}.$$
By Proposition~\ref{propositionRieszSobolevinterpolation}, the integral of \(\abs{u}^t\) can be estimated and the theorem follows.
\end{proof}

\begin{proof}[Proof of theorem \ref{theoremEmbedding}]
We first assume  that $N=1$. Since $q\geq t=2\frac{2p+\alpha}{2+\alpha},$ then $$\Big(\int_{\mathbb R}\abs{u}^q\Big)^{\frac{1}{q}}=\Big(\int_{\mathbb R}\abs{u}^{q-t+t}\Big)^{\frac{1}{q}}\leq \|u\|^{1-\frac{t}{q}}_\infty \Big(\int_{\mathbb R}\abs{u}^t\Big)^{\frac{1}{q}}.$$ The statement easily follows by estimating the two factors, respectively, by theorem~\ref{DimensionOne} and proposition~\ref{propositionRieszSobolevinterpolation}.

Assume that $N \ge 2$.
By the Gagliardo--Nirenberg interpolation inequality \citelist{\cite{Gagliardo1958}\cite{Nirenberg1959}} and by the Coulomb--Sobolev inequality (proposition~\ref{propositionRieszSobolevinterpolation}), we have
\[
\begin{split}
 \Bigl(\int_{\R^N} \abs{u}^q\Bigr)^\frac{1}{q}
 & \le C_1 \Bigl(\int_{\R^N} \abs{D u}^2\Bigr)^\frac{\mu}{2} \Bigl(\int_{\R^N} \abs{u}^{2 \frac{2 p + \alpha}{2 + \alpha}} \Bigr)^{\frac{1 - \mu}{2} \frac{2 + \alpha}{2 p + \alpha}}\\
 & \le C_2 \Bigl(\int_{\R^N} \abs{D u}^2\Bigr)^\frac{\mu 2 p + \alpha}{2(2p + \alpha)} \Bigl(\int_{\R^N} \bigabs{I_{\alpha/2} \ast \abs{u}^p}^2\Bigr)^{\frac{1 - \mu}{2 p + \alpha}},
\end{split}
\]
with $p,q\in[1,+\infty)$, \(\mu \in [0, 1]\) and
\[
 \frac{1}{q} = \mu \Bigl(\frac{1}{2} - \frac{1}{N}\Bigr) + (1 - \mu)  \frac{2 + \alpha}{2(2 p + \alpha)}.
\]
Then \eqref{Coulomb-Sobolev-estimate} follows with
$\theta\in\big[\frac{\alpha}{2 p + \alpha},1\big]$ and
\[
 \frac{1}{q} = \theta \Bigl(\frac{1}{2} - \frac{1}{N}\Bigr) + (1 - \theta)  \frac{N+\alpha}{2Np}.\qedhere
\]
\end{proof}

\begin{proposition}[Necessary condition for the embeddings]
Let $N\in\N$, $\alpha\in(0,N)$ and $p\ge 1$.
If for every $u\in C^\infty_c(\R^N)$,
\begin{equation}\label{e-embedd}
  \Bigl(\int_{\R^N} \abs{u}^q\Bigr)^\frac{1}{q}
  \le C\biggl(\int_{\R^N} \abs{D u}^2 +\Bigl( \int_{\R^N} \bigl(I_{\alpha/2} \ast \abs{u}^p\bigr)^2\Bigr)^\frac{1}{p}\biggr)^\frac{1}{2},
\end{equation}
then assumption \eqref{Q} holds.
\end{proposition}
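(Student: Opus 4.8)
The strategy is a standard scaling argument: one tests the inequality \eqref{e-embedd} against a two–parameter family of rescalings of a fixed test function and reads off the necessary relations among the exponents from the requirement that the resulting inequality survive in all the relevant limits. The plan is as follows. Fix any nontrivial $u_0 \in C^\infty_c(\R^N)$ and, for $\lambda, \mu > 0$, set $u_{\lambda,\mu}(x) := \mu\, u_0(x/\lambda)$. A change of variables gives the exact homogeneities
\[
\int_{\R^N}\abs{u_{\lambda,\mu}}^q = \mu^q \lambda^N \int_{\R^N}\abs{u_0}^q,
\qquad
\int_{\R^N}\abs{Du_{\lambda,\mu}}^2 = \mu^2 \lambda^{N-2}\int_{\R^N}\abs{Du_0}^2,
\]
and, using that $I_{\alpha/2}(x) = A_{\alpha/2}\abs{x}^{-(N+\alpha)/2+\text{(correction)}}$ scales like $\abs{x}^{-(N-\alpha/2)}$,
\[
\int_{\R^N}\bigl(I_{\alpha/2}\ast\abs{u_{\lambda,\mu}}^p\bigr)^2 = \mu^{2p}\lambda^{N+\alpha}\int_{\R^N}\bigl(I_{\alpha/2}\ast\abs{u_0}^p\bigr)^2.
\]
Substituting into \eqref{e-embedd} yields
\[
\mu\,\lambda^{N/q}\,c_0 \le C\Bigl(\mu^2\lambda^{N-2} a_0 + \mu^{2}\lambda^{\frac{N+\alpha}{p}} b_0^{1/p}\Bigr)^{1/2},
\]
where $a_0,b_0,c_0 > 0$ are the fixed integrals of $u_0$. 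Note that the factor $\mu$ cancels on both sides (both exponents of $\mu$ on the right are $2$, giving $\mu$ after the square root), so the inequality reduces to a genuine constraint purely in $\lambda$:
\[
c_0\,\lambda^{N/q} \le C\bigl(a_0\,\lambda^{N-2} + b_0^{1/p}\,\lambda^{(N+\alpha)/p}\bigr)^{1/2}
\qquad\text{for all }\lambda > 0.
\]

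Now I would extract the conditions by letting $\lambda \to 0^+$ and $\lambda \to +\infty$. As $\lambda \to 0^+$ the dominant term on the right is governed by $\min\{N-2,\frac{N+\alpha}{p}\}$, and the left side must not blow up faster than the right, giving $\frac{N}{q} \ge \tfrac12\min\{N-2,\frac{N+\alpha}{p}\}$; as $\lambda \to +\infty$ the dominant right-hand exponent is $\max\{N-2,\frac{N+\alpha}{p}\}$ and we obtain $\frac{N}{q} \le \tfrac12\max\{N-2,\frac{N+\alpha}{p}\}$. A short case distinction according to the sign of $N-2 - \frac{N+\alpha}{p}$ — equivalently according to whether $\frac1p \ge \frac{(N-2)_+}{N+\alpha}$ or $\frac1p < \frac{(N-2)_+}{N+\alpha}$ — turns these two inequalities into exactly the two alternatives of \eqref{Q}: the lower bound $\frac1q \ge \tfrac12 - \tfrac1N$ (resp.\ its reverse) and the upper bound $\frac1q \le \tfrac12 - \tfrac{p-1}{\alpha+2p}$ (resp.\ its reverse), after using $\frac{N+\alpha}{2Np} = \tfrac12 - \tfrac{p-1}{\alpha+2p}$. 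One subtlety is the case $N = 1, 2$, where $(N-2)_+ = 0$ and the Dirichlet term scales like $\lambda^{N-2}$ with $N-2 \le 0$; here the argument still works, the $\lambda\to 0$ limit forces $q < \infty$ (equivalently $\tfrac1q > 0 \ge \tfrac12-\tfrac1N$ trivially, and more precisely $\tfrac{N}{q}\le\tfrac12\cdot\tfrac{N+\alpha}{p}$), matching the reduced form of \eqref{Q} noted after Theorem~\ref{theoremEmbedding}.

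The main thing to be careful about — the only place where the argument is not completely mechanical — is the treatment of the \emph{boundary} exponent $\alpha + N = p(N-2)$, where the two terms on the right scale identically as $\lambda^{N-2}=\lambda^{(N+\alpha)/p}$, so that the right-hand side is simply $C'\lambda^{(N-2)/2}$ for all $\lambda$; then the necessary condition collapses to the single equality $\frac{N}{q} = \frac{N-2}{2}$, i.e.\ $q = \frac{2N}{N-2}$, which is precisely the common endpoint of the two intervals in \eqref{Q} in that degenerate case, so \eqref{Q} still holds. I would also remark that one need not separately justify that the rescaled functions lie in $C^\infty_c(\R^N)$ — they manifestly do — and that the constant $C$ in \eqref{e-embedd} is the same for all of them, which is exactly the content of the hypothesis. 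With these observations in place the proof is a few lines of elementary exponent bookkeeping.
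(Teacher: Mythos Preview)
Your scaling argument is correct as far as it goes, but it does \emph{not} yield condition \eqref{Q}. The error is the claimed identity
\[
  \frac{N+\alpha}{2Np} \;=\; \frac{1}{2} - \frac{p-1}{\alpha+2p}.
\]
This is false in general: a direct computation shows the difference of the two sides equals
\(
  \frac{\alpha\,\bigl((N+\alpha)-p(N-2)\bigr)}{2Np(\alpha+2p)},
\)
which vanishes only in the borderline case \(p(N-2)=N+\alpha\). For instance with \(N=3\), \(\alpha=2\), \(p=2\) your scaling bound reads \(q\ge \frac{2Np}{N+\alpha}=\tfrac{12}{5}\), whereas \eqref{Q} requires the strictly stronger \(q\ge 2\frac{2p+\alpha}{2+\alpha}=3\). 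In other words, the one-parameter dilation family (your \(\mu\) cancels, as you correctly note) only produces the weaker condition
\[
  \min\Bigl\{\tfrac{N-2}{2},\tfrac{N+\alpha}{2p}\Bigr\}\le \tfrac{N}{q}\le \max\Bigl\{\tfrac{N-2}{2},\tfrac{N+\alpha}{2p}\Bigr\},
\]
which the paper explicitly flags as insufficient. The Sobolev endpoint \(q=\frac{2N}{N-2}\) is captured correctly by scaling, but the Coulomb--Sobolev endpoint \(q=2\frac{2p+\alpha}{2+\alpha}\) is not.

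The missing ingredient is a second, independent invariance: translations. The paper first optimizes your scaling inequality over \(\lambda\) to obtain the multiplicative estimate \eqref{e-optimize-lambda}, and then tests it against sums of \(n\) widely separated translates \(u_{n,a}=\sum_{i=1}^n u(\cdot+ia)\). Each of the three integrals scales like \(n\) as \(\abs{a}\to\infty\), so \eqref{e-optimize-lambda} forces
\(
  n^2 \le C\, n^{e_1+e_2}
\)
for all \(n\), hence \(e_1+e_2\ge 2\); unpacking this inequality gives precisely \(q\ge 2\frac{2p+\alpha}{2+\alpha}\) (respectively \(\le\)), which is the sharp Coulomb--Sobolev bound in \eqref{Q}. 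Dilations alone cannot see this because both sides of the additive inequality have the same overall homogeneity in \(u\); you need to break the problem into many pieces to detect the mismatch between the exponents \(2\) and \(2p\) inside the norm.
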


\begin{proof}
Let \(u \in C^\infty_c (\R^N) \setminus \{0\}\).
For \(\lambda > 0\), define the function \(u_\lambda \in C^\infty_c (\R^N)\) by
\[
  u_\lambda (x) = u (x/\lambda).
\]
We compute
\begin{gather*}
  \int_{\R^N} \abs{u_\lambda}^q = \lambda^N \int_{\R^N} \abs{u}^q,\\
  \int_{\R^N} \abs{D u_\lambda}^2 = \lambda^{N - 2} \int_{\R^N} \abs{D u}^2,\\
  \int_{\R^N} \bigl(I_{\alpha/2} \ast \abs{u_\lambda}^p\bigr)^2 = \lambda^{N + \alpha} \int_{\R^N} \bigl(I_{\alpha/2} \ast \abs{u}^p\bigr)^2.
\end{gather*}
By \eqref{e-embedd}, we have for every \(\lambda > 0\),
\[
  \lambda^\frac{N}{q} \le c\bigl(\lambda^\frac{N - 2}{2} + \lambda^\frac{N + \alpha}{2 p}\bigr).
\]
We deduce therefrom that
\[
  \min \Bigl\{\frac{N - 2}{2}, \frac{N + \alpha}{2 p} \Bigr\} \le \frac{N}{q} \le \max \Bigl\{\frac{N - 2}{2}, \frac{N + \alpha}{2 p} \Bigr\},
\]
which is weaker then \eqref{Q}.

Assume that $N + \alpha \neq p (N - 2)$.
Optimizing with respect to $\lambda>0$ the quotient
\begin{equation}\label{e-embedd-quotient}
   R(\lambda):=\frac{\int_{\R^N} \abs{D u_\lambda}^2 +\Bigl( \int_{\R^N} \bigl(I_{\alpha/2} \ast \abs{u_\lambda}^p\bigr)^2\Bigr)^\frac{1}{p}}{\Bigl(\int_{\R^N} \abs{u_\lambda}^q\Bigr)^\frac{2}{q}},
\end{equation}
we see that $R(\lambda)$ attains an optimal value at
\begin{equation}\label{quotient-optimal}
\lambda_*=C_*\left(\frac{\Bigl( \int_{\R^N} \bigl(I_{\alpha/2} \ast \abs{u}^p\bigr)^2\Bigr)^\frac{1}{p}}{\int_{\R^N} \abs{D u}^2}\right)^\frac{p}{(N-2)p-(N+\alpha)},
\end{equation}
where $C_*=C_*(N,\alpha,p,q)$. This leads to the estimate
\begin{equation}\label{e-optimize-lambda}
  \Bigl(\int_{\R^N} \abs{u}^q\Bigr)^{2}
  \le C\Bigl(\int_{\R^N} \abs{D u}^2\Bigr)^{\frac{q(N + \alpha) - 2 pN}{(N + \alpha) - p (N - 2)}} \Bigl( \int_{\R^N} \bigl(I_{\alpha/2} \ast \abs{u}^p\bigr)^2\Bigr)^{\frac{2 N - q(N - 2)}{(N + \alpha) - p (N - 2)} }.
\end{equation}
Given a vector \(a \in \R^N\setminus\{0\}\) and \(n \in \N\), define the function \(u_{n, a} \in C^\infty_c (\R^N)\) by
\[
  u_{n, a} (x) = \sum_{i = 1}^n u (x + i a).
\]
Then
\begin{gather*}
  \lim_{\abs{a} \to \infty} \int_{\R^N} \abs{D u_{n, a}}^2
  = n \int_{\R^N} \abs{D u}^2,\\
  \lim_{\abs{a} \to \infty} \int_{\R^N} \abs{u_{n, a}}^q
  = n \int_{\R^N} \abs{u}^q,\\
  \lim_{\abs{a} \to \infty} \int_{\R^N} \abs{I_{\alpha/2} \ast \abs{u_{n, a}}^p}^2
  = n \int_{\R^N} \abs{I_{\alpha/2} \ast \abs{u}^p}^2.
\end{gather*}
Using the diagonal argument, from \eqref{e-optimize-lambda} we deduce that for all sufficiently large \(n \in \N\) must hold
\[
  n^2 \le C' n^{\frac{q(N + \alpha) - 2 pN}{(N + \alpha) - p (N - 2)}} n^{\frac{2 N - q(N - 2)}{(N + \alpha) - p (N - 2)} },
\]
which implies \eqref{Q}.

Next assume that $N + \alpha = p (N - 2)$.
For \(\lambda > 0\), consider the rescaling
\[
  u_\lambda (x) = \lambda^{-\frac{N-2}{2}}u (x/\lambda).
\]
Substituting to \eqref{e-embedd} we obtain
$\lambda^{-\frac{N-2}{2}+\frac{N}{q}}\le C$,
which requires $q=\frac{2N}{N-2}$.
\end{proof}

\begin{remark}
In the critical case \(q = 2 \frac{2 p  + \alpha}{2 + \alpha}\) a rescaling of the sequence $(u_{n,a})_{n\in\N}$ is bounded but not compact up to translations in $E^{\alpha,p}(\R^N)$.
In fact such a sequence is vanishing in the sense of P.-L.\thinspace{}Lions \cite{Lions1984CC1}.
\end{remark}

\subsection{Local compactness in Lebesgue spaces}
Since the functional space \(E^{\alpha, p} (\R^N)\) is invariant under translations, the embedding \(E^{\alpha, p} (\R^N)\) into \(L^q (\R^N)\) is never compact.

By theorem~\ref{theoremEmbedding} and proposition~\ref{propositionRuizBall}, we have continuous embedding $E^{\alpha,p}(\R^N)\subset L^q_{loc}(\R^N)$ for all $q\geq 1$  such that
\[
 \frac{1}{q} \ge \min \Bigl\{\frac{1}{2} - \frac{1}{N},\frac{1}{p},\frac{2 + \alpha}{2 (2 p + \alpha)}\Bigr\}.
\]
We show that this embedding is compact if and only if the inequality is strict.

\begin{proposition}[Local compactness in $L^p$]
\label{propositionAdvancedLocalWeakCompactness}
Let \(N \in \N\), \(\alpha \in (0, N)\) and \(p\ge 1\).
The embedding $E^{\alpha,p}(\R^N)\subset L^q_{loc}(\R^N)$ is compact if and only if
\[
 \frac{1}{q} > \min \Bigl\{\frac{1}{2} - \frac{1}{N},\frac{1}{p},\frac{2 + \alpha}{2 (2 p + \alpha)}\Bigr\}.
\]
\end{proposition}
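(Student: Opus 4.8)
The plan is to prove the two implications separately. For the ``only if'' direction we argue by contradiction using a scaling/concentration construction; for the ``if'' direction we use interpolation between $L^1_{\mathrm{loc}}$, where compactness comes from Proposition~\ref{propositionElementaryLocalWeakCompactness}, and the continuous borderline embedding provided by Theorem~\ref{theoremEmbedding}.

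\emph{Sufficiency.} Suppose $\frac{1}{q} > \min\bigl\{\frac{1}{2}-\frac{1}{N},\frac{1}{p},\frac{2+\alpha}{2(2p+\alpha)}\bigr\}$. Fix a bounded open set $\Omega\subset\R^N$ and a bounded sequence $(u_n)_{n\in\N}$ in $E^{\alpha,p}(\R^N)$. By Proposition~\ref{propositionElementaryLocalWeakCompactness} there is a subsequence, still denoted $(u_n)_{n\in\N}$, converging in $L^1_{\mathrm{loc}}(\R^N)$, hence in measure on $\Omega$, to some $u$; by Proposition~\ref{propositionSemiContinuityLocalConvergence} we have $u\in E^{\alpha,p}(\R^N)$. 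Let $q_0>q$ be a Lebesgue exponent for which $E^{\alpha,p}(\R^N)\subset L^{q_0}(\R^N)$ continuously; the strict inequality guarantees such a $q_0$ exists, since the set of admissible reciprocals $\frac1{q_0}$ given by Theorem~\ref{theoremEmbedding} together with Proposition~\ref{propositionRuizBall} is a closed interval whose left endpoint is exactly $\min\bigl\{\frac{1}{2}-\frac{1}{N},\frac{1}{p},\frac{2+\alpha}{2(2p+\alpha)}\bigr\}$, and $\frac1q$ is strictly to the left of $\frac1q$ being too large means... more precisely $\frac{1}{q}>\frac{1}{q_0}\ge$ that minimum for suitable $q_0\in(q,\infty)$. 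Then $(u_n)_{n\in\N}$ is bounded in $L^{q_0}(\Omega)$, converges in $L^1(\Omega)$, and by the standard interpolation–equiintegrability argument (write $|u_n-u|^q$, split over $\{|u_n-u|\le M\}$ and its complement, control the tail by the uniform $L^{q_0}$ bound via Hölder, and the main part by $L^1$ convergence) one concludes $u_n\to u$ in $L^q(\Omega)$. A diagonal argument over an exhaustion of $\R^N$ by balls upgrades this to convergence in $L^q_{\mathrm{loc}}(\R^N)$, establishing compactness.

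\emph{Necessity.} Suppose $\frac{1}{q} = \min\bigl\{\frac{1}{2}-\frac{1}{N},\frac{1}{p},\frac{2+\alpha}{2(2p+\alpha)}\bigr\}$ (equality is the only remaining case in the admissible range); we must exhibit a bounded sequence in $E^{\alpha,p}(\R^N)$ with supports in a fixed ball that does not converge in $L^q$. I distinguish which term realizes the minimum. If it is $\frac{1}{2}-\frac{1}{N}$ (so $q=\frac{2N}{N-2}$, $N\ge3$), take a fixed $\varphi\in C^\infty_c(B_1)\setminus\{0\}$ and the Sobolev-critical concentrating family $u_\lambda(x)=\lambda^{-(N-2)/2}\varphi(x/\lambda)$ for $\lambda\to 0$: then $\int|Du_\lambda|^2$ is constant, $\int|I_{\alpha/2}\ast|u_\lambda|^p|^2\to 0$ because the relevant scaling exponent is $\lambda^{N+\alpha-p(N-2)}$ with a sign making it vanish in this regime, the supports lie in $B_1$, yet $\int|u_\lambda|^q$ is constant and $u_\lambda\to0$ a.e., so no $L^q$-convergent subsequence exists. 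If the minimum is $\frac{2+\alpha}{2(2p+\alpha)}$ (the Coulomb--Sobolev critical exponent $q=2\frac{2p+\alpha}{2+\alpha}$), I instead use a family that concentrates the Coulomb norm: scaling $u_\lambda(x)=\lambda^{-(\alpha+2)/(2(p-1))}\varphi(x/\lambda)$ as in the discussion of the scale-invariant case in the introduction leaves $\mathcal E_*$ (hence both $\int|Du_\lambda|^2$ and $\int|I_{\alpha/2}\ast|u_\lambda|^p|^2$) comparable to constants and $\int|u_\lambda|^q$ bounded below, with supports in $B_1$ as $\lambda\to0$ and $u_\lambda\to0$ a.e. Finally, if the minimum is $\frac1p$ (so $q=p$), the failure of compactness of $E^{\alpha,p}(\R^N)\subset L^p_{\mathrm{loc}}(\R^N)$ when $p\ge\frac{2\alpha}{\alpha-2}$ — equivalently $\frac1p\le\frac{\alpha-2}{2\alpha}=\frac12-\frac1\alpha$ — is exactly Lemma~\ref{lemmaCoulombSobolev-noncompact}, whose hypercube-concentration sequence is bounded in $E^{\alpha,p}$, supported in a fixed ball, tends to $0$ a.e., but has $\|\cdot\|_{L^p}$ bounded away from $0$; one checks directly that this is the only sub-case in which $\frac1p$ is the strict minimum.

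\emph{Main obstacle.} The delicate point is the necessity direction: one must verify that in each of the three equality sub-cases the concentrating family is genuinely bounded in the \emph{full} $E^{\alpha,p}$ norm (not merely in the Sobolev part), which for the $q=p$ sub-case is precisely the content of Lemma~\ref{lemmaCoulombSobolev-noncompact} and cannot be seen by elementary scaling alone; bookkeeping which of $\frac12-\frac1N$, $\frac1p$, $\frac{2+\alpha}{2(2p+\alpha)}$ attains the minimum (and that these exhaust the boundary of the admissible region) is the other point requiring care.
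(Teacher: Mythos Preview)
Your sufficiency argument is correct and essentially identical to the paper's: extract an $L^1_{\mathrm{loc}}$--convergent subsequence via Proposition~\ref{propositionElementaryLocalWeakCompactness}, pick $\bar q>q$ for which the continuous embedding holds, and interpolate. Your necessity argument in the Sobolev-critical sub-case also matches the paper.

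There is, however, a genuine gap in your Case~2 (the Coulomb--Sobolev critical exponent $q=2\frac{2p+\alpha}{2+\alpha}$, which occurs exactly when $p>\frac{N+\alpha}{N-2}$). The scaling $u_\lambda(x)=\lambda^{-\frac{\alpha+2}{2(p-1)}}\varphi(x/\lambda)$ does \emph{not} leave $\mathcal E_*$ constant: a direct computation gives
\[
\int_{\R^N}|Du_\lambda|^2=\lambda^{e}\int_{\R^N}|D\varphi|^2,\qquad
\int_{\R^N}\bigl|I_{\alpha/2}\ast|u_\lambda|^p\bigr|^2=\lambda^{e}\int_{\R^N}\bigl|I_{\alpha/2}\ast|\varphi|^p\bigr|^2,\qquad
\int_{\R^N}|u_\lambda|^{q}=\lambda^{e}\int_{\R^N}|\varphi|^{q},
\]
all with the \emph{same} exponent $e=\frac{p(N-2)-(N+\alpha)}{p-1}>0$. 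Thus as $\lambda\to0$ the $L^q$ norm vanishes together with the $E^{\alpha,p}$ norm, and your sequence converges to $0$ in $L^q$ rather than witnessing non-compactness. Nor can this be repaired by multiplying by a constant $c_\lambda$: normalising $c_\lambda^q\lambda^e\sim 1$ forces $c_\lambda^{2p}\lambda^e\sim\lambda^{e(1-2p/q)}\to\infty$ (since $2p/q>1$ for $p>1$), so the Coulomb term blows up. More generally, for any two-parameter family $\lambda^a\varphi(\cdot/\lambda^b)$ with shrinking support, the constraints ``gradient bounded'', ``Coulomb bounded'', ``$L^q$ bounded below'' are jointly infeasible when $p>\frac{N+\alpha}{N-2}$ and $q=2\frac{2p+\alpha}{2+\alpha}$.

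This is why the paper does not attempt a scaling argument here: for \emph{all} of the regime $p>\frac{N+\alpha}{N-2}$ (covering both your Case~2 and Case~3) the paper invokes Lemma~\ref{lemmaCoulombSobolev-noncompact}, whose lattice-of-translates construction is precisely what beats the Coulomb term. You correctly cite that lemma for Case~3, but you need it for Case~2 as well; the single-bump scaling you propose cannot substitute for it.
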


In particular, the embedding \(E^{\alpha, p} (\R^N) \subset L^p_{\mathrm{loc}} (\R^N)\) is compact if and only if
\begin{equation}\label{e-pcomp}
 \frac{1}{p} > \Big(\frac{1}{2} - \frac{1}{\alpha}\Big)_+.
\end{equation}
Indeed, \eqref{e-pcomp} is equivalent to $\frac{1}{p} > \frac{2 + \alpha}{2 (2 p + \alpha)}$.
Then by proposition \ref{propositionAdvancedLocalWeakCompactness} we should have either $\frac{1}{p}>\frac{1}{2}-\frac{1}{N}$, or $\frac{1}{p}>\frac{1}{p}$,
or $\frac{1}{p}>\frac{1}{2}-\frac{1}{\alpha}$.
The second condition can never be satisfied, and the third is weaker than the first one since $\alpha<N$.

We shall see later in lemma~\ref{lemmaCantor} that \eqref{e-pcomp} is equivalent to the weak continuity of the map \(u \in E^{\alpha, p} (\R^N) \mapsto I_{\alpha/2} \ast \abs{u}^p \in L^2 (\R^N)\).

\begin{proof}[Proof of proposition~\ref{propositionAdvancedLocalWeakCompactness}]
If the sequence of functions \((u_n)_{n \in \N}\) is bounded in the Coulomb--Sobolev space \(E^{\alpha,p} (\R^N)\), then passing to if necessary to a subsequence by proposition~\ref{propositionElementaryLocalWeakCompactness}, \((u_n)_{n \in \N}\) converges strongly to a function \(u \in L^1_{\mathrm{loc}} (\R^N)\). By theorem~\ref{theoremEmbedding} and proposition~\ref{propositionRuizBall}, the sequence \((u_n)_{n \in \N}\) is bounded in \(L^{\Bar{q}}_{\mathrm{loc}} (\R^N)\) for every \(\Bar{q}\) such that
\[
 \frac{1}{\Bar{q}} \ge \min \Bigl\{\frac{1}{2} - \frac{1}{N},\frac{1}{p},\frac{2 + \alpha}{2 (2 p + \alpha)} \Bigr\}.
\]
In particular, we can take \(\Bar{q} > q\). Then by the classical H\"older inequality we have for every compact set \(K \subset \R^N\),
\[
 \int_{K} \abs{u_n - u}^q \le \Bigl(\int_{K} \abs{u_n - u} \Bigr)^\frac{\Bar{q} - q}{\Bar{q} - 1}
 \Bigl(\int_{K} \abs{u_n - u}^{\Bar{q}} \Bigr)^\frac{q - 1}{\Bar{q} - 1}.
\]
Therefore, the sequence \((u_n)_{n \in \N}\) converges to \(u\) in \(L^q_{\mathrm{loc}} (\R^N)\).

If \(p \le \frac{N + \alpha}{N - 2}\), then
\[
  \min \Bigl\{\frac{1}{2} - \frac{1}{N},\frac{1}{p},\frac{2 + \alpha}{2 (2 p + \alpha)} \Bigr\} = \frac{1}{2} - \frac{1}{N}.
\]
In this case the lack of compact embedding from $E^{\alpha,p}(\R^N)$ into $L^\frac{2N}{N-2}_\loc(\R^N)$
can be seen by considering, for a given function $u\in C^\infty_c(\R^N)\setminus\{0\}$,
a sequence of functions \((u_n)_{n \in \N}\) defined by
$$u_n (x) = n^{\frac{N-2}{2}} u (n x).$$
Then  $\supp(u_n)$ is uniformly bounded in $\R^N$, $\norm{u_n}_{E^{\alpha,p}}=\|u\|_{E^{\alpha,p}}\big(1+o(1)\big)$ and $\norm{u_n}_{L^1} \to 0$ as $n\to\infty$, but $\norm{u_n}_{L^\frac{2N}{N-2}}=\|u\|_{L^\frac{2N}{N-2}}$.

If \(p > \frac{N + \alpha}{N - 2}\), then
the lack of compact embedding from $E^{\alpha,p}(\R^N)$ into $L^q_\loc(\R^N)$
is a consequence of lemma~\ref{lemmaCoulombSobolev-noncompact} below.
\end{proof}

\begin{lemma}
\label{lemmaCoulombSobolev-noncompact}
Let \(N \ge 3\), \(\alpha \in (0, N)\) and \(p > \frac{N + \alpha}{N - 2}\). There exists a sequence of functions \((u_n)_{n \in \N}\) in \(C^\infty_c (\R^N)\) such that \(\supp(u_n)\) is uniformly bounded, \((u_n)_{n \in \N}\) is bounded in \(E^{\alpha, p}(\R^N)\), \(u_n \to 0\) almost everywhere and, if \(\alpha \le 2\) or \(p \le \frac{2 \alpha}{\alpha - 2}\),
\[
 \liminf_{n \to \infty} \int_{\R^N} \abs{u_n}^{2 \frac{2 p + \alpha}{2 + \alpha}} > 0,
\]
whereas if \(\alpha > 2\) and \(p > \frac{2 \alpha}{\alpha - 2}\),
\[
 \liminf_{n \to \infty} \int_{\R^N} \abs{u_n}^{p} > 0.
 \]
\end{lemma}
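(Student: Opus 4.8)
The plan is to build $(u_n)_{n\in\N}$ as a dilute array of tiny bumps distributed along a low-dimensional cube, and to tune the number, the scale and the height of the bumps so that the gradient and the Coulomb energies stay bounded while a prescribed Lebesgue norm stays bounded away from zero. Fix a nonnegative $w\in C^\infty_c(B_1)$ with $w>0$ on $B_{1/2}$ and an integer $d$ with $N-\alpha<d\le N$ (one may always take $d=N$, and $d<N$ is possible when $\alpha>1$). For parameters $r_n\to0$, $\sigma_n\to0$ to be chosen later, take an integer $K_n\sim\sigma_n r_n^{-N}$, choose points $x_{1,n},\dots,x_{K_n,n}$ on a cubic lattice of mesh $\delta_n\sim K_n^{-1/d}$ inside $[0,1]^d\times\{0\}^{N-d}$, and set $u_n(x):=\mu_n\sum_{j=1}^{K_n}w\bigl((x-x_{j,n})/r_n\bigr)$ with $\mu_n>0$. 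For the choices below $\delta_n/r_n\to\infty$, so eventually the bumps have disjoint supports; then $u_n\in C^\infty_c(\R^N)$ with support in a fixed ball, $\int_{\R^N}\abs{u_n}^q\sim\mu_n^q\sigma_n$ for every $q$, and $\int_{\R^N}\abs{Du_n}^2\sim\mu_n^2\sigma_n r_n^{-2}$ (constants depending only on $w,q$). Since $\abs{\supp u_n}\sim\sigma_n$, imposing in addition $\sum_n\sigma_n<\infty$ and invoking the first Borel--Cantelli lemma forces $u_n=0$ eventually on a set of full measure, hence $u_n\to0$ almost everywhere.

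The heart of the matter is the Coulomb term $\int_{\R^N}\abs{I_{\alpha/2}\ast\abs{u_n}^p}^2=\int_{\R^N}(I_\alpha\ast\abs{u_n}^p)\abs{u_n}^p$. Expanding $\abs{u_n}^p=\mu_n^p\sum_j w^p\bigl((\cdot-x_{j,n})/r_n\bigr)$ and rescaling, the diagonal part ($j=k$) is $\sim\mu_n^{2p}r_n^{N+\alpha}K_n=\mu_n^{2p}\sigma_n r_n^\alpha$, using $I_\alpha(r_n\,\cdot\,)=r_n^{-(N-\alpha)}I_\alpha(\cdot)$ and $\iint w^p(u)w^p(v)\abs{u-v}^{-(N-\alpha)}\,du\,dv<\infty$. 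For the off-diagonal part, $\abs{x_{j,n}-x_{k,n}}\ge\delta_n\gg r_n$ lets one bound $I_\alpha(x_{j,n}-x_{k,n}+r_n(\,\cdot\,))$ by a constant multiple of $I_\alpha(x_{j,n}-x_{k,n})$, leaving $\mu_n^{2p}r_n^{2N}\sum_{j\ne k}\abs{x_{j,n}-x_{k,n}}^{-(N-\alpha)}$; here the codimension hypothesis $d>N-\alpha$ makes $\abs{\cdot}^{-(N-\alpha)}$ integrable along the $d$-cube, so comparison with $\iint_{[0,1]^d\times[0,1]^d}\abs{x-y}^{-(N-\alpha)}\,dx\,dy<\infty$ gives $\sum_{j\ne k}\abs{x_{j,n}-x_{k,n}}^{-(N-\alpha)}\le CK_n^2$, whence the off-diagonal part is $\le C\mu_n^{2p}\sigma_n^2$. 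Thus $\int_{\R^N}\abs{I_{\alpha/2}\ast\abs{u_n}^p}^2\le C\mu_n^{2p}(\sigma_n r_n^\alpha+\sigma_n^2)$. This is the step I expect to be the main obstacle: the blunt Hardy--Littlewood--Sobolev bound $\le C\mu_n^{2p}\sigma_n^{(N+\alpha)/N}$ is too lossy to close the argument, and a codimension $d\le N-\alpha$ would break the summation of the long-range Riesz tails.

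It then remains to solve the scaling constraints. Normalising $\mu_n:=\sigma_n^{-1/q_*}$ makes $\int\abs{u_n}^{q_*}\sim1$, so that $\liminf_n\int\abs{u_n}^{q_*}>0$ automatically; boundedness of $\int\abs{Du_n}^2$ becomes $\sigma_n^{1-2/q_*}r_n^{-2}\le C$, boundedness of $\bigl(\int\abs{I_{\alpha/2}\ast\abs{u_n}^p}^2\bigr)^{1/p}$ becomes $\sigma_n^{1/p-2/q_*}(r_n^\alpha+\sigma_n)^{1/p}\le C$, and one needs $K_n\sim\sigma_n r_n^{-N}\to\infty$. If $\alpha\le\frac{2p}{p-2}$ (equivalently $\alpha\le2$, or $p\le\frac{2\alpha}{\alpha-2}$) one takes $q_*=2\frac{2p+\alpha}{2+\alpha}$ and $r_n:=\sigma_n^{(p-1)/(2p+\alpha)}$: both boundedness conditions then hold (the relevant powers of $\sigma_n$ cancel exactly, $\alpha\le\frac{2p}{p-2}$ ensuring $r_n^\alpha\ge\sigma_n$), and $K_n=\sigma_n^{1-N(p-1)/(2p+\alpha)}\to\infty$ is precisely the hypothesis $p>\frac{N+\alpha}{N-2}$. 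If $\alpha>\frac{2p}{p-2}$ one instead takes $q_*=p$ and $\sigma_n:=r_n^\alpha$: the Coulomb condition reads $2^{1/p}\le C$ and the Dirichlet condition reads $r_n^{\alpha(p-2)/p-2}\le C$, both valid since $\alpha\ge\frac{2p}{p-2}$, while $K_n=r_n^{-(N-\alpha)}\to\infty$ because $\alpha<N$. In both cases $\delta_n/r_n\to\infty$ (with $d=N$ it equals $\sigma_n^{-1/N}$, resp.\ $r_n^{-\alpha/N}$), which legitimates the off-diagonal step, and choosing $\sigma_n\to0$ summable completes the construction.
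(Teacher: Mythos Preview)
Your construction is correct and follows essentially the same approach as the paper: both place an array of disjoint bumps along a $d$-dimensional cube with $d>N-\alpha$ (so that the off-diagonal Riesz interactions $\sum_{j\ne k}\abs{x_j-x_k}^{-(N-\alpha)}$ are summable and scale like $K_n^2$), then tune the scales according to the same case split $\alpha\lessgtr 2p/(p-2)$. The only cosmetic difference is the parametrisation---the paper spreads bumps on a growing lattice and rescales globally, whereas you work directly in a fixed cube with the three parameters $(\mu_n,r_n,\sigma_n)$---but the estimates and the identification of the critical exponents $q_*=2\frac{2p+\alpha}{2+\alpha}$ and $q_*=p$ are identical.
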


\begin{proof}
For $R>0$, let \(Q_R = [-R, R]^n\) be the cube in $\R^N$. Take a nonnegative \(w_0 \in C^\infty_c (Q_1) \setminus \{0\}\).
Fix an integer $d\in\{1,\dots,N\}$ such that $d>N-\alpha$.
For \(n \in \N_*:=\N\cup\{0\}\) and $\rho_n>N$ which will be specified later,
define the sequence of functions $(w_n)_{n\in\N_*}\subset C^\infty_c (\R^N)$ by
\[
  w_{n} (x) = \sum_{a \in \rho_n\{-n, \dotsc, n\}^d} w_0(x - a).
\]
We compute
\begin{gather*}
  \int_{\R^N} \abs{D w_{n}}^2
  = (2n+1)^d \int_{\R^N} \abs{D w_0}^2,\\
  \int_{\R^N} \abs{w_n}^q
  = (2n+1)^d \int_{\R^N} \abs{w_0}^q,
\end{gather*}
while we estimate the Coulomb term as
\begin{multline*}
  \int_{\R^N} \abs{I_{\alpha/2} \ast \abs{w_n}^p}^2\\
  \le (2n+1)^d \int_{\R^N} \abs{I_{\alpha/2} \ast \abs{w_0}^p}^2+
  \sum_{\substack{a\neq b\\ a \in \rho_n\{-n, \dotsc, n\}^d\\b \in \rho_n\{-n, \dotsc, n\}^d}}
  \int_{Q_1}\int_{Q_1} I_\alpha(x-y+a-b)|w_0(x)|^p|w_0(y)|^p\dif x \dif y\\
  \le (2n+1)^d \int_{\R^N} \abs{I_{\alpha/2} \ast \abs{w_0}^p}^2+
  \sum_{\substack{a\neq b\\ a \in \rho_n\{-n, \dotsc, n\}^d\\b \in \rho_n\{-n, \dotsc, n\}^d}}
  \frac{C}{|a-b|^{N-\alpha}}\Big(\int_{\R^N}\abs{w_0}^p\Big)^2.
\end{multline*}
Since $N-\alpha<d$,
\begin{multline*}
\sum_{\substack{a\neq b\\ a \in \rho_n\{-n, \dotsc, n\}^d\\b \in \rho_n\{-n, \dotsc, n\}^d}}\frac{1}{|a-b|^{N-\alpha}}=
\frac{1}{\rho_n^{N-\alpha}}\sum_{\substack{a\neq b\\ a \in \{-n, \dotsc, n\}^d\\b \in \{-n, \dotsc, n\}^d}}
\frac{1}{|a-b|^{N-\alpha}}\\
\le \frac{C}{\rho_n^{N-\alpha}}\int_{Q^d_n}\int_{Q^d_n}\frac{1}{|a-b|^{N-\alpha}}\dif a \dif b=
\frac{Cn^{2d-(N-\alpha)}}{\rho_n^{N-\alpha}}\int_{Q^d_1}\int_{Q^d_1}\frac{1}{|a-b|^{N-\alpha}}\dif a \dif b,
\end{multline*}
where $Q^d_n = [-n, n]^d$ is the cube in $\R^d$.
Therefore, we estimate the Coulomb term by
\begin{equation*}
  \int_{\R^N} \abs{I_{\alpha/2} \ast \abs{w_n}^p}^2
  \le (2n+1)^d \int_{\R^N} \abs{I_{\alpha/2} \ast \abs{w_0}^p}^2+
  \frac{Cn^{2d-(N-\alpha)}}{\rho_n^{N-\alpha}}\Big(\int_{\R^N}\abs{w_0}^p\Big)^2.
\end{equation*}
Set
$$\rho_n=n^{\frac{d}{N-\alpha}-1},$$
and define for $\lambda_n>0$ the rescaled sequence
$$u_n(x)=w_n\Big(\frac{x}{\lambda_n}\Big).$$
Then, as $n\to \infty$,
\begin{gather*}
  \int_{\R^N} \abs{D u_n}^2
  =\lambda_n^{N-2}(2n)^d\bigl(1+o(1)\bigr)\int_{\R^N} \abs{D w_0}^2,\\
    \int_{\R^N} \bigl(I_{\alpha/2} \ast \abs{u_n^p}\bigr)^2
  = \lambda_n^{N+\alpha}(2n)^d\bigl(1+O(1)\bigr) \int_{\R^N} \bigl(I_{\alpha/2} \ast \abs{w_0}^p\bigr)^2,\\
  \int_{\R^N} \abs{u_n}^q
  =\lambda_n^{N}(2n)^d\bigl(1+o(1)\bigr)\int_{\R^N} \abs{w_0}^q.
\end{gather*}

Assume that $\alpha\le 2$ or $p\le\frac{2\alpha}{\alpha-2}$ and
let
$$q=2\frac{\alpha+2p}{\alpha+2}.$$
Taking into account \eqref{quotient-optimal}, define
\begin{equation*}
\lambda_n=n^{-\frac{d(p-1)}{(N-2)p-(N+\alpha)}}.
\end{equation*}
Then we compute
$$\lim_{n\to\infty}\frac{\norm{u_n}_{E^{\alpha,p}}}{\norm{u_n}_{L^q}}= \frac{\|w_0\|_{E^{\alpha,p}}}{\|w_0\|_{L^q}}>0.$$
Note that $\mathrm{Supp}(u_n)\subseteq Q_{R_n}$, where
$$R_n=\lambda_n(n\rho_n+1)=n^{-\frac{d(p-1)}{(N-2)p-(N+\alpha)}}n^{\frac{d}{N-\alpha}}(1+o(1)).$$
Therefore, we compute that
$$R_n\to 1\quad\text{if $\alpha>2$ and $p=\frac{2\alpha}{\alpha-2}$,}$$
and
$$R_n\to 0\quad\text{if $\alpha\le 2$ or $\frac{N+\alpha}{N-2}<p<\frac{2\alpha}{\alpha-2}$.}$$
Moreover, it is clear that
$$
 \abs{\{ x \in \R^N : u_n(x) \ne 0\}} \le C\lambda_n^N n^d\to 0,$$
so in both cases $u_n\to 0$ almost everywhere in $\R^N$.
\medskip

Next assume that  $\alpha>2$ and $p>\frac{2\alpha}{\alpha-2}$. We set
\begin{align*}
q&=p &
&\text{ and }&
\lambda_n&:=n^{-\frac{d}{N-\alpha}}.
\end{align*}
Since $p>\frac{2\alpha}{\alpha-2}$, we compute that
$$\lim_{n\to\infty}\frac{\norm{u_n}_{E^{\alpha,p}}}{\norm{u_n}_{L^p}}= \frac{\|I_{\alpha/2}*|w_0|^p\|_{L^2}^{1/p}}{\|w_0\|_{L^p}}>0.$$
Note that $\mathrm{Supp}(u_n)\subseteq Q_{R_n}$, where
$$R_n=\lambda_n(n\rho_n+1)=n^{-\frac{d}{N-\alpha}}n^{\frac{d}{N-\alpha}}\bigl(1+o(1)\bigr)=1+o(1).$$
Moreover, it is clear that $u_n\to 0$ almost everywhere in $\R^N$.
\end{proof}

\subsection{Weighted Coulomb estimates}

The goal of this section is to improve the Coulomb estimate on balls of proposition~\ref{propositionRuizBall} to global weighted estimates.
By homogeneity considerations, a natural candidate would be
\begin{equation}
\label{eqImpossibleRuiz}
 \int_{\R^N} \frac{\abs{u (x)}^p}{\abs{x}^{\frac{N - \alpha}{2}}} \dif x
 \le \Bigl(\int_{\R^N} \abs{I_{\alpha/2} \ast \abs{u}^p}^2\Bigr)^\frac{1}{2}.
\end{equation}
However, as already observed by Ruiz \cite{Ruiz-ARMA}*{section 3}, this estimate cannot hold.

\begin{proposition}
\label{propositionRuizCounterexample}
Let \(N \in \N\), \(\alpha \in (0, N)\), \(p \ge 1\) and \(W : \R^N \to \R\).
If for every \(u\) in \(E^{\alpha, p} (\R^N)\)
\[
\int_{\R^N} W \abs{u}^p
 \le \Bigl(\int_{\R^N} \abs{I_{\alpha/2} \ast \abs{u}^p}^2\Bigr)^\frac{1}{2},
\]
then for any \(\delta > \frac{1}{2}\),
\[
  \int_{\R^N} \frac{W (x)}{\abs{x}^\frac{N + \alpha}{2} (1 + \abs{\log \abs{x}\,})^\frac{1}{2}
  \bigl(1 + \log (1 + \abs{\log \abs{x}\,})\bigr)^\delta}\dif x < \infty.
\]
\end{proposition}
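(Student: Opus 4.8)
The plan is to construct, for each $\delta > \tfrac12$, an explicit test function $u \in E^{\alpha,p}(\R^N)$ whose Coulomb energy $\int_{\R^N} |I_{\alpha/2}\ast|u|^p|^2$ is finite but for which $\int_{\R^N} W|u|^p$ is essentially the divergent-looking integral in the conclusion. Since the inequality is linear in $|u|^p$, it suffices to work with $v := |u|^p \ge 0$ and to find $v$ with $\int_{\R^N}|I_{\alpha/2}\ast v|^2 < \infty$ (i.e. $v \in Q^{\alpha,1}(\R^N)$) which moreover arises as $|u|^p$ for some $u$ with $Du \in L^2$; in practice one takes $v$ to be a superposition of bumps of controlled size and then smooths, so that membership in $E^{\alpha,p}$ is automatic once the Coulomb term is controlled (the $\int|Du|^2$ part costs nothing because the construction can be arranged with uniformly bounded, well-separated pieces).

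The natural choice, following the heuristic that the borderline weight is $|x|^{-(N+\alpha)/2}$ up to logarithmic corrections, is a dyadic superposition: for $k$ large set
\[
 v = \sum_{k} c_k \, \chi_{A_k},
\]
where $A_k$ is the annulus $\{2^k \le |x| \le 2^{k+1}\}$ (or a union of well-separated unit bumps filling it) and $c_k$ is chosen so that $\int_{A_k} v \approx 2^{k(N-\alpha)/2} a_k$ for a sequence $a_k \to 0$ to be tuned — this normalisation is dictated precisely by proposition~\ref{propositionRuizBall}, which tells us the largest mass a piece supported in a ball of radius $2^k$ may carry while keeping its own Coulomb energy $O(a_k^2)$. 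One then estimates the full Coulomb energy $\int|I_{\alpha/2}\ast v|^2 = \iint I_\alpha(x-y)v(x)v(y)\,dx\,dy$ by splitting into diagonal terms (comparable to $\sum_k a_k^2$) and off-diagonal terms $\sum_{j\ne k}$: using $I_\alpha(x-y)\lesssim |x-y|^{-(N-\alpha)} \approx 2^{-\max(j,k)(N-\alpha)}$ for $x\in A_j$, $y\in A_k$, the cross term between levels $j<k$ contributes roughly $2^{j(N-\alpha)/2}a_j \cdot 2^{k(N-\alpha)/2}a_k \cdot 2^{-k(N-\alpha)} = a_j a_k 2^{-(k-j)(N-\alpha)/2}$, which sums (over $j\le k$) to $\lesssim \sum_k a_k \big(\sum_{j\le k} a_j 2^{-(k-j)(N-\alpha)/2}\big)$, and by the geometric decay this double sum is controlled by $\sum_k a_k^2$ up to a constant (discrete Young / Schur test). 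Hence $v \in Q^{\alpha,1}$ as soon as $\sum_k a_k^2 < \infty$.

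On the other hand, $\int_{\R^N} W\, v \approx \sum_k c_k \int_{A_k} W = \sum_k 2^{-k(N+\alpha)/2} a_k \cdot \big(2^{k\cdot ?}\big)\cdots$ — more cleanly, after the substitution the hypothesis forces $\sum_k a_k \cdot b_k < \infty$ where $b_k := 2^{-k(N+\alpha)/2}\int_{A_k} W\,dx$ suitably normalised. Choosing $a_k = b_k / \big(k (\log k)^{2\delta}\big)$ one has $\sum a_k^2 < \infty$ (since $b_k \lesssim 1$ may be assumed after truncating, or one argues by contradiction assuming the conclusion fails so that $\sum_k b_k/(k(\log k)^{2\delta}) = \infty$ for some $\delta>\tfrac12$, and derives a contradiction with the inequality by a gliding-hump / closed-graph argument). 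Translating $\sum_k b_k/(k(\log k)^{2\delta})$ back into an integral via $k \approx \log_2|x|$, $dk \approx d|x|/|x|$, produces exactly $\int_{\R^N\setminus B_2} W(x)\,|x|^{-(N+\alpha)/2}(1+|\log|x||)^{-1/2}(1+\log(1+|\log|x||))^{-\delta}\,dx$ — note the extra half-power of $\log$ comes from the square-summability threshold ($\sum 1/(k(\log k)) = \infty$ but $\sum 1/(k(\log k)^{1+\epsilon})<\infty$), matching the exponent $\tfrac12$ on $(1+|\log|x||)$ and $\delta$ on the double log.

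\textbf{Main obstacle.} The delicate point is the off-diagonal Coulomb estimate: one must verify that the cross-interactions between annuli really do sum against the diagonal, i.e. that the Schur-type bound $\sum_{j,k} a_j a_k 2^{-|k-j|(N-\alpha)/2} \lesssim \sum_k a_k^2$ holds — this is where the precise exponent $(N-\alpha)/2$ (half the Riesz exponent, via the $I_{\alpha/2}$ split) is essential and where a naive triangle-inequality estimate would lose the sharp logarithm. A secondary technical nuisance is ensuring $v = |u|^p$ for an admissible $u$ with $\|Du\|_{L^2}<\infty$: this is handled by replacing each $\chi_{A_k}$ with a sum of smooth unit bumps and noting that a bounded number of translated unit bumps per annulus, each of uniformly bounded $E^{\alpha,p}$-norm, can be arranged so that the gradient term contributes a harmless $O(\#\{\text{bumps at level }k\})$ which is absorbed by the same summability condition.
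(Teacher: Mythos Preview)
Your strategy --- build a test function with \(\abs{u}^p\) equal to the borderline weight and verify finite Coulomb energy --- is the paper's, but the execution of the Coulomb estimate differs. The paper bypasses the dyadic decomposition and Schur test entirely: it takes directly the continuous function
\[
\abs{u(x)}^p=\frac{1}{\abs{x}^{(N+\alpha)/2}(\log\abs{x})^{1/2}(\log\log\abs{x})^{\delta}}\,\chi_{\R^N\setminus B_3}(x),
\]
writes it as the layer-cake \(\abs{u(x)}^p\lesssim\int_3^\infty \rho^{-1}(\log\rho)^{-3/2}(\log\log\rho)^{-\delta}\,\abs{x}^{-(N+\alpha)/2}\chi_{\abs{x}\ge\rho}\dif\rho\), and then invokes the single pointwise bound \(I_{\alpha/2}\ast\bigl(\abs{\cdot}^{-(N+\alpha)/2}\chi_{\R^N\setminus B_\rho}\bigr)(x)\lesssim(\abs{x}+\rho)^{-N/2}\); integrating in \(\rho\) and squaring gives \(I_{\alpha/2}\ast\abs{u}^p\in L^2\) precisely when \(\delta>\tfrac12\). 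This is the continuous analogue of your off-diagonal Schur estimate, but packaged as one explicit kernel bound rather than a discrete double sum. Your dyadic route is valid and has the merit of making the underlying \(\ell^2\) duality transparent; the paper's route is shorter and avoids any duality or contradiction argument.

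Your middle paragraph is tangled, however. You need neither a contradiction nor a closed-graph argument, and the choice \(a_k=b_k/(k(\log k)^{2\delta})\) is circular (you do not know \(b_k\) is bounded) and has the wrong exponents. Simply take \(a_k=k^{-1/2}(\log k)^{-\delta}\): then \(\sum a_k^2=\sum k^{-1}(\log k)^{-2\delta}<\infty\) since \(2\delta>1\), your Schur bound gives finite Coulomb energy, and the hypothesis yields \(\sum_k a_k b_k<\infty\), which after undoing the dyadic dictionary is exactly the stated integral. This \(a_k\) is precisely the dyadic discretisation of the paper's \(u\), so the two arguments merge once you make the right choice. One technicality both arguments elide: for \(p(N-2)\ge N+\alpha\) the natural radial profile has \(Du\notin L^2\), so \(u\notin E^{\alpha,p}\); the clean repair is to observe that the hypothesised inequality extends from \(E^{\alpha,p}\) to \(Q^{\alpha,p}\) by monotone \(C^\infty_c\) approximation and Fatou, after which only \(u\in Q^{\alpha,p}\) is required.
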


In particular, since
\[
 \int_{\R^N} \frac{1}{\abs{x}^N (1 + \abs{\log \abs{x}\,})^\frac{1}{2}
    (1 + \log (1 + \abs{\log \abs{x}\,}))^\delta}\dif x = \infty
\]
for every \(\delta \in \R\), the estimate \eqref{eqImpossibleRuiz} cannot hold.

\begin{proof}[Proof of proposition~\ref{propositionRuizCounterexample}]
Given \(\delta > 0\), we define the function \(u : \R^N \to \R\) for each \(x \in \R^N\) by
\[
 u (x) = \frac{1}{(\log \abs{x})^\frac{1}{2 p} (\log (\log \abs{x}))^{\frac{\delta}{p}}\abs{x}^\frac{N + \alpha}{2 p}} \chi_{\R^N \setminus B_3} (x).
\]
Observe that for every \(x \in \R^N\)
\[
 \abs{u (x)}^p
 \le C  \int_3^\infty \frac{1}{\abs{x}^\frac{N + \alpha}{2} \rho (\log \rho)^\frac{3}{2} (\log \log \rho)^{\delta} } \chi_{\R^N \setminus B_\rho} (x) \dif \rho.
\]
Since for every \(x \in \R^N\)
\[
 \bigl(I_{\alpha/2} \ast (I_{(N - \alpha)/2} \chi_{\R^N \setminus B_\rho})\bigr) (x)
 \le \frac{C'}{(\abs{x} + \rho)^\frac{N}{2}},
\]
we have
\[
 I_{\alpha/2} \ast \abs{u (x)}^p
 \le C C' \int_3^\infty \frac{1}{(\abs{x} + \rho)^\frac{N}{2} \rho (\log \rho)^{\frac{3}{2}} (\log \log \rho)^\delta } \dif \rho
 \le C'' \frac{1}{(\log \abs{x})^\frac{1}{2} (\log \log \abs{x})^\delta \abs{x}^\frac{N}{2}}.
\]
Therefore \(I_{\alpha/2} \ast \abs{u}^p \in L^2 (\R^N)\) as soon as \(\delta > \frac{1}{2}\).
This implies that
\[
  \int_{\R^N \setminus B_3} \frac{W (x)}{\abs{x}^\frac{N + \alpha}{2} (\log \abs{x})(\log \log \abs{x})^{\delta}}\dif x < \infty.
\]
To obtain the condition around the origin, we define for \(\delta > 0\),
\[
 u (x) = \frac{1}{(\log 1/\abs{x})^\frac{1}{2 p} (\log \log 1/\abs{x})^{\frac{\delta}{p}}\abs{x}^\frac{N + \alpha}{2 p}} \chi_{B_{1/3}} (x).\qedhere
\]
\end{proof}

Although \eqref{eqImpossibleRuiz} does not hold, it is still possible to prove a scaling invariant inequality that implies the local estimate on balls of proposition~\ref{propositionRuizBall}.

\begin{proposition}
\label{propositionRuizAverage}
Let \(N \in \N\), \(\alpha \in (0, N)\), \(p \ge 1\).
For every \(a \in \R^N\),
\[
  \int_0^\infty \Bigl(\fint_{B_\rho (a)} \abs{u}^p \Bigr)^2 \rho^{\alpha + N - 1}\dif \rho
  \le C \int_{\R^N} \bigabs{I_{\alpha/2} \ast \abs{u}^p}^2.
\]
\end{proposition}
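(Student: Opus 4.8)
The plan is to reduce to the case $a=0$ and then convert both sides into explicit double integrals. Since the convolution $I_{\alpha/2}\ast\abs{u}^p$ commutes with translations, replacing $u$ by $u(\cdot+a)$ leaves the right-hand side unchanged and turns $B_\rho(a)$ into $B_\rho(0)$, so we may assume $a=0$. Write $v:=\abs{u}^p\ge 0$; if $\int_{\R^N}\bigabs{I_{\alpha/2}\ast v}^2=\infty$ there is nothing to prove, so assume this quantity is finite.

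First I would compute the left-hand side exactly. Writing $\fint_{B_\rho(0)}v=\abs{B_1}^{-1}\rho^{-N}\int_{\R^N}\chi_{B_\rho}(x)v(x)\dif x$, expanding the square and interchanging the order of integration by Tonelli's theorem (all integrands are nonnegative) gives
\[
\int_0^\infty\Bigl(\fint_{B_\rho(0)}v\Bigr)^2\rho^{\alpha+N-1}\dif\rho
=\frac{1}{\abs{B_1}^2}\int_{\R^N}\int_{\R^N}v(x)v(y)\Bigl(\int_{\max\{\abs{x},\abs{y}\}}^\infty\rho^{\alpha-N-1}\dif\rho\Bigr)\dif x\dif y,
\]
and since $\alpha<N$ the inner integral equals $(N-\alpha)^{-1}\max\{\abs{x},\abs{y}\}^{\alpha-N}$. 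Next I would use the elementary bound $\max\{\abs{x},\abs{y}\}\ge\tfrac12\abs{x-y}$, which, because $N-\alpha>0$, yields $\max\{\abs{x},\abs{y}\}^{\alpha-N}\le 2^{N-\alpha}\abs{x-y}^{\alpha-N}$, so that
\[
\int_0^\infty\Bigl(\fint_{B_\rho(0)}v\Bigr)^2\rho^{\alpha+N-1}\dif\rho
\le\frac{2^{N-\alpha}}{\abs{B_1}^2(N-\alpha)}\int_{\R^N}\int_{\R^N}\frac{v(x)v(y)}{\abs{x-y}^{N-\alpha}}\dif x\dif y.
\]

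To finish, I would recognise the last double integral as a multiple of the right-hand side: by the semigroup property $I_{\alpha/2}\ast I_{\alpha/2}=I_\alpha$ (valid since $\alpha/2+\alpha/2<N$) together with Fubini's theorem,
\[
\int_{\R^N}\bigabs{I_{\alpha/2}\ast v}^2=\int_{\R^N}v\,(I_\alpha\ast v)=A_\alpha\int_{\R^N}\int_{\R^N}\frac{v(x)v(y)}{\abs{x-y}^{N-\alpha}}\dif x\dif y,
\]
so the asserted inequality holds with $C=2^{N-\alpha}/\bigl(\abs{B_1}^2(N-\alpha)A_\alpha\bigr)$.

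There is no serious obstacle: the only points needing a line of justification are the applicability of Tonelli (ensured by nonnegativity), the convergence of $\int_r^\infty\rho^{\alpha-N-1}\dif\rho$ and of $I_{\alpha/2}\ast I_{\alpha/2}$ (both guaranteed by $\alpha\in(0,N)$), and the finiteness of $\int_{\R^N}\int_{\R^N}v(x)v(y)\abs{x-y}^{\alpha-N}\dif x\dif y$, which follows from the assumed finiteness of the right-hand side. A more hands-on alternative, closer to the proof of proposition~\ref{propositionRuizBall}, is to observe that for $\rho<\abs{x}<2\rho$ and $y\in B_\rho(0)$ one has $\abs{x-y}<3\rho$, whence $(I_{\alpha/2}\ast v)(x)\ge c\,\rho^{\alpha/2-N}\int_{B_\rho(0)}v$; squaring, integrating over the annulus $B_{2\rho}(0)\setminus B_\rho(0)$, and then integrating in $\rho$ produces the claim, the role of the annulus rather than the full ball being precisely to keep the resulting overlap $\int_{\abs{x}/2}^{\abs{x}}\rho^{-1}\dif\rho=\log 2$ bounded.
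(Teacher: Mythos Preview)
Your proof is correct and takes a genuinely different route from the paper's. The paper argues pointwise on annuli: for \(x \in B_\rho \setminus B_{\rho/2}\) one has \(\rho^{\alpha/2}\fint_{B_\rho}\abs{u}^p \le C(I_{\alpha/2}\ast\abs{u}^p)(x)\); squaring, integrating over the annulus, and then integrating \(\rho^{-1}\dif\rho\) gives the result via Fubini, with the annulus ensuring the overlap integral is \(\ln 2\). This is exactly the ``hands-on alternative'' you sketch at the end, so you have in fact anticipated the paper's argument as well.

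Your primary approach is more algebraic: you expand both sides as double integrals over \(\R^N\times\R^N\) against explicit kernels --- \((N-\alpha)^{-1}\max\{\abs{x},\abs{y}\}^{\alpha-N}\) on the left via Tonelli, and \(A_\alpha\abs{x-y}^{\alpha-N}\) on the right via the semigroup identity \(I_{\alpha/2}\ast I_{\alpha/2}=I_\alpha\) --- and then compare kernels pointwise using \(\max\{\abs{x},\abs{y}\}\ge\tfrac12\abs{x-y}\). This yields an explicit constant and is arguably cleaner; the paper's route, by contrast, stays closer to the local estimate of proposition~\ref{propositionRuizBall} and avoids invoking the semigroup property directly.
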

It is clear that proposition~\ref{propositionRuizAverage} implies proposition~\ref{propositionRuizBall}.

\begin{proof}[Proof of proposition~\ref{propositionRuizAverage}]
For every \(x \in B_\rho \setminus B_{\rho/2}\), we have
\[
   \rho^\frac{\alpha}{2} \fint_{B_\rho} \abs{u}^p
   \le C \bigl(I_{\alpha/2} \ast \abs{u}^p\bigr) (x)
\]
Therefore, by integrating over \(B_\rho \setminus B_{\rho/2}\),
\[
   \rho^{N + \alpha} \Bigl(\fint_{B_\rho} \abs{u}^p\Bigr)^2
    \le C' \int_{B_\rho \setminus B_{\rho/2}} \bigabs{I_{\alpha/2} \ast \abs{u}^p}^2.
\]
Hence by integration and by Fubini's theorem
\[
  \int_0^\infty \Bigl(\fint_{B_\rho} \abs{u}^p\Bigr)^2 \rho^{\alpha + N - 1}\dif \rho
  \le C'\int_0^\infty \Bigl(\int_{B_\rho \setminus B_{\rho/2}} \bigabs{I_{\alpha/2} \ast \abs{u}^p}^2\Bigr) \frac{\dif \rho}{\rho}
  = C' \ln 2 \int_{\R^N} \bigabs{I_{\alpha/2} \ast \abs{u}^p}^2.\qedhere
\]
\end{proof}

More generally, we can deduce from proposition~\ref{propositionRuizAverage} families of weighted estimates.

\begin{proposition}
\label{propositionRuizWeight}
Let \(N \in \N\), \(\alpha \in (0, N)\), \(p \ge 1\) and \(w : (0,\infty) \to \R\).
If
$$\int_0^\infty \rho^{1 + N -\alpha} w (\rho)^2 \dif \rho < \infty$$
and $W(\rho)=\int_\rho^\infty w (r)\dif r$,
then
\[
\Bigl(\int_{\R^N} \abs{u (x)}^{p} W (\abs{x})\dif x\Bigr)^2
\le C
\Bigl(\int_0^\infty \abs{w (\rho)}^2 \rho^{1 + N - \alpha}\dif \rho\Bigr)\Bigl( \int_0^\infty \Bigl(\fint_{B_\rho} \abs{u}^{p}\Bigr)^2 \rho^{\alpha + N - 1} \dif \rho\Bigr).
\]
\end{proposition}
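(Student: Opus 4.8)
The plan is to unfold $W(\abs{x})$ via its definition as a tail integral, exchange the order of integration by Tonelli's theorem, and then apply the Cauchy--Schwarz inequality after splitting the power of the radial variable in a way dictated by the two factors on the right-hand side.

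First I would reduce to the case $w \ge 0$. Since $\abs{W(\rho)} = \bigabs{\int_\rho^\infty w(r)\dif r} \le \int_\rho^\infty \abs{w(r)}\dif r$ and $\bigl(\int_{\R^N}\abs{u}^p W(\abs{x})\dif x\bigr)^2 \le \bigl(\int_{\R^N}\abs{u}^p \abs{W(\abs{x})}\dif x\bigr)^2$, while the quantity $\int_0^\infty \abs{w(\rho)}^2\rho^{1+N-\alpha}\dif\rho$ is unchanged under $w \mapsto \abs{w}$, it suffices to prove the estimate when $w$, and hence $W$, is nonnegative. (One may also assume the right-hand side finite, there being nothing to prove otherwise; note that $\alpha<N$ together with Cauchy--Schwarz already guarantees $W(\rho)<\infty$.) Writing $W(\abs{x}) = \int_0^\infty w(r)\chi_{B_r}(x)\dif r$ and invoking Tonelli's theorem, I obtain
\[
 \int_{\R^N} \abs{u(x)}^p W(\abs{x})\dif x
 = \int_0^\infty w(r)\Bigl(\int_{B_r} \abs{u}^p\Bigr)\dif r
 = \abs{B_1}\int_0^\infty w(r)\, r^N \Bigl(\fint_{B_r}\abs{u}^p\Bigr)\dif r.
\]
Now I would split $r^N = r^{\frac{1+N-\alpha}{2}}\cdot r^{\frac{\alpha+N-1}{2}}$, the two exponents summing to $N$, and apply the Cauchy--Schwarz inequality in $L^2\bigl((0,\infty),\dif r\bigr)$ to the pair of functions $w(r)\, r^{\frac{1+N-\alpha}{2}}$ and $\bigl(\fint_{B_r}\abs{u}^p\bigr) r^{\frac{\alpha+N-1}{2}}$. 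Squaring the outcome gives the asserted inequality with $C = \abs{B_1}^2$.

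Since every step is either an exact identity or a single application of a classical inequality, there is no genuine obstacle here; the only points deserving a word of care are the sign reduction above and the verification that Tonelli's theorem applies, both of which are immediate once $w$ is taken nonnegative and the right-hand side is assumed finite.
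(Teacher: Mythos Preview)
Your proof is correct and follows essentially the same route as the paper: unfold $W(\abs{x})$ as a tail integral, swap the order of integration to obtain $\int_0^\infty w(\rho)\,\rho^N\fint_{B_\rho}\abs{u}^p\dif\rho$ (up to a constant), and then apply Cauchy--Schwarz with the splitting $\rho^N=\rho^{\frac{1+N-\alpha}{2}}\rho^{\frac{\alpha+N-1}{2}}$. The only addition is your explicit reduction to $w\ge 0$, which the paper omits but which is indeed needed to justify the exchange of integrals.
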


Observe that by the Cauchy--Schwarz inequality, we have
\[
\begin{split}
 W (r) \le \int_r^\infty w (\rho) \dif \rho &\le \Bigl(\int_r^\infty w (\rho) \rho^{1 + N -\alpha}\Bigr)^\frac{1}{2} \Bigl(\int_r^\infty \frac{\dif \rho}{\rho^{1 + N - \alpha}} \Bigr)^\frac{1}{2}\\
 &\le \frac{1}{\sqrt{N - \alpha} \abs{x}^\frac{N - \alpha}{2}} \Bigl( \int_0^\infty w (\rho) \rho^{1 + N - \alpha} \dif \rho \Bigr)^\frac{1}{2}.
\end{split}
\]
In particular, if we choose
\[
  W (\rho) = \frac{1}{\rho^\frac{N - \alpha}{2} (1 + \abs{\log \rho})^\gamma},
\]
with \(\gamma < \frac{1}{2}\), then
\begin{equation}\label{weighted}
  \Bigl(\int_{\R^N} \frac{\abs{u (x)}^p}{\abs{x}^\frac{N- \alpha}{2} (1 + \bigabs{\log \abs{x}})^\gamma}\dif x \Bigr)^2
  \le C \int_{\R^N} \bigabs{I_{\alpha/2} \ast \abs{u}^p}^2,
\end{equation}
and we recover the inequality obtained by Ruiz for \(p = 2\) \cite{Ruiz-ARMA}*{theorem 3.1}.
By proposition~\ref{propositionRuizCounterexample}, the restriction \(\gamma < \frac{1}{2}\) is optimal for \eqref{weighted} to hold. This completes the study of Ruiz who has showed that the inequality does not hold when \(p = 2\) and \(\gamma < \frac{1}{2} - \frac{1}{N}\) \cite{Ruiz-ARMA}*{remark 3.3}.

\begin{proof}[Proof of proposition~\ref{propositionRuizWeight}]
Integrating by parts and using the Cauchy--Schwarz inequality, we obtain
\[
\begin{split}
  \Bigl(\int_{\R^N} \abs{u (x)}^{p} W (\abs{x})\dif x\Bigr)^2
  &=\Bigl(\int_{\R^N} \abs{u (x)}^{p} \int_{\abs{x}}^\infty w (\rho)\dif r \dif x\Bigr)^2\\
  &=C'\Bigl(\int_0^\infty w (\rho) \rho^N \fint_{B_\rho} \abs{u}^{p} \dif \rho\Bigr)^2\\
  &\le C \Bigl(\int_0^\infty \abs{w (\rho)}^2 \rho^{1 + N - \alpha}\dif \rho\Bigr)\Bigl( \int_0^\infty \Bigl(\fint_{B_\rho} \abs{u}^{p}\Bigr)^2 \rho^{\alpha + N - 1} \dif \rho\Bigr).\qedhere
\end{split}
\]
\end{proof}

In the sequel, we shall use the following particular case which gives a good practical substitute to \eqref{eqImpossibleRuiz}.

\begin{proposition}
\label{propositionRuizPowerExterior}
Let \(N \in \N\), \(\alpha \in (0, N)\) and \(p \ge 1\).
If \(\beta > \frac{N - \alpha}{2}\), then for every \(u \in Q^{\alpha, p} (\R^N)\)
\[
  \int_{\R^N \setminus B_R} \frac{\abs{u (x)}^p}{\abs{x}^{\beta}}\dif x
  \le \frac{C}{R^{\beta - \frac{N - \alpha}{2}}}\Bigl(\int_{\R^N} \bigabs{I_{\alpha/2} \ast \abs{u}^p}^2\Bigr)^\frac{1}{2}.
\]
If \(\beta < \frac{N - \alpha}{2}\), then for every \(u \in Q^{\alpha, p} (\R^N)\)
\[
  \int_{B_R} \frac{\abs{u (x)}^p}{\abs{x}^{\beta}}\dif x
  \le CR^{\frac{N - \alpha}{2} - \beta} \Bigl(\int_{\R^N} \bigabs{I_{\alpha/2} \ast \abs{u}^p}^2\Bigr)^\frac{1}{2}.
\]
\end{proposition}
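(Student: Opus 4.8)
The plan is to reduce both inequalities to the ball estimate of proposition~\ref{propositionRuizBall} via the elementary identity
\[
  \frac{1}{\abs{x}^\beta} = \beta \int_{\abs{x}}^\infty \frac{\dif s}{s^{\beta + 1}}, \qquad x \in \R^N \setminus \{0\},
\]
together with Fubini's theorem. Throughout I write $K := \bigl(\int_{\R^N} \bigabs{I_{\alpha/2} \ast \abs{u}^p}^2\bigr)^{1/2}$, which is finite because $u \in Q^{\alpha, p}(\R^N)$. First I would note that, although proposition~\ref{propositionRuizBall} is stated for $E^{\alpha, p}(\R^N)$, its proof only uses the pointwise bound on $I_{\alpha/2} \ast \abs{u}^p$, and therefore yields $\int_{B_s} \abs{u}^p \le C s^{(N - \alpha)/2} K$ for every $u \in Q^{\alpha, p}(\R^N)$ and every $s > 0$; this is the only input needed.

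For the exterior inequality ($\beta > \frac{N - \alpha}{2}$) I would substitute the identity above and apply Fubini's theorem to obtain
\[
  \int_{\R^N \setminus B_R} \frac{\abs{u (x)}^p}{\abs{x}^\beta}\dif x
  = \beta \int_R^\infty \frac{1}{s^{\beta + 1}} \Bigl(\int_{B_s \setminus B_R} \abs{u}^p\Bigr)\dif s
  \le C \beta K \int_R^\infty s^{\frac{N - \alpha}{2} - \beta - 1}\dif s,
\]
and the last one-dimensional integral converges precisely because $\beta > \frac{N - \alpha}{2}$, with value a constant multiple of $R^{\frac{N - \alpha}{2} - \beta}$, which is the claimed bound.

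For the interior inequality ($\beta < \frac{N - \alpha}{2}$) the same substitution, after splitting the $s$-integral according to whether $\min\{s, R\}$ equals $s$ or $R$, gives
\[
  \int_{B_R} \frac{\abs{u (x)}^p}{\abs{x}^\beta}\dif x
  = \beta \int_0^R \frac{1}{s^{\beta + 1}} \Bigl(\int_{B_s} \abs{u}^p\Bigr)\dif s + \Bigl(\int_{B_R} \abs{u}^p\Bigr) \int_R^\infty \frac{\beta \dif s}{s^{\beta + 1}}.
\]
Bounding each occurrence of $\int_{B_\rho} \abs{u}^p$ by $C \rho^{(N - \alpha)/2} K$, the first term is a constant times $K \int_0^R s^{\frac{N - \alpha}{2} - \beta - 1}\dif s$, which converges at the origin because $\beta < \frac{N - \alpha}{2}$, and the second term equals $R^{-\beta}$ times $C R^{(N - \alpha)/2} K$; both are constant multiples of $R^{\frac{N - \alpha}{2} - \beta} K$.

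I do not expect a serious obstacle: the only point requiring attention is the convergence of the integral $\int s^{\frac{N - \alpha}{2} - \beta - 1}\dif s$ — at infinity in the exterior case, at the origin in the interior case — which is governed exactly by the sign of $\beta - \frac{N - \alpha}{2}$, matching the two hypotheses. As alternative routes, one could instead split $\R^N \setminus B_R$ (resp.\ $B_R$) into dyadic annuli and sum the estimates of proposition~\ref{propositionRuizBall}, the resulting geometric series converging with ratio $2^{\frac{N - \alpha}{2} - \beta}$ (resp.\ $2^{\beta - \frac{N - \alpha}{2}}$), or invoke proposition~\ref{propositionRuizAverage} together with the Cauchy--Schwarz inequality against a suitable radial weight.
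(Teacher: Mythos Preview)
Your argument is correct. The Fubini step and the convergence of the one--dimensional integrals are exactly as you say, and the minor caveat about proposition~\ref{propositionRuizBall} holding for all of \(Q^{\alpha,p}(\R^N)\) is valid (indeed the proof of that proposition uses nothing but the pointwise lower bound on \(I_{\alpha/2}\ast\abs{u}^p\)). For the interior inequality you implicitly assume \(\beta>0\) when invoking the identity \(\abs{x}^{-\beta}=\beta\int_{\abs{x}}^\infty s^{-\beta-1}\dif s\); the case \(\beta\le 0\) is of course trivial since then \(\abs{x}^{-\beta}\le R^{-\beta}\) on \(B_R\) and proposition~\ref{propositionRuizBall} applies directly.

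Your route is genuinely more elementary than the paper's. The paper deduces proposition~\ref{propositionRuizPowerExterior} as a special case of the general weighted estimate of proposition~\ref{propositionRuizWeight}, by choosing \(w(\rho)=\beta\rho^{-\beta-1}\chi_{\{\rho\ge R\}}\) (and similarly for the interior case); that proposition in turn rests on proposition~\ref{propositionRuizAverage} together with the Cauchy--Schwarz inequality. In other words, the paper takes precisely the third of the alternative routes you list at the end. Your direct approach via proposition~\ref{propositionRuizBall} and Fubini bypasses both intermediate propositions; what the paper's detour buys is a single framework that simultaneously yields the logarithmically weighted estimates such as \eqref{weighted}, whereas your argument is tailored to pure powers.
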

In view of proposition~\ref{propositionRuizCounterexample}, the restrictions on \(\beta\) are optimal.
\begin{proof}[Proof of proposition~\ref{propositionRuizPowerExterior}]
For the first inequality we apply proposition~\ref{propositionRuizWeight} with the function \(w : (0, \infty) \to [0, \infty)\) defined for \(\rho \in (0, \infty)\) by
\[
w (\rho) = \begin{cases}
              \frac{\beta}{\abs{x}^{\beta + 1}}  & \text{if \(\rho \ge R\)},\\
              0 & \text{if \(\rho \le R\)}.
           \end{cases}
\]
The proof of the second inequality is similar.
\end{proof}

\section{Nonlocal Brezis--Lieb lemma}\label{sect-Brezis--Lieb}
\settocdepth{section}

\subsection{General nonlocal Brezis--Lieb lemma}

The main result of this section is the following nonlocal Brezis--Lieb property.

\begin{proposition}[Nonlocal Brezis--Lieb lemma]
\label{propositionBrezisLieb}
Let \(N \in \N\), \(\alpha \in (0, N)\) and \(p \ge 1\).
Assume that \((u_n)_{n \in \N}\) is a sequence of measurable functions from \(\R^N\) to \(\R\)
that converges to \(u : \R^N \to \R\) almost everywhere.
If the sequence \((I_{\alpha/2} \ast \abs{u_n}^p)_{n \in \N}\) is bounded in \(L^2 (\R^N)\), then
\[
  \lim_{n \to \infty}
  \int_{\R^N} \Bigabs{\bigabs{I_{\alpha/2} \ast \abs{u_n}^p}^2 - \bigabs{I_{\alpha/2} \ast (\abs{u_n - u}^p + \abs{u}^p)}^2}
  = 0.
\]
\end{proposition}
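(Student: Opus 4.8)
The plan is to reduce the statement to the assertion that $\norm{I_{\alpha/2} \ast f_n}_{L^2} \to 0$, where
\[
  f_n := \abs{u_n}^p - \abs{u_n - u}^p - \abs{u}^p,
\]
and then to prove this convergence by a Brezis--Lieb type splitting that uses the positivity of the Riesz kernel in an essential way. Write $A_n := I_{\alpha/2} \ast \abs{u_n}^p \ge 0$ and $B_n := I_{\alpha/2} \ast (\abs{u_n - u}^p + \abs{u}^p) \ge 0$, so that the integrand in the statement equals $\bigabs{A_n^2 - B_n^2} = \abs{A_n - B_n}\,(A_n + B_n)$ and the Cauchy--Schwarz inequality gives
\[
  \int_{\R^N} \bigabs{A_n^2 - B_n^2} \le \norm{A_n - B_n}_{L^2}\,\norm{A_n + B_n}_{L^2}.
\]
By hypothesis $\norm{A_n}_{L^2}$ is bounded. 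Fatou's lemma applied twice (first in the convolution variable, then in $L^2$, using $u_n \to u$ a.e.) shows that $I_{\alpha/2} \ast \abs{u}^p \in L^2(\R^N)$, and then $\abs{u_n - u}^p \le 2^{p-1}(\abs{u_n}^p + \abs{u}^p)$ together with the positivity of the kernel gives that $\norm{B_n}_{L^2}$, hence $\norm{A_n + B_n}_{L^2}$, is bounded as well. Since $A_n - B_n = I_{\alpha/2} \ast f_n$ by linearity of the convolution, it remains to prove $\norm{I_{\alpha/2} \ast f_n}_{L^2} \to 0$.

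For this I would use the elementary inequality: for every $\varepsilon > 0$ there is $C_\varepsilon > 0$ with $\bigabs{\abs{a + b}^p - \abs{a}^p} \le \varepsilon\abs{a}^p + C_\varepsilon \abs{b}^p$ for all $a, b \in \R$ (consequence of the mean value theorem and Young's inequality). Applying it with $a = u_n - u$ and $b = u$ yields $\abs{f_n} \le \varepsilon\abs{u_n - u}^p + (C_\varepsilon + 1)\abs{u}^p$, so that $g_{n, \varepsilon} := \bigl(\abs{f_n} - \varepsilon\abs{u_n - u}^p\bigr)_+$ satisfies $0 \le g_{n, \varepsilon} \le (C_\varepsilon + 1)\abs{u}^p$, while $g_{n, \varepsilon} \to 0$ almost everywhere because $f_n \to 0$ and $\abs{u_n - u}^p \to 0$ a.e. Since $I_{\alpha/2} > 0$, one then has pointwise
\[
  \bigabs{I_{\alpha/2} \ast f_n} \le I_{\alpha/2} \ast \abs{f_n} \le I_{\alpha/2} \ast g_{n, \varepsilon} + \varepsilon\, I_{\alpha/2} \ast \abs{u_n - u}^p.
\]

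To conclude one passes to the limit in this bound. The last term has $L^2$ norm at most $\varepsilon$ times the uniform bound on $\norm{I_{\alpha/2} \ast \abs{u_n - u}^p}_{L^2}$ noted above. For the middle term, $0 \le I_{\alpha/2} \ast g_{n, \varepsilon} \le (C_\varepsilon + 1)\, I_{\alpha/2} \ast \abs{u}^p$ with the majorant in $L^2(\R^N)$; moreover, for almost every $x \in \R^N$ one has $(I_{\alpha/2} \ast \abs{u}^p)(x) < \infty$, so dominated convergence in the variable $y$ (with dominant $y \mapsto (C_\varepsilon + 1)I_{\alpha/2}(x - y)\abs{u(y)}^p \in L^1(\R^N)$) gives $(I_{\alpha/2} \ast g_{n, \varepsilon})(x) \to 0$ as $n \to \infty$, and a second application of dominated convergence, now in $x$, yields $\norm{I_{\alpha/2} \ast g_{n, \varepsilon}}_{L^2} \to 0$. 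Hence $\limsup_{n \to \infty} \norm{I_{\alpha/2} \ast f_n}_{L^2} \le C\varepsilon$, and letting $\varepsilon \to 0$ finishes the proof.

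I expect the main obstacle---and the reason the conclusion is a genuine limit rather than only an inequality such as \eqref{ineqIntroBL}---to be the absence of any uniform bound on $(u_n)$ itself in a Lebesgue space: all that is controlled is the nonlocal quantity $I_{\alpha/2} \ast \abs{u_n}^p$. What makes the argument go through is precisely the positivity of $I_{\alpha/2}$, which allows one to replace $\bigabs{I_{\alpha/2} \ast f_n}$ by $I_{\alpha/2} \ast \abs{f_n}$ and then to separate an $\varepsilon$-small contribution $\varepsilon\abs{u_n - u}^p$, absorbed by the uniform bound, from a dominated remainder $g_{n, \varepsilon} \le (C_\varepsilon + 1)\abs{u}^p$ to which dominated convergence applies. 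Everything else is routine bookkeeping of constants.
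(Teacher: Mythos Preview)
Your proof is correct and follows essentially the same approach as the paper: both reduce via Cauchy--Schwarz to showing \(\norm{I_{\alpha/2}\ast\abs{f_n}}_{L^2}\to 0\), then use the elementary inequality \(\abs{f_n}\le \varepsilon\abs{u_n-u}^p+(C_\varepsilon+1)\abs{u}^p\) together with two nested applications of dominated convergence (first in the convolution variable, then in \(L^2\)). The only cosmetic difference is that the paper first packages the double-DCT step as a general ``Brezis--Lieb lemma with mixed norms'' for \(L^{2}(\R^N;L^p(\R^N))\) and then specialises via the isometry \(U_n(x,y)=I_{\alpha/2}(x-y)^{1/p}u_n(y)\), whereas you carry out the identical argument directly on the Riesz potential.
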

In particular,
\begin{multline}
\label{ineqBrezisLieb}
  \liminf_{n \to \infty}
  \int_{\R^N} \bigabs{I_{\alpha/2} \ast \abs{u_n}^p}^2 - \bigabs{I_{\alpha/2} \ast \abs{u_n - u}^p}^2\\
  = \int_{\R^N} \bigabs{I_{\alpha/2} \ast \abs{u}^p}^2 + 2 \liminf_{n \to \infty} \int_{\R^N} (I_{\alpha/2} \ast \abs{u}^p)(I_{\alpha/2} \ast \abs{u_n - u}^p)
  \ge \int_{\R^N} \bigabs{I_{\alpha/2} \ast \abs{u}^p}^2,
\end{multline}
that is, we have a Brezis--Lieb type inequality.
Versions of the nonlocal Brezis--Lieb
property with equality were previously obtained in
\citelist{%
\cite{Ackermann2004}%
\cite{Ackermann2006}%
\cite{MorozVanSchaftingen}%
\cite{YangWei2013}%
\cite{BellazziniFrankVisciglia}%
}.

We shall deduce proposition~\ref{propositionBrezisLieb} from the following variant of the Brezis--Lieb lemma with mixed norms.

\begin{proposition}[Brezis--Lieb lemma with mixed norms]
\label{propositionBrezisLiebMixed}
Let \(p \in (0, \infty)\), \(q \in (0, \infty)\), \(A \subset \R^N\), \(B \subset \R^M\) be measurable sets, and
\((U_n)_{n \in \N}\) be a sequence of measurable functions from \(A \times B\) to \(\R\).
If the sequence \((U_n)_{n \in \N}\) converges almost everywhere to a function \(U : A \times B \to \R\) and
\[
  \sup_{n \in \N} \int_{A} \Bigl(\int_{B} \abs{U_n (x, y)}^p\dif y \Bigr)^q \dif x < \infty,
\]
then
\[
  \lim_{n \to \infty} \int_{A} \Bigl(\int_{B} \bigabs{\abs{U_n (x, y)}^p - \abs{U_n (x, y) - U (x, y)}^p - \abs{U (x, y)}^p} \dif y \Bigr)^q \dif x = 0.
\]
\end{proposition}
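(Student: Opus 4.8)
The plan is to reduce the statement to the one-variable Brezis--Lieb argument applied in the $y$-variable, and then integrate in $x$. The only tool needed is the elementary inequality underlying \cite{BrezisLieb1983}: for each $p \in (0,\infty)$ and each $\varepsilon > 0$ there is $C_\varepsilon = C(p,\varepsilon) > 0$ with
\[
  \bigl\lvert \abs{s}^p - \abs{s - t}^p - \abs{t}^p \bigr\rvert \le \varepsilon \abs{s - t}^p + C_\varepsilon \abs{t}^p \qquad \text{for all } s,t \in \R.
\]
Writing $f_n(x,y) := \bigl\lvert \abs{U_n}^p - \abs{U_n - U}^p - \abs{U}^p \bigr\rvert$ and, for fixed $\varepsilon > 0$, $W_{n,\varepsilon} := \bigl( f_n - \varepsilon \abs{U_n - U}^p \bigr)_+$, the inequality applied with $s = U_n(x,y)$, $t = U(x,y)$ gives the pointwise bounds $f_n \le W_{n,\varepsilon} + \varepsilon \abs{U_n - U}^p$ and $0 \le W_{n,\varepsilon} \le C_\varepsilon \abs{U}^p$; moreover $W_{n,\varepsilon} \to 0$ almost everywhere on $A \times B$ as $n \to \infty$, since $U_n \to U$ a.e.\ forces $f_n \to 0$ a.e.

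Before splitting I would extract what the hypothesis gives. Put $M := \sup_n \int_A \bigl(\int_B \abs{U_n (x,y)}^p \dif y\bigr)^q \dif x < \infty$. Fatou's lemma yields $\int_A \bigl(\int_B \abs{U (x,y)}^p \dif y\bigr)^q \dif x \le M$; in particular $\int_B \abs{U(x,\cdot)}^p < \infty$ for a.e.\ $x \in A$, and $x \mapsto \bigl(\int_B \abs{U(x,\cdot)}^p\bigr)^q$ lies in $L^1(A)$ --- this is the domination used below. Combining $\abs{U_n - U}^p \le 2^{(p-1)_+}(\abs{U_n}^p + \abs{U}^p)$ with $(a + b)^q \le 2^q(a^q + b^q)$, first integrating over $B$ and raising to the power $q$, then integrating over $A$, one gets $\sup_n \int_A \bigl(\int_B \abs{U_n - U}^p \dif y\bigr)^q \dif x \le K$ with $K = K(p,q,M)$. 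Now integrating the pointwise bound $f_n \le W_{n,\varepsilon} + \varepsilon \abs{U_n - U}^p$ over $B$, applying $(a+b)^q \le 2^q(a^q+b^q)$, and integrating over $A$ gives
\[
  \int_A \Bigl( \int_B f_n(x,y) \dif y \Bigr)^q \dif x
  \le 2^q \int_A \Bigl( \int_B W_{n,\varepsilon}(x,y) \dif y \Bigr)^q \dif x + 2^q K \varepsilon^q .
\]

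The remaining point is that the first term on the right tends to $0$ as $n \to \infty$ for each fixed $\varepsilon$, which is a double application of dominated convergence: for a.e.\ fixed $x$, $W_{n,\varepsilon}(x,\cdot) \to 0$ a.e.\ on $B$ and is dominated by $C_\varepsilon \abs{U(x,\cdot)}^p \in L^1(B)$, so $\int_B W_{n,\varepsilon}(x,\cdot) \to 0$; then $x \mapsto \bigl(\int_B W_{n,\varepsilon}(x,\cdot)\bigr)^q \to 0$ a.e.\ on $A$ and is dominated by $C_\varepsilon^q \bigl(\int_B \abs{U(x,\cdot)}^p\bigr)^q \in L^1(A)$, so the integral over $A$ tends to $0$. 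Letting $n \to \infty$ in the displayed inequality gives $\limsup_n \int_A \bigl(\int_B f_n \dif y\bigr)^q \dif x \le 2^q K \varepsilon^q$ for every $\varepsilon > 0$, so the $\limsup$, and hence the limit, is $0$. The main thing to be careful about is the \emph{order of the two limits}: the auxiliary function $W_{n,\varepsilon}$ and its domination constant $C_\varepsilon$ degenerate as $\varepsilon \to 0$, so $n \to \infty$ must be taken first, for fixed $\varepsilon$, and only afterwards $\varepsilon \to 0$; the uniform bound $M$ is exactly what controls the ``bad'' term $\varepsilon\abs{U_n-U}^p$ uniformly in $n$. Apart from this one needs only the crude quasi-triangle constants $2^{(p-1)_+}$ and $2^q$, which remain valid for $p < 1$ and $q < 1$, so the argument covers the full range $p, q \in (0,\infty)$.
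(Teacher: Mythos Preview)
Your proof is correct and follows essentially the same route as the paper: Fatou to get \(U\) into the mixed-norm space, the pointwise Brezis--Lieb inequality to produce a dominated auxiliary function \(W_{n,\varepsilon}\le C_\varepsilon\abs{U}^p\), and a double application of dominated convergence (inner in \(B\), outer in \(A\)). The only cosmetic differences are that you place the \(\varepsilon\)-term on \(\abs{U_n-U}^p\) rather than on \(\abs{U_n}^p\), and that in the final step you use the single crude bound \((a+b)^q\le 2^q(a^q+b^q)\), valid for all \(q>0\), whereas the paper splits into the cases \(q\ge 1\) (Minkowski) and \(q\le 1\) (subadditivity); your version is slightly shorter at the cost of a worse constant, which is irrelevant here.
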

\begin{proof}
The proof follows the strategy of the original proof of the classical Brezis--Lieb lemma \cite{BrezisLieb1983}.
We first observe that by Fatou's lemma,
\begin{equation}\label{eqBrezisLiebBoundedLimit}
\begin{split}
\int_{A} \Bigl(\int_{B} \abs{U (x, y)}^p\dif y \Bigr)^q \dif x
&\le \int_{A} \Bigl(\liminf_{n \to \infty} \int_{B} \abs{U_n (x, y)}^p\dif y \Bigr)^q \dif x\\
&\le \liminf_{n \to \infty} \int_{A} \Bigl(\int_{B} \abs{U_n (x, y)}^p\dif y \Bigr)^q \dif x < \infty.
\end{split}
\end{equation}
Given \(\varepsilon > 0\), there exists \(C_\varepsilon > 0\) such that if \(s, t \in \R\), then
\[
  \bigabs{ \abs{s}^p - \abs{s - t}^p - \abs{t}^p} \le \varepsilon \abs{s}^p + C_\varepsilon \abs{t}^p.
\]
Hence, for every \(n \in \N\) and every \((x, y) \in A \times B\),
\[
  \bigl(\bigabs{ \abs{U_n (x, y)}^p - \abs{U_n (x, y) - U (x, y)}^p - \abs{U (x, y)}^p} - \varepsilon \abs{U_n (x, y)}^p\bigr)_+
  \le C_\varepsilon \abs{U (x, y)}^p.
\]
By \eqref{eqBrezisLiebBoundedLimit}, for almost every \(x \in A\),
we have \(U (x, \cdot) \in L^p (B)\), and thus by Lebesgue's dominated convergence theorem,
\[
  \lim_{n \to \infty} \int_{B} \bigl(\bigabs{ \abs{U_n (x, y)}^p - \abs{U_n (x, y) - U (x, y)}^p - \abs{U (x, y)}^p} - \varepsilon \abs{U_n (x, y)}^p\bigr)_+ \dif y= 0.
\]
Moreover, for every \(n \in \N\), we have
\[
  \int_{B} \bigl(\bigabs{ \abs{U_n (x, y)}^p - \abs{U_n (x, y) - U (x, y)}^p - \abs{U (x, y)}^p} - \varepsilon \abs{U_n (x, y)}^p\bigr)_+ \dif y
  \le C_\varepsilon \int_{B} \abs{U (x, y)}^p \dif y.
\]
Thanks to \eqref{eqBrezisLiebBoundedLimit}, we can apply a second time Lebesgue's dominated convergence theorem to deduce that
\[
  \lim_{n \to \infty} \int_{A} \Bigr( \int_{B} \bigl(\bigabs{ \abs{U_n (x, y)}^p - \abs{U_n (x, y) - U (x, y)}^p - \abs{U (x, y)}^p} - \varepsilon \abs{U_n (x, y)}^p\bigr)_+ \dif y\Bigr)^q \dif x
  = 0.
\]
Finally, we observe that
\begin{multline*}
  \int_{A} \Bigr( \int_{B} \bigabs{ \abs{U_n (x, y)}^p - \abs{U_n (x, y) - U (x, y)}^p - \abs{U (x, y)}^p} \dif y\Bigr)^q \dif x\\
  \le
  \int_{A} \Bigr( \int_{B} \bigl(\bigabs{ \abs{U_n (x, y)}^p - \abs{U_n (x, y) - U (x, y)}^p - \abs{U (x, y)}^p} - \varepsilon \abs{U_n (x, y)}^p\bigr)_+ + \varepsilon \abs{U_n (x, y)}^p\dif y\Bigr)^q \dif x.
\end{multline*}
Hence if \(q \ge 1\), by the Minkowski inequality,
\begin{multline*}
 \limsup_{n \to \infty} \int_{A} \Bigr( \int_{B} \bigabs{ \abs{U_n (x, y)}^p - \abs{U_n (x, y) - U (x, y)}^p - \abs{U (x, y)}^p} \dif y\Bigr)^q \dif x\\
  \le
  \limsup_{n \to \infty}
  \Bigl(\Bigl(\int_{A} \Bigr( \int_{B} \bigl(\bigabs{ \abs{U_n (x, y)}^p - \abs{U_n (x, y) - U (x, y)}^p - \abs{U (x, y)}^p} - \varepsilon \abs{U_n (x, y)}^p\bigr)_+\dif y\Bigr)^q \dif x \Bigr)^\frac{1}{q}\\
  + \Bigr(\int_{A} \Bigr(\int_{B}\varepsilon \abs{U_n (x, y)}^p\dif y\Bigr)^q \dif x\Bigr)^\frac{1}{q} \Bigr)^q
  \le \varepsilon^q \limsup_{n \to \infty}  \int_{A} \Bigr( \int_{B} \abs{U_n (x, y)}^p\dif y\Bigr)^q \dif x,
\end{multline*}
whereas if \(q \le 1\),
\begin{multline*}
  \limsup_{n \to \infty} \int_{A} \Bigr( \int_{B} \bigabs{ \abs{U_n (x, y)}^p - \abs{U_n (x, y) - U (x, y)}^p - \abs{U (x, y)}^p} \dif y\Bigr)^q \dif x\\
  \le
  \limsup_{n \to \infty}
  \int_{A} \Bigr( \int_{B} \bigabs{ \abs{U_n (x, y)}^p - \abs{U_n (x, y) - U (x, y)}^p - \abs{U (x, y)}^p} - \varepsilon \abs{U_n (x, y)}^p\bigr)_+\dif y\Bigr)^q\\
  + \Bigr(\int_{B}\varepsilon \abs{U_n (x, y)}^p\dif y\Bigr)^q \dif x \le \varepsilon^q \limsup_{n \to \infty}  \int_{A} \Bigr( \int_{B} \abs{U_n (x, y)}^p\dif y\Bigr)^q \dif x,
\end{multline*}
Since \(\varepsilon > 0\), is arbitrary, the conclusion follows in both cases.
\end{proof}

We are now in a position to prove the nonlocal Brezis--Lieb lemma.

\begin{proof}[Proof of proposition~\ref{propositionBrezisLieb}]
We define the function \(U_n : \R^N \times \R^N \to \R\) and \(U : \R^N \times \R^N \to \R\) for every \(x, y \in \R^N\) and \(n \in \N\) by \(U_n (x, y) = I_{\alpha/2} (x - y)^{1/p} u_n (y)\) and \(U (x, y) = I_{\alpha/2} (x, y)^{1/p} u (y)\). By assumption and by construction, the sequence \((U_n)_{n \in \N}\) converges almost everywhere to \(U\) in \(\R^N \times \R^N\). Moreover, since the Riesz kernel \(I_\alpha\) is nonnegative,
\[
  \sup_{n \in \N} \int_{\R^N} \Bigl(\int_{\R^N} \abs{U_n (x, y)}^p \dif y \Bigr)^2
  =\sup_{n \in \N} \int_{\R^N} \bigabs{I_{\alpha/2} \ast \abs{u_n}^p}^2 < \infty.
\]
Hence by proposition~\ref{propositionBrezisLiebMixed},
\begin{multline*}
  \lim_{n \to \infty} \int_{\R^N} \Bigabs{I_{\alpha/2} \ast \bigabs{\abs{u_n}^p - \abs{u_n - u}^p - \abs{u}^p}}^2\\
  = \lim_{n \to \infty}  \int_{\R^N} \Bigl(\int_{\R^N} \bigabs{\abs{U_n (x, y)}^p - \abs{U_n (x, y) - U (x, y)}^p - \abs{U (x, y)}^p} \dif y \Bigr)^2 \dif x = 0.
\end{multline*}
By the Cauchy--Schwarz inequality,
\begin{multline*}
  \int_{\R^N} \bigabs{\abs{I_{\alpha/2} \ast (\abs{u_n}^p)}^2 - \abs{I_{\alpha/2} \ast (\abs{u_n - u}^p + \abs{u}^p)}^2}\\
  = \int_{\R^N} \Bigabs{I_{\alpha/2} \ast (\abs{u_n}^p - \abs{u_n - u}^p - \abs{u}^p)}\bigabs{I_{\alpha/2} \ast (\abs{u_n}^p + \abs{u_n - u}^p + \abs{u}^p)}\\
  \le \Bigl(\int_{\R^N} \bigabs{I_{\alpha/2} \ast \bigabs{\abs{u_n}^p - \abs{u_n - u}^p - \abs{u}^p}}^2 \Bigr)^\frac{1}{2}
  \Bigl(\int_{\R^N} \bigabs{I_{\alpha/2} \ast (\abs{u_n}^p+ \abs{u_n - u}^p + \abs{u}^p)}^2 \Bigr)^\frac{1}{2}
\end{multline*}
and the conclusion follows.
\end{proof}

\subsection{Refined nonlocal Brezis--Lieb identity}

For our purpose the inequality \eqref{ineqBrezisLieb} given by  proposition~\ref{propositionBrezisLieb} will be sufficient.
For comparison with existing results and further use, we give some sufficient conditions for equality in \eqref{ineqBrezisLieb}.

We first obtain a general principle for equality.

\begin{proposition}
\label{propositionBrezisLiebEquiv}
Let \(N \in \N\), \(\alpha \in (0, N)\) and \(p \ge 1\).
Assume that \((u_n)_{n \in \N}\) is a sequence of measurable functions from \(\R^N\) to \(\R\)
that converges to \(u : \R^N \to \R\) almost everywhere.
If the sequence \((I_{\alpha/2} \ast \abs{u_n}^p)_{n \in \N}\) is bounded in \(L^2 (\R^N)\) and if \(u \ne 0\), then the following conditions are equivalent:
\begin{enumerate}[(i)]
\item \label{itEquivBLL1}
\(\displaystyle
  \lim_{n \to \infty}
  \int_{\R^N} \Bigabs{\bigabs{I_{\alpha/2} \ast \abs{u_n}^p}^2 - \bigabs{I_{\alpha/2} \ast \abs{u_n - u}^p}^2 -\bigabs{I_{\alpha/2} \ast \abs{u}^p}^2} = 0,
\)
\item \label{itEquivBL}
\(\displaystyle
  \lim_{n \to \infty}
  \int_{\R^N} \bigabs{I_{\alpha/2} \ast \abs{u_n}^p}^2 - \bigabs{I_{\alpha/2} \ast \abs{u_n - u}^p}^2
  = \int_{\R^N} \bigabs{I_{\alpha/2} \ast \abs{u}^p}^2,
\)
\item \label{itEquivLpLoc} the sequence \((u_n)_{n \in \N}\) converges to \(u\) strongly in \(L^p_\mathrm{loc} (\R^N)\) ,
\item \label{itEquivMeas} the sequence \((\abs{u_n}^p)_{n \in \N}\) converges to \(\abs{u}^p\) in the sense of measures,
\item \label{itEquivWeak} the sequence \((I_{\alpha/2} \ast \abs{u_n}^p)_{n \in \N}\) converges to \(I \ast \abs{u}^p\) weakly in \(L^2 (\R^N)\).
\end{enumerate}
\end{proposition}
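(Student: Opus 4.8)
I would prove the five conditions equivalent by establishing the chain $(i)\Leftrightarrow(ii)\Rightarrow(iii)\Leftrightarrow(iv)$ together with $(iii)\Rightarrow(v)\Rightarrow(ii)$, which closes the loop. The first move is to reduce everything to the behaviour of a single cross term. Expanding the square,
\[
  \bigabs{I_{\alpha/2}\ast(\abs{u_n-u}^p+\abs{u}^p)}^2
  = \bigabs{I_{\alpha/2}\ast\abs{u_n-u}^p}^2 + \bigabs{I_{\alpha/2}\ast\abs{u}^p}^2
  + 2\,(I_{\alpha/2}\ast\abs{u_n-u}^p)(I_{\alpha/2}\ast\abs{u}^p),
\]
and invoking proposition~\ref{propositionBrezisLieb}, one checks that $(i)$ and $(ii)$ are each equivalent to
\[
  c_n := \int_{\R^N}(I_{\alpha/2}\ast\abs{u_n-u}^p)(I_{\alpha/2}\ast\abs{u}^p)\to 0.
\]
Here all the $L^2$ norms involved are finite, since $I_{\alpha/2}\ast\abs{u}^p\in L^2$ by Fatou's lemma and $\abs{u_n-u}^p\le 2^{p-1}(\abs{u_n}^p+\abs{u}^p)$; and for $(i)$ one also uses that $c_n\ge 0$, together with the reverse triangle inequality.

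For $(ii)\Rightarrow(iii)$, the semigroup property $I_{\alpha/2}\ast I_{\alpha/2}=I_\alpha$ and Fubini's theorem (all integrands nonnegative) give $c_n=\int_{\R^N}(I_\alpha\ast\abs{u}^p)\abs{u_n-u}^p$. Because $u\not\equiv 0$, the potential $V:=I_\alpha\ast\abs{u}^p$ is strictly positive on all of $\R^N$; choosing a ball on which $\int\abs{u}^p>0$ and bounding the Riesz kernel from below yields a constant $c_R>0$ with $V\ge c_R$ on $B_R$, so that $c_n\ge c_R\int_{B_R}\abs{u_n-u}^p$. Hence $c_n\to 0$ forces $u_n\to u$ in $L^p(B_R)$ for every $R>0$, which is $(iii)$. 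The equivalence $(iii)\Leftrightarrow(iv)$ is then elementary: $L^p_{\mathrm{loc}}$ convergence implies $\abs{u_n}^p\to\abs{u}^p$ in $L^1_{\mathrm{loc}}$ and hence $(iv)$, while conversely $(iv)$ gives $\int_{B_R}\abs{u_n}^p\to\int_{B_R}\abs{u}^p$ for all but countably many $R$, and since $\abs{u_n}^p\to\abs{u}^p$ a.e., Scheff\'e's lemma upgrades this to strong $L^1(B_R)$ convergence, hence $(iii)$.

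It remains to close the loop with $(iii)\Rightarrow(v)\Rightarrow(ii)$. Since $(I_{\alpha/2}\ast\abs{u_n}^p)_{n\in\N}$ is bounded in $L^2(\R^N)$, to prove $(v)$ it suffices to identify the weak limit: for $\varphi\in C^\infty_c(\R^N)$ one writes $\int(I_{\alpha/2}\ast\abs{u_n}^p)\varphi=\int\abs{u_n}^p(I_{\alpha/2}\ast\varphi)$, splits the integral over $B_R$ and its complement, uses $(iii)$ on $B_R$, and controls the exterior contribution \emph{uniformly in $n$} by proposition~\ref{propositionRuizPowerExterior} — this is legitimate because $I_{\alpha/2}\ast\varphi$ is bounded and decays like $\abs{x}^{-(N-\alpha/2)}$ with $N-\tfrac{\alpha}{2}>\tfrac{N-\alpha}{2}$. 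Letting $R\to\infty$ gives $\int(I_{\alpha/2}\ast\abs{u_n}^p)\varphi\to\int(I_{\alpha/2}\ast\abs{u}^p)\varphi$, hence $(v)$. Conversely, the proof of proposition~\ref{propositionBrezisLieb} in fact shows $I_{\alpha/2}\ast\abs{u_n}^p-I_{\alpha/2}\ast\abs{u_n-u}^p-I_{\alpha/2}\ast\abs{u}^p\to 0$ \emph{strongly} in $L^2$, so $(v)$ is equivalent to $I_{\alpha/2}\ast\abs{u_n-u}^p\rightharpoonup 0$ weakly in $L^2$; testing this against $I_{\alpha/2}\ast\abs{u}^p\in L^2$ then yields $c_n\to 0$, i.e.\ $(ii)$.

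The one genuinely non-formal point is $(iii)\Rightarrow(v)$: since $\abs{u_n}^p$ is only locally integrable, one cannot test against the non-compactly-supported function $I_{\alpha/2}\ast\varphi$ directly, and the tail must be quantified against the weighted Coulomb estimate of proposition~\ref{propositionRuizPowerExterior}; the automatically valid strict inequality $N-\alpha/2>(N-\alpha)/2$ is precisely what makes the tail integrable. The positivity argument in $(ii)\Rightarrow(iii)$ is the only place where the hypothesis $u\neq 0$ is used.
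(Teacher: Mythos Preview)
Your proof is correct and structurally parallel to the paper's: both reduce everything to the vanishing of the cross term \(c_n=\int (I_{\alpha/2}\ast\abs{u_n-u}^p)(I_{\alpha/2}\ast\abs{u}^p)\) and then cycle through the implications. The differences are purely in the auxiliary machinery. For \((iv)\Rightarrow(v)\) the paper appeals to a separate weak-continuity result for Riesz potentials (proposition~\ref{propositionWeakContinuity}), whereas you bypass this and prove \((iii)\Rightarrow(v)\) directly by testing against \(\varphi\in C^\infty_c\), moving the convolution onto \(\varphi\), and quantifying the tail via the weighted estimate of proposition~\ref{propositionRuizPowerExterior}; your observation that the relevant exponent \(N-\alpha/2\) automatically exceeds \((N-\alpha)/2\) is exactly what makes this work. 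For \((v)\Rightarrow(ii)\) the paper again invokes proposition~\ref{propositionWeakContinuity} applied to the signed measures \(\abs{u_n}^p-\abs{u_n-u}^p\), while you instead extract the strong \(L^2\) convergence \(I_{\alpha/2}\ast\abs{u_n}^p-I_{\alpha/2}\ast\abs{u_n-u}^p-I_{\alpha/2}\ast\abs{u}^p\to 0\) directly from the proof of proposition~\ref{propositionBrezisLieb}. Your route is slightly more self-contained, eliminating the need for proposition~\ref{propositionWeakContinuity} altogether; the paper's route has the advantage of isolating the weak-continuity principle as a reusable lemma.
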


The proof of proposition~\ref{propositionBrezisLiebEquiv} will use the local Brezis--Lieb lemma.

\begin{proposition}[Local Brezis--Lieb lemma]\label{propositionBrezisLiebClassical}
Let \(N \in \N\), \(p \ge 1\) and \(A \subset \R^N\) be a measurable set.
If the sequence \((u_n)_{n \in \N}\) of measurable functions from \(A\) to \(\R\) converges to \(u : \R^N \to \R\) almost everywhere and if \((u_n)_{n \in \N}\) is bounded in \(L^p (A)\), then
\[
 \lim_{n \to \infty} \int_A \bigabs{\abs{u_n}^p - \abs{u_n - u}^p - \abs{u}^p} = 0.
\]
\end{proposition}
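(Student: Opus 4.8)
The plan is to run the classical argument of Brezis and Lieb; in fact this statement is the degenerate case of Proposition~\ref{propositionBrezisLiebMixed} in which the inner domain $B$ is a single point and the outer exponent is $q=1$, so one could simply invoke that proposition, but since the self-contained proof is short I would reproduce it.

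First I would record the elementary pointwise inequality: for every $p\ge 1$ and every $\varepsilon>0$ there exists $C_\varepsilon>0$ such that for all $s,t\in\R$,
\[
  \bigabs{\abs{s}^p-\abs{s-t}^p-\abs{t}^p}\le \varepsilon\abs{s}^p+C_\varepsilon\abs{t}^p.
\]
This is obtained by splitting into the régimes $\abs{t}\le\abs{s}$ and $\abs{t}>\abs{s}$: in the first one I would use the mean value bound $\bigabs{\abs{s}^p-\abs{s-t}^p}\le p\abs{t}(\abs{s}+\abs{t})^{p-1}\le p\,2^{p-1}\abs{t}\abs{s}^{p-1}$ together with Young's inequality applied to $\abs{t}\abs{s}^{p-1}$; in the second one every one of the three terms is crudely controlled by a constant multiple of $\abs{t}^p$.

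Next, since $(u_n)_{n\in\N}$ is bounded in $L^p(A)$ and $u_n\to u$ almost everywhere, Fatou's lemma gives $\int_A\abs{u}^p\le\liminf_{n\to\infty}\int_A\abs{u_n}^p<\infty$, so $\abs{u}^p\in L^1(A)$. Applying the elementary inequality with $s=u_n(x)$ and $t=u(x)$, the functions
\[
  f_n:=\Bigl(\bigabs{\abs{u_n}^p-\abs{u_n-u}^p-\abs{u}^p}-\varepsilon\abs{u_n}^p\Bigr)_+
\]
satisfy $0\le f_n\le C_\varepsilon\abs{u}^p$ and, since $u_n\to u$ a.e., $f_n\to 0$ a.e.\ on $A$; hence $\int_A f_n\to 0$ by Lebesgue's dominated convergence theorem. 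Combining this with the bound
\[
  \int_A\bigabs{\abs{u_n}^p-\abs{u_n-u}^p-\abs{u}^p}\le \int_A f_n+\varepsilon\int_A\abs{u_n}^p\le \int_A f_n+\varepsilon\sup_{m\in\N}\int_A\abs{u_m}^p,
\]
and letting first $n\to\infty$ and then $\varepsilon\to 0$ yields the claim. There is essentially no obstacle here beyond the elementary inequality: the only point requiring a moment's care is the use of Fatou's lemma to ensure $\abs{u}^p\in L^1(A)$ before invoking dominated convergence.
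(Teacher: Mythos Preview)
Your proof is correct and matches the paper's treatment: the paper does not give a self-contained proof either, but simply cites the original Brezis--Lieb argument and remarks that the statement is a special case of proposition~\ref{propositionBrezisLiebMixed}, exactly as you observe at the outset. The pointwise inequality, the Fatou step to get \(u\in L^p(A)\), and the dominated-convergence argument on \(f_n\) are the standard ingredients and are all handled correctly.
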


This statement can be found in \cite{Lieb}*{lemma 2.6}. It might seem slightly stronger then the original Brezis--Lieb lemma, but it is however a direct consequence of the original proof \cite{BrezisLieb1983}*{theorem 2} (see also \cite{Willem2013}*{proof of theorem 4.2.7}). It also follows from proposition~\ref{propositionBrezisLiebMixed}.

The proof of proposition~\ref{propositionBrezisLiebEquiv} will also rely on the following classical continuity property of Riesz potentials.

\begin{proposition}[Weak continuity of Riesz potentials]
\label{propositionWeakContinuity}
Let \(N \in \N\) and \(\alpha \in (0, N)\).
Assume that \((\mu_n)_{n \in \N}\) is a sequence of signed Radon measures on $\R^N$ that converges weakly to a signed Radon measure \(\mu\).
If the sequence \((I_{\alpha/2} \ast \abs{\mu_n - \mu})_{n \in \N}\) is bounded in \(L^2 (\R^N)\), then the sequence \((I_{\alpha/2} \ast \mu_n)_{n \in \N}\) converges to \(I_{\alpha/2} \ast \mu\) weakly in \(L^2 (\R^N)\).
\end{proposition}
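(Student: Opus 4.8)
The plan is to reduce to the case \(\mu = 0\). Replacing \(\mu_n\) by \(\nu_n := \mu_n - \mu\), the hypotheses become that \((\nu_n)_{n \in \N}\) converges weakly to \(0\) and that \((I_{\alpha/2} \ast \abs{\nu_n})_{n \in \N}\) is bounded in \(L^2(\R^N)\), say by \(M\), and it suffices to prove that \((I_{\alpha/2} \ast \nu_n)_{n \in \N}\) converges to \(0\) weakly in \(L^2(\R^N)\). Since \(\abs{I_{\alpha/2} \ast \nu_n} \le I_{\alpha/2} \ast \abs{\nu_n}\) pointwise, the sequence \((I_{\alpha/2} \ast \nu_n)_{n \in \N}\) is bounded in \(L^2(\R^N)\); as \(C^\infty_c(\R^N)\) is dense in \(L^2(\R^N)\), it is therefore enough to show that \(\int_{\R^N} (I_{\alpha/2} \ast \nu_n)\, \psi \to 0\) for every \(\psi \in C^\infty_c(\R^N)\).

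The next step is to record the measure-theoretic analogue of proposition~\ref{propositionRuizBall}, whose proof carries over verbatim with \(\abs{u}^p \dif y\) replaced by \(\dif \abs{\nu_n}\): for every \(a \in \R^N\) and \(\rho > 0\),
\[
 \abs{\nu_n}(B_\rho(a)) \le C \rho^\frac{N - \alpha}{2} \Bigl( \int_{\R^N} \bigabs{I_{\alpha/2} \ast \abs{\nu_n}}^2 \Bigr)^\frac{1}{2},
\]
and hence, by a dyadic decomposition as in proposition~\ref{propositionRuizPowerExterior}, for every \(\beta > \frac{N - \alpha}{2}\) and \(R > 0\),
\[
 \int_{\R^N \setminus B_R} \frac{\dif \abs{\nu_n}(x)}{\abs{x}^\beta} \le \frac{C}{R^{\beta - \frac{N - \alpha}{2}}} \Bigl( \int_{\R^N} \bigabs{I_{\alpha/2} \ast \abs{\nu_n}}^2 \Bigr)^\frac{1}{2} \le \frac{C M}{R^{\beta - \frac{N - \alpha}{2}}}.
\]
Now fix \(\psi \in C^\infty_c(\R^N)\) with \(\supp \psi \subset B_{R_0}\). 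The function \(I_{\alpha/2} \ast \psi\) is continuous, and \(\bigabs{(I_{\alpha/2} \ast \psi)(x)} \le C_\psi \abs{x}^{-(N - \alpha/2)}\) for \(\abs{x} \ge 2 R_0\); since \(N - \frac{\alpha}{2} > \frac{N - \alpha}{2}\), the two estimates above show that \(I_{\alpha/2} \ast \abs{\psi}\) is \(\abs{\nu_n}\)-integrable, so Fubini's theorem gives
\[
 \int_{\R^N} (I_{\alpha/2} \ast \nu_n)\, \psi = \int_{\R^N} (I_{\alpha/2} \ast \psi) \dif \nu_n.
\]

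Finally, I would cut off the slowly decaying function \(I_{\alpha/2} \ast \psi\): choose \(\zeta_R \in C^\infty_c(\R^N)\) with \(0 \le \zeta_R \le 1\), \(\zeta_R = 1\) on \(B_R\) and \(\supp \zeta_R \subset B_{2R}\), and split \(\int_{\R^N} (I_{\alpha/2} \ast \psi) \dif \nu_n\) into the contributions of \(\zeta_R (I_{\alpha/2} \ast \psi)\) and of \((1 - \zeta_R)(I_{\alpha/2} \ast \psi)\). The first is the integral of a fixed function in \(C_c(\R^N)\) against \(\nu_n\), hence tends to \(0\) as \(n \to \infty\) by the weak convergence \(\nu_n \rightharpoonup 0\); the second is bounded, for \(R \ge 2 R_0\), by \(C_\psi \int_{\R^N \setminus B_R} \abs{x}^{-(N - \alpha/2)} \dif \abs{\nu_n}(x) \le C' M R^{-N/2}\), uniformly in \(n\). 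Letting first \(n \to \infty\) and then \(R \to \infty\) yields \(\int_{\R^N} (I_{\alpha/2} \ast \nu_n)\, \psi \to 0\), which completes the proof. I expect the only genuinely non-routine point to be the integrability of \(I_{\alpha/2} \ast \psi\) against the measures \(\abs{\nu_n}\) together with the uniform control of its tail; everything else is either a verbatim repetition of earlier estimates or the bare definition of weak convergence of measures.
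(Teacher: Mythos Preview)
Your proof is correct and follows essentially the same strategy as the paper: reduce to testing against \(\psi \in C^\infty_c\), split via a cutoff into a compactly supported piece (handled by weak convergence of measures) and a tail piece controlled uniformly in \(n\). The only cosmetic difference is that the paper cuts off the measure \(\mu_n-\mu\) and bounds the tail by a direct pointwise comparison of the Riesz kernel, whereas you apply Fubini first and cut off \(I_{\alpha/2}\ast\psi\), bounding the tail via the measure analogue of proposition~\ref{propositionRuizPowerExterior}; both yield the same \(R^{-N/2}\) decay.
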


In particular, if \((\mu_n)_{n \in \N}\) is a sequence of \emph{nonnegative} measures then the sequence
\((I_{\alpha/2} \ast \mu_n)_{n \in \N}\) converges weakly in \(L^2 (\R^N)\) if and only if it is bounded in \(L^2 (\R^N)\).

Proposition~\ref{propositionWeakContinuity} can be deduced directly from the
weak convergence of the sequence \((\mu_n)_{n \in \N}\) in the space \(\mathcal E^\alpha (\R^N)\)
of Radon measures \(\nu \in \mathcal M(\R^N)\) such that
\(\norm{\nu}_{\mathcal E^\alpha}:=\norm{I_{\alpha/2}*\nu}_{L^2}<\infty\) \cite{DuPlessis}*{\S 3.4 and lemma 3.12}.

\begin{proof}[Proof of proposition~\ref{propositionWeakContinuity}]
Let \(\varphi \in C_c (\R^N)\).
Let \(\eta \in C_c (\R^N)\) such that \(\eta = 1\) on \(B_1\) and define \(\eta_R (x) = \eta (x/R)\).
For every \(n \in \N\) and every \(R > 0\), we have
\[
  \int_{\R^N} \bigl(I_{\alpha/2} \ast (\mu_n-\mu) \bigr) \varphi  =
  \int_{\R^N} \eta_R\bigl(I_{\alpha/2} \ast \varphi \bigr)(\mu_n - \mu)+
  \int_{\R^N} \bigl(I_{\alpha/2} \ast (1 - \eta_R)(\mu_n - \mu)\bigr) \varphi.
\]
We first observe that for every \(x, y, z \in \R^N\), if \(\abs{x} \ge 2 \min (\abs{y}, \abs{z})\) then
$$I_\alpha (x - z) \le 3^{N - \alpha} I_\alpha (x - y).$$
Hence, for every \(y \in \R^N \setminus B_R\),
\[
  \abs{I_\alpha \ast (1 - \eta_R) (\mu - \mu_n) (y)}
  \le 3^{N - \alpha} \int_{B_R} \bigabs{I_\alpha \ast (1 - \eta_R) \abs{\mu - \mu_n}}.
\]
Hence, by the Cauchy-Schwarz inequality, if \(\supp \varphi \subset B_{R/2}\),
\[
  \Bigabs{\int_{\R^N} \bigl(I_{\alpha/2} \ast (\mu_n-\mu) \bigr) \varphi}
  \le
  \Bigabs{\int_{\R^N} \eta_R\bigl(I_{\alpha/2} \ast \varphi \bigr)(\mu_n - \mu)}
  + \frac{C}{R^\frac{N}{2}} \Bigl(\int_{\R^N} \bigabs{\bigl(I_{\alpha/2} \ast \abs{\mu_n - \mu}}^2 \Bigr)\bigr)^\frac{1}{2} \int_{\R^N} \abs{\varphi}.
\]
The conclusion follows by letting \(R \to \infty\), and then \(n \to \infty\), since \(\eta_R\bigl(I_{\alpha/2} \ast \varphi \bigr) \in C_c (\R^N)\).
\end{proof}

\begin{proof}[Proof of proposition~\ref{propositionBrezisLiebEquiv}]
It is clear that \eqref{itEquivBLL1} implies \eqref{itEquivBL}.
If \eqref{itEquivBL} holds, we observe that in view of proposition~\ref{propositionBrezisLieb},
\begin{equation}
\label{eqLimEquality}
  \lim_{n \to \infty} \int_{\R^N} (I_{\alpha/2} \ast \abs{u}^p)(I_{\alpha/2} \ast \abs{u_n - u}^p) = 0.
\end{equation}
Since \(u \ne 0\), we conclude that \(I_{\alpha/2} \ast \abs{u}^p\) is locally bounded from below, and \(\abs{u_n - u}^p\) converges to \(0\) in \(L^1_{\mathrm{loc}} (\R^N)\).

If \eqref{itEquivLpLoc} holds, then by the classical Brezis--Lieb property of proposition~\ref{propositionBrezisLiebClassical}, the sequence
\((\abs{u_n}^p)_{n \in \N}\) converges in \(L^1_{\mathrm{loc}} (\R^N)\) to \(\abs{u}^p\).

If \eqref{itEquivMeas} holds then \eqref{itEquivWeak} follows from proposition~\ref{propositionWeakContinuity}.

Finally, assume that \eqref{itEquivWeak} holds. By the classical Brezis--Lieb
property of proposition~\ref{propositionBrezisLiebClassical},
the sequence \((\abs{u_n}^p - \abs{u_n - u}^p)_{n \in \N}\) converges weakly in the sense of measures to \(\abs{u}^p\).
Thus by proposition~\ref{propositionWeakContinuity} we conclude that the sequence \((I_{\alpha/2} \ast \abs{u_n - u}^p)_{n \in \N}\) converges weakly to \(0\) in \(L^2 (\R^N)\). This implies \eqref{eqLimEquality}, and hence \eqref{itEquivBLL1} follows by proposition~\ref{propositionBrezisLieb}.
\end{proof}

In view of proposition~\ref{propositionBrezisLiebEquiv} it is natural to ask for examples where the convergence does not occur.
We shall rely on the next lemma, which states that there exists a set of positive \(H^{\alpha/2}\)--capacity and vanishing Lebesgue measure (see for example \citelist{\cite{Carleson1967}*{theorem IV.3}\cite{Ohtsuka1957}\cite{AdamsHedberg1996}*{theorem 5.3.2}}). We give here an explicit construction in order to give more insight on the failure of the Brezis--Lieb equality.

\begin{lemma}
\label{lemmaCantor}
Let \(N \in \N\), \(\alpha \in (0, N)\) and \(p \ge 1\).
There exists a sequence \((f_n)_{n \in \N}\) in \(C^\infty_c (\R^N)\) such that:
\begin{enumerate}[(a)]
  \item the set \(\bigcup_{n \in \N} \supp f_n\) is bounded,
  \item for every \(n \in \N\), \(f_n \ge 0\),
  \item for every \(n \in \N\),
  \(
    \displaystyle \int_{\R^N} f_n = 1,
  \)
  \item the sequence \((f_n)_{n \in \N}\) converges almost everywhere to \(0\) in \(\R^N\),
  \item the sequence \((I_{\alpha/2} \ast \abs{f_n}^p)_{n \in \N}\) is bounded in \(L^2 (\R^N)\).
\end{enumerate}
\end{lemma}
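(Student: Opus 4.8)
The plan is to concentrate the mass of the \(f_n\) on a self‑similar Cantor set \(C \subset [0,1]^N\) whose Hausdorff dimension \(\beta\) is chosen strictly between \(N - \alpha\) and \(N\). Such a \(C\) is Lebesgue‑null, yet it carries a probability measure of finite \(\alpha\)-energy (equivalently, it has positive \(H^{\alpha/2}\)-capacity), and the \(f_n\) will simply be smooth mollifications of that measure.

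Concretely, fix \(\beta \in (N - \alpha, N)\). It can be realised as a similarity dimension: pick an integer \(k \ge 2\), set \(M = k^N\) and \(\lambda = k^{-N/\beta} \in (0, 1/k)\), and, starting from \([0,1]^N\), replace at each stage every cube of side \(s\) by \(M\) pairwise disjoint subcubes of side \(\lambda s\) sitting on a regular grid. This produces a decreasing family of compact sets \(C_n\), each a union of \(M^n\) cubes of side \(\lambda^n\); put \(C = \bigcap_n C_n\). Since \(M \lambda^N = \lambda^{N - \beta} < 1\) we have \(\abs{C_n} \to 0\), so \(\abs{C} = 0\). Let \(\mu\) be the canonical probability measure on \(C\) (mass \(M^{-n}\) distributed uniformly among the level‑\(n\) cubes); the spacing of the subcubes yields the Frostman bound \(\mu(B_r(x)) \le C_\beta r^\beta\) for all \(x \in \R^N\), \(r > 0\). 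Combined with the layer‑cake formula this gives, uniformly in \(x\),
\[
  \int_{\R^N} I_\alpha(x - y) \dif\mu(y) = c_{N,\alpha} \int_0^\infty \mu\bigl(B_{t^{-1/(N - \alpha)}}(x)\bigr)\dif t \le c_{N,\alpha}\Bigl(1 + C_\beta \int_1^\infty t^{-\beta/(N - \alpha)} \dif t\Bigr) =: K < \infty,
\]
because \(\beta/(N - \alpha) > 1\); and then, by the semigroup identity \(I_{\alpha/2} \ast I_{\alpha/2} = I_\alpha\) and Tonelli's theorem, \(\int_{\R^N} \bigabs{I_{\alpha/2} \ast \mu}^2 = \iint I_\alpha(x - y) \dif\mu(x) \dif\mu(y) \le K\), so \(I_{\alpha/2} \ast \mu \in L^2(\R^N)\).

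Now take a fixed nonnegative mollifier \(\rho \in C^\infty_c(B_1)\) with \(\int \rho = 1\), put \(\rho_\varepsilon(x) = \varepsilon^{-N} \rho(x/\varepsilon)\), fix \(\varepsilon_n \downarrow 0\), and define \(f_n := (\rho_{\varepsilon_n} \ast \mu)^{1/p}\). Then \(f_n \in C^\infty_c(\R^N)\), \(f_n \ge 0\), and \(\supp f_n \subset \{x : \operatorname{dist}(x, C) \le \varepsilon_n\}\), which stays inside a fixed bounded set, so (a) and (b) hold; also \(\int_{\R^N} \abs{f_n}^p = \int_{\R^N} \rho_{\varepsilon_n} \ast \mu = 1\), which is (c) (understood, as the use of \(\abs{f_n}^p\) in (e) suggests, in the normalisation \(\int\abs{f_n}^p = 1\); for \(p = 1\) this is exactly \(\int f_n = 1\), and in general one rescales \(f_n\) accordingly). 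Since \(\abs{C} = 0\), almost every \(x\) has \(\operatorname{dist}(x, C) > 0\), hence \(f_n(x) = 0\) for all large \(n\), which gives (d). Finally \(I_{\alpha/2} \ast \abs{f_n}^p = I_{\alpha/2} \ast (\rho_{\varepsilon_n} \ast \mu) = \rho_{\varepsilon_n} \ast (I_{\alpha/2} \ast \mu)\), so Young's inequality gives \(\norm{I_{\alpha/2} \ast \abs{f_n}^p}_{L^2} \le \norm{\rho_{\varepsilon_n}}_{L^1} \norm{I_{\alpha/2} \ast \mu}_{L^2} = \norm{I_{\alpha/2} \ast \mu}_{L^2}\), a finite bound independent of \(n\); this is (e).

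The only step that is more than bookkeeping is the construction of \(C\): producing a Cantor set of prescribed dimension in \((N - \alpha, N)\) and verifying the Frostman estimate, from which the uniform energy bound \(K < \infty\) follows. Both facts are classical; the real content of the lemma is that once one reads "positive \(H^{\alpha/2}\)-capacity" in the working form "supports a measure of finite \(\alpha\)-energy", the convolution identity for \(I_{\alpha/2} \ast \abs{f_n}^p\) makes the decisive bound (e) immediate, while (d) is forced by the Lebesgue‑nullity of \(C\) and (a)--(c) are built into the mollification.
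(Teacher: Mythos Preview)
Your approach is sound and shares the paper's core idea---concentrate mass on a self-similar Cantor set of Hausdorff dimension strictly between \(N-\alpha\) and \(N\)---but executes it differently. The paper constructs the smooth functions \(f_n\) directly by an explicit iteration (at each stage placing \(2^N\) rescaled copies of the previous function at the corners of a cube) and controls \(\int\abs{I_{\alpha/2}*f_n}^2\) through a recursive inequality \(E_{n+1}\le \lambda E_n + C\) with \(\lambda = (2^N\rho^{N-\alpha})^{-1}<1\). You instead pass to the limit first: build the Cantor measure \(\mu\), verify the Frostman bound and hence finite \(\alpha\)-energy, then mollify, so that (e) is a one-line consequence of Young's inequality. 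Your route is more conceptual and isolates the potential-theoretic content cleanly; the paper's is fully self-contained and bypasses Frostman's lemma.

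Two minor points. First, for \(p>1\) the map \(t\mapsto t^{1/p}\) is not smooth at \(0\), so \(f_n=(\rho_{\varepsilon_n}*\mu)^{1/p}\) need not lie in \(C^\infty_c(\R^N)\); the cleanest remedy is to take \(p=1\), i.e.\ \(f_n=\rho_{\varepsilon_n}*\mu\), which is in effect what the paper's computation does. Second, your remark that one ``rescales \(f_n\) accordingly'' to recover \(\int f_n=1\) from \(\int\abs{f_n}^p=1\) does not actually work: since \(\abs{\supp f_n}\to 0\), H\"older forces \(\int f_n\to 0\), so a multiplicative renormalisation blows up (e), while a compensating dilation destroys (a). This tension between (c) and (e) for \(p>1\) is, however, already present in the paper's own statement versus proof, and for the intended application the relevant fact is \(\int\abs{f_n}^p\not\to 0\), which your construction delivers.
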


In particular, if we take \(u_n = (u^p + f_n^p)^\frac{1}{p}\) for some given function \(u \in C^\infty_c (\R^N)\setminus \{0\}\), it is clear that the sequence \((u_n)_{n \in \N}\) converges to \(u\) almost everywhere, that the sequence \((I_{\alpha/2} \ast \abs{u_n}^p)\) is bounded in \(L^2 (\R^N)\) but that \((\abs{u_n}^p)_{n \in \N}\) does not converge to \(\abs{u}^p\) in the sense of measures.

\begin{proof}
Let \(Q = [-1, 1]^n\) be the unit cube. We take \(f_0 \in C^\infty_c (Q)\) such that
\(
  \int_{Q} f_0 = 1
\)
and \(f_0 \ge 0\).

Fix \(\rho > \frac{1}{2}\), and we define inductively the function \(f_n \in C^\infty_c (Q)\) for every \(n \in \N\) and \(x \in Q\) by
\[
  f_{n + 1} (x) = \frac{1}{(2 \rho)^N} \sum_{a \in \{-1/2, 1/2\}^N} f_n \Bigl(\frac{x - a}{\rho}\Bigr).
\]
It is clear that \(f_n \ge 0\) and that \(\int_{Q} f_n = 1\).
We compute
\[
\begin{split}
  \int_{\R^N} \abs{I_{\alpha/2} \ast f_{n + 1}}^2
  &= \rho^{N + \alpha} \sum_{\substack{a \in \{-1/2, 1/2\}^N\\ b \in \{-1/2, 1/2\}^N}}
  \int_{Q} \int_{Q} I_\alpha (\rho (x-y) + a - b) f_n (x) f_n (y)\dif x \dif y\\
  &\le \frac{1}{2^N \rho^{N - \alpha}} \int_{\R^N} \abs{I_{\alpha/2} \ast f_{n}}^2
  + \frac{C}{(1 + 2 \rho)^{N - \alpha}}.
\end{split}
\]
Therefore,
\[
  \int_{\R^N} \abs{I_{\alpha/2} \ast f_{n}}^2
  \le \frac{1}{(2^N \rho^{N - \alpha})^n} \int_{\R^N} \abs{I_{\alpha/2} \ast f_0}^2
  + \frac{1 - \frac{1}{(2^N \rho^{N - \alpha})^n}}{1 - \frac{1}{2^N \rho^{N - \alpha}}} \frac{C}{(1 + 2 \rho)^{N - \alpha}}.
\]
In particular, if \(\rho > 2^{-N/(N - \alpha)}\) then the sequence has the announced properties.
\end{proof}

The reader will recognize that the sequence \((f_n)_{n \in \N}\) converges to a measure concentrated on a generalized Cantor set of Hausdorff dimension \(N \log 2 / \log (1/\rho) > N - \alpha\). This is consistent with the known relationship between Hausdorff measure and capacity.
In particular, small values of \(\alpha\) require mildly concentrating sequences.

\begin{remark}
Note that construction in the last part of the proof of lemma~\ref{lemmaCoulombSobolev-noncompact}
(case $\alpha>2$ and $p>\frac{2\alpha}{\alpha-2}$) provides also an alternative proof
of lemma~\ref{lemmaCantor}. Indeed,
the restriction $\alpha>2$ and $p>\frac{2\alpha}{\alpha-2}$ in lemma~\ref{lemmaCoulombSobolev-noncompact}
was necessary to control the gradient term, otherwise one could take arbitrary $p\ge 1$.
\end{remark}

We now give some sufficient conditions for proposition~\ref{propositionBrezisLiebEquiv} to apply.

\begin{proposition}[Brezis--Lieb lemma with high local integrability]
\label{propositionBrezisLiebLp}
Let $N\in\mathbb N$, \(\alpha \in (0, N)\) and \(p \ge 1\).
Assume that \((u_n)_{n \in \N}\) is a sequence of measurable functions from \(\R^N\) to \(\R\)
that converges to \(u : \R^N \to \R\) almost everywhere.
If the sequence \((I_{\alpha/2} \ast \abs{u_n}^p)_{n \in \N}\) is bounded in \(L^2 (\R^N)\) and if there exists \(q > p\) such that the sequence \((u_n)_{n \in \N}\) is bounded in \(L^q_{\mathrm{loc}} (\R^N)\), then \((I_{\alpha/2} \ast \abs{u_n}^p)_{n \in \N}\) converges weakly to \(I_{\alpha/2} \ast \abs{u}^p\) in \(L^2 (\R^N)\) and
\[
  \lim_{n \to \infty}
  \int_{\R^N} \Bigabs{\bigabs{I_{\alpha/2} \ast \abs{u_n}^p}^2 - \bigabs{I_{\alpha/2} \ast \abs{u_n - u}^p}^2}
  = \int_{\R^N} \bigabs{I_{\alpha/2} \ast \abs{u}^p}^2.
\]
\end{proposition}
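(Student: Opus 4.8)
The plan is to reduce everything to the equivalences of proposition~\ref{propositionBrezisLiebEquiv}. I would first show that the extra hypothesis of boundedness in \(L^q_{\loc}(\R^N)\) for some \(q>p\) forces \((u_n)_{n\in\N}\) to converge to \(u\) strongly in \(L^p_{\loc}(\R^N)\), which is exactly condition~\eqref{itEquivLpLoc} of that proposition. Granting this, in the case \(u\neq 0\), proposition~\ref{propositionBrezisLiebEquiv} immediately yields both the weak convergence \(I_{\alpha/2}\ast\abs{u_n}^p\rightharpoonup I_{\alpha/2}\ast\abs{u}^p\) in \(L^2(\R^N)\) (condition~\eqref{itEquivWeak}) and the refined convergence~\eqref{itEquivBLL1}. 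The asserted identity then follows from~\eqref{itEquivBLL1}: writing \(g_n=\bigabs{I_{\alpha/2}\ast\abs{u_n}^p}^2-\bigabs{I_{\alpha/2}\ast\abs{u_n-u}^p}^2\) and \(h=\bigabs{I_{\alpha/2}\ast\abs{u}^p}^2\), Fatou's lemma gives \(\int_{\R^N}h<\infty\), hence \(g_n=(g_n-h)+h\in L^1(\R^N)\), and the reverse triangle inequality gives \(\bigabs{\int_{\R^N}\abs{g_n}-\int_{\R^N}h}\le\int_{\R^N}\abs{g_n-h}\to 0\), which is the claim.

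For the \(L^p_{\loc}\) convergence I would argue ball by ball. Fix a ball \(B\subset\R^N\). Since \((u_n)_{n\in\N}\) is bounded in \(L^q(B)\) and converges to \(u\) almost everywhere on \(B\), Fatou's lemma gives \(u\in L^q(B)\), so \((\abs{u_n-u}^p)_{n\in\N}\) is bounded in \(L^{q/p}(B)\) with \(q/p>1\). As \(\abs{B}<\infty\), Hölder's inequality shows that \(\int_E\abs{u_n-u}^p\le C\abs{E}^{1-p/q}\) for every measurable \(E\subset B\), uniformly in \(n\); that is, \((\abs{u_n-u}^p)_{n\in\N}\) is uniformly integrable on \(B\). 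Since \(\abs{u_n-u}^p\to 0\) almost everywhere, Vitali's convergence theorem yields \(\int_B\abs{u_n-u}^p\to 0\). As \(B\) is arbitrary, \(u_n\to u\) in \(L^p_{\loc}(\R^N)\).

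It remains to treat the degenerate case \(u=0\), for which proposition~\ref{propositionBrezisLiebEquiv} was not stated. Here the previous step gives \(\abs{u_n}^p\to 0\) in \(L^1_{\loc}(\R^N)\), so \((\abs{u_n}^p)_{n\in\N}\) converges to \(0\) in the sense of measures; since \((I_{\alpha/2}\ast\abs{u_n}^p)_{n\in\N}\) is bounded in \(L^2(\R^N)\) by hypothesis, proposition~\ref{propositionWeakContinuity} gives \(I_{\alpha/2}\ast\abs{u_n}^p\rightharpoonup 0=I_{\alpha/2}\ast\abs{u}^p\) weakly in \(L^2(\R^N)\), and the claimed limit identity is trivial because \(u_n-u=u_n\) and \(I_{\alpha/2}\ast\abs{u}^p=0\). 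There is no real obstacle in this argument: the analytic content is carried entirely by proposition~\ref{propositionBrezisLiebEquiv}, proposition~\ref{propositionWeakContinuity}, and the classical Brezis--Lieb lemma used inside them. The only points needing a little care are that the strict inequality \(q>p\) is genuinely used — mere boundedness in \(L^p_{\loc}\) does not give equi-integrability, in accordance with the concentration examples of lemma~\ref{lemmaCoulombSobolev-noncompact} and lemma~\ref{lemmaCantor} — and that the case \(u=0\) must be split off since the equivalences of proposition~\ref{propositionBrezisLiebEquiv} were formulated under the assumption \(u\neq 0\).
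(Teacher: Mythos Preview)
Your proof is correct and follows essentially the same route as the paper: both reduce the statement to one of the equivalent conditions in proposition~\ref{propositionBrezisLiebEquiv}. The only minor difference is that the paper verifies condition~\eqref{itEquivMeas} directly --- it cites the standard fact that almost-everywhere convergence together with boundedness in \(L^{q/p}_{\loc}(\R^N)\) (with \(q/p>1\)) forces weak convergence of \((\abs{u_n}^p)_{n\in\N}\) in \(L^{q/p}_{\loc}(\R^N)\), hence convergence in the sense of measures --- whereas you take the equivalent route of establishing condition~\eqref{itEquivLpLoc} via Vitali's theorem. Your treatment is in fact slightly more careful than the paper's, since you explicitly separate the case \(u=0\), which proposition~\ref{propositionBrezisLiebEquiv} formally excludes, and you spell out how the stated \(L^1\) limit follows from condition~\eqref{itEquivBLL1}.
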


The local boundedness assumption is satisfied in particular if the sequence \((u_n)_{n \in \N}\) is bounded in \(L^q (\R^N)\).
When \(q = \frac{2 N p}{N + \alpha}\), we recover a result of Moroz and Van Schaftingen \cite{MorozVanSchaftingen} whereas when \(p = 2\) and \(q > 2\) this is due to Bellazzini, Frank and Visciglia \cite{BellazziniFrankVisciglia}.

\begin{proof}[Proof of proposition~\ref{propositionBrezisLiebLp}]
The almost everywhere convergence and boundedness in \(L^q_{\mathrm{loc}} (\R^N)\) imply that the sequence \((\abs{u_n}^p)_{n \in \N}\) converges weakly in \(L^{q/p}_\mathrm{loc} (\R^N)\) \citelist{\cite{Bogachev2007}*{proposition~4.7.12}\cite{Willem2013}*{proposition 5.4.7}}.
\end{proof}

\begin{proposition}[Brezis--Lieb lemma in Coulomb--Sobolev spaces]
Let $N\in\N$, \(\alpha \in (0, N)\) and $p\ge 1$ be such that
\[
  \frac{1}{p} > \frac{1}{2}-\frac{1}{\alpha}.
\]
Assume that \((u_n)_{n \in \N}\) is a sequence of measurable functions from \(\R^N\) to \(\R\)
that converges to \(u : \R^N \to \R\) almost everywhere.
If the sequence \((I_{\alpha/2} \ast \abs{u_n}^p)_{n \in \N}\) is bounded in \(L^2 (\R^N)\) and if  the sequence \((D u_n)_{n \in \N}\) is bounded in \(L^2 (\R^N)\), then
\[
  \lim_{n \to \infty}
  \int_{\R^N} \Bigabs{\bigabs{I_{\alpha/2} \ast \abs{u_n}^p}^2 - \bigabs{I_{\alpha/2} \ast \abs{u_n - u}^p}^2}
  = \int_{\R^N} \bigabs{I_{\alpha/2} \ast \abs{u}^p}^2.
\]
\end{proposition}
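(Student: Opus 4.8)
The plan is to deduce the statement as a corollary of the general equivalence in proposition~\ref{propositionBrezisLiebEquiv}, combined with the compactness of the embedding $E^{\alpha, p} (\R^N) \subset L^p_{\mathrm{loc}} (\R^N)$. First I would dispose of the trivial case $u = 0$: then $I_{\alpha/2} \ast \abs{u_n - u}^p = I_{\alpha/2} \ast \abs{u_n}^p$ for every $n$ and $I_{\alpha/2} \ast \abs{u}^p = 0$, so both sides of the claimed identity vanish and there is nothing to prove. Hence I may assume $u \ne 0$, which is precisely the hypothesis under which proposition~\ref{propositionBrezisLiebEquiv} is stated.

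Next I would record that the hypotheses say exactly that $(u_n)_{n \in \N}$ is bounded in the Coulomb--Sobolev space $E^{\alpha, p} (\R^N)$: each $u_n$ is weakly differentiable with $(Du_n)_{n \in \N}$ bounded in $L^2 (\R^N)$, and $(I_{\alpha/2} \ast \abs{u_n}^p)_{n \in \N}$ is bounded in $L^2 (\R^N)$, so $\sup_{n \in \N} \norm{u_n}_{E^{\alpha, p}} < \infty$. Since $\frac{1}{p} > \frac{1}{2} - \frac{1}{\alpha}$, condition \eqref{e-pcomp} holds, and therefore by proposition~\ref{propositionAdvancedLocalWeakCompactness} (and the discussion following it) the embedding $E^{\alpha, p} (\R^N) \subset L^p_{\mathrm{loc}} (\R^N)$ is compact.

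The conclusion then follows by a routine subsequence argument. Given any subsequence $(u_{n_k})_{k \in \N}$, it is again bounded in $E^{\alpha, p} (\R^N)$, so by the compact embedding a further subsequence converges in $L^p_{\mathrm{loc}} (\R^N)$ to some $v$; passing to a sub-subsequence along which the convergence is also almost everywhere and using $u_{n_k} \to u$ almost everywhere, we obtain $v = u$, hence $u_{n_k} \to u$ in $L^p_{\mathrm{loc}} (\R^N)$ along this subsequence, that is, assertion \eqref{itEquivLpLoc} of proposition~\ref{propositionBrezisLiebEquiv} holds there. Since $u \ne 0$, proposition~\ref{propositionBrezisLiebEquiv} yields assertion \eqref{itEquivBLL1} along the subsequence:
\[
  \lim_{k \to \infty} \int_{\R^N} \Bigabs{\bigabs{I_{\alpha/2} \ast \abs{u_{n_k}}^p}^2 - \bigabs{I_{\alpha/2} \ast \abs{u_{n_k} - u}^p}^2 - \bigabs{I_{\alpha/2} \ast \abs{u}^p}^2} = 0,
\]
whence, by the triangle inequality,
\[
  \lim_{k \to \infty} \int_{\R^N} \Bigabs{\bigabs{I_{\alpha/2} \ast \abs{u_{n_k}}^p}^2 - \bigabs{I_{\alpha/2} \ast \abs{u_{n_k} - u}^p}^2} = \int_{\R^N} \bigabs{I_{\alpha/2} \ast \abs{u}^p}^2.
\]
As every subsequence of $\bigl(\int_{\R^N} \bigabs{\abs{I_{\alpha/2} \ast \abs{u_n}^p}^2 - \abs{I_{\alpha/2} \ast \abs{u_n - u}^p}^2}\bigr)_{n \in \N}$ admits a further subsequence with this same limit, the whole sequence converges to $\int_{\R^N} \bigabs{I_{\alpha/2} \ast \abs{u}^p}^2$, which is the claim. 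I expect no genuine obstacle here beyond this bookkeeping; the only points requiring care are that proposition~\ref{propositionBrezisLiebEquiv} needs $u \ne 0$ (handled by the separate case $u = 0$) and that convergence of the full sequence, rather than of a subsequence, is obtained through the standard ``every subsequence has a convergent sub-subsequence'' device.
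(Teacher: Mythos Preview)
Your argument is correct and follows the same approach as the paper: use the compact embedding \(E^{\alpha,p}(\R^N)\subset L^p_{\mathrm{loc}}(\R^N)\) (valid under \eqref{e-pcomp}) to obtain \(u_n\to u\) in \(L^p_{\mathrm{loc}}(\R^N)\), and then invoke proposition~\ref{propositionBrezisLiebEquiv}. The paper compresses all of this into two sentences, whereas you spell out the case \(u=0\), the subsequence identification of the limit, and the passage from assertion \eqref{itEquivBLL1} to the stated identity via the triangle inequality; these are exactly the routine details the paper leaves implicit.
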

\begin{proof}
We observe that by proposition~\ref{propositionAdvancedLocalWeakCompactness}, the sequence \((u_n)_{n \in \N}\) is relatively compact in \(L^p_{\textrm{loc}} (\R^N)\). Then we apply proposition~\ref{propositionBrezisLiebEquiv}.
\end{proof}

\section{Groundstates of Schr\"odinger--Poisson--Slater equations}

\label{sect-equation}

\subsection{Existence of optimizers to multiplicative inequalities and proof of theorem \ref{t-multiplicative-intro}}
\settocdepth{subsection}

Let \(N \in \N\), \(\alpha \in (0, N)\), \(p \ge 1\) and $q\ge 1$ be such that assumption \eqref{Q} holds.
For each $u\in E^{\alpha,p}(\R^N)\setminus\{0\}$, we define the quotient
$$\mathcal R(u)=\frac{\Big(\int_{\R^N}\abs{D u}^2 \Big)^{\frac{\theta}{2}}\Bigl(\int_{\R^N} \abs{I_{\alpha/2} \ast \abs{u}^p}^2\Bigr)^{\frac{1-\theta}{2p}}}{\Big(\int_{\R^N}\abs{u}^q \Big)^{\frac{1}{q}}},$$
where the parameter $\theta$ is given by \eqref{e-theta} of theorem~\ref{theoremEmbedding}.
For $a, b>0$, define
\begin{equation}\label{quotient}
S:=\inf\Big\{\mathcal R(u)\, : \, u\in E^{\alpha,p}(\R^N)\setminus\{0\} \Big\},
\end{equation}
\begin{equation*}
S_{a}:=\inf\Big\{\mathcal R(u)\, : \, u\in E^{\alpha,p}(\R^N)\setminus\{0\},\,\int_{\R^N}\abs{u}^{q} =a\Big\}
\end{equation*}
and
\begin{equation*}
S_{a,b}:=\inf\Big\{\mathcal R(u)\, : \, u\in E^{\alpha,p}(\R^N)\setminus\{0\},\,\int_{\R^N}\abs{D u}^2  =a,\, \int_{\R^N}\abs{u}^{q}=b\Big\}.
\end{equation*}
By using the scaling and homogeneity invariance of $\mathcal R$ we see that
\begin{equation*}
S=S_a=S_{a,b}.
\end{equation*}
In fact, using the invariance, it is possible to minimize constraining either one or two of the three integrals
involved in the quotient $\mathcal R$, without changing the value of the infimum.

The following lemma is in the spirit of a classical result of Lieb, see e.g.
\cite{LiebLoss2001}*{p.\thinspace{} 215}.
Together with proposition~\ref{propositionElementaryLocalWeakCompactness} it implies that, under a suitable condition,
a given bounded sequence in $E^{\alpha,p}(\R^N)$ contains a subsequence which after translations has a nontrivial
limit in $L^1_{\textrm{loc}}(\R^N)$. In view of proposition~\ref{weakL1loc},
when $p>1$ this is equivalent for a bounded sequence in $E^{\alpha,p}(\R^N)$ to
have a nonzero weak limit, up to translations and a subsequence.
A similar statement was recently established in the fractional case
\cite{BellazziniFrankVisciglia}*{Lemma 2.1}.
We give a short self-contained proof, in the spirit of the proofs
of interpolation inequalities between Sobolev spaces and Morrey spaces
given recently in \cite{VanSchaftingen2013}.

\begin{lemma}[On vanishing bounded sequences]\label{liebtypelemma0}
Let $N\in\mathbb N$ and $(u_n)_{n\in\N}$ be a sequence in $W^{1,
1}_{\mathrm{loc}} (\R^N)$.
If
\[
 \limsup_{n \to \infty} \int_{\R^N} \abs{\nabla u_n}^2 < \infty
\]
and there exists \(R > 0\) such that
\[
 \lim_{n \to \infty} \sup_{x \in \R^N} \int_{B_R (x)} \abs{u_n}
 = 0,
\]
then for every \(\varepsilon > 0\),
\begin{equation*}
  \lim_{n \to \infty} \mathcal L^N(\{x : \abs{u_n(x)}> \varepsilon\}) = 0.
\end{equation*}
\end{lemma}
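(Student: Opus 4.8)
The plan is to exploit a scale-invariant interpolation inequality between the Dirichlet energy and an $L^1$-type Morrey quantity, measured against the $L^q$-norm — essentially a Gagliardo--Nirenberg inequality adapted to the Morrey scale. Concretely, I would first reduce to showing that for a suitable exponent $r\in(1,2^*)$ one has the estimate
\[
 \int_{\R^N}\abs{u_n}^r
 \le C\Bigl(\int_{\R^N}\abs{\nabla u_n}^2\Bigr)^{\sigma}\Bigl(\sup_{x\in\R^N}\int_{B_R(x)}\abs{u_n}\Bigr)^{1-\sigma'}
\]
for appropriate exponents, uniformly in $n$; once such an estimate is in hand, the hypotheses force $\int_{\R^N}\abs{u_n}^r\to0$, and then Chebyshev's inequality gives $\mathcal L^N(\{\abs{u_n}>\varepsilon\})\le\varepsilon^{-r}\int_{\R^N}\abs{u_n}^r\to0$, which is exactly the conclusion. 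So the whole statement reduces to establishing one interpolation inequality of Morrey type.

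To prove that inequality, I would follow the approach of \cite{VanSchaftingen2013} referenced just above the lemma. First I would cover $\R^N$ by a lattice of cubes $Q$ of a fixed sidelength comparable to $R$, with bounded overlap. On each cube, the Sobolev--Poincaré inequality on $W^{1,1}(Q)$ controls $\bigl(\fint_Q\abs{u_n-\fint_Q u_n}^{N/(N-1)}\bigr)^{(N-1)/N}$ by $\fint_Q\abs{\nabla u_n}$, and hence, upgrading the gradient integrability via Hölder (since $\nabla u_n\in L^2$), one gets local control of a suitable $L^s$-norm of $u_n$ on each cube in terms of $\int_Q\abs{\nabla u_n}^2$ and $\int_Q\abs{u_n}$. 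The key point is that the small-mass hypothesis $\sup_x\int_{B_R(x)}\abs{u_n}\to0$ makes the local $L^1$-masses $\int_Q\abs{u_n}$ uniformly small; combined with the global bound $\sum_Q\int_Q\abs{\nabla u_n}^2\lesssim\int_{\R^N}\abs{\nabla u_n}^2\le C$, summing the local estimates over all cubes — using that the exponent on the gradient term is $\ge1$ so the $\ell^1$–$\ell^\infty$ splitting $\sum_Q a_Q^{1+\delta}\le(\sup_Q a_Q)^\delta\sum_Q a_Q$ is available — yields $\int_{\R^N}\abs{u_n}^r\le C\,(\sup_x\int_{B_R(x)}\abs{u_n})^{\delta'}\to0$ for a suitable $r>1$.

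The main obstacle is getting the bookkeeping of exponents right so that the gradient term appears with a power $\ge1$ (needed for the summation step) while the resulting $r$ still lies in a range where Chebyshev is useful — i.e., $r>0$, which is automatic, but one must verify the interpolation exponents are admissible for all $N\ge1$, including $N=1,2$ where the Sobolev exponent degenerates and one argues directly via the fundamental theorem of calculus on intervals of length $R$ instead. A secondary technical point is justifying the reduction to smooth or at least locally Lipschitz representatives so that the cube-wise Poincaré inequalities apply, which is routine by mollification together with the lower semicontinuity already recorded in proposition~\ref{propositionSemiContinuityLocalConvergence}. None of these steps is deep, but the exponent arithmetic must be done carefully.
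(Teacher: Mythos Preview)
Your reduction to an inequality of the form
\[
\int_{\R^N}\abs{u_n}^r\le C\Bigl(\int_{\R^N}\abs{\nabla u_n}^2\Bigr)^{a}\Bigl(\sup_{x}\int_{B_R(x)}\abs{u_n}\Bigr)^{b}
\]
for some \(r\in(1,2^*)\) and \(b>0\) cannot work, because no such inequality holds. Take \(u_n(x)=c_n\,\phi(x/n)\) with \(\phi\in C^\infty_c(B_2)\), \(\phi=1\) on \(B_1\), and \(c_n=n^{-(N-2)/2}\) (for \(N\ge 3\); take \(c_n=1/\log n\) when \(N=2\), and a similar slow decay when \(N=1\)). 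Then \(\int\abs{\nabla u_n}^2\) is bounded, \(\sup_x\int_{B_R(x)}\abs{u_n}\le c_n\,\omega_N R^N\to 0\), yet \(\int\abs{u_n}^r=c_n^r n^N\to\infty\) for every \(r<2^*\). The hypotheses of the lemma are satisfied and its conclusion holds (since \(\|u_n\|_\infty\to 0\)), but \(\int\abs{u_n}^r\) blows up. In your cube decomposition this manifests as the ``lower-order'' term \(\sum_Q\abs{Q}^{1-r}\bigl(\int_Q\abs{u_n}\bigr)^r\) coming from the local means, which you do not discuss and which fails to be summable: smallness of each \(\int_Q\abs{u_n}\) is not enough, because the number of cubes carrying mass is unbounded. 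The Lions-type inequalities you have in mind require an \(L^s\) term on the right (not just the homogeneous gradient), and the Morrey--Sobolev interpolation of \cite{VanSchaftingen2013} takes the supremum over \emph{all} radii, not a single fixed \(R\); neither is available here.

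The paper's proof avoids this by never attempting to control any \(L^r\) norm. It uses the pointwise Sobolev representation \(u(x)=\fint_{B_R(x)}u+(S_R\ast\nabla u)(x)\) to split \(\{\abs{u_n}>\varepsilon\}\) into the set where the average exceeds \(\varepsilon/2\) (empty for large \(n\) by the vanishing assumption) and the set where \(\abs{S_R\ast\nabla u_n}>\varepsilon/2\), whose measure is at most \(CR^2\varepsilon^{-2}\int\abs{\nabla u_n}^2\) by Chebyshev and Young. Choosing \(R\) small makes this last quantity as small as desired. Your cube approach can be salvaged along the same lines: once the local means \(\fint_Q u_n\) are below \(\varepsilon/2\), work with \(\{\abs{u_n-\fint_Q u_n}>\varepsilon/2\}\) on each cube and apply Chebyshev there with Sobolev--Poincar\'e; the gradient contributions then sum globally. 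But the route through a global \(L^r\) bound, as you have written it, does not go through.
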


\begin{proof}
We first observe that the assumption implies that for \emph{every} \(R >
0\),
\[
 \lim_{n \to \infty} \sup_{x \in \R^N} \int_{B_R (x)} \abs{u_n}
 = 0.
\]

Following the strategy of \cite{VanSchaftingen2013}, we start from the Sobolev
representation formula: for every \(n \in \N\) and almost every \(x \in \R^N\),
we have
\[
 u (x) = \frac{1}{\omega_N R^N}
 \int_{B_R(x)} u + \int_{B_R(x)} \nabla u_n (x) \cdot S_R (x - y) \dif y,
\]
where \(\omega_N =\pi^{N/2}/\Gamma (\frac{N}{2} + 1)\) is the volume of the \(N\)--dimensional unit ball and
the Sobolev kernel \(S_R : B_R(0) \to \R^N\) is
defined by
\[
 S_R (z)
 = - \Bigl(\frac{1}{\abs{z}^N} - \frac{1}{R^N} \Bigr)\frac{z}{N\omega_N}.
\]
For every \(\varepsilon > 0\), we have
\begin{equation*}
 \{ x \in \R^N : u_n (x) \ge \varepsilon\}\\
 \subseteq \Bigl\{ x \in \R^N : \int_{B_R(x)} \abs{u_n} \ge \frac{\omega_N R^N
\varepsilon}{2} \Bigr\}
\cup \Bigl\{ x \in \R^N : \bigabs{S_R \ast \nabla u_n} (x) \ge
\frac{\varepsilon}{2}\Bigr\}
 \cup E,
\end{equation*}
with \(\mathcal{L}^N (E) = 0\).

We now observe that
\[
 \mathcal{L}^N \bigl(\bigl\{ x \in \R^N : (S_R \ast \nabla u_n) (x) \ge
\tfrac{\varepsilon}{2}\bigr\}\bigr)
\le \frac{4}{\varepsilon^2} \int_{\R^N} \abs{S_R \ast \nabla u_n}^2
\le \frac{4}{\varepsilon^2} \Bigl(\int_{B_R(0)} \abs{S_R} \Bigr)^2
\int_{\R^N} \abs{\nabla u_n}^2.
\]
Since
\[
 \int_{B_R(0)} \abs{S_R} = R \frac{N}{N + 1},
\]
we conclude that
\begin{equation*}
 \mathcal{L}^N \bigl(\{ x \in \R^N : u_n (x) \ge \varepsilon\}\bigr)
 \le \mathcal{L}^N \Bigl(\Bigl\{ x \in \R^N : \int_{B_R(x)} \abs{u_n} \ge
\frac{\omega_N R^N
\varepsilon}{2} \Bigr\}\Bigr)
+ \Bigl(\frac{2 NR}{(N+1)\varepsilon}\Bigr)^2
\int_{\R^N} \abs{\nabla u_n}^2.
\end{equation*}
In order to obtain the conclusion, for every \(\eta > 0\), we take \(R >
0\) small enough so that for each \(n \in \N\),
\[
\Bigl(\frac{2NR}{(N+1)\varepsilon}\Bigr)^2
\int_{\R^N} \abs{\nabla u_n}^2 \le \eta;
\]
By the other assumption, when \(n \in \N\) is large enough
\[
 \Bigl\{ x \in \R^N : \int_{B_R(x)} \abs{u_n} \ge
\frac{\omega_N R^N
\varepsilon}{2} \Bigr\} = \emptyset,
\]
and the conclusion follows.
\end{proof}
An immediate consequence of the preceding lemma is the following
\begin{lemma}[Nonzero weak limit after translations]\label{liebtypelemma}
Let $N\in\mathbb N$, \(\alpha \in (0, N)\) and \(p \ge 1\).
Let $(u_n)_{n\in\N}$ be a sequence in $E^{\alpha,p}(\R^N)$ such that $\norm{u_n}_{E^{\alpha,p}(\R^N)}< C$ and let $\varepsilon,\delta>0$ be such that for all $n\in \N$
\begin{equation}\label{measurebound}
\mathcal L^N(x : |u_n(x)|> \varepsilon) > \delta .
\end{equation}
Then there exists a sequence $(a_n)_{n\in\N}$ in $\R^N$ such that $v_n:=u_n(\cdot+a_n)$ does not contain any subsequence converging to zero in $L^1_\loc(\R^N).$
\end{lemma}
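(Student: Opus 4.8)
The plan is to obtain the statement as a direct consequence of Lemma~\ref{liebtypelemma0}, arguing by contradiction to convert the non-vanishing of the measures in \eqref{measurebound} into a uniform lower bound for the mass of $u_n$ on some unit ball, and then choosing each translation $a_n$ to be a point where (up to a fixed factor) this mass is attained.

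First I would note that a bounded sequence in $E^{\alpha,p}(\R^N)$ lies in $W^{1,1}_{\loc}(\R^N)$ by Proposition~\ref{propositionRuizBall}, and that $\int_{\R^N}\abs{D u_n}^2 \le \norm{u_n}_{E^{\alpha,p}}^2 < C^2$; hence the hypothesis $\limsup_n \int_{\R^N}\abs{\nabla u_n}^2 < \infty$ of Lemma~\ref{liebtypelemma0} is satisfied by $(u_n)_{n\in\N}$ and, a fortiori, by each of its subsequences. Next I claim that
\[
  \liminf_{n\to\infty}\ \sup_{x\in\R^N}\int_{B_1(x)}\abs{u_n} > 0 .
\]
Were this $\liminf$ equal to $0$, one could extract a subsequence $(u_{n_k})_{k\in\N}$ with $\sup_{x\in\R^N}\int_{B_1(x)}\abs{u_{n_k}}\to 0$; applying Lemma~\ref{liebtypelemma0} to this subsequence (with $R=1$) would give $\mathcal L^N(\{x:\abs{u_{n_k}(x)}>\varepsilon\})\to 0$, contradicting \eqref{measurebound}.

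Write $3\gamma>0$ for the value of the $\liminf$ above. Then $\sup_{x\in\R^N}\int_{B_1(x)}\abs{u_n}>2\gamma$ for all $n$ large enough, so for such $n$ I may pick $a_n\in\R^N$ with $\int_{B_1(a_n)}\abs{u_n}>\gamma$, while for the remaining finitely many indices I choose $a_n$ arbitrarily. Setting $v_n:=u_n(\cdot+a_n)$, a translation of variables yields $\int_{B_1(0)}\abs{v_n}>\gamma$ for all large $n$. Therefore every subsequence of $(v_n)_{n\in\N}$ has mass on $B_1(0)$ bounded below by $\gamma$ from some index on, so no subsequence can converge to $0$ in $L^1(B_1(0))$, and in particular none converges to $0$ in $L^1_{\loc}(\R^N)$.

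I do not anticipate a real obstacle here: the argument is a short contradiction combined with a choice of near-optimal centres. The only point that needs a line of care is the extraction of the subsequence in the second step, together with the (immediate) observation that the hypotheses of Lemma~\ref{liebtypelemma0} pass to subsequences. When $p>1$, combining the conclusion with Proposition~\ref{propositionElementaryLocalWeakCompactness} and Proposition~\ref{weakL1loc} shows that, along a further subsequence, $(v_n)_{n\in\N}$ converges weakly in $E^{\alpha,p}(\R^N)$ to a nonzero limit.
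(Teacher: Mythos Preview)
Your proof is correct and follows essentially the same approach as the paper's own proof, which is a one-line invocation of Lemma~\ref{liebtypelemma0} via its contrapositive. Your version simply makes explicit the subsequence extraction and the choice of near-optimal centres that the paper leaves implicit.
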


\begin{proof}
By Lemma~\ref{liebtypelemma0} $R,C_0>0$ and a sequence  $(a_n)_{n\in\N}$ in $\R^N$ exist such that $$\int_{B_R(0)} |v_n|= \int_{B_{R} (a_n)} \abs{u_n} > C_0>0,$$ where  $v_n:=u_n(\cdot+a_n)$. And this concludes the proof.
\end{proof}

Now we are in a position to prove theorem~\ref{t-multiplicative-intro} of the Introduction.
For convenience, we reproduce here the statement.

\begin{theorem}[Existence of optimizers]\label{t-multiplicative}
Let \(N \in \N\), \(\alpha \in (0, N)\), \(p\ge 1\) and assumption \eqref{Q_0} holds.
Then the best constant $S$ in \eqref{quotient} is achieved.
\end{theorem}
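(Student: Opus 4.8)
The plan is to run the direct method on a minimizing sequence for $\mathcal R$, using the translation invariance of $\mathcal R$ to prevent vanishing and a nonlocal Brezis--Lieb splitting combined with a weighted arithmetic--geometric mean inequality to prevent dichotomy. Since $S=S_{1,1}$ by the scaling and homogeneity invariance of $\mathcal R$, one may start from a sequence $(u_n)_{n\in\N}$ in $E^{\alpha,p}(\R^N)$ with $\int_{\R^N}\abs{Du_n}^2=1$, $\int_{\R^N}\abs{u_n}^q=1$ and $\int_{\R^N}\abs{I_{\alpha/2}\ast\abs{u_n}^p}^2\to L:=S^{2p/(1-\theta)}$; in particular $(u_n)_{n\in\N}$ is bounded in $E^{\alpha,p}(\R^N)$. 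Since assumption \eqref{Q_0} makes the two inequalities bounding $1/q$ strict, theorem~\ref{theoremEmbedding} provides exponents $q_1<q<q_2$ with $E^{\alpha,p}(\R^N)\subset L^{q_1}(\R^N)\cap L^{q_2}(\R^N)$; splitting $1=\int_{\R^N}\abs{u_n}^q$ over $\{\abs{u_n}\le\varepsilon\}$ and $\{\abs{u_n}>\varepsilon\}$, estimating the first piece by $\varepsilon^{q-q_1}\int_{\R^N}\abs{u_n}^{q_1}$ and the second by H\"older's inequality in terms of $\mathcal L^N(\{\abs{u_n}>\varepsilon\})$, one obtains $\varepsilon,\delta>0$ with $\mathcal L^N(\{\abs{u_n}>\varepsilon\})>\delta$ for all $n$. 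Lemma~\ref{liebtypelemma} then yields translations $(a_n)_{n\in\N}$ for which, after replacing $u_n$ by $u_n(\cdot+a_n)$ (which alters none of the quantities above), no subsequence of $(u_n)_{n\in\N}$ converges to $0$ in $L^1_{\mathrm{loc}}(\R^N)$. Passing to a subsequence via proposition~\ref{propositionElementaryLocalWeakCompactness}, $u_n\to u$ in $L^1_{\mathrm{loc}}(\R^N)$ and almost everywhere, with $u\ne0$; by proposition~\ref{propositionSemiContinuityLocalConvergence}, $u\in E^{\alpha,p}(\R^N)$, $Du_n\rightharpoonup Du$ weakly in $L^2(\R^N)$, and the three integrals $s:=\int_{\R^N}\abs{Du}^2$, $\sigma:=\int_{\R^N}\abs{u}^q$, $\tau:=\int_{\R^N}\abs{I_{\alpha/2}\ast\abs{u}^p}^2$ are finite (using $E^{\alpha,p}(\R^N)\subset L^q(\R^N)$ for $\sigma$) and strictly positive (for $s$: a nonzero constant does not lie in $Q^{\alpha,p}(\R^N)$; for $\tau$: $\abs{u}^p\not\equiv0$).

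Set $w_n:=u_n-u$, so $w_n\to0$ almost everywhere. The weak convergence $Du_n\rightharpoonup Du$ gives $\int_{\R^N}\abs{Dw_n}^2\to1-s$; the classical Brezis--Lieb lemma (proposition~\ref{propositionBrezisLiebClassical}, applicable since $(u_n)_{n\in\N}$ is bounded in $L^q(\R^N)$) gives $\int_{\R^N}\abs{w_n}^q\to1-\sigma$; and the nonlocal Brezis--Lieb inequality \eqref{ineqBrezisLieb} gives, along a further subsequence on which $\int_{\R^N}\abs{I_{\alpha/2}\ast\abs{w_n}^p}^2\to\rho$, the bound $\tau+\rho\le L$. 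Applying the interpolation estimate \eqref{Coulomb-Sobolev-estimate}, raised to the power $q$, to $u$ and to $w_n$ (and letting $n\to\infty$ in the latter) gives, with $a:=\theta q/2$ and $b:=(1-\theta)q/(2p)$, the inequalities $S^q\sigma\le s^{a}\tau^{b}$ and $S^q(1-\sigma)\le(1-s)^{a}\rho^{b}$. Adding these, dividing by $S^q=L^{b}$, and using $0\le\rho\le L-\tau$ together with $x:=\tau/L\in(0,1]$, one arrives at the scalar inequality $1\le s^{a}x^{b}+(1-s)^{a}(1-x)^{b}$ with $s,x\in(0,1]$.

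A direct computation from \eqref{e-theta} shows that $a+b>1$ if and only if $\theta>\alpha/(2p+\alpha)$, and assumption \eqref{Q_0} guarantees this strict inequality (at the Coulomb--Sobolev critical exponent one has $a+b=1$, and the argument correctly fails there). Since $a+b>1$, the weighted arithmetic--geometric mean inequality $s^{a}x^{b}\le\tfrac{a}{a+b}\,s^{a+b}+\tfrac{b}{a+b}\,x^{a+b}$ combined with $t^{a+b}+(1-t)^{a+b}\le1$ for $t\in[0,1]$ shows that the right-hand side of the scalar inequality is at most $1$, with equality only if $s=x\in\{0,1\}$. As $s,x>0$, we conclude $s=1$ and $\tau=L$; then $(1-s)^{a}\rho^{b}=0$ forces $\sigma\ge1$, while $\sigma\le\liminf_n\int_{\R^N}\abs{u_n}^q=1$ by Fatou's lemma, so $\sigma=1$. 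Hence $\mathcal R(u)=L^{(1-\theta)/(2p)}=S$, i.e.\ $u$ attains the infimum in \eqref{quotient}. One also reads off $\int_{\R^N}\abs{Dw_n}^2\to0$ and $\int_{\R^N}\abs{I_{\alpha/2}\ast\abs{w_n}^p}^2\to0$, so in fact $u_n\to u$ strongly in $E^{\alpha,p}(\R^N)$.

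The hard part will be this dichotomy step: the nonlocal Brezis--Lieb property supplies only the inequality $\tau+\rho\le L$ rather than an identity, so the pieces cannot simply be compared energetically; the inequality does, however, point in exactly the direction needed to be pinched against the reverse arithmetic--geometric mean bound, and the pinching is \emph{strict} precisely because the open condition \eqref{Q_0} forces $a+b>1$. A secondary difficulty is the exclusion of vanishing, which relies on the strict interiority of $q$ in the Coulomb--Sobolev embedding range — again a consequence of \eqref{Q_0}.
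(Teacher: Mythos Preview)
Your proof is correct and follows essentially the same route as the paper's: normalize via $S=S_{1,1}$, exclude vanishing by the $p,q,r$ mechanism (which you spell out rather than cite), translate to a nonzero limit, split with the classical and nonlocal Brezis--Lieb properties, and then pinch a scalar inequality that is strict precisely when \eqref{Q_0} holds. The only cosmetic difference is in the scalar step: the paper combines the split pieces via the discrete H\"older inequality and then invokes strict concavity of $t\mapsto t^{1/(qc)}$ (equivalently $1/q<\theta/2+(1-\theta)/(2p)$), whereas you add the two applications of \eqref{Coulomb-Sobolev-estimate}, use the weighted AM--GM bound $s^{a}x^{b}\le\tfrac{a}{a+b}s^{a+b}+\tfrac{b}{a+b}x^{a+b}$, and the strict convexity of $t\mapsto t^{a+b}$; these are the same condition $a+b=q(\theta/2+(1-\theta)/(2p))>1$, and your identification of it with $\theta>\alpha/(2p+\alpha)$ is a nice explicit check. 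As a bonus, tracking $s$ and $\tau$ separately lets you read off $\int\abs{Dw_n}^2\to0$ and $\int\abs{I_{\alpha/2}\ast\abs{w_n}^p}^2\to0$, i.e.\ strong convergence in $E^{\alpha,p}(\R^N)$ along the subsequence, which the paper establishes only later (theorem~\ref{thm-additive}) via the additive formulation.
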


\begin{proof}
Let \((u_n)_{n \in \N}\) be a minimizing sequence for $S$ such that \(\norm{D u_n}_{L^2} = 1\) and \(\norm{u_n}_{L^q}= 1\). Such a minimizing sequence is obviously bounded in $E^{\alpha,p}(\R^N)$. By the so-called $p,q,r$ theorem, \cite{FrolichLiebLoss}*{ Lemma 2.1 p.\thinspace{}258}, using the assumption on $q$, and by theorem~\ref{theoremEmbedding} it follows that \eqref{measurebound} holds. By lemma~\ref{liebtypelemma} and proposition~\ref{propositionElementaryLocalWeakCompactness},
up to translations and a subsequence \((u_n)_{n \in \N}\) converges in $L^1_{\textrm{loc}}(\R^N)$ and almost everywhere in \(\R^N\) to a nontrivial limit \(u\).

Passing if necessary to a subsequence, by proposition~\ref{propositionSemiContinuityLocalConvergence}
and by proposition~\ref{propositionBrezisLieb},
\[
\begin{split}
 S &= \lim_{n \to \infty} \frac{\norm{D u_n}_{L^2 (\R^N)}^\theta \norm{I_{\alpha/2} \ast \abs{u_n}^p}_{L^2 (\R^N)}^\frac{1 - \theta}{p}}{\norm{u_n}_{L^q (\R^N)}}\\
 & \ge \limsup_{n \to \infty} \frac{(\norm{D u}^2_{L^2 (\R^N)} + \norm{D (u_n - u)}^2_{L^2 (\R^N)})^\frac{\theta}{2} (\norm{I_{\alpha/2} \ast \abs{u}^p}_{L^2 (\R^N)}^2 + \norm{I_{\alpha/2} \ast \abs{u_n - u}^p}_{L^2 (\R^N)}^2)^\frac{1 - \theta}{2p}}{\norm{u_n}_{L^q (\R^N)}},
\end{split}
\]
where
\[
  \theta = \frac{\frac{1}{q} - \frac{1}{2 p}\bigl(1 + \frac{\alpha}{N}\bigr)}{\frac{1}{2} - \frac{1}{N} - \frac{1}{2 p} \bigl(1 + \frac{\alpha}{N}\bigr)}.
\]
By the discrete H\"older inequality, this implies that
\[
 S \ge \limsup_{n \to \infty} \frac{\bigl(\norm{D u}_{L^2 (\R^N)}^\frac{2\theta p}{\theta p + (1 - \theta)}\norm{I_{\alpha/2} \ast \abs{u}^p}_{L^2 (\R^N)}^\frac{2(1 - \theta)}{\theta p + (1 - \theta)} + \norm{D (u_n - u)}_{L^2 (\R^N)}^\frac{2 \theta p}{\theta p + (1 - \theta)} \norm{I_{\alpha/2} \ast \abs{u-u_n}^p}_{L^2 (\R^N)}^\frac{2(1 - \theta)}{\theta p + (1 - \theta)} \bigr)^{\frac{\theta}{2} + \frac{1 - \theta}{2 p}}}{\norm{u_n}_{L^q (\R^N)}}.
\]
By definition of \(S\) and by the classical Brezis--Lieb lemma this implies that
\[
S \ge \limsup_{n \to \infty} S \frac{\bigl(\norm{u }_{L^q (\R^N)}^\frac{2 p}{\theta p + (1 -\theta)} + \norm{u_n - u}_{L^q (\R^N)}^\frac{2 p}{\theta p + (1 - \theta)}\bigr)^{\frac{\theta}{2} + \frac{(1 - \theta)}{2p}}}{\bigl(\norm{u}_{L^q (\R^N)}^q + \norm{u_n - u}^q_{L^q (\R^N)} \bigr)^\frac{1}{q}}.
\]
Since by our assumption,
\[
  \frac{1}{q} < \frac{\theta}{2} + \frac{1 - \theta}{2 p},
\]
it follows by strict concavity and since \(\norm{u}_{E^{\alpha,p}(\R^N)} \ne 0\), that
\[
  \lim_{n \to \infty}\norm{u-u_n}_{L^q (\R^N)}= 0,
\]
passing if necessary to a subsequence. In view of the Fatou property (proposition~\ref{propositionSemiContinuityLocalConvergence}) this implies that
$$
S = \lim_{n \to \infty} \frac{\norm{D u_n}_{L^2 (\R^N)}^\theta \norm{I_{\alpha/2} \ast \abs{u_n}^p}_{L^2 (\R^N)}^\frac{1 - \theta}{p}}{\norm{u_n}_{L^q  (\R^N)}}\geq  \frac{\norm{D u}_{L^2 (\R^N)}^\theta \norm{I_{\alpha/2} \ast \abs{u}^p}_{L^2 (\R^N)}^\frac{1 - \theta}{p}}{\norm{u}_{L^q (\R^N)}},
$$
which is enough to prove the claim.
\end{proof}

We emphasise that assumptions of theorem~\ref{t-multiplicative} include $p=1$, although in this case
there is no obvious Euler--Lagrange equation which could be associated to  $\mathcal R$.
If $p>1$ then the Euler--Lagrange equation of the quantity $\log\mathcal R(u)$
for $u\in E^{\alpha,p}(\R^N)\setminus\{0\}$ has the form
\begin{equation}\label{EulerMinim1}
 A(-\Delta)u + B(I_\alpha \ast \abs{u}^p)\abs{u}^{p - 2} u= C\abs{u}^{q-2}u\quad\text{in \(\R^N\)},
\end{equation}
where
$$A=\frac{\theta}{\|D u\|_{L^2(\R^N)}^2},\quad
B=\frac{1-\theta}{\|I_{\alpha/2} \ast \abs{w}^p\|_{L^2(\R^N)}^2},\quad C=\frac{1}{\|u\|_{L^q(\R^N)}^q}.$$
In particular, minimizers for $S$ constructed in theorem~\ref{t-multiplicative}
are weak solutions of \eqref{EulerMinim1} and, after a rescaling, of equation \eqref{sps}.
Note also that if $u$ is a minimizer for $S$ then $\abs{u}$ is also a minimizer and hence
we can assume that $u$ is nonnegative.

In the next section, we are going to consider an equivalent to \eqref{quotient} additive minimization problem
for the functional which has a meaning of the physical energy which is naturally associated to \eqref{sps}.

\subsection{Additive minimization problem}\label{sect-additive}

For $u\in E^{\alpha,p}(\R^N)$, set
\begin{equation}\label{E-defn}
\mathcal{E}_*(u)=\frac{1}{2}\int_{\R^N}\abs{D u}^2+\frac{1}{2p}\int_{\R^N} \abs{I_{\alpha/2} \ast \abs{u}^p}^2 ,
\end{equation}
and for $c>0$ define a minimization problem
\begin{equation}\label{varprob1nonrad}
M_c=\inf_{u\in \mathcal A_c} \mathcal{E}_*(u)
\end{equation}
where
\begin{equation*}
\mathcal A_c=\Bigl\{u \in E^{\alpha,p}(\R^N) \,: \int_{\R^N}\abs{u}^q=c\Bigr\}.
\end{equation*}
Up to a rescaling, for $p>1$ this problem shares with \eqref{quotient} the same Euler-Lagrange equation.
In fact, one can show that minimization problems \eqref{quotient} and \eqref{varprob1nonrad}
are equivalent. Indeed, given $u\in\mathcal A_c$,
consider a rescaling $u_\lambda(x)=\lambda^{-N/q} u(x/\lambda)\in\mathcal A_c$.
Minimizing $\mathcal \mathcal{E}_*(u_\lambda)$ with respect to $\lambda>0$, after a direct computation we find
that
$$\min_{\lambda>0} \mathcal{E}_*(u_\lambda)=C_*\big(\mathcal R(u)\big)^{2\sigma},$$
where
$$\sigma={\frac{(N+\alpha)-p(N-2)}{(\alpha+2)-\frac{2N}{q}(p-1)}}$$
and
$$C_*=
\left(\Big(\frac{1-\theta}{p\theta}\Big)^{\sigma\theta}+\Big(\frac{1-\theta}{p\theta}\Big)^{-\sigma\frac{1-\theta}{p}}\right)
\Big(\frac{1}{2^\theta(2p)^\frac{1-\theta}{p}}\Big)^\sigma,$$
with $\theta$ as in \eqref{eq-theta}.
We conclude that the best constant $S$
in the multiplicative minimization problem \eqref{quotient} is achieved if and only if $M_c$ is achieved in the additive minimization problem \eqref{varprob1nonrad}. Moreover,
\begin{equation}\label{scaledinfimum}
M_c=C_*\big(c^\frac{1}{q}S\big)^{2\sigma}=c^\frac{2\sigma}{q} M_1.
\end{equation}
In particular, Theorem~\ref{t-multiplicative} implies the existence of a minimizer for \eqref{varprob1nonrad}.
Nevertheless, below we sketch an independent proof,
which uses the same tools as the proof of Theorem~\ref{t-multiplicative} but provides an additional information
about the properties of the minimizing sequences.

\begin{theorem}\label{thm-additive}
Let $N\in\mathbb N,$ \(\alpha \in (0, N)\), \(p \ge 1\)  and assumption \eqref{Q_0} holds.
Then all the minimising sequences for $M_c$ in \eqref{varprob1nonrad} are relatively compact in $E^{\alpha,p}(\R^N)$ modulo translations. In particular, the best constant $M_c$ is achieved.
\end{theorem}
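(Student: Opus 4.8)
The plan is to run a Lions-type concentration–compactness argument, parallel to the proof of Theorem~\ref{t-multiplicative}, with the homogeneity of the multiplicative quotient replaced by the strict subadditivity of $c\mapsto M_c$. Let $(u_n)_{n\in\N}$ be a minimizing sequence for $M_c$. Since $\mathcal{E}_*(u_n)\to M_c$, both $\int_{\R^N}\abs{Du_n}^2$ and $\int_{\R^N}\abs{I_{\alpha/2}\ast\abs{u_n}^p}^2$ are bounded, so $(u_n)_{n\in\N}$ is bounded in $E^{\alpha,p}(\R^N)$. First I would rule out vanishing: because \eqref{Q_0} holds, $q$ lies strictly inside the embedding interval of Theorem~\ref{theoremEmbedding}, hence $(u_n)_{n\in\N}$ is bounded in $L^{q_1}(\R^N)\cap L^{q_2}(\R^N)$ for some $q_1<q<q_2$; together with the constraint $\int_{\R^N}\abs{u_n}^q=c>0$ and the $p,q,r$ theorem \cite{FrolichLiebLoss}*{Lemma~2.1}, this gives $\varepsilon,\delta>0$ for which \eqref{measurebound} holds, exactly as in the proof of Theorem~\ref{t-multiplicative}. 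Then Lemma~\ref{liebtypelemma}, Lemma~\ref{liebtypelemma0} and Proposition~\ref{propositionElementaryLocalWeakCompactness} furnish translations $(a_n)_{n\in\N}$ so that, after replacing $u_n$ by $u_n(\cdot+a_n)$ and passing to a subsequence, $u_n\to u$ in $L^1_{\loc}(\R^N)$ and almost everywhere, with $u\neq0$ and $u\in E^{\alpha,p}(\R^N)$ by Proposition~\ref{propositionSemiContinuityLocalConvergence}.

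Next I would split the energy along $v_n:=u_n-u$, which converges to $0$ almost everywhere. Since $Du_n\rightharpoonup Du$ weakly in $L^2(\R^N)$ (Proposition~\ref{propositionSemiContinuityLocalConvergence}), the cross term vanishes and $\int_{\R^N}\abs{Du_n}^2=\int_{\R^N}\abs{Du}^2+\int_{\R^N}\abs{Dv_n}^2+o(1)$. For the Coulomb term the nonlocal Brezis--Lieb inequality \eqref{ineqBrezisLieb} of Proposition~\ref{propositionBrezisLieb} applies (using that $(I_{\alpha/2}\ast\abs{u_n}^p)_{n\in\N}$ is bounded in $L^2(\R^N)$), and its defect term is nonnegative, so after passing to a further subsequence along which all relevant quantities converge,
\[
  \int_{\R^N}\bigabs{I_{\alpha/2}\ast\abs{u_n}^p}^2\ge\int_{\R^N}\bigabs{I_{\alpha/2}\ast\abs{u}^p}^2+\int_{\R^N}\bigabs{I_{\alpha/2}\ast\abs{v_n}^p}^2+o(1).
\]
Adding, $\mathcal{E}_*(u_n)\ge\mathcal{E}_*(u)+\mathcal{E}_*(v_n)+o(1)$, whence
\[
  M_c=\lim_{n\to\infty}\mathcal{E}_*(u_n)\ge\mathcal{E}_*(u)+\lim_{n\to\infty}\mathcal{E}_*(v_n).
\]
On the other hand, the classical Brezis--Lieb lemma (Proposition~\ref{propositionBrezisLiebClassical}) applied in $L^q(\R^N)$ gives $\int_{\R^N}\abs{v_n}^q=c-\int_{\R^N}\abs{u}^q+o(1)$; writing $c_1:=\int_{\R^N}\abs{u}^q$ we have $c_1\in(0,c]$ ($c_1>0$ since $u\neq0$, $c_1\le c$ by Fatou) and $\int_{\R^N}\abs{v_n}^q\to c_2:=c-c_1$. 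Since $\mathcal{E}_*(u)\ge M_{c_1}$, $\mathcal{E}_*(v_n)\ge M_{\int_{\R^N}\abs{v_n}^q}$, and $t\mapsto M_t=t^{2\sigma/q}M_1$ is continuous by \eqref{scaledinfimum}, this yields $M_c\ge M_{c_1}+M_{c_2}$.

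Finally I would invoke strict subadditivity. The exponent $\tfrac{2\sigma}{q}$ in \eqref{scaledinfimum} lies in $(0,1)$ under \eqref{Q_0} (the inequality $\tfrac{2\sigma}{q}<1$ is precisely the strict upper bound on $\tfrac1q$ that already played this role in the proof of Theorem~\ref{t-multiplicative}), and $M_1>0$ because the interpolation estimate of Theorem~\ref{theoremEmbedding} together with Young's inequality gives $\int_{\R^N}\abs{u}^q\le C\,\mathcal{E}_*(u)^{q/2}$; hence $c\mapsto M_c$ is strictly subadditive, i.e. $M_{c_1}+M_{c_2}>M_{c_1+c_2}=M_c$ whenever $c_1,c_2>0$. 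Comparing with $M_c\ge M_{c_1}+M_{c_2}$ forces $c_2=0$, so $c_1=c$ and $u\in\mathcal A_c$. Then $\mathcal{E}_*(u)\ge M_c$ combined with $M_c\ge\mathcal{E}_*(u)+\lim_n\mathcal{E}_*(v_n)\ge\mathcal{E}_*(u)$ gives at once $\mathcal{E}_*(u)=M_c$ (so $u$ is a minimizer) and $\lim_n\mathcal{E}_*(v_n)=0$, which is exactly $\norm{Dv_n}_{L^2}\to0$ and $\norm{I_{\alpha/2}\ast\abs{v_n}^p}_{L^2}\to0$, i.e. $u_n(\cdot+a_n)\to u$ in $E^{\alpha,p}(\R^N)$. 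Since any subsequence of a minimizing sequence is again minimizing, this shows every minimizing sequence is relatively compact in $E^{\alpha,p}(\R^N)$ modulo translations, and in particular $M_c$ is attained. The only delicate point is the interaction between the one-directional nonlocal Brezis--Lieb lemma and the splitting: because \eqref{ineqBrezisLieb} is merely an inequality with a \emph{nonnegative} defect, it produces a \emph{lower} bound on $\mathcal{E}_*(u_n)$ in terms of $\mathcal{E}_*(u)+\mathcal{E}_*(v_n)$ — exactly the direction needed to close the subadditivity argument — so no equality form of the nonlocal Brezis--Lieb lemma is required; checking $\tfrac{2\sigma}{q}\in(0,1)$ under \eqref{Q_0}, hence strict subadditivity, is the remaining thing that must be verified.
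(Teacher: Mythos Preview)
Your proof is correct and follows essentially the same approach as the paper: both arguments rule out vanishing via the \(p,q,r\) theorem and Lemma~\ref{liebtypelemma}, split the energy using weak \(L^2\)-convergence of the gradients together with the one-sided nonlocal Brezis--Lieb inequality of Proposition~\ref{propositionBrezisLieb}, and then exclude dichotomy using the scaling relation \eqref{scaledinfimum} with \(2\sigma/q\in(0,1)\). Your packaging via strict subadditivity of \(c\mapsto M_c\) is exactly the concentration--compactness reformulation the paper spells out in the Remark following the proof; the only cosmetic slip is the exponent in your bound \(\int\abs{u}^q\le C\,\mathcal E_*(u)^{q/2}\), which should read \(\mathcal E_*(u)^{sq/2}\) with \(s=\theta+(1-\theta)/p\le 1\) (or one simply invokes \(M_1=C_* S^{2\sigma}>0\) from \eqref{scaledinfimum}), but this does not affect the argument.
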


\begin{proof}
 Let \((u_n)_{n \in \N}\) be a minimizing sequence for $M_1$. By $p,q,r$ theorem \cite{FrolichLiebLoss}*{Lemma 2.1 p. 258}, using the assumption on $q$ and by Theorem~\ref{theoremEmbedding} it follows that \eqref{measurebound} holds. By using Lemma~\ref{liebtypelemma} and Proposition~\ref{propositionElementaryLocalWeakCompactness},
up to translations and a subsequence we can assume that \((u_n)_{n \in \N}\) converges in $L^1_{\textrm{loc}}(\R^N)$ and almost everywhere in \(\R^N\) to a nontrivial limit \(u\).

Now, up to subsequence, there holds
\[
  \lim_{n \to \infty}\norm{u-u_n}_{L^q (\R^N)}= 0,
\]
and in particular  $\norm{u}_{L^q (\R^N)}=1$.

In fact, passing if necessary to a subsequence, by Proposition~\ref{propositionSemiContinuityLocalConvergence},
by Proposition~\ref{propositionBrezisLieb} and by \eqref{scaledinfimum} we obtain
\begin{multline*}
M_1 = \lim_{n \to \infty} \Big(\frac{1}{2}\norm{D u_n}_{L^2 (\R^N)}^2+ \frac{1}{2p}\norm{I_{\alpha/2} \ast \abs{u_n}^p}_{L^2 (\R^N)}^{2}\Big)\\
 \ge \limsup_{n \to \infty} \Big(\frac{1}{2}\norm{D u}^2_{L^2 (\R^N)} + \frac{1}{2}\norm{D (u_n - u)}^2_{L^2 (\R^N)}\hspace{12em}\\ +\frac{1}{2p}\norm{I_{\alpha/2} \ast \abs{u}^p}_{L^2 (\R^N)}^2 + \frac{1}{2p}\norm{I_{\alpha/2} \ast \abs{u_n - u}^p}_{L^2 (\R^N)}^2\Big)\\
\ge M_1\limsup_{n \to \infty} \Big(\norm{u}^{2\sigma}_{L^q (\R^N)}+\norm{u-u_n}^{2\sigma}_{L^q (\R^N)}\Big).
\end{multline*}
In view of the assumption \eqref{Q_0}, we have $2\sigma/q\in (0,1)$. The strong convergence in $L^q (\R^N)$ follows by strict concavity, as  \(\norm{u}_{L^q (\R^N)} \ne 0\).

As a consequence $M_1$ is achieved, since by Proposition~\ref{propositionSemiContinuityLocalConvergence}  (Fatou's property) we have
$$M_1 \ge \frac{1}{2}\norm{D u}_{L^2 (\R^N)}^2+\frac{1}{2p}\norm{I_{\alpha/2} \ast \abs{u}^p}_{L^2 (\R^N)}^{2}.$$
Notice that the convergence is in $E^{\alpha,p}(\R^N)$ in view of Proposition~\ref{propositionBrezisLieb} and  Proposition~\ref{propositionSemiContinuityLocalConvergence}.
\end{proof}

\begin{remark}
By interpreting \((|u_n|^q)_{n \in \N}\) as a sequence of probability measures, in the language of the concentration-compactness principle of P.L. Lions \cite{Lions1984CC1}, Lemma~\ref{liebtypelemma} rules out the vanishing case, and \eqref{scaledinfimum} yields the strict subadditive inequality
\begin{equation*}
M_{c_1+c_2}<M_{c_1}+M_{c_2},\qquad c_1,c_2>0,
\end{equation*}
which rules out the dichotomy case.
\end{remark}

\subsection{Existence of groundstates and proof of theorem \ref{t-groundstate}}

We now formulate our main result on the existence of groundstates solutions of equation \eqref{sps}, namely theorem \ref{t-groundstate} from the introduction, which we recall here for reader's convenience:

\begin{customthm}{\ref{t-groundstate}}
Let \(N \in \N\), \(\alpha \in (0, N)\), \(p>1\) and  assumption \eqref{Q_0} holds.
Then there exists a nontrivial nonnegative groundstate solution $u\in E^{\alpha,p}(\R^N)\cap C^2(\R^N)$ to equation \eqref{sps} and $u\in C^\infty(\R^N\setminus u^{-1}(0))$.
In addition, if $p\ge 2$ then $u(x)>0$.
\end{customthm}
\begin{proof}
Let $w\in E^{\alpha,p}(\R^N)$ be a minimiser for $M_1$ in \eqref{varprob1nonrad}.
Since $|w|\in E^{\alpha,p}(\R^N)$ is also a minimiser for $M_1$, we can assume that $w$ is nonnegative.
Since $p>1$, the energy $E$ is of class $C^1(E^{\alpha,p}(\R^N);\mathbb R)$ and
the Euler-Lagrange equation for $w$ can be written in the form
\begin{equation*}
 - \Delta w + (I_\alpha \ast \abs{w}^p)\abs{w}^{p - 2} w=\mu\abs{w}^{q-2}w\quad\text{in \(\R^N\)},
\end{equation*}
with a Lagrange multiplier $\mu>0$.

Writing
$w(x)=\gamma u(\delta x)$,  for arbitrary $\gamma,\delta>0$,
it follows that $u$ is a solution of
\begin{equation}\label{EulerMinim2+}
 - \Delta u + \gamma^{2p-2}\delta^{-\alpha-2}(I_\alpha \ast \abs{u}^p)\abs{u}^{p - 2} u= \mu\gamma^{q-2}\delta^{-2}\abs{u}^{q-2}u\quad\text{in \(\R^N\).}
\end{equation}
Since $q\neq 2\frac{2p+\alpha}{2+\alpha}$ we can define $\gamma,\delta>0$ such that
$$ \gamma^{2p-2}\delta^{-\alpha-2}=1 \qquad \textrm{and}\qquad \mu\gamma^{q-2}\delta^{-2}=1.$$
It follows that $u$ is a nonnegative solution to the equation \eqref{sps}.

Regularity and positivity properties of the ground states of \eqref{sps} will follow from the results
in the remaining part of this section.
\end{proof}

\begin{remark}
The above simple scaling argument shows that the condition $q\neq 2\frac{2p+\alpha}{2+\alpha}$ is necessary and sufficient to get rid of any arising multipliers for this class of Euler-Lagrange equations.
\end{remark}

\subsection{Regularity and positivity of weak solutions}
We first establish regularity of weak solutions of \eqref{sps}.

\begin{proposition}[Local regularity]\label{proposition:Regularity}
Let \(N \in \N\), \(\alpha \in (0, N)\), \(p>1\) and  assumption \eqref{Q_0} holds.
If \(u \in E^{\alpha, p} (\R^N)\) is a weak solution of the equation
\begin{equation}\label{main-reg}
  -\Delta u + \bigl(I_\alpha \ast \abs{u}^p\bigr) \abs{u}^{p - 2} u = \abs{u}^{q - 2} u\quad\text{in \(\R^N\),}
\end{equation}
where $\mu>0$, then the following holds:
\begin{itemize}
\item
\(u \in L^r (\R^N)\) for every \(r \in [1, \infty)\) such that
\[
  \frac{1}{r} \le \max \Bigl\{\frac{1}{2} - \frac{1}{N}, \frac{2 + \alpha}{2(2 p + \alpha)}\Bigr\},
\]
\item
\(u \in C^{k, \gamma}_{\mathrm{loc}} (\R^N)\)
for every \(k \in \N\) and \(\gamma \in (0, 1)\) such that \(k + \gamma \le \min \{\Tilde{p}, \Tilde{q}\} + 1\),
where for $s\in\R$ we denote \(\Tilde{s} = \infty\) if \(s\) is an even integer and \(\Tilde{s} = s\) otherwise,
\smallskip
\item
\(u \in C^\infty(\R^N \setminus u^{-1} (0))\).
\end{itemize}

\end{proposition}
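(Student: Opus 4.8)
The plan is to run a bootstrap argument, treating the right-hand side of \eqref{main-reg} as a fixed datum and upgrading the integrability of $u$ step by step, then passing to Schauder estimates. First I would record what Theorem~\ref{theoremEmbedding} and Proposition~\ref{propositionRuizBall} already give: $u\in E^{\alpha,p}(\R^N)$ lies in $L^{q}(\R^N)$, in $L^{2N/(N-2)}(\R^N)$ when $N\ge 3$, and in every $L^r_{\loc}(\R^N)$ with $\tfrac1r\ge\min\{\tfrac12-\tfrac1N,\tfrac1p,\tfrac{2+\alpha}{2(2p+\alpha)}\}$; moreover, by the Hardy--Littlewood--Sobolev inequality (stated in Section~2), $I_\alpha\ast\abs{u}^p\in L^{s}(\R^N)$ for a range of $s$ controlled by $\norm{u}_{L^{2Np/(N+\alpha)}}$, which is finite by the critical Coulomb--Sobolev embedding $E^{\alpha,p}\subset L^{2(2p+\alpha)/(2+\alpha)}$ and interpolation. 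The key structural observation is that the nonlocal coefficient $V:=I_\alpha\ast\abs{u}^p$ is \emph{nonnegative}, so it can be kept on the left-hand side and does not obstruct the iteration; the only term that can grow under bootstrap is $\abs{u}^{q-2}u$.

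The core of the argument is then a Brezis--Kato / Moser-type iteration for $-\Delta u + V\,\abs{u}^{p-2}u = f$ with $f=\abs{u}^{q-2}u$. Concretely, I would test the equation against $u\abs{u}^{2(\beta-1)}$ (suitably truncated to stay in the energy space), use that $\int V\abs{u}^{p-2}u\cdot u\abs{u}^{2(\beta-1)}\ge 0$ to discard that term, and obtain $\int\abs{D(\abs{u}^{\beta})}^2\le C\beta^2\int \abs{u}^{q-2}\abs{u}^{2\beta}$. Splitting $\abs{u}^{q-2}$ over the region where it is large (a set of small measure, by the integrability already known) and applying Sobolev's inequality to $\abs{u}^\beta$ absorbs the bad part and yields $u\in L^{2^*\beta}_{\loc}$, hence an improvement of the exponent by the fixed factor $2^*/2=N/(N-2)$ each step (for $N\ge 3$; for $N\in\{1,2\}$ one uses the full Sobolev/Gagliardo--Nirenberg range and the iteration terminates even more easily). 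Iterating finitely many times gives $u\in L^r_{\loc}(\R^N)$ for all finite $r$, and the global bound $\tfrac1r\le\max\{\tfrac12-\tfrac1N,\tfrac{2+\alpha}{2(2p+\alpha)}\}$ follows by combining with the already-known global membership in $L^{2N/(N-2)}(\R^N)$ and $L^{2(2p+\alpha)/(2+\alpha)}(\R^N)$ and the HLS bound on $V$.

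Once $u\in L^r_{\loc}$ for all finite $r$, so is $V=I_\alpha\ast\abs{u}^p$ (again by HLS, locally), hence the full right-hand side $\abs{u}^{q-2}u-V\abs{u}^{p-2}u$ is in $L^r_{\loc}$ for all $r<\infty$; elliptic $L^r$ regularity gives $u\in W^{2,r}_{\loc}\subset C^{1,\gamma}_{\loc}$ for every $\gamma\in(0,1)$. To climb higher I would use Schauder theory: $V\in C^{0,\gamma}_{\loc}$ because $I_\alpha$ smooths (for $\alpha\ge1$ directly; for $\alpha<1$ one still gets Hölder continuity of the Riesz potential of an $L^r$ function for large $r$), and then $\abs{u}^{q-2}u\in C^{\min\{\tilde q,1\}\wedge\dots}$ — here the non-smoothness of $t\mapsto\abs{t}^{q-2}t$ and of $t\mapsto\abs{t}^{p-2}t$ at $t=0$ is exactly what limits the regularity to $C^{k,\gamma}$ with $k+\gamma\le\min\{\tilde p,\tilde q\}+1$. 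Away from the zero set of $u$ these maps are smooth, so a standard Schauder bootstrap upgrades $u$ to $C^\infty(\R^N\setminus u^{-1}(0))$. In particular, since $\min\{\tilde p,\tilde q\}+1>2$ always (as $p,q>1$), we get $u\in C^2(\R^N)$, which is what Theorem~\ref{t-groundstate} needs.

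\textbf{Main obstacle.} The delicate point is the first bootstrap step: making the test function $u\abs{u}^{2(\beta-1)}$ admissible in $E^{\alpha,p}(\R^N)$ (it need not be bounded) and controlling the nonlocal term uniformly — this requires a truncation argument and the sign of $V$, together with care that the HLS estimates on $V$ do not degrade as the exponent grows. The non-smoothness of the nonlinearities at $u=0$ is not an obstacle to the iteration itself but is precisely the mechanism producing the stated limitation $k+\gamma\le\min\{\tilde p,\tilde q\}+1$, so the statement is sharp in that respect.
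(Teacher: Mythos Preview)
Your approach works in the \emph{subcritical} regime \(p(N-2)<N+\alpha\) (the first option of \eqref{Q_0}), and indeed the paper notes in the remark following the proof that in that case a classical Kato/Moser bootstrap suffices. But in the \emph{supercritical} regime \(p(N-2)>N+\alpha\) (the second option of \eqref{Q_0}), discarding the nonlocal term by sign and relying on the classical Sobolev gain \(2^*/2\) fails. Concretely: the Brezis--Kato absorption step requires \(\abs{u}^{q-2}\in L^{N/2}_{\loc}\), i.e.\ \(u\in L^{(q-2)N/2}_{\loc}\); the best starting exponent you have is \(r_0=2\tfrac{2p+\alpha}{2+\alpha}\), and a short computation shows that \((q-2)N/2>r_0\) whenever \(q\) is close enough to \(2\tfrac{2p+\alpha}{2+\alpha}\) and \((N-2)p>N+\alpha\). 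Equivalently, the naive iteration \(r\mapsto \tfrac{N}{N-2}(r-q+2)\) is an \emph{improvement} only when \(r>N(q-2)/2\), and this threshold can exceed every exponent supplied by the Coulomb--Sobolev embedding. So the first bootstrap step never gets off the ground.

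The paper's remedy is the point you are missing: the nonlocal term is not discarded but kept and bounded \emph{from below}. Testing against \(\abs{u_m}^{2\beta-2}u_m\) and using Cauchy--Schwarz in the convolution gives
\[
\int_{\R^N}(I_\alpha\ast\abs{u}^p)\abs{u}^{p-2}u\,\abs{u_m}^{2\beta-2}u_m
\;\ge\;
\int_{\R^N}\bigl(I_\alpha\ast\abs{u_m}^{p+\beta-1}\bigr)\abs{u_m}^{p+\beta-1},
\]
so that \(\abs{u_m}^\beta\) lands in the Coulomb--Sobolev space \(E^{\alpha,\,1+(p-1)/\beta}(\R^N)\), not merely in \(\dot H^1\). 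Applying Theorem~\ref{theoremEmbedding} to that space yields, for \(\beta=1+\tfrac{r-q}{2}\), the new exponent \(s=r+\bigl(2\tfrac{2p+\alpha}{2+\alpha}-q\bigr)\) (among others), which is a strict improvement since \(q<2\tfrac{2p+\alpha}{2+\alpha}\) by \eqref{Q_0}. This Coulomb--Sobolev gain is what drives the iteration in the supercritical case; once \(u\in L^r_{\loc}\) for all finite \(r\), your Calder\'on--Zygmund and Schauder steps go through as you wrote them.
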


\begin{proof}[Proof of proposition~\ref{proposition:Regularity}.]
\setcounter{claim}{0}
The essential new step in the proof is the following claim which improves regularity using the Coulomb--Sobolev embedding.

\begin{claim}
\label{claimIteration}
Let \(r \ge q\) and \(u \in L^r (\R^N)\).
If \(s \in [1, \infty)\) is such that
either \(r - q \le \frac{2}{2 + \alpha} \bigl((N - 2)p - (N + \alpha) \bigr)\) and
\[
  \frac{N - 2}{N (2 + r - q)} \ge \frac{1}{s} \ge \frac{\alpha + 2}{2(\alpha + 2p)+(\alpha+2)(r - q)}
\]
or \(r - q \ge \frac{2}{2 + \alpha} \bigl((N - 2)p - (N + \alpha) \bigr)\) and
\[
  \frac{(N - 2)_+}{N (2 + r - q)} \le \frac{1}{s} \le \frac{\alpha + 2}{2(\alpha + 2p)+(\alpha+2)(r - q)},
\]
then \(u \in L^s (\R^N)\).
\end{claim}

In particular, if $N\ge 3$ then $u\in L^\frac{N(2+r-q)}{N-2}(\R^N)\cap L^{2\frac{2p+\alpha}{2+\alpha}+r-q}(\R^N)$.

\begin{proofclaim}
Given \(\mmu > 0\), we define the truncated solution \(u_\mmu \in E^{\alpha,p} (\R^N)\) for each \(x \in \R^N\) by
\[
  u_\mmu (x)
  =\begin{cases}
      -\mmu &\text{if \(u (x) < -\mmu\)},\\
      u (x)&\text{if \(-\mmu \le u (x) \le \mmu\)},\\
      \mmu & \text{if \(u (x) > \mmu\)}.
   \end{cases}
\]
Given \(\beta > 1/2\), we test the equation against the function \(\abs{u_\mmu}^{2\beta - 2} u_\mmu\) and we get
\[
  \int_{\R^N} \abs{u}^{q - 2}u \abs{u_\mmu}^{2\beta - 2} u_\mmu
  = \int_{\R^N} \nabla u \cdot \nabla (\abs{u_\mmu}^{2\beta - 2} u_\mmu) + (I_\alpha \ast \abs{u}^p)  \abs{u}^{p-2}u \abs{u_\mmu}^{2\beta - 2} u_\mmu.
\]
Since \( u_\mmu u \le u^2\), we first have
\[
  \int_{\R^N} \abs{u}^{q - 2}u \abs{u_\mmu}^{2\beta - 2}  u_\mmu \le \int_{\R^N} \abs{u}^{q + 2\beta-2}.
\]
Since \(\nabla u_\mmu = \nabla u\) on $\supp(\nabla u_\mmu)$, we also have
\[
  \int_{\R^N} \nabla u \cdot \nabla (\abs{u_\mmu}^{2\beta - 2} u_\mmu)
  =\frac{2\beta - 1}{\beta^2} \int_{\R^N} \abs{\nabla \abs{u_\mmu}^{\beta}}^2.
\]
Finally we have, by the Cauchy--Schwarz inequality
\[
\begin{split}
  \int_{\R^N} (I_\alpha \ast \abs{u}^p) \abs{u}^{p - 2} u\abs{u_\mmu}^{2\beta - 2} u_\mmu
  &\ge \int_{\R^N} (I_\alpha \ast \abs{u_\mmu}^p) \abs{u_\mmu}^{p + 2\beta - 2}\\
  &\ge \int_{\R^N} (I_\alpha \ast \abs{u_\mmu}^{p + \beta - 1}) \abs{u_\mmu}^{p + \beta - 1}.
\end{split}
\]
In summary, we have the estimate
\[
  \frac{2\beta - 1}{\beta^2} \int_{\R^N} \abs{\nabla \abs{u_\mmu}^\beta}^2 + \int_{\R^N} (I_\alpha \ast (\abs{u_\mmu}^\beta)^{1 + \frac{p - 1}{\beta}}) (\abs{u_\mmu}^\beta)^{1 + \frac{p - 1}{\beta}}
  \le \int_{\R^N} \abs{u}^{q + 2 \beta - 2}.
\]
We now take \(\beta = 1 + \frac{r - q}{2}\). Then
\[
  1 + \frac{p - 1}{\beta} = \frac{2 p + r - q}{2 + r - q}
\]
and hence $|u_\mmu|^\beta\in E^{\alpha,\frac{2 p + r - q}{2 + r - q}}(\R^N)$.
By the Coulomb--Sobolev embedding (theorem~\ref{theoremEmbedding}), the integral
\begin{equation}\label{s-regularity}
  \int_{\R^N} \abs{u_\mmu}^{s}
\end{equation}
is bounded uniformly with respect to \(\mmu > 0\). We conclude by Lebesgue's monotone convergence theorem that \(u \in L^s (\R^N)\).
\end{proofclaim}

\begin{claim}
\label{claimLr}
\(u \in L^r (\R^N)\) for every \(r \in [1, \infty)\) such that
\[
  \frac{1}{r} \le \max \Bigl\{\frac{1}{2} - \frac{1}{N}, \frac{2 + \alpha}{2(2 p + \alpha)}\Bigr\}.
\]
\end{claim}
\begin{proofclaim}
This follows by the Coulomb--Sobolev embeddinq (theorem~\ref{theoremEmbedding}) and by iterating claim~\ref{claimIteration}.

Indeed, for $p>\frac{N+\alpha}{N-2}$ we can set
$$r_0:=2\frac{\alpha+2p}{\alpha+2}.$$
Then by claim~\ref{claimIteration}, $u\in L^{r_k}(\R^N)$ for all $k\in\N\cup\{0\}$, where
$$r_{k+1}:=\max\left\{\frac{N}{N-2}\big(r_k-q\big)+\frac{2N}{N-2},r_k+\Big(2\frac{\alpha+2p}{\alpha+2}-q\Big)\right\}.$$
Since $\frac{2N}{N-2}<q<2\frac{\alpha+2p}{\alpha+2}$, it is clear that $r_k\to\infty$ as $k\to\infty$.
\smallskip

On the other hand, for $N\ge 3$, $1<p<\frac{N+\alpha}{N-2}$ and $2\frac{\alpha+2p}{\alpha+2}<q<\frac{2N}{N-2}$,
we can choose
$$r_0:=\frac{2N}{N-2}.$$
Then by claim~\ref{claimIteration}, $u\in L^{r_k}(\R^N)$ for all $k\in\N\cup\{0\}$, where
$$r_{k+1}:=\max\left\{\frac{N}{N-2}\big(r_k-q\big)+\frac{2N}{N-2},r_k-\Big(q-2\frac{\alpha+2p}{\alpha+2}\Big)\right\}=
\frac{N}{N-2}\big(r_k-q\big)+\frac{2N}{N-2},$$
since $q<\frac{2N}{N-2}$. Clearly $r_k\to\infty$ as $k\to\infty$.
\end{proofclaim}

\begin{claim}
\(u \in W^{2,r}_{\mathrm{loc}} (\R^N)\) for every \(r \in [1, \infty)\).
\end{claim}
\begin{proofclaim}
By claim~\ref{claimLr} and the H\"older inequality, one has for every \(r \in [1, \infty)\), \(u \in L^r_{\mathrm{loc}} (\R^N)\). It follows that \(I_\alpha \ast \abs{u}^p \in L^\infty(\R^N)\), and therefore for every \(r \in [1, \infty)\), \((I_\alpha \ast \abs{u}^p) \abs{u}^{p - 2} u \in L^r_{\mathrm{loc}} (\R^N)\). Since we also have for every \(r \in [1, \infty)\), \(\abs{u}^{q - 2} u \in L^r_{\mathrm{loc}}(\R^N)\), we conclude by the classical Calder\'on--Zygmund regularity estimates that for every \(r \in (1, \infty)\), \(u \in W^{2, r}_{\mathrm{loc}} (\R^N)\)\cite{GilbargTrudinger1983}*{chapter 9}.
\end{proofclaim}

The additional H\"older and $C^\infty$--regularity follows from the classical Schauder estimates.
\end{proof}

\begin{remark}
If \(p(N - 2)<N+\alpha\) then regularity of weak solutions can be obtained by a classical bootstrap argument.
Indeed, by the Kato inequality, every weak solution \(u \in E^{\alpha, p} (\R^N)\) of \eqref{main-reg} satisfies
\[
  -\Delta \abs{u} \le -\Delta \abs{u} + V (I_\alpha \ast \abs{u}^p) \abs{u}^{p - 1} \le \abs{u}^{q - 1}\quad\text{in \(\R^N\)}.
\]
By the Coulomb--Sobolev embedding we have \(u \in L^r(\R^N)\) for \(\frac{1}{q} \le \frac{1}{r} \le \frac{1}{2} - \frac{1}{N}\). Then regularity of $u$ follows by iterating standard linear regularity estimates (see \citelist{\cite{GilbargTrudinger1983}*{proof of theorem 8.16}\cite{Trudinger1973}} and also \cite{DiCosmoVanSchaftingen}).
\end{remark}

One of the important consequences of proposition~\ref{proposition:Regularity} is
positivity of nontrivial nonnegative solutions of \eqref{main-reg} in the case $p\ge 2$.

\begin{proposition}[Positivity]\label{proposition:Positivity}
Let \(N \in \N\), \(\alpha \in (0, N)\), \(p\ge 2\) and assumption \eqref{Q_0} holds.
If \(u \in E^{\alpha, p} (\R^N)\setminus\{0\}\) is a nonnegative weak solution of the equation
\eqref{main-reg} then $u(x)>0$ for all $x\in\R^N$.
\end{proposition}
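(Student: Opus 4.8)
The plan is to reduce the statement to the strong maximum principle for a linear elliptic inequality with locally bounded coefficients. First I would invoke proposition~\ref{proposition:Regularity}: since \(p>1\), the solution satisfies \(u\in C^2(\R^N)\), and moreover \(I_\alpha\ast\abs{u}^p\in L^\infty(\R^N)\), as established in the proof of that proposition. Using that \(u\ge 0\), so that \(\abs{u}^{q-2}u=u^{q-1}\ge 0\), equation \eqref{main-reg} yields the differential inequality
\[
  -\Delta u + V u\ge 0\quad\text{in }\R^N,\qquad\text{where } V:=(I_\alpha\ast\abs{u}^p)\,\abs{u}^{p-2}.
\]
Here is where the hypothesis \(p\ge 2\) enters: the map \(t\mapsto\abs{t}^{p-2}\) is continuous on \(\R\) and bounded on bounded sets, so, since \(u\) is continuous and \(I_\alpha\ast\abs{u}^p\in L^\infty(\R^N)\), the coefficient \(V\) is nonnegative and belongs to \(L^\infty_{\loc}(\R^N)\). (For \(p<2\) this term is singular at the zeros of \(u\), which is precisely why both the argument below and the statement itself require \(p\ge 2\).) Note also that no assumption on \(q\) is needed, since the subcritical term is simply discarded using its favourable sign.

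Next I would show that the set \(u^{-1}(0)\) is open. Fix \(x_0\in\R^N\) with \(u(x_0)=0\) and a ball \(B=B_r(x_0)\). On \(B\) the function \(V\) is bounded and nonnegative, and \(-u\in C^2(B)\) satisfies \((\Delta-V)(-u)\ge 0\) on \(B\) with \(-u\le 0\) and \((-u)(x_0)=0\), so \(-u\) attains a nonnegative interior maximum. By the strong maximum principle (see e.g.\ \cite{GilbargTrudinger1983}*{theorem 3.5}), applied to the uniformly elliptic operator \(\Delta-V\) whose zeroth order coefficient \(-V\) is nonpositive and bounded on \(B\), the function \(-u\) is constant on \(B\), hence \(u\equiv 0\) on \(B\). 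Therefore \(u^{-1}(0)\) is open; it is also closed by continuity of \(u\). Since \(\R^N\) is connected and \(u\not\equiv 0\), we conclude \(u^{-1}(0)=\emptyset\), that is, \(u(x)>0\) for every \(x\in\R^N\).

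I expect no serious analytic obstacle here: the only two points requiring care are that the nonlocal term contributes a \emph{locally bounded} coefficient exactly when \(p\ge 2\), so that a linear elliptic inequality with \(L^\infty_{\loc}\) coefficients is at our disposal and the strong maximum principle applies verbatim, and that the sign of \(\abs{u}^{q-2}u\) allows one to drop the right-hand side entirely, so that the conclusion is insensitive to the value of \(q\) within the admissible range.
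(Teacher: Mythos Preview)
Your proof is correct and follows essentially the same scheme as the paper's, with one minor variation worth noting. The paper keeps the full equation and writes it as \(-\Delta u + Vu = 0\) with the \emph{combined} potential \(V = (I_\alpha\ast u^{p})\,u^{p-2} - u^{q-2}\), checks that \(V\in L^r(\R^N)\) for every \(r>N/2\) (this is where \(p\ge 2\) is used), and then appeals to the weak-form strong maximum principle \cite{GilbargTrudinger1983}*{theorem~8.19}. You instead discard the favourable term \(u^{q-1}\ge 0\) to obtain the \emph{inequality} \(-\Delta u + Vu\ge 0\) with the simpler nonnegative potential \(V=(I_\alpha\ast\abs{u}^{p})\,\abs{u}^{p-2}\in L^\infty_{\loc}(\R^N)\), and apply the classical \(C^2\) strong maximum principle \cite{GilbargTrudinger1983}*{theorem~3.5}. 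Your route is marginally more elementary and, as you observe, does not need any control on the \(u^{q-2}\) term; the paper's route treats the equation as an identity, which is slightly cleaner conceptually but requires the extra integrability check. Both arguments hinge on exactly the same point: for \(p\ge 2\) the factor \(\abs{u}^{p-2}\) is locally bounded, so the nonlocal term produces an admissible potential for the strong maximum principle.
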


\begin{proof}
Under the assumptions, $u$ satisfies the equation
\[
-\Delta u + Vu=0\quad\text{in \(\R^N\)},
\]
where $V=(I_\alpha \ast u^p) u^{p - 2} - u^{q - 2}$.
By proposition~\ref{proposition:Regularity} and since $p\ge 2$, $V\in L^r(\R^N)$ for all $r>\frac{N}{2}$.
Then $u(x)>0$ for all $x\in\R^N$ by the strong maximum principle
(see \cite{GilbargTrudinger1983}*{theorem 8.19}).
\end{proof}

It is an interesting open question whether equation \eqref{main-reg}
with $p<2$ admits nontrivial nonnegative solutions
which vanish on subsets of $\R^N$.

\subsection{Poho\v zaev identity}

An important feature of equations of type \eqref{sps} is that under mild regularity assumptions
its finite energy solutions satisfy a Poho\v zaev type integral identity.

\begin{proposition}[Poho\v zaev identity]
\label{theoremPohozhaev}
Let \(N \in \N\), $\alpha\in(0,N)$, $p>1$, $q>1$ and $\mu>0$.
Assume that \(u \in E^{\alpha, p} (\R^N)\cap L^q(\R^N)\) is a weak solution of the equation
\begin{equation}\label{main-Poh}
  -\Delta u + \bigl(I_\alpha \ast \abs{u}^p\bigr) \abs{u}^{p - 2} u = \mu\abs{u}^{q - 2} u\quad\text{in \(\R^N\).}
\end{equation}
If \(\nabla u \in E^{\alpha, p}_\loc (\R^N)\cap L^q_\loc(\R^N)\)
then
\begin{equation}
\label{eqPohozaev}
 \frac{N - 2}{2} \int_{\R^N} \abs{\nabla u}^2 + \frac{N + \alpha}{2p} \int_{\R^N} \bigl|I_{\alpha/2}*\abs{u}^p\bigr|^2
 = \frac{N\mu}{q} \int_{\R^N} \abs{u}^q.
\end{equation}
\end{proposition}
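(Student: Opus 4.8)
The plan is to obtain the identity by testing the equation against the dilation generator $x \cdot \nabla u$ and carefully integrating by parts, using a cut-off argument to justify all the manipulations. The three terms of the equation each contribute a characteristic scaling weight: the Laplacian gives $\tfrac{N-2}{2}$, the nonlocal term gives $\tfrac{N+\alpha}{2p}$ (reflecting that $\int |I_{\alpha/2}\ast|u|^p|^2$ scales like $\lambda^{N+\alpha}$ under $u \mapsto u(\cdot/\lambda)$, as computed in the necessary-condition proposition of Section~\ref{sect-Embeddings}), and the right-hand side gives $\tfrac{N}{q}$. Formally, multiplying \eqref{main-Poh} by $x\cdot\nabla u$ and integrating yields exactly \eqref{eqPohozaev}; the work is entirely in the rigour.

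First I would fix a cut-off: choose $\varphi \in C^\infty_c(\R^N)$ with $\varphi = 1$ on $B_1$, $\supp \varphi \subset B_2$, $0 \le \varphi \le 1$, and set $\varphi_R(x) = \varphi(x/R)$. Testing the equation against $\varphi_R\, (x\cdot\nabla u)$ — which is admissible since by hypothesis $\nabla u \in E^{\alpha,p}_\loc(\R^N)\cap L^q_\loc(\R^N)$ and $u \in C^2$ by Proposition~\ref{proposition:Regularity} (so the pointwise manipulations are legitimate on the support of $\varphi_R$) — I would handle the three terms separately. For the Laplacian term, integrate by parts twice: $\int_{\R^N} \nabla u \cdot \nabla(\varphi_R\, x\cdot\nabla u)$ expands into a term producing $\tfrac{N-2}{2}\int \varphi_R |\nabla u|^2$ plus boundary-type terms carrying $\nabla\varphi_R$, which are $O(R^{-1})$ times quantities bounded by $\int_{B_{2R}\setminus B_R}|\nabla u|^2 \cdot |x|$, hence $o(1)$ as $R\to\infty$ since $\nabla u \in L^2(\R^N)$. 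For the local term, $\int \mu|u|^{q-2}u\,\varphi_R\,x\cdot\nabla u = \tfrac{\mu}{q}\int \varphi_R\, x\cdot\nabla(|u|^q)$, integrate by parts to get $-\tfrac{N\mu}{q}\int\varphi_R|u|^q$ plus an $o(1)$ remainder using $u \in L^q(\R^N)$.

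The main obstacle is the nonlocal term: one must show that
\[
 \int_{\R^N}\bigl(I_\alpha\ast|u|^p\bigr)|u|^{p-2}u\,\varphi_R\,(x\cdot\nabla u)
 \ \longrightarrow\ -\frac{1}{p}\cdot\frac{N+\alpha}{2}\int_{\R^N}\bigl|I_{\alpha/2}\ast|u|^p\bigr|^2.
\]
The clean way is to write $(I_\alpha\ast|u|^p)|u|^{p-2}u\,(x\cdot\nabla u) = \tfrac1p (I_\alpha\ast|u|^p)\,(x\cdot\nabla(|u|^p))$, integrate by parts moving $x\cdot\nabla$ off $|u|^p$, and exploit the identity — valid for the Riesz kernel — that $(x\cdot\nabla_x + y\cdot\nabla_y) I_\alpha(x-y) = -(N-\alpha)I_\alpha(x-y)$, combined with symmetrisation in $x \leftrightarrow y$ of the double integral $\iint I_\alpha(x-y)|u(x)|^p|u(y)|^p$. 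This produces the factor $\tfrac1p\bigl(-N + \tfrac{N-\alpha}{2}\bigr) = -\tfrac{1}{p}\cdot\tfrac{N+\alpha}{2}$, and using the semigroup property $I_\alpha = I_{\alpha/2}\ast I_{\alpha/2}$ rewrites the double integral as $\int|I_{\alpha/2}\ast|u|^p|^2$. The delicate points are: (i) justifying the integration by parts for the convolution, for which one writes $I_\alpha \ast (\varphi_R |u|^p)$ and controls the commutator $[I_\alpha, \varphi_R]$, the error terms vanishing because $I_{\alpha/2}\ast|u|^p \in L^2(\R^N)$; and (ii) controlling the cross terms $\iint I_\alpha(x-y)|u(x)|^p|u(y)|^p (x\cdot\nabla\varphi_R(y)+\cdots)$, which I would dominate using the weighted exterior estimate of Proposition~\ref{propositionRuizPowerExterior} (with a suitable $\beta$) together with $I_{\alpha/2}\ast|u|^p \in L^2$ to get an $o(1)$ bound. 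Passing $R\to\infty$ throughout and collecting the three limits gives \eqref{eqPohozaev}.
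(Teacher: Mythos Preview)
Your approach is correct and matches the paper's: the paper itself omits the proof, stating only that the strategy is classical and consists in testing the equation against a suitable cut-off of \(x\cdot\nabla u(x)\) and integrating by parts (referring to \cite{MorozVanSchaftingen}*{Proposition 3.1} and \cite{WillemMinimax}*{appendix B}), which is precisely what you carry out. One small caveat: your appeal to Proposition~\ref{proposition:Regularity} for \(u\in C^2\) is not available under the bare hypotheses of the Poho\v zaev proposition (that regularity result requires assumption \eqref{Q_0}); the admissibility of the test function \(\varphi_R\,(x\cdot\nabla u)\) is instead exactly the content of the hypothesis \(\nabla u\in E^{\alpha,p}_\loc(\R^N)\cap L^q_\loc(\R^N)\), which the paper spells out to mean \(\psi\cdot\nabla u\in E^{\alpha,p}(\R^N)\cap L^q(\R^N)\) for every \(\psi\in C^\infty_c(\R^N;\R^N)\).
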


By \(\nabla u \in E^{\alpha, p}_{\mathrm{loc}} (\R^N) \cap L^q_\loc (\R^N)\) we mean that for every
\(\psi \in C^{\infty}_c (\R^N; \R^N)\), \(\psi \cdot \nabla u \in E^{\alpha, p}(\R^N) \cap L^q (\R^N)\).

In particular, if assumption \eqref{Q_0} holds then \(u \in C^2(\R^N)\) by proposition~\ref{proposition:Regularity},
and therefore \eqref{eqPohozaev} is valid.

For $N=3$, $\alpha=2$, $p=2$ a Poho\v zaev type identity for Schr\"odinger--Poisson--Slater type equations is well known \citelist{\cite{DAprile-Mugnai}\cite{Ruiz-JFA}}.
The proof of proposition \ref{theoremPohozhaev} for the general case is an adaptation of the argument in \cite{MorozVanSchaftingen}*{Proposition 3.1}. The strategy is classical and consists in testing the equation against a suitable cut-off of \(x \cdot \nabla u(x)\)
and integrating by parts, cf. \cite{WillemMinimax}*{appendix B}.
We omit the details.

A direct consequence of Poho\v zaev identity \eqref{eqPohozaev}
is the following nonexistence result for \eqref{sps}.

\begin{proposition}[Nonexistence of solutions]
Let \(N \in \N\), \(\alpha \in (0, N)\) and \(p> 1\). Assume that
\begin{equation}\tag{$\mathcal P$}\label{P}
\begin{split}
\text{either}\quad & \frac{1}{p} > \frac{(N - 2)_+}{N + \alpha} \quad\text{and}\quad
   \frac{1}{q} \not\in \left(\frac{1}{2} - \frac{1}{N}, \frac{1}{2p} + \frac{\alpha}{2Np} \right)\,,\bigskip\\
\text{or}\quad & \frac{1}{p} < \frac{(N - 2)_+}{N + \alpha} \quad\text{and}\quad
   \frac{1}{q} \not\in \left(\frac{1}{2p} + \frac{\alpha}{2Np},\frac{1}{2} - \frac{1}{N} \right)\,.
\end{split}
\end{equation}
Then equation \eqref{sps} has no nontrivial weak solutions $u\in E^{\alpha,p}(\R^N)\cap L^q(\R^N)$ such
that $D u\in E^{\alpha,p}_\loc(\R^N)\cap L^q_\loc(\R^N)$.
\end{proposition}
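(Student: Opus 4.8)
The plan is to derive the triviality of $u$ from two integral identities: the variational identity obtained by pairing \eqref{sps} with $u$ itself, and the Poho\v zaev identity of proposition~\ref{theoremPohozhaev} with $\mu = 1$. First I would record the variational identity. Since $u\in E^{\alpha,p}(\R^N)\cap L^q(\R^N)$ and $p>1$, testing \eqref{sps} against $u$ is legitimate (by density of test functions, proposition~\ref{smoothapprox}, and continuity of the corresponding functionals on $E^{\alpha,p}(\R^N)$); rewriting $\int_{\R^N}(I_\alpha\ast\abs{u}^p)\abs{u}^p=\int_{\R^N}\bigabs{I_{\alpha/2}\ast\abs{u}^p}^2$ via the semigroup property of the Riesz kernel, one obtains
\[
\int_{\R^N}\abs{D u}^2+\int_{\R^N}\bigabs{I_{\alpha/2}\ast\abs{u}^p}^2=\int_{\R^N}\abs{u}^q
\]
with all three integrals finite because $u\in E^{\alpha,p}(\R^N)\cap L^q(\R^N)$. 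Since in addition $Du\in E^{\alpha,p}_\loc(\R^N)\cap L^q_\loc(\R^N)$, proposition~\ref{theoremPohozhaev} applies with $\mu=1$ and gives
\[
\frac{N-2}{2}\int_{\R^N}\abs{D u}^2+\frac{N+\alpha}{2p}\int_{\R^N}\bigabs{I_{\alpha/2}\ast\abs{u}^p}^2=\frac{N}{q}\int_{\R^N}\abs{u}^q .
\]

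Next I would set $a=\int_{\R^N}\abs{D u}^2\ge 0$ and $b=\int_{\R^N}\bigabs{I_{\alpha/2}\ast\abs{u}^p}^2\ge 0$, and eliminate $\int_{\R^N}\abs{u}^q=a+b$ between the two identities to get the single linear relation
\[
\Bigl(\frac{N-2}{2}-\frac{N}{q}\Bigr)a+\Bigl(\frac{N+\alpha}{2p}-\frac{N}{q}\Bigr)b=0 .
\]
The coefficient of $a$ is nonnegative if and only if $\tfrac1q\le\tfrac12-\tfrac1N$, the coefficient of $b$ is nonnegative if and only if $\tfrac1q\le\tfrac1{2p}+\tfrac{\alpha}{2Np}$, and the condition $\tfrac1p>\tfrac{(N-2)_+}{N+\alpha}$ (respectively $<$) appearing in \eqref{P} is equivalent to $\tfrac1{2p}+\tfrac{\alpha}{2Np}>\tfrac12-\tfrac1N$ (respectively $<$). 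A short inspection of the two branches of \eqref{P} then shows that, since $\tfrac1q$ is excluded from the open interval with endpoints $\tfrac12-\tfrac1N$ and $\tfrac1{2p}+\tfrac{\alpha}{2Np}$, the two coefficients above have the same weak sign; and since \eqref{P} rules out the equality $\tfrac1p=\tfrac{(N-2)_+}{N+\alpha}$, they cannot both be zero. As $a,b\ge 0$, the displayed relation therefore forces $a=0$ or $b=0$.

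Finally I would observe that each of these alternatives already forces $u\equiv 0$: if $b=0$ then $I_{\alpha/2}\ast\abs{u}^p=0$ almost everywhere, and strict positivity of the Riesz kernel gives $u=0$ almost everywhere; if $a=0$ then $Du=0$, so $u$ is constant and hence, lying in $L^q(\R^N)$, identically zero. This yields the asserted nonexistence. The only genuinely delicate point is the bookkeeping in the middle step: tracking on which side of $\tfrac1q$ the two thresholds $\tfrac12-\tfrac1N$ and $\tfrac1{2p}+\tfrac{\alpha}{2Np}$ lie in each branch of \eqref{P}, and handling the degenerate subcases in which one of the two coefficients vanishes exactly — this is precisely where one needs both the \emph{strict} inequality in \eqref{P} and the remark that $a=0$ or $b=0$ alone already implies triviality; everything else is immediate from the two identities.
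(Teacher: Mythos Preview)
Your proof is correct and follows essentially the same approach as the paper: combine the Nehari identity (testing the equation against $u$) with the Poho\v zaev identity, then analyze the signs of the resulting coefficients to force $u\equiv 0$. The paper's write-up solves the two-identity linear system for $\int_{\R^N}\abs{Du}^2$ and $\int_{\R^N}\bigabs{I_{\alpha/2}\ast\abs{u}^p}^2$ in terms of $\int_{\R^N}\abs{u}^q$ and then simply says ``the conclusion follows''; your version is in fact more explicit about the endpoint and degenerate cases (one coefficient vanishing, and why $a=0$ or $b=0$ alone already gives triviality).
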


\begin{proof}
Testing \eqref{sps} against the solution $u$ we obtain a Nehari type identity
\begin{equation*}
\int_{\R^N}|\nabla u|^2+\int_{\R^N}\big|I_{\alpha/2}*\abs{u}^p\big|^2=\int_{\R^N}\abs{u}^q.
\end{equation*}
Combining this with the Poho\v zaev identity \eqref{eqPohozaev} we conclude that the solution $u$ must satisfy the relation
\begin{equation}\label{pohozaev-realtions}
\int_{\R^N}|\nabla u|^2=\frac{\frac{2N}{q}-\frac{N+\alpha}{p}}{N-2-\frac{N+\alpha}{p}}\int_{\R^N}\abs{u}^q,\quad
\int_{\R^N}\big|I_{\alpha/2}*\abs{u}^p\big|^2=\left(1-\frac{\frac{2N}{q}-\frac{N+\alpha}{p}}{N-2-\frac{N+\alpha}{p}}\right)\int_{\R^N}\abs{u}^q.
\end{equation}
The conclusion follows.
\end{proof}

Comparing nonexistence assumption \eqref{P} with the existence range \eqref{Q_0} we observe that there is a gap
between the two assumptions. We are going to show that the existence range \eqref{Q_0} indeed could be extended.

\section{Estimates for radial functions and radial Coulomb--Sobolev embeddings}\label{sect-radial}
In this section we study embedding properties of the subspace of radial functions $E^{\alpha, p}_{\rad} (\R^N)$ into Lebesgue spaces $L^q(\R^N)$ and prove theorem~\ref{theoremRadialSobolev-intro}.

\subsection{Necessary conditions for the radial Coulomb--Sobolev embeddings}

We begin by justifying the necessity of the embedding assumptions \eqref{Q} and \eqref{Qrad} of theorem~\ref{theoremRadialSobolev-intro}.

\begin{lemma}[Criticality of the classical Sobolev exponent]
\label{lemmaRadialExampleSobolev}
Let \(N \ge 1\), $\alpha\in(0,N)$ and $p\ge 1$.
Then there exists a family of radial functions \(u_R \in C^1_c (\R^N)\) such that
if \(\frac{1}{p} \ge \frac{N - 2}{N + \alpha}\), then
\[
  \limsup_{R \to 0} \int_{\R^N} \abs{D u_R}^2 + \bigabs{I_{\alpha/2} \ast \abs{u_R}^p}^2
  < \infty
\]
and for every \(q \in [1, \infty]\),
\[
  \liminf_{R \to 0} \norm{u_R}_{L^q (\R^N)}R^{\frac{N - 2}{2} - \frac{N}{q}} > 0;
\]
and if \(p \ge \frac{N + \alpha}{N - 2}\), then
\[
  \limsup_{R \to \infty} \int_{\R^N} \abs{D u_R}^2 + \abs{I_{\alpha/2} \ast \abs{u_R}^p}^2
  < \infty
\]
and for every \(q \in [1, \infty]\),
\[
  \liminf_{R \to \infty} \norm{u_R}_{L^q (\R^N)} R^{\frac{N - 2}{2} - \frac{N}{q}} > 0.
\]
\end{lemma}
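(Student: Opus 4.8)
The plan is to exhibit a single scaling family and let the scaling parameter go in opposite directions in the two cases. Fix a nonzero, nonnegative, radial test function \(u \in C^\infty_c (\R^N)\) and, for \(R > 0\), set \(u_R(x) = R^{-\frac{N - 2}{2}} u(x/R)\). Each \(u_R\) is again radial and lies in \(C^\infty_c (\R^N) \subset C^1_c (\R^N)\). Since \(u\) is smooth and compactly supported, the three quantities \(\int_{\R^N} \abs{D u}^2\), \(\int_{\R^N} \abs{I_{\alpha/2} \ast \abs{u}^p}^2\) and \(\norm{u}_{L^q}\) are all finite and strictly positive (for the nonlocal one, e.g.\ by Hardy--Littlewood--Sobolev, or by proposition~\ref{propositionRuizBall}).

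The first step is to record the scaling identities by a change of variables. The chosen power of \(R\) freezes the Dirichlet integral: \(\int_{\R^N} \abs{D u_R}^2 = \int_{\R^N} \abs{D u}^2\). It also makes the weighted \(L^q\) factor constant: \(\norm{u_R}_{L^q}^q = R^{N - \frac{(N-2)q}{2}} \norm{u}_{L^q}^q\), hence \(\norm{u_R}_{L^q} R^{\frac{N-2}{2} - \frac{N}{q}} = \norm{u}_{L^q}\) for every \(R > 0\). For the nonlocal term one uses the homogeneity \(I_{\alpha/2} \ast (g(\cdot/R)) = R^{\alpha/2} (I_{\alpha/2} \ast g)(\cdot/R)\) with \(g = \abs{u}^p\), which gives \(\int_{\R^N} \abs{I_{\alpha/2} \ast \abs{u_R}^p}^2 = R^{N + \alpha - (N-2)p} \int_{\R^N} \abs{I_{\alpha/2} \ast \abs{u}^p}^2\).

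With these identities the two cases are immediate. If \(\frac{1}{p} \ge \frac{N-2}{N+\alpha}\), i.e.\ \(N + \alpha - (N-2)p \ge 0\), then \(R^{N + \alpha - (N-2)p}\) stays bounded as \(R \to 0\) (it tends to \(0\), or equals \(1\) in the scale-invariant boundary case \((N-2)p = N + \alpha\)), so \(\limsup_{R \to 0} \int_{\R^N} \abs{D u_R}^2 + \abs{I_{\alpha/2} \ast \abs{u_R}^p}^2 < \infty\), while \(\norm{u_R}_{L^q} R^{\frac{N-2}{2} - \frac{N}{q}} = \norm{u}_{L^q} > 0\). If instead \(p \ge \frac{N+\alpha}{N-2}\) (which forces \(N \ge 3\) and \(N + \alpha - (N-2)p \le 0\)), the very same family with \(R \to \infty\) works, since now \(R^{N + \alpha - (N-2)p}\) is bounded as \(R \to \infty\), and the \(L^q\) factor is unchanged.

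There is no serious obstacle here — it is a scaling computation — so the only point requiring care is bookkeeping: choosing the normalization \(R^{-\frac{N-2}{2}}\) so that simultaneously \(\int_{\R^N} \abs{D u_R}^2\) is frozen and \(\norm{u_R}_{L^q} R^{\frac{N-2}{2} - \frac{N}{q}}\) is constant, and then observing that the sign of the single remaining exponent \(N + \alpha - (N-2)p\) dictates which of the limits, \(R \to 0\) or \(R \to \infty\), keeps the Coulomb term bounded — and that this sign is exactly the dichotomy \(\frac{1}{p} \ge \frac{N-2}{N+\alpha}\) versus \(p \ge \frac{N+\alpha}{N-2}\) in the statement.
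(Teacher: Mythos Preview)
Your proof is correct and follows essentially the same approach as the paper: the same rescaled family \(u_R(x) = R^{-\frac{N-2}{2}} u(x/R)\), the same scaling identities for the three integrals, and the same observation that the sign of \(N + \alpha - (N-2)p\) determines which limit (\(R \to 0\) or \(R \to \infty\)) keeps the Coulomb term bounded.
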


In particular, the classical Sobolev exponent \(q = \frac{2 N}{N - 2}\) is always an extremal exponent
for the radial Coulomb--Sobolev embeddings.

\begin{proof}%
[Proof of lemma~\ref{lemmaRadialExampleSobolev}]
Choose a radial function \(u \in C^1_c (\R^N)\setminus\{0\}\).
For $R>0$ and \(x \in \R^N\), set \(u_R (x) = R^{-\frac{N - 2}{2}} u \big(\frac{x}{R}\big)\).
Then we compute
\[
  \int_{\R^N} \abs{D u_R}^2 + \abs{I_{\alpha/2} \ast \abs{u_R}^p}^2
  = \int_{\R^N} \abs{D u}^2 + R^{N + \alpha-p (N - 2)}\int_{\R^N} \abs{I_{\alpha/2} \ast \abs{u}^p}^2
\]
and
\[
  \norm{u_R}_{L^q (\R^N)} = R^{\frac{N}{q} - \frac{N - 2}{2}} \norm{u}_{L^q (\R^N)}.
\]
The conclusion follows.
\end{proof}

\begin{lemma}[Criticality of the Coulomb--Sobolev exponents]
\label{lemmaRadialExampleLocal}
Let \(N \ge 2\), $\alpha\in(0,N)$ and $p\ge 1$.
Then there exists a family of radial functions
\(u_R \in C^1_c (B_{2R} \setminus B_R)\) such that if \(p(N-2) < N + \alpha\),
\[
  \limsup_{R \to \infty} \int_{\R^N} \abs{D u_R}^2 + \abs{I_{\alpha/2} \ast \abs{u_R}^p}^2
  < \infty
\]
and, for every \(q \in [1, \infty]\),
\begin{align*}
  \liminf_{R \to \infty} \norm{u_R}_{L^q (B_{2R} \setminus B_R)} R^{\frac{3 N + \alpha - 4}{2 (p + 2)}-\frac{2 p (N - 1) + N - \alpha}{q (p + 2)}} &> 0 & & \text{if \(\alpha > 1\)},\\
  \liminf_{R \to \infty} \norm{u_R}_{L^q (B_{2R} \setminus B_R)} R^{\frac{N - 1}{p + 2} (\frac{3}{2} - \frac{2 p + 1}{q})} \abs{\log R}^{\frac{1}{2 p}} & > 0 & & \text{if \(\alpha = 1\)},\\
    \liminf_{R \to \infty} \norm{u_R}_{L^q (B_{2R} \setminus B_R)} R^{\frac{N - 1}{p + \alpha + 1} (\frac{\alpha + 2}{2} - \frac{2 p + \alpha}{q})} & > 0 & & \text{if \(\alpha < 1\)},
\end{align*}
and if \(p(N-2) > N + \alpha\), then
\[
  \limsup_{R \to 0} \int_{\R^N} \abs{D u_R}^2 + \abs{I_{\alpha/2} \ast \abs{u_R}^p}^2
  < \infty
\]
and, for every \(q \in [1, \infty]\),
\begin{align*}
  \liminf_{R \to 0} \norm{u_R}_{L^q (B_{2R} \setminus B_R)} R^{\frac{3 N + \alpha - 4}{2 (p + 2)}-\frac{2 p (N - 1) + N - \alpha}{q (p + 2)}} & > 0 & & \text{if \(\alpha > 1\)},\\
  \liminf_{R \to 0} \norm{u_R}_{L^q (B_{2R} \setminus B_R)} R^{\frac{N - 1}{p + 2} (\frac{3}{2} - \frac{2 p + 1}{q})} \abs{\log R}^{\frac{1}{2 p}} & > 0 & & \text{if \(\alpha = 1\)},\\
  \liminf_{R \to 0} \norm{u_R}_{L^q (B_{2R} \setminus B_R)} R^{\frac{N - 1}{p + \alpha + 1} (\frac{\alpha + 2}{2} - \frac{2 p + \alpha}{q})} & > 0 & & \text{if \(\alpha < 1\)}.
\end{align*}
In particular, when $\alpha<1$ the embedding of $E_{\rad}^{\alpha,p}(\R^N)$ into \(L^q (\R^N)\) is noncompact for $q=2\frac{2p+\alpha}{2+\alpha}.$

\end{lemma}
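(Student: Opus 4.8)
The plan is to use, for each parameter $R$, a single radial bump concentrated on a thin spherical shell of radius comparable to $R$, with amplitude and thickness chosen as (possibly logarithmically corrected) powers of $R$; all the needed information then comes out of three elementary scaling computations. First, fix a nonnegative $\phi\in C^\infty_c((-1,1))\setminus\{0\}$ and, for a thickness $0<\ell_R<R/2$ to be fixed later, set
\[
 u_R(x)=A_R\,\phi\Bigl(\frac{\abs{x}-\tfrac{3R}{2}}{\ell_R}\Bigr),
\]
so that $u_R\in C^\infty_c(B_{2R}\setminus B_R)$ is supported in a shell $S=\supp u_R$ of radius $\sim R$ and thickness $\sim\ell_R$. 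Writing $u_R(x)=f_R(\abs{x})$ and using $\int_{\R^N}g(\abs{x})\dif x=\sigma_{N-1}\int_0^\infty g(r)r^{N-1}\dif r$ together with $r\sim R$ on $S$, one gets immediately
\[
 \int_{\R^N}\abs{Du_R}^2\sim A_R^2\,\ell_R^{-1}R^{N-1},\qquad \int_{\R^N}\abs{u_R}^q\sim A_R^q\,\ell_R\,R^{N-1}.
\]

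The heart of the matter — and the step I expect to be the main obstacle — is the Coulomb term. Since $\abs{\phi}^p$ is comparable to the indicator of $S$, up to multiplicative constants
\[
 \int_{\R^N}\bigabs{I_{\alpha/2}\ast\abs{u_R}^p}^2\sim A_R^{2p}\iint_{S\times S}\abs{x-y}^{-(N-\alpha)}\dif x\dif y\sim A_R^{2p}\,\abs S\,K_0,\qquad K_0:=\int_{S}\abs{x_0-y}^{-(N-\alpha)}\dif y
\]
for a fixed $x_0\in S$. The crucial geometric observation is that for a curved shell of radius $\sim R$ and thickness $\sim\ell_R$ the distribution function $g(d)=\abs{\{y\in S:\abs{x_0-y}\le d\}}$ satisfies $g(d)\sim d^N$ for $d\le\ell_R$, $g(d)\sim d^{N-1}\ell_R$ for $\ell_R\le d\le R$, and $g(d)\sim R^{N-1}\ell_R$ for $d\ge R$. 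Feeding this into the layer-cake formula $K_0\sim\int_0^\infty g'(d)\,d^{-(N-\alpha)}\dif d$ produces the trichotomy
\[
 K_0\sim\begin{cases}
 \ell_R\,R^{\alpha-1}, & \alpha>1,\\
 \ell_R\,\log(R/\ell_R), & \alpha=1,\\
 \ell_R^{\,\alpha}, & \alpha<1,
 \end{cases}
\]
and hence, using $\abs S\sim R^{N-1}\ell_R$, that $\int_{\R^N}\bigabs{I_{\alpha/2}\ast\abs{u_R}^p}^2$ is comparable to $A_R^{2p}R^{N+\alpha-2}\ell_R^2$, to $A_R^{2p}R^{N-1}\ell_R^2\log(R/\ell_R)$, or to $A_R^{2p}R^{N-1}\ell_R^{1+\alpha}$, according to whether $\alpha>1$, $\alpha=1$ or $\alpha<1$. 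The delicate point here is that the shell is curved, so its geometry at the intermediate scales $\ell_R\le d\le R$ must be controlled uniformly; and the logarithm at $\alpha=1$, which is the source of the $\abs{\log R}^{1/(2p)}$ factor in the statement, comes precisely from the borderline non‑integrability of $\abs{x-y}^{-(N-1)}$ on a hypersurface.

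With these three scalings available, I would choose $\ell_R$ to be the power of $R$ at which the amplitude saturating the gradient bound and the amplitude saturating the Coulomb bound coincide; this forces $\ell_R=R^{\beta}$ with $\beta=\frac{p(N-1)-(N+\alpha-2)}{p+2}$ when $\alpha>1$, $\beta=\frac{(N-1)(p-1)}{p+2}$ when $\alpha=1$, and $\beta=\frac{(N-1)(p-1)}{p+\alpha+1}$ when $\alpha<1$, and then take $A_R$ equal to the smaller of the two admissible amplitudes. The condition $\ell_R/R=R^{\beta-1}\to0$, needed so that $\supp u_R\subset B_{2R}\setminus B_R$, is exactly $\beta<1$, i.e. $p(N-2)<N+\alpha$, in the limit $R\to\infty$, and $\beta>1$, i.e. $p(N-2)>N+\alpha$, in the limit $R\to0$ — this is why the statement splits into these two regimes — and in both cases the support escapes to infinity or collapses to the origin, so $u_R\to0$ almost everywhere. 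When $\alpha\ne1$ both bounds are saturated and $\int\abs{Du_R}^2$ and $\int\bigabs{I_{\alpha/2}\ast\abs{u_R}^p}^2$ are both $\sim1$; when $\alpha=1$ the logarithm makes the Coulomb bound the binding one, forcing $A_R\sim R^{-\frac{3(N-1)}{2(p+2)}}(\log R)^{-1/(2p)}$, so the gradient term tends to $0$ while the Coulomb term stays $\sim1$. Substituting $A_R$ and $\ell_R$ into $\norm{u_R}_{L^q}\sim A_R\,\ell_R^{1/q}R^{(N-1)/q}$ and simplifying the exponents yields precisely the powers of $R$ (with the factor $\abs{\log R}^{1/(2p)}$ when $\alpha=1$) claimed in the lemma. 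Finally, at $q=2\frac{2p+\alpha}{2+\alpha}$ one has $\frac{2p+\alpha}{q}=\frac{\alpha+2}{2}$, so for $\alpha<1$ the normalising power vanishes and $\norm{u_R}_{L^q}\sim1$; therefore a sequence $(u_{R_k})_{k\in\N}$ with $R_k\to\infty$ (or $R_k\to0$) is bounded in $E^{\alpha,p}_\rad(\R^N)$, has $L^q$-norm bounded away from $0$, and converges to $0$ almost everywhere, hence has no subsequence converging in $L^q(\R^N)$, which gives the asserted noncompactness.
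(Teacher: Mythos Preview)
Your construction is the same as the paper's: a single radial bump supported in a thin shell of radius $\sim R$ and thickness $\ell_R=R^\beta$, with the amplitude chosen so that the gradient and Coulomb energies are balanced; your values of $\beta$ in the three cases $\alpha>1$, $\alpha=1$, $\alpha<1$ coincide exactly with the paper's exponent $\gamma$, and the resulting $L^q$ scalings are the same.

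The one genuine difference is in how the Coulomb term is estimated. The paper first proves a separate lemma (lemma~\ref{lemmaRadialKernel}) reducing the Riesz energy of a radial function to a one-dimensional integral with an explicit kernel $K^R_{\alpha,N}(r,s)$, and then bounds that kernel by $H_\alpha(r,s)$; the trichotomy in $\alpha$ comes from the singularity of $H_\alpha$ on the diagonal. You bypass this reduction entirely and estimate $\iint_{S\times S}\abs{x-y}^{-(N-\alpha)}\dif x\dif y$ directly via the layer-cake formula and the distribution function $g(d)=\abs{\{y\in S:\abs{x_0-y}\le d\}}$ for a thin curved shell, which has the three regimes $d^N$, $d^{N-1}\ell_R$, $R^{N-1}\ell_R$. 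Your route is more self-contained and geometrically transparent for this particular family; the paper's route yields a reusable tool (the radial kernel bounds) that is also used elsewhere, for instance in the proof of lemma~\ref{lemmaNoRadialCritical}. Both give only the upper bound on the Coulomb energy, which is all that is needed here.
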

We point out that when  \(\alpha > 1\) and \(p(N-2) < N + \alpha\) there is no embedding of $E^{\alpha,p}_\rad(\R^N)$ into \(L^q (\R^N)\) if
\[
  q < 2 \frac{2 p (N - 1) + N - \alpha}{3 N + \alpha - 4},
\]
from which we recover in the case where \(N = 3\), \(\alpha = 2\) and \(p = 2\), the condition \(q < \frac{18}{7}\) of Ruiz \cite{Ruiz-ARMA}*{theorem~1.2}.
When \(\alpha \le 1\) there is no embedding of $E^{\alpha,p}_\rad(\R^N)$ into \(L^q (\R^N)\) if
\[
  q < 2 \frac{2 p  + \alpha}{\alpha + 2},
\]
which coincides with the necessary condition for the embedding of $E^{\alpha,p}(\R^N)$ into \(L^q (\R^N)\) (Theorem~\ref{theoremEmbedding}).
The case $p(N-2)=N+\alpha$ is covered by lemma~\ref{lemmaRadialExampleSobolev}.

In order to prove lemma~\ref{lemmaRadialExampleLocal}, we study the form of the Riesz integral kernel on radial functions, where it reduces to less singular kernel.

\begin{lemma}
\label{lemmaRadialKernel}
If \(N \ge 2\) and $\alpha\in(0,N)$, then for every measurable function \(f : [0, \infty) \to [0, \infty)\),
\[
  \iint_{\R^N} \frac{f (\abs{x}) f (\abs{y})}{\abs{x - y}^{N - \alpha}}\dif x \dif y
  =\int_0^\infty \int_0^\infty f (r) K^R_{\alpha, N}(r,s) f (s) r^{N - 1} s^{N - 1} \dif r \dif s,
\]
where the kernel \(K^R_{\alpha,N} : [0, \infty) \times [0, \infty) \to [0, \infty)\) is defined for \(r, s \in [0, \infty) \times [0, \infty)\) by
\[
  K^R_{\alpha, N} (r, s) =
  C_N \int_0^1 \frac{z^{\frac{N - 3}{2}} (1 - z)^{\frac{N - 3}{2}}}{((s + r)^2- 4 rs z)^\frac{N - \alpha}{2}}\dif z
  = C'_N \frac{F (\frac{N - \alpha}{2}, \frac{N - 1}{2}, N - 1, \frac{4 rs}{(s + r)^2})}{(s + r)^{N - \alpha}}.
\]
Moreover there exists \(M > 0\) such that
\[
  K^R_{\alpha, N} (r, s) \le M H_\alpha (r, s).
\]
where
\[
  H_\alpha (r, s) = \left\{\begin{aligned}
       &\frac{1}{(r s)^\frac{N - 1}{2}} \frac{1}{|r - s|^{1 - \alpha}}& &\text{if \(\alpha < 1\)},\\
       & \frac{1}{(r s)^\frac{N - 1}{2}} \ln \dfrac{2\abs{r + s}}{\abs{r - s}} & &\text{if \(\alpha = 1\)},\\
       &\frac{1}{(r s)^\frac{N - \alpha}{2}} && \text{if \(\alpha > 1\)}.
    \end{aligned}\right.
\]
\end{lemma}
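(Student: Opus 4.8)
\emph{Strategy.} The plan is to pass to polar coordinates, use rotational invariance to collapse the double integral over the sphere into a single integral depending only on $r$ and $s$, recognise that integral as a classical Euler integral for ${}_2F_1$, and then estimate the resulting kernel by elementary means; the three regimes in $H_\alpha$ will come from the convergence, logarithmic divergence or (tamed) power divergence of a one–dimensional integral at the point where $(r+s)^2-4rsz$ attains its minimum.

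\emph{Reduction to a radial kernel.} Write $x=r\omega$, $y=s\sigma$ with $r,s>0$, $\omega,\sigma\in\Sset^{N-1}$. Since the integrands are nonnegative, Tonelli's theorem applies and the asserted identity holds as an equality in $[0,+\infty]$, so there is no integrability issue; one gets
\[
 \iint_{\R^N}\frac{f(\abs{x})f(\abs{y})}{\abs{x-y}^{N-\alpha}}\dif x\dif y
 = \int_0^\infty\!\int_0^\infty f(r)f(s)\,r^{N-1}s^{N-1}\Bigl(\iint_{\Sset^{N-1}\times\Sset^{N-1}}\frac{\dif\omega\,\dif\sigma}{\abs{r\omega-s\sigma}^{N-\alpha}}\Bigr)\dif r\,\dif s.
\]
By rotational invariance of the surface measure and of $\abs{\cdot}$, the inner $\omega$–integral does not depend on $\sigma$, so the bracket equals $\abs{\Sset^{N-1}}\int_{\Sset^{N-1}}\abs{r\omega-se_1}^{-(N-\alpha)}\dif\omega$ for a fixed $e_1\in\Sset^{N-1}$; this is the definition of $K^R_{\alpha,N}(r,s)$. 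Using $\int_{\Sset^{N-1}}g(\omega\cdot e_1)\dif\omega=\abs{\Sset^{N-2}}\int_{-1}^1 g(t)(1-t^2)^{\frac{N-3}{2}}\dif t$ (valid for $N\ge2$) and $\abs{r\omega-se_1}^2=r^2+s^2-2rs(\omega\cdot e_1)$, then substituting $t=2z-1$ so that $1-t^2=4z(1-z)$ and $r^2+s^2-2rst=(r+s)^2-4rsz$, we obtain the first displayed formula for $K^R_{\alpha,N}$ with an explicit $C_N$. Factoring $(r+s)^{N-\alpha}$ out of the denominator and invoking Euler's integral representation of ${}_2F_1$ with $b=\frac{N-1}{2}$, $c=N-1$, $a=\frac{N-\alpha}{2}$ and argument $\frac{4rs}{(r+s)^2}$ (the condition $c>b>0$ being exactly $N\ge2$) yields the hypergeometric expression.

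\emph{The pointwise bound.} Put $a=r+s$, $b=\abs{r-s}$, so $0\le b<a$ and $(r+s)^2-4rsz=a^2(1-z)+b^2z$. If $r=s$, the $z$–integral diverges at $z=1$ exactly when $\alpha\le1$, and then both sides are $+\infty$; for $\alpha>1$ and $r=s$ the estimate reduces to $a^{-(N-\alpha)}\le C(r^2)^{-(N-\alpha)/2}$. Assume now $b>0$ and split $\int_0^1=\int_0^{1/2}+\int_{1/2}^1$. On $[0,\tfrac12]$ one has $a^2(1-z)+b^2z\ge a^2/2$, and $\int_0^{1/2}z^{\frac{N-3}{2}}(1-z)^{\frac{N-3}{2}}\dif z<\infty$ for $N\ge2$, so this contribution is $\le C\,a^{-(N-\alpha)}$. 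In the second piece substitute $z\mapsto1-z$, bound $(1-z)^{\frac{N-3}{2}}\le C$ and $a^2z+b^2(1-z)\ge\tfrac14(a^2z+b^2)$ on $[0,\tfrac12]$, reducing it to $C\,J$ with $J:=\int_0^{1/2}z^{\frac{N-3}{2}}(a^2z+b^2)^{-\frac{N-\alpha}{2}}\dif z$. For $\alpha>1$ discard $b^2$: $J\le a^{-(N-\alpha)}\int_0^{1/2}z^{\frac{\alpha-3}{2}}\dif z\le C\,a^{-(N-\alpha)}$ since $\frac{\alpha-3}{2}>-1$. For $\alpha\le1$ change variables $z=(b^2/a^2)u$, turning $J$ into $b^{\alpha-1}a^{-(N-1)}\int_0^{a^2/(2b^2)}u^{\frac{N-3}{2}}(1+u)^{-\frac{N-\alpha}{2}}\dif u$, whose last integral converges at infinity for $\alpha<1$ and is $O(\log(1+a^2/b^2))$ for $\alpha=1$; hence $J\le C\,a^{-(N-1)}\abs{r-s}^{-(1-\alpha)}$ for $\alpha<1$ and $J\le C\,a^{-(N-1)}\log\frac{2a}{b}$ for $\alpha=1$. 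Since $4rs=a^2-b^2\le a^2$ and $b<a$, one verifies in each of the three cases that $a^{-(N-\alpha)}+J\le C\,H_\alpha(r,s)$, which gives $K^R_{\alpha,N}\le M H_\alpha$.

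\emph{Main obstacle.} The substantive work is the pointwise estimate. The trichotomy in $H_\alpha$ is produced by the exponent $\frac{\alpha-3}{2}$ appearing near the endpoint of $J$ that corresponds to $z=1$, i.e.\ to where $(r+s)^2-4rsz$ is smallest: for $\alpha>1$ that power is integrable and one may simply drop the $b^2$ term, obtaining a clean bound by $a^{-(N-\alpha)}$; for $\alpha\le1$ it is not, so $b^2$ must be retained, and after the rescaling $z=(b^2/a^2)u$ the tail of the $u$–integral contributes the factor $\abs{r-s}^{-(1-\alpha)}$ when $\alpha<1$ and the factor $\log\frac{2\abs{r+s}}{\abs{r-s}}$ when $\alpha=1$. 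The care needed is in checking that all implied constants depend only on $N$ and $\alpha$ and in comparing the three resulting bounds with $H_\alpha$ via $4rs\le(r+s)^2$.
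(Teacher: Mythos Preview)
Your proof is correct. The reduction to the radial kernel is the same as the paper's up to parameterisation: the paper integrates in the azimuthal angle \(\theta\) and substitutes \(z=\cos^2\tfrac{\theta}{2}\), which is exactly your \(t=2z-1\) since \(t=\cos\theta\). The identification with \({}_2F_1\) via Euler's integral is the same.

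Where the two arguments genuinely differ is the pointwise bound. The paper rewrites the denominator as \((r-s)^2+4rs\sin^2\tfrac{\theta}{2}\), splits the analysis at \(\alpha=2\) (not at \(\alpha=1\)), and for \(\alpha<2\) uses \(\sin\tfrac{\theta}{2}\ge\theta/\pi\) together with \(a^2+b^2\ge\tfrac12(|a|+|b|)^2\) to reduce to an explicit elementary integral in \(\theta\). You instead stay in the \(z\)-variable, write the denominator as \(a^2(1-z)+b^2z\), split the interval at \(z=\tfrac12\), and for \(\alpha\le1\) rescale by \(z=(b^2/a^2)u\). Your route avoids the trigonometric step and makes the trichotomy appear directly from the tail exponent \(\tfrac{\alpha-3}{2}\) of the \(u\)-integral; the paper's route keeps the computation closer to the angular picture and gives slightly more explicit constants. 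Both are valid and of comparable length.
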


In this statement, \(F\) denotes the classical hypergeometric function (see for example \cite{Lebedev1965}*{chapter 9}).
Similar estimates were previously obtained in \citelist{\cite{Rubin-1982}\cite{Duoandikoetxea13}\cite{Thim2016}}.

When \(N = 3\), the kernel \(K^R_{\alpha, N} (r, s)\) is particularly easy to compute:
\[
  K^R_{\alpha, 3} (r, s)= \frac{C_3}{2 rs}
    \begin{cases}
       \dfrac{1}{1 - \alpha} \Bigl(\dfrac{1}{\abs{r - s}^{1 - \alpha}} - \dfrac{1}{\abs{r + s}^{1 - \alpha}} \Bigr) & \text{if \(\alpha < 1\)},\\
        \ln \dfrac{\abs{r + s}}{\abs{r - s}} & \text{if \(\alpha = 1\)},\\
       \dfrac{1}{\alpha - 1} \bigl(\abs{r + s}^{\alpha - 1} - \abs{r - s}^{\alpha - 1} \bigr) & \text{if \(\alpha > 1\)}.
    \end{cases}
\]
In particular, when \(N = 3\) and \(\alpha = 2\), we recover \citelist{\cite{LiebLoss2001}*{proof of theorem 9.7}\cite{Ruiz-ARMA}*{p. 359}}
\[
  K^R_{2, 3} (r, s)
  = \frac{C_3}{2 rs} \bigl(\abs{r + s} - \abs{r - s}\bigr) = \frac{C_3 \min (r, s)}{rs} .
\]

\begin{proof}[Proof of lemma~\ref{lemmaRadialKernel}]
By writing the integral in spherical coordinates, we have
\[
  \int_{\R^N} \int_{\R^N} \frac{f (\abs{x}) f (\abs{y})}{\abs{x - y}^{N - \alpha}}\dif x \dif y
  =\int_0^\infty \int_0^\infty f (r) \Bigl(\int_{\Sset^{N - 1}} \int_{\Sset^{N - 1}}\frac{1}{\abs{r u - r v}^{N - \alpha}}\dif u \dif v\Bigr) f (s) r^{N - 1} s^{N - 1} \dif r \dif s.
\]
By writing the spherical integral in azimuthal coordinates and by using the trigonometric identities \(\sin \theta = 2 \sin \frac{\theta}{2} \cos \frac{\theta}{2}\) and \(\cos \theta = 2\cos^2 \frac{\theta}{2}-1\), we obtain
\[
\begin{split}
  \int_{\Sset^{N - 1}} \int_{\Sset^{N - 1}}\frac{1}{\abs{r u - r v}^{N - \alpha}}
  & = C \int_0^\pi \frac{(\sin \theta)^{N - 2}}{(s^2 + r^2 - 2 r s \cos \theta)^\frac{N - \alpha}{2}} \dif \theta\\
  & = C \int_0^\pi \frac{2^{N - 2} (\sin \frac{\theta}{2})^{N - 2}(\cos \frac{\theta}{2})^{N - 2}}{((s + r)^2 - 4 r s (\cos \frac{\theta}{2})^2)^\frac{N - \alpha}{2}} \dif \theta.
\end{split}
\]
By a change of variables \(z = (\cos \frac{\theta}{2})^2\), this gives
\[
  \int_{\Sset^{N - 1}} \int_{\Sset^{N - 1}}\frac{1}{\abs{r u - r v}^{N - \alpha}}
  = C \int_0^1 \frac{2^{N - 2} z^\frac{N - 3}{2} (1 - z)^\frac{N - 3}{2}}{((s + r)^2 - 4 r s z)^\frac{N - \alpha}{2}} \dif z.
\]

In order to prove the bounds, we rewrite the kernel \(K^R_{\alpha, N}\) for every \(r, s \in (0, \infty)\) as
\[
K^R_{\alpha, N} (r, s) = C \int_0^\pi \frac{2^{N - 2} (\sin \frac{\theta}{2})^{N - 2}(\cos \frac{\theta}{2})^{N - 2}}{((r - s)^2 + 4 r s (\sin \frac{\theta}{2})^2)^\frac{N - \alpha}{2}} \dif \theta.
\]
If \(\alpha \ge 2\), we have
\[
  \int_0^\pi \frac{2^{N - 2} (\sin \frac{\theta}{2})^{N - 2}(\cos \frac{\theta}{2})^{N - 2}}{((r - s)^2 + 4 r s (\sin \frac{\theta}{2})^2)^\frac{N - \alpha}{2}} \dif \theta
  \le \int_0^\pi \frac{2^{\alpha - 2} (\sin \frac{\theta}{2})^{\alpha-2}}{(rs)^{\frac{N - \alpha}{2}} }\dif \theta \le \frac{2^{\alpha-2}\pi}{(rs)^{\frac{N - \alpha}{2}}}.
\]
If \(\alpha < 2\), we bound
\[
  \int_0^\pi \frac{2^{N - 2} (\sin \frac{\theta}{2})^{N - 2}(\cos \frac{\theta}{2})^{N - 2}}{((r - s)^2 + 4 r s (\sin \frac{\theta}{2})^2)^\frac{N - \alpha}{2}} \dif \theta
  \le \frac{1}{(rs)^\frac{N - 2}{2}} \int_0^\pi \frac{1}{((r - s)^2 + 4 r s (\sin \frac{\theta}{2})^2)^\frac{2 - \alpha}{2}}\dif \theta.
\]
Since for every \(\theta \in [0, \pi]\) we have \(\sin \frac{\theta}{2} \ge \frac{\theta}{\pi}\) and for every \(a, b \in \R\) we have \(a^2 + b^2 \ge (\abs{a} + \abs{b})^2/2\), we deduce that
\[
  \int_0^\pi \frac{1}{((r - s)^2 + 4 r s (\sin \frac{\theta}{2})^2)^\frac{2 - \alpha}{2}}\dif \theta
  \le \int_0^\pi \frac{2^{1 - \frac{\alpha}{2}}} {(\abs{r - s} + 2 \sqrt{rs} \theta/\pi)^{2 - \alpha}}\dif \theta.
\]
and the conclusion follows.
\end{proof}

\begin{proof}[Proof of lemma~\ref{lemmaRadialExampleLocal}]
Observe that if we choose a radial function \(u \in C^1_c (B_1 \setminus \{0\})\setminus \{0\}\) and define \(v_{R} \in C^1_c (\R^N)\) for \(x \in \R^N\), $R>0$ and $\gamma\in\R$ by
\[
  v_{R} (\abs{x}) = u \Bigl(\frac{\abs{x} - R}{R^\gamma} \Bigr),
\]
then, taking into account lemma~\ref{lemmaRadialKernel},
as $R^{\gamma-1}\to 0$ we have
\begin{gather*}
\int_{\R^N} \abs{v_R}^q
  =O\big(R^{N-1+\gamma}\big),\\
  \int_{\R^N} \abs{D v_R}^2
  =O\big(R^{N-1-\gamma}\big),\\
    \int_{\R^N} \bigl(I_{\alpha/2} \ast \abs{v_R}^p\bigr)^2
  \le \begin{cases}
       O\big(R^{N-2+\alpha+2\gamma}\big), & \text{if \(\alpha > 1\)},\\
            O\big(R^{N-1+2\gamma}\big)\ln(R^{1-\gamma}), & \text{if \(\alpha = 1\)},\,  \\
       O\big(R^{N-1+\gamma(1+\alpha)}\big), & \text{if \(\alpha < 1\)}.
    \end{cases}
\end{gather*}

To construct the required family of functions, when \(\alpha > 1\) choose a radial function \(u \in C^1_c (B_1 \setminus \{0\})\setminus\{0\}\) and we define \(u_{R} \in C^1_c (\R^N)\) for \(x \in \R^N\) by
\begin{equation}\label{uR-radial}
  u_{R} (\abs{x}) = \frac{1}{R^\frac{3 N + \alpha - 4}{2(p + 2)}} u \Bigl(\frac{\abs{x} - R}{R^{\frac{p (N - 1) - (N + \alpha) + 2}{2 + p}}} \Bigr).
\end{equation}
When \(\alpha = 1\), we take similarly
\[
  u_{R} (|x|) = \frac{1}{R^\frac{3(N - 1)}{2(p + 2)} \abs{\log R}^{\frac{1}{2 p}}}  u \Bigl(\frac{\abs{x} - R}{R^{\frac{(N - 1)(p - 1)}{p + 2}}} \Bigr),
\]
whereas when \(\alpha < 1\), we set
\[
  u_{R} (|x|) = \frac{1}{R^\frac{(N - 1)(\alpha + 2)}{2(p + 1 + \alpha)}} u \Bigl(\frac{\abs{x} - R}{R^{\frac{(N - 1)(p - 1)}{p + 1 + \alpha}}} \Bigr).
\]
 The choice of $\gamma$ in all the above cases ensures that $R^{\gamma-1}\to 0,$ and the support of $u_R$ is contained in the annulus with radii $R, R+R^\gamma$ which is contained in $B_{2R}\setminus B_R
,$ as $R^{\gamma-1}$ is small enough. In particular, since $\gamma <1$ (resp.$\gamma>1$) as $p(N-2)<N+\alpha$ (resp. as $p(N-2)>N+\alpha$), we take $R\rightarrow\infty$ (resp. $R\rightarrow 0$).
The required conclusion now follows by direct computation.
\end{proof}

Finally, we are going to show optimality of the assumption \eqref{Qrad} for the continuous embedding
$E^{\alpha,p}_\rad(\R^N)$ into $L^q(\R^N)$.

\begin{lemma}[No limiting radial estimate]
\label{lemmaNoRadialCritical}
Let $N\ge 2$, \(\alpha > 1\) and $p\ge 1$.
If \(p(N-2) \ne N + \alpha\) then there exists a sequence of radial functions $(u_n)_{n\in\N}$ in $C^1_c (\R^N)$
such that
\[
  \sup_{n \to \infty} \int_{\R^N} \abs{D u_n}^2 + \abs{I_{\alpha/2} \ast \abs{u_n}^p}^2
  < \infty
\]
and
\[
  \lim_{n \to \infty}
  \int_{\R^N} \abs{u_n}^{2 \frac{2 p (N - 1) + N - \alpha}{3 N + \alpha - 4}} = \infty.
\]
\end{lemma}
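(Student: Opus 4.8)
\emph{Strategy.} The inequality \eqref{Qrad} is sharp, and the point of the lemma is that the \emph{endpoint} exponent $q=2\frac{2p(N-1)+N-\alpha}{3N+\alpha-4}$ is exactly the one for which the one–parameter family of single–scale profiles in \eqref{uR-radial} becomes scale invariant: already in the proof of lemma~\ref{lemmaRadialExampleLocal} the power of $R$ in front of $\norm{u_R}_{L^q}$ vanishes precisely at this $q$, so a single bump $u_R$ has bounded energy and $L^q$–norm comparable to $1$, but does not by itself diverge. The plan is therefore to \emph{superpose} copies of such critical profiles over a long range of scales, and to show that the energy of the superposition stays bounded while the $L^q$–mass accumulates. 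Since $p(N-2)\ne N+\alpha$, the degeneracy sits genuinely at one end of the scale axis; after the rescaling $x\mapsto\lambda x$ (equivalently redoing the computation with $R\to 0$) it suffices to treat $p(N-2)<N+\alpha$, so that all the scales involved tend to infinity.

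\emph{Construction and estimates.} First I would fix a lacunary sequence of radii $R_1<R_2<\dots$ with $R_{j+1}/R_j$ large enough that the annuli $B_{2R_j}\setminus B_{R_j}$ are pairwise disjoint, and on the $j$–th annulus place an amplitude $c_j$ times a rescaled copy of the profile $u_{R_j}$ from \eqref{uR-radial} normalised at the endpoint exponent so that $\int_{\R^N}\abs{Du_{R_j}}^2$, $\int_{\R^N}\abs{I_{\alpha/2}\ast\abs{u_{R_j}}^p}^2$ and $\int_{\R^N}\abs{u_{R_j}}^q$ are each comparable to $1$; set $u_n:=\sum_{j=1}^n c_j u_{R_j}$, the $c_j$ to be tuned (with a slowly varying, logarithmic, correction built either into the amplitudes or, as in the $\alpha=1$ case of lemma~\ref{lemmaRadialExampleLocal}, into the single–scale profiles). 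By disjointness of supports, $\int_{\R^N}\abs{Du_n}^2\asymp\sum_j c_j^2$ and $\int_{\R^N}\abs{u_n}^q\asymp\sum_j c_j^q$. The Coulomb term is the delicate one: writing it through lemma~\ref{lemmaRadialKernel} as a double integral against the radial kernel $K^R_{\alpha,N}$, one uses that for $\alpha>1$ the bound $K^R_{\alpha,N}(r,s)\le M(rs)^{-(N-\alpha)/2}$ holds, sharp when $r\asymp s$ but of the much smaller order $\max(r,s)^{-(N-\alpha)}$ when the two scales are far apart. Hence the diagonal part of $\int_{\R^N}\abs{I_{\alpha/2}\ast\abs{u_n}^p}^2$ is $\asymp\sum_j c_j^{2p}$, while the cross term between the $j$–th and $k$–th scales is controlled by $c_j^pc_k^p(R_{\min}/R_{\max})^{(N-\alpha)/2}$, so the off–diagonal contribution is a geometrically convergent double series dominated by $C\sum_j c_j^{2p}$; since $p\ge1$, $\bigl(\sum_j c_j^{2p}\bigr)^{1/p}\le\sum_j c_j^2$, and $\norm{u_n}_{E^{\alpha,p}}^2$ is comparable to $\sum_j c_j^2$.

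\emph{Main obstacle, and conclusion.} The crux is precisely the joint choice of the amplitudes $c_j$ and of the logarithmic correction so that the bounded–energy/divergent–$L^q$ balance actually closes at the endpoint: the naive geometric superposition with constant amplitudes is borderline, and one has to distribute the $L^q$–mass across the scales (and, if needed, nest or widen the profiles within each annulus) while keeping both $\sum_j c_j^2$ and the Coulomb double series controlled — this is exactly where the \emph{strict} inequality in \eqref{Qrad} enters, and where the hypothesis $p(N-2)\ne N+\alpha$ is used to retain the necessary scaling freedom. Once a family $(u_n)$ of compactly supported radial functions with $\sup_n\bigl(\int_{\R^N}\abs{Du_n}^2+\abs{I_{\alpha/2}\ast\abs{u_n}^p}^2\bigr)<\infty$ and $\int_{\R^N}\abs{u_n}^q\to\infty$ is obtained, a routine radial mollification replaces them by functions in $C^1_c(\R^N)$ (indeed $C^\infty_c(\R^N)$) without affecting these three quantities, which completes the proof; the last assertion of lemma~\ref{lemmaRadialExampleLocal} is recovered along the way since $K^R_{\alpha,N}$ is finite on the diagonal exactly for $\alpha>1$.
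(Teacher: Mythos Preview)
Your superposition idea is the right starting point, and your control of the off-diagonal Coulomb interactions is correct. But the plan of tuning only amplitudes $c_j$ cannot close in the generic case. With each profile normalised so that its gradient, Coulomb and $L^q$ contributions are all $\asymp 1$, you need $\sum_j c_j^2<\infty$ while $\sum_j c_j^q=\infty$. This is possible only if $q<2$; yet the endpoint exponent
\[
q=2\,\frac{2p(N-1)+N-\alpha}{3N+\alpha-4}
\]
satisfies $q>2$ whenever $p>\frac{N+\alpha-2}{N-1}$, in particular in the classical Ruiz case $N=3$, $\alpha=2$, $p=2$ where $q=18/7$. Your proposal acknowledges this is ``the crux'' but does not supply a mechanism, and the vague reference to ``widening the profiles'' does not fill the gap: the single-scale profile $u_R$ is already optimised so that gradient and Coulomb are simultaneously $O(1)$.

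The missing idea, which the paper provides, is a \emph{global} two-parameter rescaling of the whole superposition. After summing $k$ equal-amplitude copies at geometrically separated scales (so all three integrals are $\asymp k$), one sets
\[
v_{R,k}(x)=k^{-\frac{\alpha+2}{2(N+\alpha-p(N-2))}}\,u_{R,k}\Bigl(x\,k^{-\frac{p-1}{N+\alpha-p(N-2)}}\Bigr).
\]
Because the gradient and the Coulomb energies scale with \emph{different} powers of the dilation parameter (exponents $N-2$ versus $N+\alpha$), the two free parameters suffice to pin both at $O(1)$; this is precisely where $p(N-2)\neq N+\alpha$ is used. A direct computation then gives
\[
\int_{\R^N}\abs{v_{R,k}}^q \;\asymp\; k^{\frac{2(\alpha-1)}{3N+\alpha-4}},
\]
which diverges as $k\to\infty$ exactly because $\alpha>1$. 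So the hypothesis $\alpha>1$ enters not merely through the boundedness of the radial kernel on the diagonal, as you suggest, but decisively in this last scaling exponent.
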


When \(N = 3\), \(\alpha = 2\) and \(p = 2\), lemma~\ref{lemmaNoRadialCritical} was conjectured by D.\thinspace{} Ruiz \cite{Ruiz-ARMA}*{remark 4.1}.

\begin{proof}[Proof of lemma~\ref{lemmaNoRadialCritical}]
Let $q=2 \frac{2 p (N - 1) + N - \alpha}{3 N + \alpha - 4}$ and \(p(N-2) < N + \alpha\).
Define for \(R > 1\),
\[
  u_{R, k} = \sum_{i = 1}^k u_{R^i},
\]
where \(u_R\) is the family of functions constructed in \eqref{uR-radial} in the proof of lemma~\ref{lemmaRadialExampleLocal}.
Since \(p(N-2) < N + \alpha\), we have
\begin{gather}
\lim_{R \to \infty}\int_{\R^N} \abs{I_{\alpha/2} \ast \abs{u_{R,k}}^p}^2 \le c  k\lim_{R \to \infty}\int_{\R^N} \abs{I_{\alpha/2} \ast \abs{u_{R}}^p}^2,\label{e-Coulomb-k}\\
\lim_{R \to \infty} \int_{\R^N} \abs{D u_{R, k}}^2 = k \lim_{R \to \infty} \int_{\R^N} \abs{D u_R}^2,\nonumber\\
\lim_{R \to \infty} \int_{\R^N} \abs{u_{R, k}}^q = k \lim_{R \to \infty} \int_{\R^N} \abs{u_R}^q.\nonumber
\end{gather}
To deduce \eqref{e-Coulomb-k}, we observe that for $\alpha>1$ by lemma \ref{lemmaRadialExampleLocal},
$$\int_{\R^N} \abs{I_{\alpha/2} \ast \abs{u_{R}}^p}^2=O(1),\qquad
\int_{\R^N}\abs{u_{R}}^p=O(R^\frac{N-\alpha}{2}).$$
Then for a fixed $k\in\N$ and for sufficiently large $R>1$ we have
\begin{multline*}
  \int_{\R^N} \abs{I_{\alpha/2} \ast \abs{u_{R,k}}^p}^2
  \le ck \int_{\R^N} \abs{I_{\alpha/2} \ast \abs{u_R}^p}^2+
  2\sum_{\substack{i,j=1\\i>j}}^k
  \iint_{\R^N} I_\alpha(x-y)|u_{R^i}(x)|^p|u_{R^j}(y)|^p\dif x \dif y\\
  \le ck \int_{\R^N} \abs{I_{\alpha/2} \ast \abs{u_R}^p}^2+
  c_1\sum_{\substack{i,j=1\\i>j}}^k\left(
  \frac{1}{(R^i-R^j)^{N-\alpha}}\int_{\R^N}\abs{u_{R^i}}^p\int_{\R^N}\abs{u_{R^j}}^p\right)\\
  \le ck \int_{\R^N} \abs{I_{\alpha/2} \ast \abs{u_R}^p}^2+
  c_2\sum_{\substack{i,j=1\\i> j}}^k\frac{(R^i R^j)^\frac{N-\alpha}{2}}{(R^i-R^j)^{N-\alpha}},
\end{multline*}
and
$\lim_{R\to\infty}\sum_{i,j=1,\,i> j}^k\frac{(R^{i+j})^\frac{N-\alpha}{2}}{(R^i-R^j)^{N-\alpha}}=
\lim_{R\to\infty}\sum_{i,j=1,\,i> j}^k\Big(\frac{R^{-\frac{i-j}{2}}}{1-R^{-(i-j)}}\Big)^{N-\alpha}=0$.

If we define
\[
  v_{R,k}(x) = \frac{1}{k^\frac{\alpha + 2}{2(N + \alpha - p (N - 2))}} u_{R,k} \Bigl(\frac{x}{k^{\frac{p - 1}{N + \alpha - p (N - 2)}}} \Bigr),
\]
we have
\begin{gather}
  \lim_{R \to \infty}\int_{\R^N} \abs{I_{\alpha/2} \ast \abs{v_{R,k}}^p}^2 \le c  \lim_{R \to \infty}\int_{\R^N} \abs{I_{\alpha/2} \ast \abs{u_{R}}^p}^2,\nonumber\\
  \lim_{R \to \infty} \int_{\R^N} \abs{D v_{R, k}}^2 = \lim_{R \to \infty} \int_{\R^N} \abs{D u_R}^2,\nonumber\\
  \lim_{R \to \infty} \int_{\R^N} \abs{v_{R, k}}^q = k^\frac{2 (\alpha - 1)}{3 N + \alpha - 4} \lim_{R \to \infty} \int_{\R^N} \abs{u_R}^q.\label{e-qnormgrowth}
\end{gather}
Since \(\alpha > 1\), the conclusion follows from \eqref{e-qnormgrowth} by a diagonal argument.
The case \(p(N-2) > N + \alpha\) is similar, by letting $R\rightarrow 0.$
\end{proof}

\subsection{Radial estimates and proof of theorem \ref{theoremRadialSobolev-intro}}
We establish additional estimates for radial functions.

\begin{theorem}[Uniform decay estimates for radial functions]\label{theoremRadialEstimates}
Let \(N \ge 2\), $\alpha\in(0,N)$ and $p\ge 1$.
The estimate
\begin{equation}\label{uniformradialestimate}
 \abs{u (x)} \le \frac{C}{\abs{x}^\beta} \Bigl(\int_{\R^N} \abs{Du}^2 \Bigr)^\frac{\theta}{2}
\Bigl( \int_{\R^N} \abs{I_{\alpha/2} \ast \abs{u}^p}^2\Bigr)^\frac{1 - \theta}{2 p}
\end{equation}
is satisfied for every function \(u \in E^{\alpha, p}_\rad(\R^N)\) for almost every \(x \in \R^N\) if and only if
\[
 \frac{1}{1 + \frac{p}{1 + \min (1, \alpha)}} \le \theta \le 1
\]
and
\[
  \beta = \theta \frac{N - 2}{2} + (1 - \theta) \frac{N + \alpha}{2 p} > 0.
\]
\end{theorem}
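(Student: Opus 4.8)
The plan is to prove the two implications separately. For sufficiency — that the estimate \eqref{uniformradialestimate} holds whenever $\theta_*\le\theta\le 1$ and $\beta>0$ — I would first reduce by density to $u\in C^\infty_c(\R^N)$ radial: the mollification used in the proof of Proposition~\ref{smoothapprox} preserves radial symmetry when the mollifier is radial, so $C^\infty_{c,\rad}(\R^N)$ is dense in $E^{\alpha,p}_\rad(\R^N)$. Since the two sides of \eqref{uniformradialestimate} are homogeneous and transform in the same way under dilations precisely when $\beta=\theta\frac{N-2}{2}+(1-\theta)\frac{N+\alpha}{2p}$, that formula is forced, and it then suffices to establish the inequality for each admissible $\theta$.

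The core of the sufficiency argument is the fundamental theorem of calculus along rays: for radial $u$ and $r>0$,
\[
 |u(r)|^{a}\le a\int_r^\infty |u(s)|^{a-1}\,|u'(s)|\,\dif s .
\]
Applying H\"older's inequality in $s$, one factor being $\bigl(\int_r^\infty |u'|^2 s^{N-1}\dif s\bigr)^{1/2}\le C\bigl(\int_{\R^N}|Du|^2\bigr)^{1/2}$ and the remaining $|u|$-factor (of the shape $\int_r^\infty |u|^p s^{N-1-\beta'}\dif s$ after a suitable choice of exponents and of the power $a$) being controlled, via Proposition~\ref{propositionRuizPowerExterior}, by $r^{-(\beta'-\frac{N-\alpha}{2})}\bigl(\int_{\R^N}|I_{\alpha/2}\ast|u|^p|^2\bigr)^{1/2}$, yields \eqref{uniformradialestimate} for every $\theta$ in the range $[\tfrac{2}{2+p},1)$, and — letting $a\to 1$, so that the argument degenerates to the classical radial Sobolev estimate — also $\theta=1$ when $N\ge 3$. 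When $\alpha<1$ this range must be enlarged down to $\theta_*=\frac{1+\alpha}{1+\alpha+p}<\frac{2}{2+p}$; there I would instead combine the classical radial Strauss estimate in $L^t$ with $t=2\frac{2p+\alpha}{2+\alpha}$ (again a one-variable fundamental-theorem-of-calculus computation) with the critical Coulomb--Sobolev inequality \eqref{e-critical-inequality} of Proposition~\ref{propositionRieszSobolevinterpolation} to bound $\|u\|_{L^t}$, which produces exactly the endpoint $\theta=\theta_*$, $\beta=\frac{(N-1)(2+\alpha)}{2(1+\alpha+p)}$. Every remaining admissible pair $(\theta,\beta)$ is then obtained by multiplying two of the established pointwise inequalities raised to complementary powers $t$ and $1-t$.

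For necessity, the formula for $\beta$ has already been pinned down by scaling; it remains to see that none of $\theta<\theta_*$, $\theta>1$, $\beta\le 0$ can occur. This follows by testing \eqref{uniformradialestimate} against the families of radial functions constructed in Lemmas~\ref{lemmaRadialExampleSobolev} and~\ref{lemmaRadialExampleLocal} — and, for sharpness of the lower endpoint $\theta_*$ when $\alpha>1$, against the geometrically spaced superpositions of Lemma~\ref{lemmaNoRadialCritical}: along these families the Dirichlet and Coulomb energies stay uniformly bounded (so the right-hand side of \eqref{uniformradialestimate} stays bounded), while the quantity $|x|^\beta|u(x)|$ evaluated near the relevant radius would blow up unless $\theta_*\le\theta\le 1$ and $\beta>0$.

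The main obstacle I expect is reaching the sharp endpoint $\theta=\theta_*$ without any loss: the correct tool changes at $\alpha=1$ (Proposition~\ref{propositionRuizPowerExterior} for $\alpha\ge 1$, the critical Coulomb--Sobolev inequality for $\alpha<1$), and one must verify that both routes are exactly scale-critical, i.e.\ that the $r$-exponents produced by the H\"older splittings and by the weighted Riesz estimates match the value of $\beta$ dictated by homogeneity. A secondary delicate point is the low-dimensional case $N=2$, where $\theta=1$ is excluded ($\beta=0$) and the full interval $[\theta_*,1)$ must be recovered directly from the fundamental-theorem-of-calculus argument rather than by interpolating against the classical radial Sobolev endpoint.
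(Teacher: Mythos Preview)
Your plan is essentially the paper's own argument: the sufficiency is proved via the fundamental theorem of calculus along rays combined with H\"older, using Proposition~\ref{propositionRuizPowerExterior} to reach the range $\theta\ge\frac{2}{2+p}$ (this is the paper's Claim~1) and the critical Coulomb--Sobolev inequality of Proposition~\ref{propositionRieszSobolevinterpolation} to reach $\theta\ge\frac{1+\alpha}{1+\alpha+p}$ (Claim~2); necessity comes from the test families of Lemmas~\ref{lemmaRadialExampleSobolev} and~\ref{lemmaRadialExampleLocal}.

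Two small corrections to your necessity sketch. First, Lemma~\ref{lemmaNoRadialCritical} is not needed here: the single-bump families of Lemma~\ref{lemmaRadialExampleLocal} already pin down the lower endpoint $\theta_*=\frac{2}{2+p}$ when $\alpha>1$ (take $q=\infty$ there and compare the resulting $L^\infty$ decay rate with $\beta(\theta)$). Second, the exclusion of $\beta=0$ (which occurs only at $\theta=1$, $N=2$) is not delivered by those scaling families --- their $L^\infty$ norms stay bounded rather than blowing up. The paper handles this case with an explicit unbounded radial function behaving like $(\log\abs{x})^\gamma$, $\gamma\in(0,\tfrac12)$, near the origin, which lies in $E^{\alpha,p}_\rad(\R^2)$ but violates any uniform bound.
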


The assumption \(\frac{1}{1 + \frac{p}{1 + \min (1, \alpha)}}\le \theta \le 1\) implies immediately that \(\beta > 0\), unless \(\theta = 1\) and \(N = 2\).

When \(\theta = 1\) and \(N > 2\), we recover the classical estimate of Ni \cite{Ni1982}
\begin{equation}
\label{ineqNi}
  \abs{u (x)} \le \frac{C}{\abs{x}^{\frac{N - 2}{2}}} \Bigl(\int_{\R^N} \abs{D u}^2 \Bigr)^\frac{1}{2}
\end{equation}
(see also \citelist{\cite{Strauss1977}*{radial lemma 1}\cite{SuWangWillem2007}*{lemma 1}}).
The other cases are new. In particular, when \(\theta = \frac{1}{1 + \frac{p}{2}}\), we have the estimate
\begin{equation}
\label{eqRadialLargeAlpha}
  \abs{u (x)} \le \frac{C}{\abs{x}^{\frac{3 N + \alpha - 4}{2 (p + 2)}}} \Bigl(\int_{\R^N} \abs{D u}^2 \Bigr)^\frac{1}{p + 2} \Bigl(\int_{\R^N} \abs{I_{\alpha/2} \ast \abs{u}^p}^2\Bigr)^\frac{1}{2 (p + 2)},
\end{equation}
whereas when \(\theta = \frac{1}{1 + \frac{p}{1 + \alpha}}\), we have
\begin{equation}
\label{eqRadialSmallAlpha}
  \abs{u (x)}
  \le \frac{C}{\abs{x}^\frac{(N - 1) (\alpha + 2)}{2 (p + 1 + \alpha)}}
  \Bigl(\int_{\R^N} \abs{D u}^2 \Bigr)^\frac{1 + \alpha}{2 (p + 1 + \alpha)}
  \Bigl(\int_{\R^N} \abs{I_{\alpha/2} \ast \abs{u}^p}^2 \Bigr)^\frac{1}{2 (p + 1 + \alpha)}.
  \end{equation}
When \(N \ge 3\) it is possible to deduce the estimates of theorem~\ref{theoremRadialEstimates} from the endpoint estimates \eqref{ineqNi}, \eqref{eqRadialLargeAlpha} and \eqref{eqRadialSmallAlpha}, in order to cover the case \(N = 2\) we prove directly the whole scale of estimates.

When $N\ge 3$, the range of admissible decay rates \(\beta\) can de rewritten explicitly as follows:
\smallskip
\begin{itemize}
\item either \(\alpha \ge 1\), \(\frac{1}{p} \ge \frac{N - 2}{N + \alpha}\) and
  \(
    \frac{3 N + \alpha - 4}{2 (p + 2)} \ge \beta \ge \frac{N - 2}{2},
  \)\smallskip

\item or \(\alpha \ge 1\), \(\frac{1}{p} \le \frac{N - 2}{N + \alpha}\) and
  \(
    \frac{3 N + \alpha - 4}{2 (p + 2)} \le \beta \le \frac{N - 2}{2},
  \)\smallskip

\item or \(\alpha \le 1\), \(\frac{1}{p} \ge \frac{N - 2}{N + \alpha}\) and $
    \frac{(N - 1) (\alpha + 2)}{2 (p + 1 + \alpha)} \ge \beta \ge \frac{N - 2}{2},
 \)\smallskip

\item or \(\alpha \le 1\), \(\frac{1}{p} \le \frac{N - 2}{N + \alpha}\) and
  \(
    \frac{(N - 1) (\alpha + 2)}{2 (p + 1 + \alpha)} \le \beta \le \frac{N - 2}{2};
  \)\smallskip
\end{itemize}
whereas when \(N = 2\), we have:
\smallskip
\begin{itemize}
\item
either \(\alpha \ge 1\) and
  \(
    \frac{2 + \alpha}{2 (p + 2)} \ge \beta > 0,
  \)
\smallskip

\item
or \(\alpha \le 1\) and \(
    \frac{2+\alpha}{2 (p + 1 + \alpha)} \ge \beta > 0\).
\smallskip
\end{itemize}

\begin{proof}[Proof of theorem~\ref{theoremRadialEstimates}]
We split the proof into three separate claims.

\setcounter{claim}{0}
\begin{claim}
Estimate \eqref{uniformradialestimate} holds if \(\beta = \theta \frac{N - 2}{2} + (1 - \theta) \frac{N + \alpha}{p} > 0\) and \(\theta \ge \frac{1}{1 + \frac{p}{2}}\).
\end{claim}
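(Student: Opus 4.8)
The plan is to establish the claim by a one–dimensional argument in the radial variable. First I would reduce to a nonnegative $u\in C^1_c(\R^N)$: for general $u\in E^{\alpha,p}_\rad(\R^N)$ one replaces $u$ by $\abs u$ (legitimate since $\abs{D\abs u}=\abs{Du}$ a.e.\ and $\abs u^p$ is unchanged), approximates by smooth compactly supported radial functions via Proposition~\ref{smoothapprox}, and passes the pointwise a.e.\ bound to the limit along an a.e.\ convergent subsequence using the lower semicontinuity of the two integrals (Proposition~\ref{propositionSemiContinuityLocalConvergence}). Writing $u=u(r)$, the starting point is the elementary identity, valid because $t\beta\ge 0$ (this is where $\beta>0$ enters essentially),
\[
 r^{t\beta}u(r)^t=-\int_r^\infty\frac{d}{d\rho}\bigl(\rho^{t\beta}u(\rho)^t\bigr)\dif\rho\le t\int_r^\infty\rho^{t\beta}u(\rho)^{t-1}\abs{u'(\rho)}\dif\rho,
\]
for a power $t=t(\theta,p,N)$ chosen below.

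The core step is to split the integrand $\rho^{t\beta}u^{t-1}\abs{u'}$ into three factors and apply Hölder's inequality: a factor of the shape $\rho^{(N-1)/2}\abs{u'}$, whose integral is controlled by $\bigl(\int_{\R^N}\abs{Du}^2\bigr)^{1/2}$; a second, Hardy–type factor consisting of a weighted power of $u$, which is again absorbed into $\int_{\R^N}\abs{Du}^2$ by a one–dimensional weighted (Hardy) inequality; and a third factor which is a weighted integral of $u^p$, controlled by $\int_{\R^N}\bigabs{I_{\alpha/2}\ast\abs u^p}^2$ through the weighted Coulomb estimates of Proposition~\ref{propositionRuizAverage} and Proposition~\ref{propositionRuizPowerExterior}. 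Tracking the homogeneity of the weights forces the exponents to be exactly $\theta/2$ on $\int\abs{Du}^2$ and $(1-\theta)/(2p)$ on the Coulomb term, and the power of $\abs x$ to be exactly $\beta=\theta\frac{N-2}{2}+(1-\theta)\frac{N+\alpha}{2p}$; admissibility of the Hardy step is precisely what produces the constraint $\theta\ge\frac{1}{1+p/2}$.

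When $N\ge3$ this can be shortened: one proves the two endpoint estimates — Ni's inequality \eqref{ineqNi} (the case $\theta=1$, itself a special case of the same Cauchy--Schwarz plus Hardy computation) and inequality \eqref{eqRadialLargeAlpha} (the case $\theta=\frac1{1+p/2}$) — and then writes $\abs{u(x)}=\abs{u(x)}^\lambda\abs{u(x)}^{1-\lambda}$, bounding the two factors by these two estimates. A direct computation shows that $\lambda=\frac{\theta(p+2)-2}{p}$ lies in $[0,1]$ exactly when $\frac1{1+p/2}\le\theta\le1$, and that this choice reproduces \eqref{uniformradialestimate} with the stated $\beta$. Since \eqref{ineqNi} fails for $N=2$, the direct scale–of–estimates argument above is what is actually carried out, so as to cover $N=2$ uniformly.

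The main obstacle is the bookkeeping: the weight exponents inside the primitive $\rho^{t\beta}u^t$ and in the three–factor Hölder split must be chosen so that the three requirements hold simultaneously — the weight power stays $\ge 0$ (why $\beta>0$ is needed), the Hardy/one–dimensional step is valid (the origin of $\theta\ge\frac1{1+p/2}$), and the weight of the third factor lands on the correct side of $\tfrac{N-\alpha}{2}$ so that Proposition~\ref{propositionRuizPowerExterior} applies. The genuine restriction is that the decomposition must never generate a quantity uncontrolled by the two available norms — in particular no $\norm{u}_{L^2(\R^N)}$ and no $\sup_{\rho\ge r}u(\rho)$ — and avoiding this pins down $t$ and all the exponents. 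The complementary lower range of $\theta$ when $\alpha\le1$ is treated in the remaining claims, while sharpness of the resulting range of $\beta$ is already supplied by Lemmas~\ref{lemmaRadialExampleSobolev} and~\ref{lemmaRadialExampleLocal}.
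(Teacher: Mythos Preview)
Your overall framework is right and matches the paper: reduce to smooth radial $u\ge 0$, write a fundamental-theorem-of-calculus identity in the radial variable for a power of $u$, then apply a three-way H\"older inequality and control the resulting factors by $\int_{\R^N}\abs{Du}^2$ and by the weighted Coulomb estimate of Proposition~\ref{propositionRuizPowerExterior}. You also correctly identify the $N\ge 3$ shortcut via interpolation between \eqref{ineqNi} and \eqref{eqRadialLargeAlpha}, and the reason the direct argument is preferred.

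The point where you diverge from the paper is the decomposition itself. You propose a split (gradient factor) $\times$ (Hardy-type factor absorbed back into $\int\abs{Du}^2$) $\times$ (weighted $\abs u^p$ factor), and you locate the constraint $\theta\ge \tfrac{1}{1+p/2}$ in the admissibility of the Hardy step. The paper uses a simpler split with \emph{no} Hardy inequality: starting from $\abs{u(r)}^\gamma \le \gamma\int_r^\infty \abs{u'}\abs{u}^{\gamma-1}\,ds$ (no weight in the primitive), the three H\"older factors are
\[
\bigl(\abs{u'(s)}^2 s^{N-1}\bigr)^{1/2},\qquad
\Bigl(\tfrac{\abs{u(s)}^p}{s^{(N-\alpha)/2+\delta}}\,s^{N-1}\Bigr)^{(\gamma-1)/p},\qquad
\bigl(s^{-1-\delta}\bigr)^{\frac12-\frac{\gamma-1}{p}},
\]
with $\delta=N-2+\tfrac{N+\alpha}{p}(\gamma-1)$ chosen so the $s$-powers match. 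The middle factor is controlled directly by Proposition~\ref{propositionRuizPowerExterior}; the third is a pure power of $s$ that integrates explicitly to $C r^{-\delta(\frac12-\frac{\gamma-1}{p})}$ and supplies the decay. The constraint $\theta\ge\tfrac{1}{1+p/2}$ arises simply because the third H\"older exponent $\tfrac12-\tfrac{\gamma-1}{p}$ must be nonnegative, i.e.\ $\gamma=1/\theta\le 1+p/2$; the condition $\beta>0$ is exactly $\delta>0$, needed for the pure-power integral to converge. Taking the $\gamma$-th root then yields \eqref{uniformradialestimate}.

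So your plan is not wrong in spirit, but the Hardy detour is unnecessary and you have not verified that it actually produces the sharp threshold $\theta=\tfrac1{1+p/2}$; the paper's route gets there with less bookkeeping and makes the origin of the constraint transparent.
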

\begin{proofclaim}
Let \(\gamma > 0\).
For almost every \(r > 0\), we have
\begin{equation}
\label{eqRadialIntegration}
  \abs{u (r)}^\gamma \le \frac{1}{\gamma} \int_r^\infty \abs{u' (s)} \abs{u (s)}^{\gamma - 1} \dif s.
\end{equation}
If \(\gamma \le 1 + \frac{p}{2}\), then by the generalized H\"older inequality
\[
  \abs{u (r)}^\gamma
  \le \frac{1}{\gamma} \Bigl(\int_r^\infty \abs{u' (s)}^2 s^{N - 1} \dif s\Bigr)^\frac{1}{2}
  \Bigl(\int_r^\infty \frac{\abs{u (s)}^p}{s^{\frac{N - \alpha}{2} + \delta}} s^{N - 1} \dif s\Bigr)^\frac{\gamma - 1}{p}\Bigl(\int_r^\infty \frac{\dif s}{s^{1 + \delta}} \Bigr)^{\frac{1}{2}-\frac{\gamma - 1}{p}},
\]
where
\[
  \delta = N - 2 + \frac{N + \alpha}{p}(\gamma - 1).
\]
Since \(\delta > 0\), we deduce in view of proposition~\ref{propositionRuizPowerExterior} and the explicit computation of the last integral that
\[
  \abs{u (x)}^\gamma \le \frac{C}{\abs{x}^\frac{\delta}{2}} \Bigl(\int_{\R^N} \abs{D u}^2 \Bigr)^\frac{1}{2} \Bigl(\int_{\R^N} \abs{I_\alpha \ast \abs{u}^p}^2 \Bigr)^\frac{\gamma - 1}{2 p}.
\]
Since \(\theta \ge \frac{1}{1 + \frac{p}{2}}\), we conclude by taking \(\gamma = \frac{1}{\theta}\) so that in particular \(\delta = \frac{\beta}{2 \theta}\).
\end{proofclaim}

\begin{claim}
\label{claimSmallAlpha}
Estimate \eqref{uniformradialestimate} holds if \(\beta = \theta \frac{N - 2}{2} + (1 - \theta) \frac{N + \alpha}{2 p} > 0\) and \(\theta \ge \frac{1}{1 + \frac{p}{1 + \alpha}}\).
\end{claim}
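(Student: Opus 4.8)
\begin{proofclaim}
The plan is to run the Strauss-type argument of the first claim, but with a two-factor Hölder inequality, and to enlarge the admissible range of exponents by replacing the appeal to proposition~\ref{propositionRuizPowerExterior} with a stronger weighted bound on the Coulomb term which is available, for radial functions, precisely when \(\alpha < 1\). If \(\alpha \ge 1\) the assumption \(\theta \ge \frac{1}{1 + p/(1 + \alpha)}\) already forces \(\theta \ge \frac{1}{1 + p/2}\), so the estimate is contained in the first claim; I therefore assume \(\alpha < 1\). Set \(\gamma = 1/\theta\), so that \(1 \le \gamma \le 1 + \frac{p}{1 + \alpha}\); since the range \(\gamma \le 1 + \frac{p}{2}\) is covered by the first claim it suffices to treat \(\gamma \in [1 + \frac{p}{2}, 1 + \frac{p}{1 + \alpha}]\), and I put \(m := \frac{2 (\gamma - 1)}{p} \in [1, \frac{2}{1 + \alpha}]\).

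As in \eqref{eqRadialIntegration}, for almost every \(r > 0\) one has \(\abs{u(r)}^\gamma \le \frac{1}{\gamma} \int_r^\infty \abs{u'(s)}\,\abs{u(s)}^{\gamma - 1}\dif s\), and the Cauchy--Schwarz inequality in the form
\[
  \int_r^\infty \abs{u'}\,\abs{u}^{\gamma - 1}\dif s
  \le \Bigl(\int_r^\infty \abs{u'(s)}^2 s^{N - 1}\dif s\Bigr)^\frac{1}{2}
  \Bigl(\int_r^\infty \abs{u(s)}^{2(\gamma - 1)}s^{1 - N}\dif s\Bigr)^\frac{1}{2}
\]
bounds the first factor by \(c\bigl(\int_{\R^N}\abs{Du}^2\bigr)^{1/2}\). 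Since \(2(\gamma - 1) = pm\) and \(\gamma\theta = 1\), a computation of exponents analogous to the one in the first claim (the power of \(r\) produced being exactly \(-\gamma\beta\) for the stated value of \(\beta\)) reduces the whole claim to the following weighted lower bound on the Coulomb term, to be proved for radial \(u\) and \(1 \le m \le \frac{2}{1 + \alpha}\):
\[
  \int_{\R^N \setminus B_r} \frac{\abs{u(x)}^{pm}}{\abs{x}^{2(N - 1)}}\dif x
  \le \frac{C}{r^{(N - 2) + m\frac{N + \alpha}{2}}}\Bigl(\int_{\R^N}\bigabs{I_{\alpha/2} \ast \abs{u}^p}^2\Bigr)^\frac{m}{2}.
\]
For \(m = 1\) this is proposition~\ref{propositionRuizPowerExterior} with the admissible weight \(\abs{x}^{-2(N - 1)}\) (here \(2(N - 1) > \frac{N - \alpha}{2}\)), and by Hölder's inequality the intermediate exponents interpolate between the endpoints \(m = 1\) and \(m = \frac{2}{1 + \alpha}\); thus only the endpoint \(m = \frac{2}{1 + \alpha}\) remains.

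It is here that \(\alpha < 1\) is essential: it is precisely when \(\alpha < 1\) that the radial Riesz kernel of lemma~\ref{lemmaRadialKernel} keeps a diagonal singularity \(\abs{r - s}^{\alpha - 1}\) (for \(\alpha > 1\) it is bounded there), and the hypergeometric representation also yields the matching lower bound \(K^R_{\alpha, N}(r, s) \ge c\,(rs)^{-(N - 1)/2}\abs{r - s}^{\alpha - 1}\) for \(\frac12 s \le r \le 2s\). Writing \(g(r) = \abs{u(r)}^p r^{(N - 1)/2}\) this gives
\[
  \int_{\R^N}\bigabs{I_{\alpha/2} \ast \abs{u}^p}^2
  \ge c\int_0^\infty\!\!\int_{r/2}^{2r} g(r)\,g(s)\,\abs{r - s}^{\alpha - 1}\dif s\dif r
  \ge c\sum_{k \in \mathbb{Z}}\norm{g\,\chi_{(2^k, 2^{k + 1})}}_{\dot H^{-\alpha/2}(\R)}^2,
\]
and it remains to bound each dyadic block \(\norm{g\,\chi_{(2^k, 2^{k + 1})}}_{\dot H^{-\alpha/2}(\R)}^2\) from below by the corresponding portion of the weighted \(L^{pm}\)-tail of \(u\) and to reassemble these by a weighted Hölder inequality over the scales \(2^k > r\). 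I expect this one-dimensional estimate to be the main obstacle: a crude lower bound on a single block only controls an \(L^1\)-average of \(\abs{u}^p\), and converting it into the required \(pm\)-th power tail (with \(m > 1\)) demands a careful localised one-dimensional Hardy--Littlewood--Sobolev argument adapted to the nonnegative profile \(g\), with the value \(m = \frac{2}{1 + \alpha}\) appearing as the endpoint beyond which the weighted estimate fails. Once this is available, the Strauss-type assembly is identical to that of the first claim.
\end{proofclaim}
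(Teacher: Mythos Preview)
Your reduction is clean up to the displayed weighted inequality, but the proof then stops: you yourself call the endpoint \(m = \frac{2}{1+\alpha}\) ``the main obstacle'' and only sketch a dyadic \(\dot H^{-\alpha/2}(\R)\) argument whose crucial step---passing from a lower bound on a block of the Coulomb energy to an \(L^{pm}\)-type tail with \(m>1\)---is left entirely open. The interpolation between \(m=1\) and the endpoint is also not automatic, since the exponents and the power of \(r\) on the right-hand side both move; a genuine Hölder interpolation would need both endpoints in hand with compatible exponents, which brings you back to the unproved endpoint. As written, the argument does not close.

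The paper avoids this difficulty entirely by a different choice of the middle factor in the generalized Hölder inequality. Instead of the weighted \(L^p\)-type integral \(\int_r^\infty |u|^p |x|^{-\beta'}\) (which forces you into weighted Coulomb estimates and eventually into the endpoint you cannot reach), it uses the \emph{unweighted} \(L^{2\frac{2p+\alpha}{2+\alpha}}\) norm, i.e.\ \(\bigl(\int_r^\infty |u(s)|^{2\frac{2p+\alpha}{2+\alpha}} s^{N-1}\,ds\bigr)^{\frac{(\gamma-1)(2+\alpha)}{2(2p+\alpha)}}\), together with a pure power in the third factor. The point is that this middle integral is controlled by the Coulomb--Sobolev inequality of proposition~\ref{propositionRieszSobolevinterpolation}, already established without any radial hypothesis, which gives exactly \(\int_{\R^N} |u|^{2\frac{2p+\alpha}{2+\alpha}} \le C(\int |Du|^2)^{\alpha/(2+\alpha)}(\int |I_{\alpha/2}\ast|u|^p|^2)^{2/(2+\alpha)}\). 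With this substitution and the choice \(\gamma = \frac{2p}{2p\theta - (1-\theta)\alpha}\), the Hölder exponents close precisely for \(\gamma \le 2\frac{p+1+\alpha}{2+\alpha}\), i.e.\ \(\theta \ge \frac{1}{1+p/(1+\alpha)}\), and the proof is complete in a few lines. No new weighted or one-dimensional estimate is needed, and the argument works uniformly in \(\alpha\).
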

\begin{proofclaim}
We start from \eqref{eqRadialIntegration} with \(1 \le \gamma < 2 \frac{p + \alpha + 1}{2 + \alpha}\) and apply the generalized H\"older inequality to obtain
\[
  \abs{u (r)}^\gamma
  \le \frac{1}{\gamma} \Bigl(\int_r^\infty \abs{u' (s)}^2 s^{N - 1} \dif s\Bigr)^\frac{1}{2}
  \Bigl(\int_r^\infty \abs{u (s)}^{2\frac{2 p + \alpha}{2 + \alpha}} s^{N - 1} \dif s\Bigr)^\frac{(\gamma - 1)(2 + \alpha)}{2 (2p + \alpha)}\Bigl(\int_r^\infty \frac{\dif s}{s^{1 + \delta}} \Bigr)^{\frac{1}{2}-\frac{(\gamma - 1)(2 + \alpha)}{2 (2p + \alpha)}},
\]
where
\[
  \delta = \frac{2 (N - 1)}{1 - \frac{(\gamma - 1)(2 + \alpha)}{2p + \alpha}} - N.
\]
and therefore
\[
 \abs{u (r)}^\gamma
  \le \frac{C}{\abs{x}^{\frac{N - 2}{2} + \frac{(\gamma - 1)N (2 + \alpha)}{2 p + \alpha}}} \Bigl(\int_r^\infty \abs{u' (s)}^2 s^{N - 1} \dif s\Bigr)^\frac{1}{2}
  \Bigl(\int_r^\infty \abs{u (s)}^{2\frac{2 p + \alpha}{2 + \alpha}} s^{N - 1} \dif s\Bigr)^\frac{(\gamma - 1)(2 + \alpha)}{2 (2p + \alpha)}.
\]
We observe that this inequality also holds when \(\gamma = 2 \frac{p + \alpha + 1}{2 + \alpha}\)
by the Cauchy--Schwarz inequality and the fact that \(s \ge r\) in the integrals.

We deduce from proposition~\ref{propositionRieszSobolevinterpolation} that if \(\delta > 0\),
\[
  \abs{u (x)}^\gamma \le \frac{C'}{\abs{x}^{\frac{N - 2}{2} + \frac{(\gamma - 1)N (2 + \alpha)}{2 p + \alpha}}} \Bigl(\int_{\R^N} \abs{D u}^2 \Bigr)^{\frac{2 p + \gamma \alpha}{2(2 p + \alpha)}} \Bigl(\int_{\R^N} \abs{I_\alpha \ast \abs{u}^p}^2 \Bigr)^\frac{\gamma - 1}{2 p + \alpha}.
\]
We take
\[
 \gamma = \frac{2 p}{\theta 2 p - (1 - \theta) \alpha}.
\]
It is clear that \(\gamma \ge 1\).
Moreover, since \(\theta > \frac{1}{1 + \frac{2 p}{\alpha + 1}}\), we have \(\gamma < 2 \frac{p + \alpha + 1}{2 + \alpha}\), so that the conclusion follows.
\end{proofclaim}

\begin{claim}
The assumptions of theorem~\ref{theoremRadialEstimates} are necessary for estimate \eqref{uniformradialestimate} to hold.
\end{claim}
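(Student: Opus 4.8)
The plan is to prove the three necessary conditions of theorem~\ref{theoremRadialEstimates}—the exact value of \(\beta\), the upper bound \(\theta\le1\), and the lower bound \(\theta\ge\frac{1}{1+p/(1+\min(1,\alpha))}\)—by testing \eqref{uniformradialestimate} against three scaling families of radial functions; the sufficiency having been settled in the first two claims, only necessity remains. \emph{Step 1: the value of \(\beta\).} Fix any \(u\in E^{\alpha,p}_\rad(\R^N)\setminus\{0\}\) and apply \eqref{uniformradialestimate} to \(u_\lambda:=u(\cdot/\lambda)\) at the point \(\lambda x\). Using \(\int_{\R^N}\abs{Du_\lambda}^2=\lambda^{N-2}\int_{\R^N}\abs{Du}^2\) and \(\int_{\R^N}\abs{I_{\alpha/2}\ast\abs{u_\lambda}^p}^2=\lambda^{N+\alpha}\int_{\R^N}\abs{I_{\alpha/2}\ast\abs{u}^p}^2\), one obtains for almost every \(x\) a bound whose right-hand side is \(\lambda^{\theta\frac{N-2}{2}+(1-\theta)\frac{N+\alpha}{2p}-\beta}\) times a quantity independent of \(\lambda\). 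Letting \(\lambda\to0\) and \(\lambda\to\infty\) forces the exponent of \(\lambda\) to vanish, so \(\beta=\theta\frac{N-2}{2}+(1-\theta)\frac{N+\alpha}{2p}\); from now on \(\beta\) is frozen at this value.

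\emph{Step 2: \(\theta\le1\).} Here I would add to a fixed radial bump a distant thick spherical shell with bounded Dirichlet energy but divergent Coulomb energy. Pick a radial \(\phi\in C^1_c(\R^N)\setminus\{0\}\) with \(\phi(x_0)\ne0\) for some \(x_0\ne0\), fix \(w\in C^1_c((1,2))\), and set \(b_R(\abs{x})=R^{-(N-2)/2}w\bigl((\abs{x}-R)/R\bigr)\) and \(u_R=\phi+b_R\). Assuming \(p(N-2)\ne N+\alpha\) (the case of equality is excluded as in theorem~\ref{theoremEmbedding}; on that line the Coulomb term is controlled by the Dirichlet term through the Hardy--Littlewood--Sobolev and Sobolev inequalities, and there is nothing to prove), take \(R\to\infty\) when \(p(N-2)<N+\alpha\) and \(R\to0\) when \(p(N-2)>N+\alpha\). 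Then the supports of \(\phi\) and \(b_R\) become disjoint, so \(\abs{u_R(x_0)}=\abs{\phi(x_0)}\) is a fixed positive number, \(\int_{\R^N}\abs{Du_R}^2\) stays bounded, while a direct computation—using the mean-field estimate \(\iint I_\alpha(x-y)\dif x\dif y\asymp R^{N+\alpha}\) over a region of scale \(R\) for the shell self-energy—gives \(\int_{\R^N}\abs{I_{\alpha/2}\ast\abs{u_R}^p}^2\asymp R^{N+\alpha-p(N-2)}\), which diverges. If \(\theta>1\) the exponent \((1-\theta)/(2p)\) is negative, so the right-hand side of \eqref{uniformradialestimate} at \(x_0\) tends to \(0\) along the family, contradicting \(\abs{\phi(x_0)}>0\); hence \(\theta\le1\).

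\emph{Step 3: the lower bound on \(\theta\), and the case \(N=2\), \(\theta=1\).} I would reuse the annular families from the proof of lemma~\ref{lemmaRadialExampleLocal}: for a fixed radial profile \(u\in C^1_c(B_1\setminus\{0\})\setminus\{0\}\) put \(v_{R,\gamma}(\abs{x})=u\bigl((\abs{x}-R)/R^\gamma\bigr)\), and take \(\gamma<1\), \(R\to\infty\) when \(p(N-2)<N+\alpha\) (and \(\gamma>1\), \(R\to0\) otherwise), so that \(\supp v_{R,\gamma}\subset B_{2R}\setminus B_R\) and the estimates recorded in lemma~\ref{lemmaRadialExampleLocal} apply: \(\int_{\R^N}\abs{Dv_{R,\gamma}}^2=O(R^{N-1-\gamma})\), while \(\int_{\R^N}\abs{I_{\alpha/2}\ast\abs{v_{R,\gamma}}^p}^2\) is \(O(R^{N-2+\alpha+2\gamma})\), \(O(R^{N-1+2\gamma}\log R)\) or \(O(R^{N-1+\gamma(1+\alpha)})\) according to the sign of \(\alpha-1\). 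Evaluating \eqref{uniformradialestimate} on the neighbourhood of the peak of \(v_{R,\gamma}\), where \(\abs{v_{R,\gamma}}\) equals a fixed positive constant and \(\abs{x}\asymp R\), and using \(\theta\le1\) so the Coulomb exponent \((1-\theta)/(2p)\) is nonnegative, the amplitude drops out and one is left with \(1\le C\,R^{E(\gamma)}\) with \(E(\gamma)\) affine in \(\gamma\) of slope \(-\frac{\theta}{2}+\frac{(1+\min(1,\alpha))(1-\theta)}{2p}\) (the logarithm being of lower order). Since \(E\) must be \(\ge0\) on \((-\infty,1)\) (respectively \(\le0\) on \((1,\infty)\)) and \(E(1)=0\) by the \(\beta\)-formula of Step 1, the slope must be nonpositive, which is exactly \(\theta\ge\frac{1}{1+p/(1+\min(1,\alpha))}\); in particular, when the slope is strictly negative this constraint holds automatically, reconfirming the \(\beta\)-formula. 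Finally, for \(N=2\) and \(\theta=1\) the \(\beta\)-formula gives \(\beta=0\), so \eqref{uniformradialestimate} would read \(\abs{u(x)}\le C\norm{Du}_{L^2(\R^2)}\); this fails for the radial logarithmic cut-offs \(u_k(\abs{x})=\min\{1,(\log(k/\abs{x}))_+/\log k\}\in E^{\alpha,p}_\rad(\R^2)\), for which \(\norm{Du_k}_{L^2(\R^2)}^2\asymp1/\log k\to0\) whereas \(u_k\equiv1\) on \(B_1\).

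\emph{Main obstacle.} The difficulty is not in any single estimate but in the bookkeeping: one has to check that each test family is tuned exactly to the critical scale, so that only the decay exponents and not the amplitudes survive in the limit, and to keep straight the four regimes fixed by the signs of \(\alpha-1\) and of \(N+\alpha-p(N-2)\), which decide whether concentration occurs as \(R\to0\) or \(R\to\infty\) and which Coulomb bound of lemma~\ref{lemmaRadialExampleLocal} is in force; the one genuinely new computation is the Coulomb self-energy of the thick shell in Step 2.
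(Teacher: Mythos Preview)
Your proof is correct and follows the same overall strategy as the paper: test the estimate against scaling families and the thin--annulus families from the proof of lemma~\ref{lemmaRadialExampleLocal}. The paper's own proof of this claim is extremely terse---it simply says that necessity ``follows directly from the examples of lemmas~\ref{lemmaRadialExampleSobolev} and~\ref{lemmaRadialExampleLocal}'' together with a logarithmic singularity for the \(N=2\), \(\beta=0\) case---so your write-up is considerably more explicit.

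The one substantive difference is your Step~2. To force \(\theta\le 1\) you build a superposition \(\phi+b_R\) consisting of a fixed bump plus a distant thick shell whose Dirichlet energy stays bounded while its Coulomb energy diverges; the negative exponent on the Coulomb factor then drives the right-hand side to zero at the fixed point \(x_0\). This construction is \emph{not} contained in the families of lemmas~\ref{lemmaRadialExampleSobolev} or~\ref{lemmaRadialExampleLocal} as stated: both of those families have bounded \(E^{\alpha,p}\)-norm, and once \(\beta\) is pinned down by scaling, the pure dilation family of lemma~\ref{lemmaRadialExampleSobolev} is exactly neutral in \(\theta\) (the exponents balance regardless of \(\theta\)), while the annular family of lemma~\ref{lemmaRadialExampleLocal} only produces the lower bound \(\theta\ge\theta_-\). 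So your Step~2 genuinely supplies an argument that the paper leaves implicit---presumably the paper is tacitly treating \(\theta\) as an interpolation parameter in \([0,1]\) from the outset. Your handling of the borderline \(p(N-2)=N+\alpha\) is a little loose (on that line the Coulomb energy is dominated by a power of the Dirichlet energy, so your shell cannot make it diverge, and in fact the estimate with \(\theta>1\) then \emph{follows} from Ni's inequality; the ``only if'' is vacuous there), but this is a degenerate case that neither you nor the paper treats carefully.

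Your \(N=2\) counterexample (the logarithmic cut-offs \(u_k\)) is different from the paper's \((\log\lvert x\rvert)^\gamma\) singularity but equally valid; both exhibit an unbounded or nonvanishing function with vanishing Dirichlet energy.
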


\begin{proofclaim}
First we remark that when \(N = 2\), the estimate cannot hold if \(\beta = 0\): indeed if \(u \in C^1 (\R^2 \setminus \{0\})\) is radial, \(\supp u\) is compact and \(u (x) = (\log \abs{x})^\gamma\) for some \(\gamma \in (0, 1/2)\) for \(x\) in a neighbourhood of \(0\), then \(u\in E^{\alpha, p}_\rad (\R^N)\) but \(u\) is unbounded.

The necessity of all other conditions follows directly from the examples of lemmas~\ref{lemmaRadialExampleSobolev} and
~\ref{lemmaRadialExampleLocal}.
\end{proofclaim}

This completes the proof of theorem~\ref{theoremRadialEstimates}.
\end{proof}

The proof of Claim~\ref{claimSmallAlpha} also works when \(\beta = 0\) and \(\theta = \frac{1 + \alpha}{1 + p + \alpha}\) and gives back the proof of Theorem~\ref{DimensionOne}.

Using uniform estimates of theorem~\ref{theoremRadialEstimates}, we are going to prove a weighted version of
the radial embedding of theorem~\ref{theoremRadialSobolev-intro}. The weight will be used later
in order to prove compactness of the embedding.

\begin{proposition}[Weighted radial embedding]\label{propositionRadialSobolevLargeAlpha}
Let \(N \ge 2\), $\alpha\in(0,N)$, $\gamma\in\R$, $p\ge 1$ and $q\ge p$.
If \(p (N - 2) \ne N + \alpha\),
\[
 \frac{1 - \frac{p}{q}}{1 + \frac{p}{1 + \min (1, \alpha)}} < \theta < 1,
\]
and
\[
 \frac{N - \gamma}{q} = \theta \frac{N - 2}{2} + (1 - \theta) \frac{N + \alpha}{2 p},
\]
then for every $u\in E^{\alpha,p}_\rad(\R^N)$,
\[
 \int_{\R^N} \frac{\abs{u (x)}^q}{\abs{x}^\gamma}\dif x \le C \Bigl(\int_{\R^N} \abs{D u}^2 \Bigr)^\frac{\theta}{2} \Bigl(\int_{\R^N} \abs{I_{\alpha/2} \ast \abs{u}^p}^2\Bigr)^\frac{1 - \theta}{2}.
\]
\end{proposition}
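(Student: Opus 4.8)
\emph{Plan.}
The idea is to combine the pointwise radial decay estimate of theorem~\ref{theoremRadialEstimates} with the weighted Coulomb estimate of proposition~\ref{propositionRuizPowerExterior}, splitting $\R^N$ into a ball $B_R$ and its complement $\R^N\setminus B_R$ and then removing the dependence on $R$ by an elementary optimisation. Since both theorem~\ref{theoremRadialEstimates} and proposition~\ref{propositionRuizPowerExterior} are already available on $E^{\alpha,p}_\rad(\R^N)$, no smoothing is needed. Set $\beta(\vartheta) = \vartheta\tfrac{N-2}{2} + (1-\vartheta)\tfrac{N+\alpha}{2p}$; theorem~\ref{theoremRadialEstimates} gives, for every $\vartheta$ in $[\tfrac{1}{1+p/(1+\min(1,\alpha))},1)$ and almost every $x$,
\[
  \abs{u(x)} \le \frac{C}{\abs{x}^{\beta(\vartheta)}}\Bigl(\int_{\R^N}\abs{Du}^2\Bigr)^{\frac{\vartheta}{2}}\Bigl(\int_{\R^N}\abs{I_{\alpha/2}\ast\abs{u}^p}^2\Bigr)^{\frac{1-\vartheta}{2p}}.
\]
Assume first $p(N-2) < N+\alpha$, so $\beta$ is strictly decreasing on $[0,1]$, and observe that the hypothesis amounts to $(N-\gamma)/q = \beta(\theta)$ with $\tfrac{1-p/q}{1+p/(1+\min(1,\alpha))} < \theta < 1$.

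\emph{The two estimates.}
On $B_R$ I would raise the displayed inequality to the power $q$ with a parameter $\vartheta_0$ chosen in $(\max\{\theta,\tfrac{1}{1+p/(1+\min(1,\alpha))}\},1)$ --- possible because $\theta<1$ --- so that $\beta(\vartheta_0)<\beta(\theta)=(N-\gamma)/q$, whence $\gamma+q\beta(\vartheta_0)<N$ and, integrating the explicit power,
\[
  \int_{B_R}\frac{\abs{u}^q}{\abs{x}^\gamma}\dif x \le C\,R^{\,N-\gamma-q\beta(\vartheta_0)}\Bigl(\int_{\R^N}\abs{Du}^2\Bigr)^{\frac{q\vartheta_0}{2}}\Bigl(\int_{\R^N}\abs{I_{\alpha/2}\ast\abs{u}^p}^2\Bigr)^{\frac{q(1-\vartheta_0)}{2p}}.
\]
On $\R^N\setminus B_R$, if $\theta>\tfrac{1}{1+p/(1+\min(1,\alpha))}$ one argues the same way with $\vartheta_1\in[\tfrac{1}{1+p/(1+\min(1,\alpha))},\theta)$, so that $\gamma+q\beta(\vartheta_1)>N$ and the exterior integral of $\abs{x}^{-\gamma-q\beta(\vartheta_1)}$ converges; while if $\theta\le\tfrac{1}{1+p/(1+\min(1,\alpha))}$ one writes $\abs{u}^q=\abs{u}^{q-p}\abs{u}^p$, bounds $\abs{u}^{q-p}$ by the displayed estimate with $\vartheta_1=\tfrac{1}{1+p/(1+\min(1,\alpha))}$, and controls the remaining factor $\int_{\R^N\setminus B_R}\abs{u}^p\abs{x}^{-\gamma-\beta(\vartheta_1)(q-p)}\dif x$ by the exterior part of proposition~\ref{propositionRuizPowerExterior}, which applies precisely because the weight exponent $\gamma+\beta(\vartheta_1)(q-p)$ exceeds $\tfrac{N-\alpha}{2}$. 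Either way one gets a bound on $\R^N\setminus B_R$ of the form $C\,R^{-e_1}\bigl(\int_{\R^N}\abs{Du}^2\bigr)^{a_1}\bigl(\int_{\R^N}\abs{I_{\alpha/2}\ast\abs{u}^p}^2\bigr)^{b_1}$ with $e_1>0$ and $2a_1+2pb_1=q$.

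\emph{Combining, and the main obstacle.}
Adding the two bounds and minimising $C R^{e_0}B_0 + C R^{-e_1}B_1$ over $R>0$ gives $\int_{\R^N}\abs{u}^q\abs{x}^{-\gamma}\dif x \le C\,B_0^{s}B_1^{1-s}$ for the appropriate $s\in(0,1)$; both $B_0$ and $B_1$ are homogeneous of degree $q$ in $u$, and by construction each $B_i$ has $x$-scaling degree $c_i$ with $e_0+c_0 = -e_1+c_1 = N-\gamma$, so $B_0^sB_1^{1-s}$ has $u$-degree $q$ and $x$-degree $N-\gamma$; the two resulting linear relations force its exponents to be exactly $\tfrac{q\theta}{2}$ on $\int_{\R^N}\abs{Du}^2$ and $\tfrac{q(1-\theta)}{2p}$ on $\int_{\R^N}\abs{I_{\alpha/2}\ast\abs{u}^p}^2$, which is the asserted estimate. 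The case $p(N-2)>N+\alpha$ is symmetric, exchanging the roles of $B_R$ and $\R^N\setminus B_R$ (equivalently of $R\to0$ and $R\to\infty$). The one genuinely delicate point --- and the reason for the exact shape of the hypotheses --- is the bookkeeping guaranteeing that $\vartheta_0$ and $\vartheta_1$ as above exist throughout the stated $\theta$-range: writing everything in terms of $\theta$ via $\gamma = N - q\beta(\theta)$ and using the identity $\tfrac{N-2}{2}+\tfrac{(N+\alpha)-p(N-2)}{2p}=\tfrac{N+\alpha}{2p}$, one checks that the condition ``$\beta(\vartheta_0)<\beta(\theta)$ is achievable with $\vartheta_0<1$'' is equivalent to $\theta<1$, and that the condition ``$\gamma+\beta(\vartheta_1)(q-p)>\tfrac{N-\alpha}{2}$ at $\vartheta_1=\tfrac{1}{1+p/(1+\min(1,\alpha))}$'' is equivalent to $\theta>\tfrac{1-p/q}{1+p/(1+\min(1,\alpha))}$; the second is exactly the place where proposition~\ref{propositionRuizCounterexample} forbids the endpoint. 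Everything else is Hölder's inequality, explicit integration of powers of $\abs{x}$, and the one-variable minimisation.
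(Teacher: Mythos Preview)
Your proof is correct and follows essentially the same strategy as the paper: split into $B_R$ and its complement, use the pointwise radial estimate (theorem~\ref{theoremRadialEstimates}) on each piece with suitable parameters, invoke proposition~\ref{propositionRuizPowerExterior} on the exterior when needed, and optimise in $R$. The paper always uses the decomposition $\abs{u}^q=\abs{u}^{q-p}\abs{u}^p$ on the exterior with $\theta_*=\tfrac{1}{1+p/(1+\min(1,\alpha))}$, whereas you additionally observe that when $\theta$ already lies above this threshold a direct pointwise bound with $\vartheta_1<\theta$ suffices; this is a harmless variation. Your closing scaling/homogeneity verification of the exponents is a nice explicit check that the paper leaves implicit.
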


If \(\alpha > 1\), the assumptions reduce to
\begin{equation}\tag{$\mathcal Q_{\rad,\gamma}$}\label{Qrad-beta}
\begin{split}
\text{either}\quad & \frac{1}{p} > \frac{N - 2}{N + \alpha} \quad\text{and}\quad
   \frac{N-2}{2 (N - \gamma)} < \frac{1}{q} < \frac{3 N + \alpha - 4}{2(p (2N - 2 - \gamma) - (N - \alpha - 2 \gamma))}\,,\bigskip\\
\text{or}\quad & \frac{1}{p} < \frac{N - 2}{N + \alpha} \quad\text{and}\quad
\frac{N-2}{2 (N - \gamma)} > \frac{1}{q} > \frac{3 N + \alpha - 4}{2(p (2N - 2 - \gamma) - (N - \alpha - 2 \gamma))}\,.
\end{split}
\end{equation}

\begin{proof}
If \(\frac{1}{p} > \frac{N - 2}{N + \alpha}\), then for every \(\theta^* \ge 1/(1 + p/(1 + \min(1, \alpha)))\) and \(\beta^* = \theta^* \frac{N - 2}{2} + (1 - \theta^*) \frac{N + \alpha}{p}\) by Theorem~\ref{theoremRadialEstimates},
we have
\[
 \abs{u (x)} \le \frac{C}{\abs{x}^{\beta^*}} \Bigl(\int_{\R^N} \abs{D u}^2 \Bigr)^\frac{\theta^*}{2}\Bigl( \int_{\R^N} \abs{I_{\alpha/2} \ast \abs{u}^p}^2\Bigr)^\frac{1 - \theta^*}{2 p}.
\]
Since \(1/(1 + p/(1 + \min(1, \alpha))) < \theta < 1\) and \(\frac{N - \gamma}{q} = \theta \frac{N - 2}{2} + (1 - \theta) \frac{N + \alpha}{2 p}\), we can choose \(\theta^* \in (\theta, 1)\) such that
\(\gamma + \beta^* q < N\), and we have thus
\begin{equation}
\label{eqRadialSobolevLargeAlphaInterior}
\begin{split}
  \int_{B_R} \frac{\abs{u (x)}^q}{\abs{x}^\gamma}\dif x
  &\le C^q \Bigl(\int_{\R^N} \abs{D u}^2 \Bigr)^\frac{\theta^* q}{2} \Bigl( \int_{\R^N} \abs{I_{\alpha/2} \ast \abs{u}^p}^2\Bigr)^\frac{(1 - \theta^*)q}{2 p} \int_{B_R} \frac{1}{\abs{x}^{\gamma + \beta^* q}}\dif x\\
  &\le \frac{C'}{R^{\gamma + \beta^* q - N}} \Bigl(\int_{\R^N} \abs{D u}^2 \Bigr)^\frac{\theta^* q}{2} \Bigl( \int_{\R^N} \abs{I_{\alpha/2} \ast \abs{u}^p}^2\Bigr)^\frac{(1 - \theta^*)q}{2 p}.
\end{split}
\end{equation}
On the other hand, by theorem~\ref{theoremRadialEstimates} again we have
\[
 \int_{\R^N \setminus B_R} \frac{\abs{u (x)}^q}{\abs{x}^\gamma}\dif x
 \le C^{q - p}  \Bigl(\int_{\R^N} \abs{D u}^2 \Bigr)^\frac{\theta_* (q - p)}{2} \Bigl(\int_{\R^N} \abs{I_{\alpha/2} \ast \abs{u}^p}^2\Bigr)^\frac{(1 - \theta_*) (q - p)}{2 p}
   \int_{\R^N \setminus B_R} \frac{\abs{u (x)}^p}{\abs{x}^{\gamma + \beta_* (q - p)}} \dif x,
\]
with \(\theta_* = 1/(1 + 1/(1 + \min (1, \alpha)))\). By our assumption \(\theta > (1 - \frac{p}{q})/(1 + 1/(1 + \min (1, \alpha)))\), we have
\[
 \gamma + \beta_* (q - p) = \frac{N - \alpha}{2} + \Bigl(\frac{N + \alpha}{2 p} - \frac{N - 2}{2} \Bigr)\bigl(q \theta - (q - p) \theta_* \bigr) > \frac{N - \alpha}{2}
\]
By our assumption , we have \(\gamma + \beta_* (q - p) > \frac{N - \alpha}{2}\), and thus by the estimate of proposition~\ref{propositionRuizPowerExterior},
\begin{equation}\label{eqRadialSobolevLargeAlphaExterior}
 \int_{\R^N \setminus B_R} \frac{\abs{u (x)}^q}{\abs{x}^\beta}\dif x
 \le \frac{C^{q - p}}{R^{\gamma + \beta_* (q - p) - \frac{N - \alpha}{2}} }  \Bigl(\int_{\R^N} \abs{D u}^2 \Bigr)^\frac{\theta_* (q - p)}{2} \Bigl(\int_{\R^N} \abs{I_{\alpha/2} \ast \abs{u}^p}^2\Bigr)^\frac{(1 - \theta_*)q - \theta_* p}{2 p}.
\end{equation}
The result follows by combining the estimates \eqref{eqRadialSobolevLargeAlphaInterior} and \eqref{eqRadialSobolevLargeAlphaExterior} and optimizing with respect to \(R > 0\).

The case \(\frac{1}{p} < \frac{N - 2}{N + \alpha}\) is similar.
\end{proof}

Theorem~\ref{theoremRadialSobolev-intro} follows directly from the previous results.

\begin{proof} [Proof of theorem~\ref{theoremRadialSobolev-intro}]
The estimates follow from the weighted estimates of Proposition~\ref{propositionRadialSobolevLargeAlpha} in the constant weight case \(\gamma = 0\) and from the nonradial estimates of Theorem~\ref{theoremEmbedding}.

The necessity follows from Lemmas~\ref{lemmaRadialExampleSobolev}, \ref{lemmaRadialExampleLocal} and \ref{lemmaNoRadialCritical}.
\end{proof}

Next we are going to show that away from the end points of the embedding interval the embedding
\(E^{\alpha, p}_\rad (\R^N)\subset L^q (\R^N)\) is compact.

\begin{proposition}[Compactness of the radial embedding]\label{propositionRadialCompactness}
Let $N\geq2$, $\alpha\in (0,N)$ and $p\ge 1$.
Assume that
\[
  \frac{1 - \frac{p}{q}}{1 + \frac{p}{1 + \min (1, \alpha)}} < \frac{\frac{N}{q} - \frac{N + \alpha}{2 p}}{\frac{N - 2}{2} - \frac{N + \alpha}{2 p}} < 1.
\]
Then the embedding \(E^{\alpha, p}_\rad (\R^N)\subset L^q (\R^N)\) is compact.
When $\alpha\neq 1$ the conditions are also necessary for compactness of the embedding.
\end{proposition}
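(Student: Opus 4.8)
The plan is to run a standard Strauss-type argument: I deduce compactness by combining the local compactness of the embedding into Lebesgue spaces with a uniform control of the $L^q$-mass of bounded sequences near $0$ and near $\infty$, the whole point of the two strict inequalities being that they furnish exactly this control with room to spare. The necessity part is then read off from the extremal families already constructed, and the only place where $\alpha=1$ is genuinely excluded is in that second part.

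\textbf{Sufficiency.} Let $(u_n)_{n\in\N}$ be bounded in $E^{\alpha,p}_\rad(\R^N)$. By proposition~\ref{propositionElementaryLocalWeakCompactness} a subsequence converges in $L^1_\loc(\R^N)$ and almost everywhere to some $u$, which by proposition~\ref{propositionSemiContinuityLocalConvergence} lies in $E^{\alpha,p}(\R^N)$ and is radial. Write $\theta=\dfrac{N/q-(N+\alpha)/(2p)}{(N-2)/2-(N+\alpha)/(2p)}$ and $\theta_1=\dfrac{1-p/q}{1+p/(1+\min(1,\alpha))}$, so that the hypothesis reads $\theta_1<\theta<1$; in particular $p(N-2)\ne N+\alpha$. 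First, $u_n\to u$ in $L^q(B_R)$ for every $R>0$: when $\frac1p>\frac{N-2}{N+\alpha}$ the bound $\theta<1$ is precisely $\frac1q>\frac12-\frac1N$, which is the hypothesis of proposition~\ref{propositionAdvancedLocalWeakCompactness}, so the embedding into $L^q_\loc$ is compact; in the complementary case $\frac1p<\frac{N-2}{N+\alpha}$ (forcing $N\ge3$ and $q>\frac{2N}{N-2}$) Ni's radial estimate \eqref{ineqNi} gives a uniform $L^\infty$-bound for $(u_n)$ on each annulus $B_R\setminus B_\delta$, hence $L^q$-convergence there, the residual ball $B_\delta$ being absorbed into the tail estimate. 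Second, the tails: since $\theta$ lies in the \emph{open} interval $(\theta_1,1)$ and $\gamma\mapsto\theta(\gamma):=\dfrac{(N-\gamma)/q-(N+\alpha)/(2p)}{(N-2)/2-(N+\alpha)/(2p)}$ is continuous with $\theta(0)=\theta$, there is $\gamma_0>0$ with $\theta(\gamma_0),\theta(-\gamma_0)\in(\theta_1,1)$; applying the weighted radial embedding of proposition~\ref{propositionRadialSobolevLargeAlpha} with the weight exponents $\gamma=\pm\gamma_0$ yields $\sup_n\int_{\R^N}\abs{u_n}^q\abs{x}^{\gamma_0}<\infty$ and $\sup_n\int_{\R^N}\abs{u_n}^q\abs{x}^{-\gamma_0}<\infty$ (and the same for $u$), whence $\int_{B_\delta}\abs{u_n}^q\le C\delta^{\gamma_0}$ and $\int_{\R^N\setminus B_R}\abs{u_n}^q\le CR^{-\gamma_0}$ uniformly in $n$. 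In the remaining borderline regime $q<p$ (which can occur only in the second case above), where proposition~\ref{propositionRadialSobolevLargeAlpha} is not directly applicable, the analogous tail bounds follow instead from the radial decay estimates of theorem~\ref{theoremRadialEstimates} together with proposition~\ref{propositionRuizPowerExterior} and \eqref{ineqNi}. A three-$\varepsilon$ argument — first $n\to\infty$, then $R\to\infty$ and $\delta\to0$ — now gives $u_n\to u$ in $L^q(\R^N)$, proving compactness.

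\textbf{Necessity $(\alpha\ne1)$.} A compact embedding is in particular continuous, so theorem~\ref{theoremRadialSobolev-intro} forces $q$ into the closed range $\theta_1\le\theta\le1$, and for $\alpha>1$ the left endpoint $\theta=\theta_1$ (i.e.\ $q=2\frac{2p(N-1)+N-\alpha}{3N+\alpha-4}$) is excluded already at the level of continuity by lemma~\ref{lemmaNoRadialCritical}. It remains to discard the two endpoints for general $\alpha\ne1$. The endpoint $\theta=1$, i.e.\ $q=\frac{2N}{N-2}$ (so $N\ge3$), is killed by lemma~\ref{lemmaRadialExampleSobolev}: the radial family $u_R$ produced there is bounded in $E^{\alpha,p}(\R^N)$, its support shrinks to a point (or escapes to $\infty$) so $u_R\to0$ almost everywhere, yet $\norm{u_R}_{L^{2N/(N-2)}}$ stays bounded away from $0$, hence no subsequence converges in $L^q$. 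When $\alpha<1$ the left endpoint $\theta=\theta_1$ sits at the exponent $q=2\frac{2p+\alpha}{2+\alpha}$, which is disposed of by the noncompact family of lemma~\ref{lemmaRadialExampleLocal}, the remaining portion of the closed range being handled by superimposing and rescaling copies of that family exactly as in the proof of lemma~\ref{lemmaNoRadialCritical}. Thus $\theta_1<\theta<1$ is necessary; the case $\alpha=1$ is deliberately left aside, in accordance with the corresponding open problem.

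\textbf{Main difficulty.} Because the radial decay estimates, the weighted radial embedding, and the three extremal families are all in place, the proof is essentially an assembly; the one genuinely delicate point is combinatorial rather than analytic, namely to check that the open condition $\theta_1<\theta<1$ really does leave the room needed to push the weight exponent $\gamma$ off $0$ \emph{uniformly}, simultaneously across the two parameter regimes $\frac1p>\frac{N-2}{N+\alpha}$ and $\frac1p<\frac{N-2}{N+\alpha}$, in the low dimension $N=2$, and in the borderline subcase $q<p$; and, on the necessity side, to patch lemmas~\ref{lemmaRadialExampleSobolev}, \ref{lemmaRadialExampleLocal} and \ref{lemmaNoRadialCritical} together so that their union covers the whole complement of the open interval while leaving precisely the gap at $\alpha=1$ uncovered.
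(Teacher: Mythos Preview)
Your argument is essentially the paper's: local compactness away from the origin via the radial $L^\infty$ decay, plus uniform tail control near $0$ and near $\infty$ obtained by perturbing the weight exponent $\gamma$ off zero in proposition~\ref{propositionRadialSobolevLargeAlpha}. The paper streamlines the local step by invoking theorem~\ref{theoremRadialEstimates} once to get a uniform $L^\infty_{\mathrm{loc}}(\R^N\setminus\{0\})$ bound, which together with $L^1_{\mathrm{loc}}$ convergence gives $L^q_{\mathrm{loc}}(\R^N\setminus\{0\})$ convergence for \emph{all} $q$ without your case split; but the substance is identical.

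Your observation that proposition~\ref{propositionRadialSobolevLargeAlpha} carries the hypothesis $q\ge p$, and that $q<p$ can genuinely occur in the supercritical regime $p(N-2)>N+\alpha$, is a good catch that the paper passes over silently. Your proposed patch is correct in spirit but a little imprecise: for the exterior tail, Ni's estimate alone suffices since $q>\tfrac{2N}{N-2}$; for the interior tail, however, the pointwise bound of theorem~\ref{theoremRadialEstimates} can fail to give an integrable majorant (one can have $\theta<\theta_{\min}=\tfrac{1}{1+p/(1+\min(1,\alpha))}$ while still $q<p$), so you should instead use H\"older together with proposition~\ref{propositionRuizBall} to obtain
\[
\int_{B_\delta}\abs{u}^q\;\le\;\Bigl(\int_{B_\delta}\abs{u}^p\Bigr)^{q/p}\abs{B_\delta}^{1-q/p}\;\le\;C\,\delta^{\,N-q(N+\alpha)/(2p)},
\]
whose exponent is positive precisely because $q<p<\tfrac{2pN}{N+\alpha}$.

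In the necessity part, once you have excluded the two endpoints (lemma~\ref{lemmaRadialExampleSobolev} at $q=\tfrac{2N}{N-2}$; lemma~\ref{lemmaRadialExampleLocal} for $\alpha<1$, lemma~\ref{lemmaNoRadialCritical} for $\alpha>1$, at the other endpoint) and noted that outside the closed interval there is no continuous embedding at all, you are done; the sentence about ``the remaining portion of the closed range'' is superfluous.
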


Equivalently, we are assuming that either $\alpha\le 1$ and \eqref{Q_0} holds, or $\alpha>1$ and \eqref{Qrad0} holds.

\begin{proof}[Proof of proposition~\ref{propositionRadialCompactness}]
Let \((u_n)_{n \in \N}\) be a bounded sequence in \(E^{\alpha, p}_\rad (\R^N)\) radial functions is bounded.
By proposition~\ref{propositionElementaryLocalWeakCompactness},
by passing to a subsequence, we can assume that the sequence \((u_n)_{n \in \N}\) converges strongly in \(L^1_{\mathrm{loc}} (\R^N)\) to \(u \in E^{\alpha, p}_\rad (\R^N)\).
In view of the uniform radial estimate of theorem~\ref{theoremRadialEstimates}, the sequence \((u_n)_{n \in \N}\) is bounded in \(L^\infty_\mathrm{loc} (\R^N\setminus\{0\})\) and thus converges strongly in \(L^q_{\mathrm{loc}} (\R^N\setminus\{0\})\) to \(0\).

Moreover, in view of our assumptions, for \(\gamma \in \R\) close to \(0\), the weighted estimates of Proposition~\ref{propositionRadialSobolevLargeAlpha} apply. In particular, we have for \(\gamma > 0\),
\[
 \sup_{n \in \N} R^{-\gamma} \int_{B_R} \abs{u_n}^q
 \le \limsup_{n \in \N} \int_{\R^N} \frac{\abs{u_n (x)}^q}{\abs{x}^\gamma} \dif x
 < \infty,
\]
and therefore
\[
 \limsup_{R \to 0} \sup_{n \in \N} \int_{B_R} \abs{u_n}^q = 0.
\]
Similarly, for \(\gamma < 0\) close enough  to \(0\), we have
\[
 \sup_{n \in \N} R^{\gamma} \int_{\R^N \setminus B_R} \abs{u_n}^q
 \le \limsup_{n \in \N} \int_{\R^N} \frac{\abs{u_n (x)}^q}{\abs{x}^\gamma} \dif x
 < \infty,
\]
and thus
\[
 \limsup_{R \to \infty} \sup_{n \in \N} \int_{\R^N \setminus B_R} \abs{u_n}^q = 0.
\]
It follows that the sequence \((u_n)_{n \in \N}\) converges strongly to \(u\) in \(L^q (\R^N)\).

When $\alpha\neq 1$ as a result of lemma \ref{lemmaRadialExampleSobolev}, lemma \ref{lemmaRadialExampleLocal} and lemma \ref{lemmaNoRadialCritical},
the conditions are also necessary for the compactness of the embedding.
\end{proof}

In the case $\alpha=1$ the compactness of the critical embedding $E^{1,p}_\rad (\R^N)\subset L^{\frac{2}{3}(2p+1)}(\R^N)$ in the case $(N-2)p\neq N+1$ is an open question.

\section{Existence of radially symmetric solutions and proof of theorem~\ref{t-radial-intro}}

Consider the radial minimization problem
\begin{equation}\label{varprob1}
M_{c,\rad}=\inf_{u\in \mathcal A_{c,\rad}} \mathcal{E}_*(u)
\end{equation}
where $E$ is the energy functional defined in \eqref{E-defn},
\begin{equation*}
\mathcal A_{c,\rad}=\{u \in E^{\alpha,p}_\rad(\R^N): \int_{\R^N}\abs{u}^q=c\},
\end{equation*}
and $c>0$. Since the rescaling relation \eqref{scaledinfimum} also applies to $M_{c,\rad}$,
in what follows we can restrict ourself to the case $c=1$.

\begin{proposition}\label{MinRadialExistence}
Let $N\geq2,$ \(\alpha \in (0, N)\), \(p \ge 1\).
Assume that either $\alpha\le 1$ and assumption \eqref{Q_0} holds,
or $\alpha> 1$ and assumption \eqref{Qrad0} holds.
Then all the minimising sequences for $M_{1,\rad}$ in \eqref{varprob1} are relatively compact in $E^{\alpha,p}_\rad(\R^N)$.
In particular, the best constant $M_{1,\rad}$ is achieved.
\end{proposition}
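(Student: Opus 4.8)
The plan is to run the same scheme as in the proof of Theorem~\ref{thm-additive}, with the compactness of the radial embedding (Proposition~\ref{propositionRadialCompactness}) playing the role of the ``compactness up to translations'' argument. By the rescaling relation \eqref{scaledinfimum} it suffices to treat $c=1$. First I would take a minimizing sequence $(u_n)_{n\in\N}$ for $M_{1,\rad}$; replacing $u_n$ by $\abs{u_n}$ changes neither $\mathcal{E}_*(u_n)$ nor $\int_{\R^N}\abs{u_n}^q$, so I may assume $u_n\ge 0$, and $(u_n)_{n\in\N}$ is bounded in $E^{\alpha,p}_\rad(\R^N)$. Under the stated hypotheses --- either $\alpha\le1$ together with \eqref{Q_0}, or $\alpha>1$ together with \eqref{Qrad0} --- the assumptions of Proposition~\ref{propositionRadialCompactness} hold, so after passing to a subsequence $u_n\to u$ strongly in $L^q(\R^N)$; by Proposition~\ref{propositionElementaryLocalWeakCompactness} and Proposition~\ref{propositionSemiContinuityLocalConvergence} I may further assume $u_n\to u$ in $L^1_{\loc}(\R^N)$ and almost everywhere, with $Du_n\rightharpoonup Du$ weakly in $L^2(\R^N)$. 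The limit $u$ is radial and nonnegative, and $\int_{\R^N}\abs{u}^q=\lim_n\int_{\R^N}\abs{u_n}^q=1$, so $u\in\mathcal A_{1,\rad}$ and in particular $u\ne0$.

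Next, the Fatou property of Proposition~\ref{propositionSemiContinuityLocalConvergence} gives $\mathcal{E}_*(u)\le\liminf_n\mathcal{E}_*(u_n)=M_{1,\rad}$, and since $u\in\mathcal A_{1,\rad}$ also $\mathcal{E}_*(u)\ge M_{1,\rad}$; hence $\mathcal{E}_*(u)=M_{1,\rad}$ and $u$ is a minimizer. To upgrade this to strong convergence in $E^{\alpha,p}_\rad(\R^N)$ I would expand both quadratic terms. Weak convergence $Du_n\rightharpoonup Du$ in $L^2$ yields $\int_{\R^N}\abs{Du_n}^2=\int_{\R^N}\abs{D(u_n-u)}^2+\int_{\R^N}\abs{Du}^2+o(1)$, while the nonlocal Brezis--Lieb lemma (Proposition~\ref{propositionBrezisLieb}) gives
$$\int_{\R^N}\bigabs{I_{\alpha/2}\ast\abs{u_n}^p}^2=\int_{\R^N}\bigabs{I_{\alpha/2}\ast\abs{u_n-u}^p}^2+\int_{\R^N}\bigabs{I_{\alpha/2}\ast\abs{u}^p}^2+2\int_{\R^N}\bigl(I_{\alpha/2}\ast\abs{u_n-u}^p\bigr)\bigl(I_{\alpha/2}\ast\abs{u}^p\bigr)+o(1).$$
Substituting both identities into $\mathcal{E}_*(u_n)=\mathcal{E}_*(u)+o(1)$ and using that $I_{\alpha/2}$ is a positive kernel, so that the cross term is nonnegative, I obtain
$$\tfrac12\int_{\R^N}\abs{D(u_n-u)}^2+\tfrac1{2p}\int_{\R^N}\bigabs{I_{\alpha/2}\ast\abs{u_n-u}^p}^2\le o(1),$$
whence $\int_{\R^N}\abs{D(u_n-u)}^2\to0$ and $\int_{\R^N}\bigabs{I_{\alpha/2}\ast\abs{u_n-u}^p}^2\to0$, i.e.\ $u_n\to u$ in $E^{\alpha,p}_\rad(\R^N)$. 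Since this argument applies to every subsequence of an arbitrary minimizing sequence, all minimizing sequences are relatively compact in $E^{\alpha,p}_\rad(\R^N)$, and in particular $M_{1,\rad}$ is achieved.

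I do not expect a genuine obstacle here: once Proposition~\ref{propositionRadialCompactness} is available --- and that compact embedding is the real content, already established above --- the remaining steps are soft. The one point requiring a little care is the last one: naive weak lower semicontinuity of the Coulomb term would not by itself force $\norm{I_{\alpha/2}\ast\abs{u_n-u}^p}_{L^2}\to0$, so one genuinely needs the Brezis--Lieb splitting of Proposition~\ref{propositionBrezisLieb} together with the positivity of the Riesz kernel to annihilate the cross term. This is also what makes the strong convergence work uniformly for all $p\ge1$, including $p=1$, where $E^{\alpha,p}_\rad(\R^N)$ is not uniformly convex and the alternative ``weak convergence plus norm convergence'' route is unavailable.
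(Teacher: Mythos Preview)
Your proof is correct and follows essentially the same strategy as the paper: use the compact radial embedding (Proposition~\ref{propositionRadialCompactness}) to get strong \(L^q\) convergence, the Fatou property (Proposition~\ref{propositionSemiContinuityLocalConvergence}) to identify \(u\) as a minimizer, and then upgrade to strong \(E^{\alpha,p}_\rad\) convergence via a Brezis--Lieb splitting.

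There is one small but genuine technical difference worth noting. For the Coulomb term, the paper invokes Proposition~\ref{propositionBrezisLiebLp} (Brezis--Lieb with high local integrability), which requires an auxiliary exponent \(s>p\) with \((u_n)\) bounded in \(L^s_{\loc}\); this is available here thanks to the radial embeddings, but has to be checked. You instead use the general nonlocal Brezis--Lieb of Proposition~\ref{propositionBrezisLieb} directly and exploit the nonnegativity of the cross term \(\int (I_{\alpha/2}\ast|u_n-u|^p)(I_{\alpha/2}\ast|u|^p)\) to discard it. This is cleaner: it bypasses the \(L^s_{\loc}\) hypothesis entirely and, as you observe, works uniformly for all \(p\ge1\), including \(p=1\) where the uniform convexity route is unavailable. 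One cosmetic point: the reduction to \(u_n\ge0\) is unnecessary for the relative compactness claim (and your argument never uses it), so you may simply drop it.
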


\begin{proof}
Let \((u_n)_{n \in \N}\subset\mathcal A_{1,\rad}\) be a minimizing sequence for $M_{1,\rad}$.
By the minimization property, we have $\sup_{n\in\N} \mathcal{E}_*(u_n)< \infty$.
In view of proposition~\ref{propositionElementaryLocalWeakCompactness} and proposition~\ref{propositionSemiContinuityLocalConvergence}, up to a subsequence, $(u_n)_{n\in\N}$ converges to a function $u\in E^{\alpha,p}(\R^N)$ strongly in $L^1_{\textrm{loc}}(\R^N)$.
Moreover, $\mathcal{E}_*(u)\le M_{1,\rad}$.
By proposition~\ref{propositionRadialCompactness}, the  embedding $E^{\alpha,p}_\rad(\R^N)\subset L^q(\R^N)$ is compact.
Hence, $(u_n)_{n\in\N}$ converges to $u$ strongly in \(L^q (\R^N)\), $u\in \mathcal A_{1,\rad}$ and $\mathcal{E}_*(u)=M_{1,\rad}$.
Therefore, the sequence \((D u_n)_{n \in \N}\) converges strongly to \(Du\) in \(L^2 (\R^N)\),
while the sequence \((I_{\alpha/2} \ast \abs{u_n}^p)_{n \in \N}\) converges strongly to \(I_{\alpha/2} \ast \abs{u}^p\) in \(L^2 (\R^N)\) by the Brezis--Lieb property of proposition~\ref{propositionBrezisLiebLp}.
\end{proof}

\begin{proof}[Proof of theorem~\ref{t-radial-intro}]
The compactness part of the statement of the theorem is contained in proposition~\ref{propositionRadialCompactness}.
The existence of a minimiser $w\in E^{\alpha,p}_\rad(\R^N)$ for $M_{1,\rad}$ follows from proposition \ref{MinRadialExistence}.
Since $|w|\in E^{\alpha,p}_\rad(\R^N)$ is also a minimiser for $M_{1,\rad}$, we can assume that $w$ is nonnegative.
Since $p>1$, the energy $E$ is of class $C^1(E^{\alpha,p}_\rad(\R^N);\mathbb R)$ and
by the Palais symmetric criticality principle \cite{WillemMinimax}*{theorem 1.28},
$w$ is a weak solution of
\begin{equation}\label{sps-mu-rad}
 - \Delta w + (I_\alpha \ast \abs{w}^p)\abs{w}^{p - 2} w= q\mu\abs{w}^{q-2}w\quad\text{in \(\R^N\)},
\end{equation}
with a Lagrange multiplier $\mu>0$.

Local regularity of $w$ follows via the same arguments as in the proof of proposition~\ref{proposition:Regularity},
but using the radial Coulomb--Sobolev embedding of theorem~\ref{theoremRadialSobolev-intro} in order
to estimate the norm in \eqref{s-regularity}. In addition to local regularity,
uniform decay estimate of theorem~\ref{theoremRadialEstimates} implies that $w(\abs{x})\to 0$ as $\abs{x}\to\infty$.
Finally, positivity of $w$ for $p\ge 2$ follows by the arguments of proposition~\ref{proposition:Positivity}.

If $q\neq 2\frac{2p+\alpha}{2+\alpha}$,
then as in \eqref{EulerMinim2+}, minimizer $w$ can be rescaled to a solution of the original equation \eqref{sps} with $\mu=1$.
On the other hand, if $q=2\frac{2p+\alpha}{2+\alpha}$ then \eqref{sps-mu-rad} is invariant
with respect to the scaling  \eqref{EulerMinim2+}. In this case,
using Poho\v zaev identity of proposition~\ref{theoremPohozhaev},
similarly to Poho\v zaev relations \eqref{pohozaev-realtions} and since $\mathcal{E}_*(w)=M_{1,\rad}$,
we compute that
$$\mu=2M_{1,\rad}\frac{(N-2)p-(N+\alpha)}{2N(p-1)-q(2+\alpha)}.$$
In particular, if $q=2\frac{2p+\alpha}{2+\alpha}$ then $\mu=M_{1,\rad}$, and this concludes the proof.
\end{proof}

\begin{remark}
The significance of the threshold  $q=2\frac{2p+\alpha}{2+\alpha}$ in the statement
of theorem~\ref{t-radial-intro} is clarified by the analysis of the geometry of the unconstrained energy functional
$$\mathcal{J}_*(u)=\frac{1}{2}\int_{\R^N}|Du|^2+\frac{1}{2p}\int_{\R^N}\big|I_{\alpha/2}*\abs{u}^p\big|^2-\frac{1}{q}\int_{\R^N}\abs{u}^q.$$
Similarly to the arguments in \cite{Ruiz-ARMA}*{theorem 1.3}, one can show that
if the  assumption \eqref{Q_0} holds, then $\mathcal J_*$ has a mountain pass geometry on $E^{\alpha,p}_\rad(\R^N)$.
On the other hand, if \(\alpha > 1\) and the complementary to \eqref{Q_0} assumption
\begin{equation}\tag{$\mathcal Q''$}\label{Qrad3}
\begin{split}
\text{either}\quad & \frac{1}{p} > \frac{(N - 2)_+}{N + \alpha} \quad\text{and}\quad
   \frac{1}{2} - \frac{p - 1}{\alpha + 2 p} < \frac{1}{q} < \frac{3 N + \alpha - 4}{2(2 p (N - 1) + N - \alpha)}\,,\bigskip\\
\text{or}\quad & \frac{1}{p} < \frac{(N - 2)_+}{N + \alpha} \quad\text{and}\quad
   \frac{1}{2} - \frac{p - 1}{\alpha + 2 p} > \frac{1}{q}> \frac{3 N + \alpha - 4}{2(2 p (N - 1) + N - \alpha)}\,
\end{split}
\end{equation}
holds, then $\mathcal J_*$ is coercive on $E^{\alpha,p}_\rad(\R^N)$ and $\inf_{u\in E^{\alpha,p}_\rad(\R^N)} \mathcal{J}_*(u)<0$.

Indeed, a scaling argument similar to \cite{Ruiz-ARMA}*{p.\thinspace{}361}
together with the continuous radial embedding of $E^{\alpha,p}_\rad(\R^N)$ into $L^q(\R^N)$
shows that there exists a constant $C>0$ such that
\begin{equation}\label{MP1}
\mathcal{J}_*(u)\geq \mathcal{E}_*(u)-C \mathcal{E}_*(u)^{\frac{2N(p-1)-(2+\alpha)q}{2(p(N-2)-N-\alpha)}},
\end{equation}
where $\mathcal{E}_*$ is defined in \eqref{E-defn}.
Then \eqref{Q_0} implies that $u=0$ is a strict local minimum of $\mathcal J_*$,
while if the assumption \eqref{Qrad3} holds
then $\mathcal J_*$ is coercive on $E^{\alpha,p}_\rad(\R^N)$.

If $p>1$, then for $u\in E^{\alpha,p}_\rad(\R^N)\setminus\{0\}$ and $\lambda>0$ we define
$$u_\lambda(x)=\lambda^{\frac{2+\alpha}{2(p-1)}}u(\lambda x).$$
Then
\begin{equation*}
\mathcal{J}_*(u_\lambda)=\lambda^{\frac{2+\alpha+(2-N)(p-1)}{p-1}}\Big(\frac{1}{2}\int_{\R^N}\abs{D u}^2+\frac{1}{2p}\int_{\R^N}\big|I_{\alpha/2}*\abs{u}^p\big|^2\Big)-\lambda^{\frac{(2+\alpha)q-2N(p-1)}{2(p-1)}}\frac{1}{q}\int_{\R^N}\abs{u}^q.
\end{equation*}
If \eqref{Q_0} holds then $\mathcal{J}_*(u_\lambda)$ is unbounded below,
while \eqref{Qrad3} implies that $\mathcal{J}_*(u_\lambda)$ attains negative values for small $\lambda>0$.

Finally, if \eqref{Qrad3} holds and $p=1$, then for $u\in E^{\alpha,p}_\rad(\R^N)\setminus\{0\}$ and $\lambda>0$,
\begin{equation*}
\mathcal{J}_*(\lambda u)=\lambda^{2}\Big(\frac{1}{2}\int_{\R^N}\abs{D u}^2+\frac{1}{2p}\int_{\R^N}\big|I_{\alpha/2}*\abs{u}\big|^2\Big)-\lambda^{q}\frac{1}{q}\int_{\R^N}\abs{u}^q.
\end{equation*}
Since $q<2$ in view of \eqref{Qrad3} we conclude that $\mathcal{J}_*(\lambda u)$ attains negative values for small $\lambda>0$.

This geometric characterisation of $\mathcal J_*$ combined with the compactness of the embedding
of $E^{\alpha,p}_\rad(\R^N)$ into $L^q(\R^N)$ could be used to provide an alternative proof
of the existence of the radial solution of \eqref{sps} via direct minimization or via mountain--pass lemma,
similarly to \citelist{\cite{Ruiz-ARMA}\cite{Ruiz-JFA}}, where the case $N=3$, $p=2$, $\alpha=2$ was considered.
Such approach however excludes the Coulomb--Sobolev critical case $q=2\frac{2p+\alpha}{2+\alpha}.$
\end{remark}

\begin{remark}\label{universalmubound}
Let $p\neq\frac{N+\alpha}{N-2}$, $q= 2\frac{2p+\alpha}{2+\alpha}$ and $u\in E^{\alpha,p}(\R^N)$ be a nontrivial radial solution of
\begin{equation*}
 - \Delta u + (I_\alpha \ast \abs{u}^p)\abs{u}^{p - 2} u= q\mu\abs{u}^{q-2}u\quad\text{in \(\R^N\)}.
\end{equation*}
Then following universal bound holds for $\mu$:
\begin{equation}\label{boundmu}
\mu\ge M_{1,\rad}.
\end{equation}
In order to see this notice that in view of the relation \eqref{scaledinfimum}, taking into account that for $q= 2\frac{2p+\alpha}{2+\alpha}$ we have $2\sigma/q=1$, we can rewrite the Coulomb-Sobolev inequality \eqref{ineqCriticalCoulombSobolev}
as
\begin{equation*}
M_{1,\rad}C_*^{-1}\int_{\R^N} \abs{u}^{2 \frac{2 p + \alpha}{2 + \alpha}}
\le  \Bigl(\int_{\R^N} \abs{Du}^2 \Bigr)^\frac{\alpha}{2 + \alpha}
\Bigl( \int_{\R^N} \abs{I_{\alpha/2} \ast \abs{u}^p}^2\Bigr)^\frac{2}{2 + \alpha}.
\end{equation*}
By combining Poho\v zaev's and Nehari's identities the above inequality becomes
\begin{equation}\label{universalbound}
M_{1,\rad}C_*^{-1}
\le  \mu 2\frac{2p+\alpha}{2+\alpha}\Bigl(\theta\Bigr)^\frac{\alpha}{2 + \alpha}
\Bigl( 1-\theta \Bigr)^\frac{2}{2 + \alpha}.
\end{equation}
Here
$\theta= \frac{\alpha}{2p+\alpha}$
and
$C_*=\left(\Big(\frac{2}{\alpha}\Big)^{\frac{\alpha}{\alpha+2}}+\Big(\frac{2}{\alpha}\Big)^{-\frac{2}{\alpha+2}}\right)
\Big(\frac{1}{2p^{\frac{2}{\alpha+2}}}\Big).$
It is easy to check that
$$C_*2\frac{2p+\alpha}{2+\alpha}\theta^\frac{\alpha}{2 + \alpha}( 1-\theta )^\frac{2}{2 + \alpha}=1.$$
Hence \eqref{boundmu} follows.
\end{remark}

\section*{Acknowledgements}
C.M. would like to thank Antonio Ambrosetti for drawing his attention to several questions related to nonlinear Schr\"odinger-Poisson systems.
J.V.S was funded by the Fonds de la Recherche Scientifique - FNRS (grant T.1110.14)

\begin{bibdiv}

\begin{biblist}

\bib{Ackermann2004}{article}{
   author={Ackermann, Nils},
   title={On a periodic Schr\"odinger equation with nonlocal superlinear
   part},
   journal={Math. Z.},
   volume={248},
   date={2004},
   number={2},
   pages={423--443},
   issn={0025-5874},
%    review={\MR{2088936 (2005i:35075)}},
%    doi={10.1007/s00209-004-0663-y},
}

\bib{Ackermann2006}{article}{
   author={Ackermann, Nils},
   title={A nonlinear superposition principle and multibump solutions of
   periodic Schr\"odinger equations},
   journal={J. Funct. Anal.},
   volume={234},
   date={2006},
   number={2},
   pages={277--320},
   issn={0022-1236},
%    review={\MR{2216902}},
%    doi={10.1016/j.jfa.2005.11.010},
}

\bib{AdamsHedberg1996}{book}{
   author={Adams, David R.},
   author={Hedberg, Lars Inge},
   title={Function spaces and potential theory},
   series={Grundlehren der Mathematischen Wissenschaften},
   volume={314},
   publisher={Springer},
   date={1996},
   pages={xii+366},
   isbn={3-540-57060-8},
}

\bib{Ambrosetti-survey}{article}{
   author={Ambrosetti, Antonio},
   title={On Schr\"odinger--Poisson systems},
   journal={Milan J. Math.},
   volume={76},
   date={2008},
   pages={257--274},
   issn={1424-9286},
}

\bib{AmbrosettiRabinowitz}{article}{
   author={Ambrosetti, Antonio},
    author={Rabinowitz, Paul H.},
   title={Dual variational methods in critical point theory and applications},
   journal={J. Funct. Anal.},
   volume={14},
   date={1973},
   pages={349--381},

}

\bib{Bao-Mauser-Stimming-2003}{article}{
   author={Bao, Weizhu},
   author={Mauser, N. J.},
   author={Stimming, H. P.},
   title={Effective one particle quantum dynamics of electrons: a numerical
   study of the Schr\"odinger--Poisson-$X\alpha$ model},
   journal={Commun. Math. Sci.},
   volume={1},
   date={2003},
   number={4},
   pages={809--828},
   issn={1539-6746},
   %review={\MR{2041458}},
}

\bib{BellazziniFrankVisciglia}{article}{
 title={Maximizers for Gagliardo--Nirenberg inequalities
and related non-local problems},
 author={Bellazzini, Jacopo},
 author={Frank, Rupert L.},
 author={Visciglia, Nicola},
 journal={Math. Ann.},
%  doi={10.1007/s00208-014-1046-2},
 year={2014},
 volume={360},
 number={3--4},
 pages={653--673},
}

\bib{BellaziniGhimentiOzawa}{article}{
  title={Sharp lower bounds for Coulomb energy},
  author={Bellazzini, Jacopo},
  author={Ghimenti, Marco},
  author={Ozawa, Tohru},
  journal={Math. Res. Lett.},
  year={2016},
  volume={23},
  number={3},
  pages={621--632},
%  eprint={arXiv:1410.0598},
}

\bib{Benci-Fortunato}{article}{
   author={Benci, Vieri},
   author={Fortunato, Donato},
   title={An eigenvalue problem for the Schr\"odinger-Maxwell equations},
   journal={Topol. Methods Nonlinear Anal.},
   volume={11},
   date={1998},
   number={2},
   pages={283--293},
   issn={1230-3429},
   %review={\MR{1659454 (2000e:35222)}},
}

\bib{BenedekPanzone1961}{article}{
   author={Benedek, A.},
   author={Panzone, R.},
   title={The space \(L^P\), with mixed norm},
   journal={Duke Math. J.},
   volume={28},
   date={1961},
   pages={301--324},
   issn={0012-7094},
}

\bib{Boas1940}{article}{
   author={Boas, R. P., Jr.},
   title={Some uniformly convex spaces},
   journal={Bull. Amer. Math. Soc.},
   volume={46},
   date={1940},
   pages={304--311},
   issn={0002-9904},
}

\bib{Bogachev2007}{book}{
   author={Bogachev, V. I.},
   title={Measure theory},
   publisher={Springer},
   place={Berlin},
   date={2007},
   isbn={978-3-540-34513-8},
   isbn={3-540-34513-2},
}

\bib{Bokanowski-Lopez-Soler}{article}{
   author={Bokanowski, Olivier},
   author={L{\'o}pez, Jos{\'e} L.},
   author={Soler, Juan},
   title={On an exchange interaction model for quantum transport: the
   Schr\"odinger--Poisson--Slater system},
   journal={Math. Models Methods Appl. Sci.},
   volume={13},
   date={2003},
   number={10},
   pages={1397--1412},
   issn={0218-2025},
   %review={\MR{2013491 (2005h:35290)}},
%    doi={10.1142/S0218202503002969},
}

\bib{BonheureMercuri2011}{article}{
   author={Bonheure, Denis},
   author={Mercuri, Carlo},
   title={Embedding theorems and existence results for nonlinear
   Schr\"odinger--Poisson systems with unbounded and vanishing potentials},
   journal={J. Differential Equations},
   volume={251},
   date={2011},
   number={4--5},
   pages={1056--1085},
   issn={0022-0396},
%    doi={10.1016/j.jde.2011.04.010},
}

\bib{Brezis2011}{book}{
   author={Brezis, Haim},
   title={Functional analysis, Sobolev spaces and partial differential
   equations},
   series={Universitext},
   publisher={Springer},
   place={New York},
   date={2011},
   pages={xiv+599},
   isbn={978-0-387-70913-0},
%    %review={\MR{2759829 (2012a:35002)}},
}
\bib{BrezisLieb1983}{article}{
   author={Brezis, Ha{\"{\i}}m},
   author={Lieb, Elliott},
   title={A relation between pointwise convergence of functions and
   convergence of functionals},
   journal={Proc. Amer. Math. Soc.},
   volume={88},
   date={1983},
   number={3},
   pages={486--490},
   issn={0002-9939},
}

\bib{Carleson1967}{book}{
   author={Carleson, Lennart},
   title={Selected problems on exceptional sets},
   series={Van Nostrand Mathematical Studies, No. 13},
   publisher={Van Nostrand},
   address={Princeton, N.J.-- Toronto, Ont.--London},
   date={1967},
   pages={v+151},
}

\bib{Catto-2013}{article}{
   author={Catto, I.},
   author={Dolbeault, J.},
   author={S{\'a}nchez, O.},
   author={Soler, J.},
   title={Existence of steady states for the Maxwell-Schr\"odinger--Poisson
   system: exploring the applicability of the concentration-compactness
   principle},
   journal={Math. Models Methods Appl. Sci.},
   volume={23},
   date={2013},
   number={10},
   pages={1915--1938},
   issn={0218-2025},
   %review={\MR{3078678}},
%    doi={10.1142/S0218202513500541},
}

\bib{DAprile-Mugnai}{article}{
   author={D'Aprile, Teresa},
   author={Mugnai, Dimitri},
   title={Solitary waves for nonlinear Klein-Gordon-Maxwell and
   Schr\"odinger-Maxwell equations},
   journal={Proc. Roy. Soc. Edinburgh Sect. A},
   volume={134},
   date={2004},
   number={5},
   pages={893--906},
   issn={0308-2105},
   %review={\MR{2099569 (2005h:35319)}},
%    doi={10.1017/S030821050000353X},
}

\bib{Day1941}{article}{
   author={Day, Mahlon M.},
   title={Some more uniformly convex spaces},
   journal={Bull. Amer. Math. Soc.},
   volume={47},
   date={1941},
   pages={504--507},
   issn={0002-9904},
%    %review={\MR{00040s68 (2,314a)}},
}

\bib{DelPinoDolbeault2002}{article}{
   author={Del Pino, Manuel},
   author={Dolbeault, Jean},
   title={Best constants for Gagliardo-Nirenberg inequalities and
   applications to nonlinear diffusions},
   journal={J. Math. Pures Appl. (9)},
   volume={81},
   date={2002},
   number={9},
   pages={847--875},
   issn={0021-7824},
}

\bib{DiCosmoVanSchaftingen}{article}{
 author={Di Cosmo, Jonathan},
 author={Van Schaftingen, Jean},
 title={Semiclassical stationary states for nonlinear Schr\"odinger equations
under a strong external magnetic field},
%  eprint={arXiv:1312.5467},
 journal={J. Differential Equations},
 volume = {259},
 date={2015},
 pages={596--627},
}

\bib{Duoandikoetxea13}{article}{
   author={Duoandikoetxea, Javier},
   title={Fractional integrals on radial functions with applications to
   weighted inequalities},
   journal={Ann. Mat. Pura Appl. (4)},
   volume={192},
   date={2013},
   number={4},
   pages={553--568},
   issn={0373-3114},
%   review={\MR{3081635}},
%   doi={10.1007/s10231-011-0237-7},
}

\bib{Gagliardo1958}{article}{
   author={Gagliardo, Emilio},
   title={Propriet\`a di alcune classi di funzioni in pi\`u variabili},
   journal={Ricerche Mat.},
   volume={7},
   date={1958},
   pages={102--137},
   issn={0035-5038},
}

\bib{GilbargTrudinger1983}{book}{
    author = {Gilbarg, David},
    author ={Trudinger, Neil S.},
     title = {Elliptic partial differential equations of second order},
    series = {Grundlehren der Mathematischen Wissenschaften},
    volume = {224},
 publisher = {Springer},
   address = {Berlin},
      year = {1983},
     pages = {xiii+513},
      isbn = {3-540-13025-X},
}

\bib{FrolichLiebLoss}{article}{
   author={Fr\"ohlich, J.},
   author={Lieb, E. H.},
   author={Loss, M.},
   title={Stability of Coulomb systems with magnetic fields. I.
The one-electron atom.},
   journal={Comm. Math. Phys.},
   volume={104},
   date={1986},
   number={2},
   pages={251--270},

}

\bib{Ianni-Ruiz-2012}{article}{
   author={Ianni, Isabella},
   author={Ruiz, David},
   title={Ground and bound states for a static Schr\"odinger--Poisson--Slater
   problem},
   journal={Commun. Contemp. Math.},
   volume={14},
   date={2012},
   number={1},
   pages={1250003, 22},
   issn={0219-1997},
   %review={\MR{2902293}},
%    doi={10.1142/S0219199712500034},
}

\bib{Koskela1979}{article}{
   author={Koskela, Mauri},
   title={Some generalizations of Clarkson's inequalities},
   journal={Univ. Beograd. Publ. Elektrotehn. Fak. Ser. Mat. Fiz.},
   number={634--677},
   date={1979},
   pages={89--93},
   issn={0522-8441},
}

\bib{Lebedev1965}{book}{
   author={Lebedev, N. N.},
   title={Special functions and their applications},
   translator={Silverman, Richard A.},
   publisher={Prentice--Hall},
   place={Englewood Cliffs, N.J.},
   date={1965},
   pages={xii+308},
}

\bib{LeBris-Lions-2005}{article}{
   author={Le Bris, Claude},
   author={Lions, Pierre-Louis},
   title={From atoms to crystals: a mathematical journey},
   journal={Bull. Amer. Math. Soc. (N.S.)},
   volume={42},
   date={2005},
   number={3},
   pages={291--363},
   issn={0273-0979},
%    %review={\MR{2149087 (2006k:81121)}},
%    doi={10.1090/S0273-0979-05-01059-1},
}

\bib{Lieb}{article}{
   author={Lieb, Elliott H.},
   title={Sharp constants in the Hardy--Littlewood--Sobolev and related inequalities},
   journal={Ann. of Math.},
   number={ 118},
   date={1983},
   pages={349--374},

}

\bib{LiebLoss2001}{book}{
   author={Lieb, Elliott H.},
   author={Loss, Michael},
   title={Analysis},
   series={Graduate Studies in Mathematics},
   volume={14},
   edition={2},
   publisher={American Mathematical Society},
   place={Providence, RI},
   date={2001},
   pages={xxii+346},
   isbn={0-8218-2783-9},
}

\bib{Lions-1981}{article}{
   author={Lions, P.-L.},
   title={Some remarks on Hartree equation},
   journal={Nonlinear Anal.},
   volume={5},
   date={1981},
   number={11},
   pages={1245--1256},
   issn={0362-546X},
%    %review={\MR{636734 (83h:58025)}},
%    doi={10.1016/0362-546X(81)90016-X},
}

\bib{Lions1984CC1}{article}{
   author={Lions, P.-L.},
   title={The concentration-compactness principle in the calculus of
   variations. The locally compact case. I},
   journal={Ann. Inst. H. Poincar\'e Anal. Non Lin\'eaire},
   volume={1},
   date={1984},
   number={2},
   pages={109--145},
   issn={0294-1449},
}

\bib{Lions1987}{article}{
   author={Lions, P.-L.},
   title={Solutions of Hartree--Fock equations for Coulomb systems},
   journal={Comm. Math. Phys.},
   volume={109},
   date={1987},
   number={1},
   pages={33--97},
   issn={0010-3616},
}

\bib{MaligrandaSabourova2007}{article}{
   author={Maligranda, Lech},
   author={Sabourova, Natalia},
   title={On Clarkson's inequality in the real case},
   journal={Math. Nachr.},
   volume={280},
   date={2007},
   number={12},
   pages={1363--1375},
   issn={0025-584X},
}

\bib{Mauser-2001}{article}{
   author={Mauser, N. J.},
   title={The Schr\"odinger--Poisson-$X\alpha$ equation},
   journal={Appl. Math. Lett.},
   volume={14},
   date={2001},
   number={6},
   pages={759--763},
   issn={0893-9659},
   %review={\MR{1836081 (2002c:81042)}},
%    doi={10.1016/S0893-9659(01)80038-0},
}

\bib{Mazya2010}{book}{
   author={Maz\cprime{}ya, Vladimir},
   title={Sobolev spaces with applications to elliptic partial differential
   equations},
   series={Grundlehren der Mathematischen Wissenschaften
% [Fundamental Principles of Mathematical Sciences]
},
   volume={342},
   edition={2},
   publisher={Springer},
   place={Heidelberg},
   date={2011},
   pages={xxviii+866},
   isbn={978-3-642-15563-5},
% doi={10.1007/978-3-642-15564-2},
}

\bib{Mercuri2008}{article}{
   author={Mercuri, Carlo},
   title={Positive solutions of nonlinear Schro\"dinger--Poisson systems with radial potentials vanishing at infinity},
   journal={ Atti Accad. Naz. Lincei Cl. Sci. Fis. Mat. Natur. Rend. Lincei (9) Mat. Appl. },
   volume={19},
   date={2008},
   number={3},
   pages={211--227},
   issn={0025-584X},
}

\bib{MerlePeletier1992}{article}{
   author={Merle, F.},
   author={Peletier, L. A.},
   title={Asymptotic behaviour of positive solutions of elliptic equations
   with critical and supercritical growth. II. The nonradial case},
   journal={J. Funct. Anal.},
   volume={105},
   date={1992},
   number={1},
   pages={1--41},
   issn={0022-1236},
%    review={\MR{1156668 (93e:35039)}},
%    doi={10.1016/0022-1236(92)90070-Y},
}

\bib{Milman}{article}{
  author={Milman, D.},
  title={ On some criteria for the regularity of spaces of type (B)},
  journal={C. R. (Doklady) Acad. Sci. U.R.S.S},
  volume={20},
  date={1938},
  pages={243--246},
}

\bib{MorozVanSchaftingen}{article}{
   author={Moroz, Vitaly},
   author={Van Schaftingen, Jean},
   title={Groundstates of nonlinear Choquard equations: Existence,
   qualitative properties and decay asymptotics},
   journal={J. Funct. Anal.},
   volume={265},
   date={2013},
   number={2},
   pages={153--184},
   issn={0022-1236},
%   doi={10.1016/j.jfa.2013.04.007},
}

\bib{Ni1982}{article}{
  author={Ni, W.-M.},
  title={A nonlinear Dirichlet problem on the unit ball and its applications},
  journal={Indiana Univ. Math. J.},
  volume={31},
  date={1982},
  pages={801--807},
}

\bib{Nirenberg1959}{article}{
      author={Nirenberg, L.},
       title={On elliptic partial differential equations},
        date={1959},
     journal={Ann. Scuola Norm. Sup. Pisa (3)},
      volume={13},
       pages={115\ndash 162},
}

\bib{Ohtsuka1957}{article}{
   author={Ohtsuka, Makoto},
   title={Capacit\'e d'ensembles de Cantor g\'en\'eralis\'es},
   journal={Nagoya Math. J.},
   volume={11},
   date={1957},
   pages={151--160},
   issn={0027-7630},
}

\bib{DuPlessis}{book}{
   author={du Plessis, Nicolaas},
   title={An introduction to potential theory},
   series={University Mathematical Monographs},
   volume={7},
   publisher={Oliver and Boyd},
   address={Edinburgh},
   date={1970},
   pages={viii+177},
}

\bib{Rubin-1982}{article}{
   author={Rubin, B. S.},
   title={One-dimensional representation, inversion and certain properties
   of Riesz potentials of radial functions},
   language={Russian},
   journal={Mat. Zametki},
   volume={34},
   date={1983},
   number={4},
   pages={521--533},
%   issn={0025-567X},
%   review={\MR{722223 (86a:42015)}},
}

\bib{Ruiz-JFA}{article}{
   author={Ruiz, David},
   title={The Schro\"dinger--Poisson equation under the effect of a nonlinear local term},
   journal={ J. Funct. Anal.},
   volume={237},
   date={2006},
   number={2},
   pages={655--674},

}

\bib{Ruiz-ARMA}{article}{
   author={Ruiz, David},
   title={On the Schr\"odinger--Poisson--Slater system: behavior of
   minimizers, radial and nonradial cases},
   journal={Arch. Ration. Mech. Anal.},
   volume={198},
   date={2010},
   number={1},
   pages={349--368},
   issn={0003-9527},
}

\bib{Slater}{article}{
  title = {A Simplification of the Hartree--Fock Method},
  author = {Slater, J.},
  journal = {Phys. Rev.},
  volume = {81},
%  issue = {3},
  pages = {385--390},
  year = {1951},
  publisher = {American Physical Society},
%   doi = {10.1103/PhysRev.81.385},
%   url = {http://link.aps.org/doi/10.1103/PhysRev.81.385}
}

\bib{Stein1970}{book}{
   author={Stein, Elias M.},
   title={Singular integrals and differentiability properties of functions},
   series={Princeton Mathematical Series},
   volume = {30},
   publisher={Princeton University Press},
   place={Princeton, N.J.},
   date={1970},
   pages={xiv+290},
}

\bib{Strauss1977}{article}{
  author={Strauss, W. A.},
  title={Existence of solitary waves in higher dimensions},
  journal={Comm. Math. Phys.},
  volume={55},
  date={1977},
  pages={149--162},
}

\bib{SuWangWillem2007b}{article}{
   author={Su, Jiabao},
   author={Wang, Zhi-Qiang},
   author={Willem, Michel},
   title={Nonlinear Schr\"odinger equations with unbounded and decaying
   radial potentials},
   journal={Commun. Contemp. Math.},
   volume={9},
   date={2007},
   number={4},
   pages={571--583},
   issn={0219-1997},
}

\bib{SuWangWillem2007}{article}{
   author={Su, Jiabao},
   author={Wang, Zhi-Qiang},
   author={Willem, Michel},
   title={Weighted Sobolev embedding with unbounded
and decaying radial potentials},
   volume={238},
   journal={J. Differential Equations},
   date={2007},
   number={1},
   pages={201--219},
}

\bib{Thim2016}{article}{
   author={Thim, Johan},
   title={Asymptotics and inversion of Riesz potentials through
   decomposition in radial and spherical parts},
   journal={Ann. Mat. Pura Appl. (4)},
   volume={195},
   date={2016},
   number={2},
   pages={323--341},
   issn={0373-3114},
%   review={\MR{3476676}},
%   doi={10.1007/s10231-014-0465-8},
}

\bib{Trudinger1973}{article}{
   author={Trudinger, Neil S.},
   title={Linear elliptic operators with measurable coefficients},
   journal={Ann. Scuola Norm. Sup. Pisa (3)},
   volume={27},
   date={1973},
   pages={265--308},
}

\bib{VanSchaftingen2013}{article}{
   author={Van Schaftingen, Jean},
   title={Interpolation inequalities between Sobolev and Morrey--Campanato spaces},
   subtitle={A common gateway to concentration--compactness and Gagliardo-Nirenberg},
%    eprint={arXiv:1308.1794},
   journal={Port. Math.},
   volume={71},
   year={2014},
   number={3--4},
   pages={159--175},
}

\bib{WillemMinimax}{book}{
   author={Willem, Michel},
   title={Minimax theorems},
   series={Progress in Nonlinear Differential Equations and their
   Applications, 24},
   publisher={Birkh\"auser Boston Inc.},
   place={Boston, MA},
   date={1996},
   pages={x+162},
}

\bib{Willem2013}{book}{
  author = {Willem, Michel},
  title = {Functional analysis},
  subtitle = {Fundamentals and Applications},
  series={Cornerstones},
  publisher = {Birkh\"auser},
  place = {Basel},
  volume = {XIV},
  pages = {213},
  date={2013},
}

\bib{YangWei2013}{article}{
   author={Yang, Minbo},
   author={Wei, Yuanhong},
   title={Existence and multiplicity of solutions for nonlinear
   Schr\"odinger equations with magnetic field and Hartree type
   nonlinearities},
   journal={J. Math. Anal. Appl.},
   volume={403},
   date={2013},
   number={2},
   pages={680--694},
   issn={0022-247X},
%    review={\MR{3037498}},
%    doi={10.1016/j.jmaa.2013.02.062},
}

\bib{Yosida1980}{book}{
   author={Yosida, K{\^o}saku},
   title={Functional analysis},
   series={Grundlehren der Mathematischen Wissenschaften},
   volume={123},
   edition={6},
   publisher={Springer},
   address={Berlin -- New York},
   date={1980},
   pages={xii+501},
   isbn={3-540-10210-8},
}

\end{biblist}

\end{bibdiv}

\end{document}